\definecolor{refkey}{gray}{.75}
\definecolor{labelkey}{gray}{.5}
\newtheorem{theorem}{Theorem}[section]
\newtheorem{proposition}[theorem]{Proposition}
\newtheorem{lemma}[theorem]{Lemma}
\newtheorem{corollary}[theorem]{Corollary}
\newtheorem*{thm}{Theorem}
\theoremstyle{definition}
\newtheorem{definition}[theorem]{Definition}
\newtheorem{example}[theorem]{Example}
\newtheorem{setup}[theorem]{Setup}
\theoremstyle{remark}
\newtheorem*{remark}{Remark}
\DeclareMathOperator{\DISC}{DISC}
\newcommand{\spa}{\mathrm{sp}}
\newcommand{\abs}[1]{\left\lvert#1\right\rvert}
\newcommand{\floor}[1]{\left\lfloor #1 \right\rfloor}
\newcommand{\ceil}[1]{\left\lceil #1 \right\rceil}
\newcommand{\paren}[1]{\left( #1 \right)}
\newcommand{\sqb}[1]{\left[ #1 \right]}
\newcommand{\set}[1]{\left\{ #1 \right\}}
\newcommand{\setcond}[2]{\left\{ #1 \;\middle\vert\; #2 \right\}}
\newcommand{\cond}[2]{\left( #1 \;\middle\vert\; #2 \right)}
\newcommand{\sqcond}[2]{\left[ #1 \;\middle\vert\; #2 \right]}
\newcommand{\wt}{\widetilde}
\newcommand{\x}{\times}
\newcommand{\approxmod}{\mathop{\approx}}
\newcommand{\CC}{\mathbb{C}}
\newcommand{\EE}{\mathbb{E}}
\newcommand{\RR}{\mathbb{R}}
\newcommand{\bx}{\mathbf{x}}
\newcommand{\by}{\mathbf{y}}
\newcommand{\bz}{\mathbf{z}}
\newcommand{\D}{\Delta}
\newcommand{\e}{\epsilon}
\tikzstyle{p}+=[fill=black, circle, minimum width = 1pt, inner sep =
\tikzstyle{w}+=[fill=white, draw, circle, minimum width = 1pt, inner sep =
\tikzstyle{l}+=[fill=white,inner sep = 1pt,font=\footnotesize]
\tikzstyle{Gamma}+=[decoration={snake, segment length=1.3mm,
\tikzstyle{dense}+=[line width=1mm]
\newcommand{\LargeLeftarrow}{\scalebox{2}{$\Leftarrow$}}
\newcommand{\LargeUparrow}{\scalebox{2}{$\Uparrow$}}
\begin{document}

\title{Extremal results in sparse pseudorandom graphs}

\author{David Conlon\thanks{Mathematical Institute, Oxford OX1 3LB, United
Kingdom. E-mail: {\tt david.conlon@maths.ox.ac.uk}. Research supported by a Royal Society University Research Fellowship.} \and Jacob Fox\thanks{Department of Mathematics, MIT, Cambridge, MA 02139-4307. Email: {\tt fox@math.mit.edu}. Research
supported by a Simons Fellowship and NSF grant DMS-1069197.} \and Yufei Zhao \thanks{Department of Mathematics,
MIT, Cambridge, MA 02139-4307. Email: {\tt yufeiz@math.mit.edu}. Research
supported by an Akamai Presidential Fellowship.} \and \vspace{2mm}
Accepted for publication in Advances in Mathematics}


\date{}

\maketitle

\begin{abstract}
Szemer\'edi's regularity lemma is a fundamental tool in extremal combinatorics. However, the original version is only helpful in studying dense graphs. In the 1990s, Kohayakawa and R\"odl proved
an analogue of Szemer\'edi's regularity lemma for sparse graphs as part of a
general program toward extending extremal results to sparse graphs. Many of
the key applications of Szemer\'edi's regularity lemma use an associated
counting lemma. In order to prove extensions of these results which also apply to sparse
graphs, it remained a well-known open problem to prove a counting lemma in
sparse graphs.

The main advance of this paper lies in a new counting lemma, proved
following the functional approach of Gowers, which complements the sparse
regularity lemma of Kohayakawa and R\"odl, allowing us to count small graphs
in regular subgraphs of a sufficiently pseudorandom graph. We use this to
prove sparse extensions of several well-known combinatorial theorems, including the removal lemmas for graphs and groups, the Erd\H{o}s-Stone-Simonovits theorem and Ramsey's theorem. These results extend and improve
upon a substantial body of previous work.

\end{abstract}

\tableofcontents

\section{Introduction}

Szemer\'edi's regularity lemma is one of the most powerful tools in extremal combinatorics. Roughly speaking, it says that the vertex set of every graph can be partitioned into a bounded number of parts so that the induced bipartite graph between almost all pairs of parts is pseudorandom. Many important results in graph theory, such as the graph removal lemma and the Erd\H{o}s-Stone-Simonovits theorem on Tur\'an numbers, have straightforward proofs using the regularity lemma.

Crucial to most applications of the regularity lemma is the use of a counting lemma. A counting lemma, roughly speaking, is a result that says that the number of embeddings of a fixed graph $H$ into a pseudorandom graph $G$ can be estimated by pretending that $G$ were a genuine random graph. The combined application of the regularity lemma and a counting lemma is known as the regularity method, and has important applications in graph theory, combinatorial geometry, additive combinatorics and theoretical computer science. For surveys on the regularity method and its applications, see \cite{KR03a, KS96, RS10}.

One of the limitations of Szemer\'edi's regularity lemma is that it is only meaningful for dense graphs.   While an analogue of the regularity lemma for sparse graphs has been proven by Kohayakawa \cite{K97} and by R\"odl (see also \cite{GS05, Sc11}), the problem of proving an associated counting lemma for sparse graphs has turned out to be much more difficult.  In random graphs, proving such an embedding lemma is a famous problem, known as the K\L R conjecture \cite{KLR97}, which has only been resolved very recently \cite{BMS12, CGSS12}.

Establishing an analogous result for pseudorandom graphs has been a central problem in this area. Certain partial results are known in this case \cite{KRSS10, KRS04}, but it has remained an open problem to prove a counting lemma for embedding a general fixed subgraph. We resolve this difficulty, proving a counting lemma for embedding any fixed small graph into subgraphs of sparse pseudorandom graphs.

As applications, we prove sparse extensions of several well-known combinatorial theorems, including the  removal lemmas for graphs and groups, the Erd\H{o}s-Stone-Simonovits theorem, and Ramsey's theorem. Proving such sparse analogues for classical combinatorial results has been an important trend in modern combinatorics research. For example, a sparse analogue of Szemer\'edi's theorem was an integral part of Green and Tao's proof \cite{GT08} that the primes contain arbitrarily long arithmetic progressions.

\subsection{Pseudorandom graphs}\label{sec:pseudos}

The binomial random graph $G_{n, p}$ is formed by taking an empty graph on $n$ vertices and choosing to add each edge independently with probability $p$. These graphs tend to be very well-behaved. For example, it is not hard to show that with high probability all large vertex subsets $X,Y$ have density approximately $p$ between them. Motivated by the question of determining when a graph behaves in a random-like manner, Thomason \cite{T85, T87} began the first systematic study of this property. Using a slight variant of Thomason's notion, we say that a graph on vertex set $V$ is {\it $(p, \beta)$-jumbled} if, for all vertex subsets $X,Y \subseteq V$,
\[|e(X,Y) - p|X||Y|| \leq \beta \sqrt{|X||Y|}.\]
The random graph $G_{n,p}$ is, with high probability, $(p, \beta)$-jumbled with $\beta = O(\sqrt{pn})$. It is not hard to show \cite{EGPS88,ES71} that this is optimal and that a graph on $n$ vertices with $p \leq 1/2$ cannot be $(p, \beta)$-jumbled with $\beta = o(\sqrt{pn})$. Nevertheless, there are many explicit examples which are optimally jumbled in that $\beta =O(\sqrt{pn})$. The Paley graph with vertex set $\mathbb{Z}_p$, where $p \equiv 1 (\mbox{mod } 4)$ is prime, and edge set given by connecting $x$ and $y$ if their difference is a quadratic residue is such a graph with $p = \frac{1}{2}$ and $\beta = O(\sqrt{n})$. Many more examples are given in the excellent survey \cite{KS06}.

A fundamental result of Chung, Graham and Wilson \cite{CGW89} states that for graphs of density $p$, where $p$ is a fixed positive constant, the property of being $(p, o(n))$-jumbled is equivalent to a number of other properties that one would typically expect in a random graph. The following theorem details some of these many equivalences.

\begin{thm} \label{CGW}
For any fixed $0 < p < 1$ and any sequence of graphs $(\Gamma_n)_{n \in \mathbb{N}}$ with $|V(\Gamma_n)| = n$ the following properties are equivalent.
\begin{itemize}
\item[$P_1$:]
$\Gamma_n$ is $(p, o(n))$-jumbled, that is, for all subsets $X,Y \subseteq V(\Gamma_n)$, $e(X,Y) = p |X||Y| + o(n^2)$;

\item[$P_2$:]
$e(\Gamma_n) \geq p \binom{n}{2} + o(n^2)$, $\lambda_1(\Gamma_n) = p n + o(n)$ and $|\lambda_2(\Gamma_n)| = o(n)$, where $\lambda_i(\Gamma_n)$ is the $i$th largest eigenvalue, in absolute value, of the adjacency matrix of $\Gamma_n$;

\item[$P_3$:]
for all graphs $H$, the number of labeled induced copies of $H$ in $\Gamma_n$ is $p^{\binom{\ell}{2}} n^{\ell} + o(n^{\ell})$, where $\ell = V(H)$;

\item[$P_4$:]
$e(\Gamma_n) \geq p \binom{n}{2} + o(n^2)$ and the number of labeled cycles of length $4$ in $\Gamma_n$ is at most $p^4 n^4 + o(n^4)$.
\end{itemize}
\end{thm}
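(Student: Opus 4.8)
\medskip
\noindent
The plan is to establish the equivalence through the cyclic chain of implications
\[
P_2 \;\Longrightarrow\; P_1 \;\Longrightarrow\; P_3 \;\Longrightarrow\; P_4 \;\Longrightarrow\; P_2 .
\]
Throughout write $A$ for the adjacency matrix of $\Gamma_n$, $J$ for the all-ones matrix, and $\mu_1 \ge \cdots \ge \mu_n$ for the eigenvalues of $A$; for an adjacency matrix the Perron--Frobenius theorem gives $\mu_1 = \lambda_1(\Gamma_n)$. For $P_4 \Rightarrow P_2$ the edge condition is already part of $P_4$, so only the eigenvalues need attention: the Rayleigh quotient at the all-ones vector gives $\lambda_1 \ge 2e(\Gamma_n)/n \ge pn + o(n)$, while $\operatorname{tr}(A^4) = \sum_i \mu_i^4$ equals the number of closed walks of length four, which is the number of labeled $C_4$'s plus $O(n^3)$ degenerate walks; hence $\lambda_1^4 \le \operatorname{tr}(A^4) \le p^4 n^4 + o(n^4)$, forcing $\lambda_1 = pn + o(n)$, and then $\abs{\lambda_2}^4 \le \operatorname{tr}(A^4) - \mu_1^4 = o(n^4)$, i.e.\ $\abs{\lambda_2} = o(n)$, which is $P_2$.

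For $P_2 \Rightarrow P_1$ only the discrepancy estimate is in question. I would fix an orthonormal eigenbasis $v_1, \dots, v_n$ and expand $e(X,Y) = \mathbf{1}_X^{\top} A\, \mathbf{1}_Y = \sum_i \mu_i \ang{\mathbf{1}_X, v_i}\ang{\mathbf{1}_Y, v_i}$. Cauchy--Schwarz together with Bessel's inequality bound the terms with $i \ge 2$ by $\abs{\lambda_2}\,\norm{\mathbf{1}_X}\norm{\mathbf{1}_Y} = o(n)\sqrt{\abs{X}\abs{Y}} = o(n^2)$. For the leading term, comparing $2e(\Gamma_n) = \sum_i \mu_i \ang{\mathbf{1}, v_i}^2$ with $\lambda_1 = pn + o(n)$ and $\abs{\lambda_2} = o(n)$ forces $\ang{\mathbf{1}, v_1}^2 = n + o(n)$, i.e.\ $v_1 = n^{-1/2}\mathbf{1} + o(1)$ in $\ell^2$; hence $\ang{\mathbf{1}_X, v_1} = \abs{X}/\sqrt{n} + o(\sqrt{n})$ and likewise for $Y$, so the $i = 1$ term equals $p\abs{X}\abs{Y} + o(n^2)$. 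Adding the two contributions gives $P_1$ (this is the expander mixing lemma).

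The implication $P_1 \Rightarrow P_3$ is the substantial one. Property $P_1$ says precisely that the cut norm $\norm{A - pJ}_{\square}$ is $o(n^2)$. From here I would invoke the standard dense counting lemma: for a fixed graph $H$ on $\ell$ vertices, write the number of homomorphisms of $H$ into a weighted graph $W$ as the $\ell$-fold sum of $\prod_{ij \in E(H)} W(x_i, x_j)$ and interpolate from $W = A$ to $W = pJ$ one edge-slot at a time, each swap changing the count by $O\paren{\norm{A - pJ}_{\square}\, n^{\ell-2}}$, for total error $O\paren{e(H)\,\norm{A - pJ}_{\square}\,n^{\ell-2}} = o(n^{\ell})$; since the all-$p$ model has $p^{e(H)} n^{\ell}$ homomorphisms and non-injective maps number $o(n^{\ell})$, there are $p^{e(H)} n^{\ell} + o(n^{\ell})$ labeled copies of $H$. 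Carrying this out simultaneously for $\overline{A} - (1-p)J$ (discrepancy being self-complementary) refines the estimate to the $G_{n,p}$-count $p^{e(H)}(1-p)^{\binom{\ell}{2}-e(H)} n^{\ell} + o(n^{\ell})$ of labeled \emph{induced} copies, which is $P_3$. Finally $P_3 \Rightarrow P_4$ is immediate: $H = K_2$ gives $2e(\Gamma_n) = pn^2 + o(n^2)$, and summing the induced-copy counts over the four graphs on four vertices that contain a spanning $4$-cycle gives $\paren{p^4(1-p)^2 + 2p^5(1-p) + p^6} n^4 + o(n^4) = p^4 n^4 + o(n^4)$, so $P_4$ holds.

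I expect the counting step $P_1 \Rightarrow P_3$ to be the main obstacle. Two-set discrepancy is in effect a linear (first-moment) condition on the graph, whereas the number of copies of a fixed $H$ is a degree-$e(H)$ statistic; the content of the counting lemma is that the linear control already pins down all of these higher statistics, via a telescoping that introduces only $e(H)$ error terms, each governed by the cut norm. It is exactly the sparse (relative) version of this step that is the genuine difficulty taken up in the rest of the paper: once $p = p(n) \to 0$, smallness of the cut norm relative to $p$---that is, jumbledness---no longer propagates to subgraph counts by itself, and a new argument, the functional counting lemma, is needed.
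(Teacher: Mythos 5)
Your proof is correct, but note first that the paper does not actually prove Theorem~\ref{CGW}: it is quoted as background from Chung--Graham--Wilson, so the relevant comparison is with the paper's proof sketch of its sparse generalization, Theorem~\ref{thm:relquas}. Your cycle $P_2 \Rightarrow P_1 \Rightarrow P_3 \Rightarrow P_4 \Rightarrow P_2$ is one of the two classical routes, and each step is sound: the trace argument for $P_4 \Rightarrow P_2$, the expander mixing lemma with the observation that $\langle \mathbf{1}, v_1\rangle^2 = n + o(n)$ for $P_2 \Rightarrow P_1$, the telescoping counting lemma (the paper's Proposition~\ref{prop:dense-counting}) for $P_1 \Rightarrow P_3$, and inclusion over the diagonals for $P_3 \Rightarrow P_4$. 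The paper's own architecture for Theorem~\ref{thm:relquas} differs in one substantive way: the closing of the cycle from the $C_4$ count back to discrepancy is done there \emph{without} eigenvalues, via a double Cauchy--Schwarz on the de-meaned $C_4$ count (Lemma~\ref{lem:C4-DISC} and Proposition~\ref{prop:C4-DISC}), with the eigenvalue property handled as a side branch ($P_5 \Rightarrow P_6 \Rightarrow P_1$ in their numbering). Your spectral route is shorter in the dense setting but does not survive the passage to subgraphs $G$ of a sparse $\Gamma$, where $G$ need not be regular enough for $\operatorname{tr}(A^4)$ to isolate $\lambda_1$ cleanly; the paper's Cauchy--Schwarz route is exactly what generalizes. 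So the two approaches genuinely diverge at the $C_4 \Rightarrow \mathrm{DISC}$ step, and the paper's choice is dictated by the sparse application.

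One point worth making explicit: as literally stated, $P_3$ gives the count of labeled \emph{induced} copies as $p^{\binom{\ell}{2}} n^{\ell} + o(n^{\ell})$, which is the $p = \tfrac{1}{2}$ specialization; for general fixed $p$ the correct benchmark is $p^{e(H)}(1-p)^{\binom{\ell}{2}-e(H)} n^{\ell}$ (take $H = \overline{K_2}$ to see that the stated constant cannot be right). Your proof quietly establishes the corrected constant and then uses it in the $P_3 \Rightarrow P_4$ computation $p^4\bigl((1-p)^2 + 2p(1-p) + p^2\bigr) = p^4$, which is the right thing to do; you should just say explicitly that you are proving the repaired form of $P_3$, since the equivalence is false for the statement as printed.
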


Any graph sequence which satisfies any (and hence all) of these properties is said to be {\it $p$-quasirandom}. The most surprising aspect of this theorem, already hinted at in Thomason's work, is that if the number of cycles of length $4$ is as one would expect in a binomial random graph then this is enough to imply that the edges are very well-spread. This theorem has been quite influential. It has led to the study of quasirandomness in other structures such as hypergraphs \cite{CG91, G06}, groups \cite{G08}, tournaments, permutations and  sequences (see \cite{CG08} and it references), and progress on problems in different areas (see, e.g., \cite{C09,G07,G08}). It is also closely related to Szemer\'edi's regularity lemma and its recent hypergraph generalization \cite{G07, NRS06, RS04, T06} and all proofs of Szemer\'edi's theorem \cite{Sz75} on long arithmetic progressions in dense subsets of the integers use some notion of quasirandomness.

For sparser graphs, the equivalences between the natural
generalizations of these properties are not so clear cut (see
\cite{CG02, KR03, KRS04} for discussions). In this case, it is natural
to generalize the jumbledness condition for dense graphs by
considering graphs which are $(p, o(pn))$-jumbled. Otherwise, we would
not even have control over the density in the whole set. However, it
is no longer the case that being $(p, o(pn))$-jumbled implies that the
number of copies of any subgraph $H$ agrees approximately with the expected
count. For $H = K_{3,3}$ and $p = n^{-1/3}$, it is easy to see this by
taking the random graph $G_{n,p}$ and changing three vertices $u$, $v$
and $w$ so that they are each connected to everything else. This does
not affect the property of being $(p, o(pn))$-jumbled but it does
affect the $K_{3,3}$ count, since as well as the roughly $p^9 n^6 =
n^3$ copies of $K_{3,3}$ that one expects in a random graph, one gets
a further $\Omega(n^3)$ copies of $K_{3,3}$ containing all of $u$, $v$
and $w$.

However, for any given graph $H$ one can find a function $\beta_H : = \beta_H(p, n)$ such that if $\Gamma$ is a $(p, \beta_H)$-jumbled graph then $\Gamma$ contains a copy of $H$. Our chief concern in this paper will be to determine jumbledness conditions which are sufficient to imply other properties. In particular, we will be concerned with determining conditions under which certain combinatorial theorems continue to hold within jumbled graphs.

One particularly well-known class of $(p, \beta)$-jumbled graphs is the collection of {\it $(n, d, \lambda)$-graphs}. These are graphs on $n$ vertices which are $d$-regular and such that all eigenvalues of the adjacency matrix, save the largest, are smaller in absolute value than $\lambda$. The famous expander mixing lemma tells us that these graphs are $(p, \beta)$-jumbled with $p = d/n$ and $\beta = \lambda$. Bilu and Linial \cite{BL06} proved a converse of this fact, showing that every $(p,\beta)$-jumbled  $d$-regular  graph is an  $(n,d,\lambda)$-graph with $\lambda=O(\beta \log (d/\beta))$.  This shows that the jumbledness parameter $\beta$ and the second largest in absolute value eigenvalue $\lambda$ of a regular graph are within a logarithmic factor of each other.

Pseudorandom graphs have many surprising properties and applications and have recently attracted a lot of attention both in combinatorics and theoretical computer science (see, e.g., \cite{KS06}). Here we will focus on their behavior with respect to extremal properties. We discuss these properties in the next section.

\subsection{Extremal results in pseudorandom graphs} \label{sec:extremal}

In this paper, we study the extent to which several well-known combinatorial statements continue to hold relative to pseudorandom graphs or, rather, $(p, \beta)$-jumbled graphs and $(n, d, \lambda)$-graphs.

One of the most important applications of the regularity method is the graph removal lemma \cite{ADLRY94, RS78}. In the following statement and throughout the paper, $v(H)$ and $e(H)$ will denote the number of vertices and edges in the graph $H$, respectively. The graph removal lemma states that for every fixed graph $H$ and every $\e > 0$ there exists $\delta > 0$ such that if $G$ contains at most $\delta n^{v(H)}$ copies of $H$ then $G$ may be made $H$-free by removing at most $\e n^2$ edges. This innocent looking result, which follows easily from Szemer\'edi's regularity lemma and the graph counting lemma, has surprising applications in diverse areas, amongst others a simple proof of Roth's theorem on $3$-term arithmetic progressions in dense subsets of the integers. It is also much more difficult to prove than one might expect, the best known bound \cite{F11} on $\delta^{-1}$ being a tower function of height on the order of $\log \e^{-1}$.

An analogue of this result for random graphs (and hypergraphs) was proven in \cite{CG12}. For pseudorandom graphs, the following analogue of the triangle removal lemma was recently proven by Kohayakawa, R\"odl, Schacht and Skokan \cite{KRSS10}.

\begin{thm} \label{KRSS}
For every $\e > 0$, there exist $\delta > 0$ and $c > 0$ such that if $\beta \leq c p^3 n$ then any $(p, \beta)$-jumbled graph $\Gamma$ on $n$ vertices has the following property. Any subgraph of $\Gamma$ containing at most $\delta p^3 n^3$ triangles may be made triangle-free by removing at most $\e p n^2$ edges.
\end{thm}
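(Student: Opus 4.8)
The plan is to run the regularity method in its sparse incarnation: apply the sparse regularity lemma of Kohayakawa and R\"odl to the subgraph $G\subseteq\Gamma$, clean it up using the jumbledness of $\Gamma$, and then derive a contradiction from a triangle counting lemma valid inside sufficiently jumbled graphs. Fix $\e>0$. Since $\beta\le cp^3n=o(pn)$, for any two linear-sized sets $X,Y$ one has $e_G(X,Y)\le e_\Gamma(X,Y)\le p|X||Y|+\beta\sqrt{|X||Y|}=(1+o(1))p|X||Y|$, so $G$ has bounded density at scale $p$ and the sparse regularity lemma applies to it. It produces an equitable partition $V(\Gamma)=V_0\cup V_1\cup\cdots\cup V_k$ with $|V_0|\le\e'n$ and $k=k(\e')$ bounded, in which all but at most $\e'k^2$ of the pairs $(V_i,V_j)$ with $1\le i<j\le k$ are $(\e',p)$-regular, meaning that the $p$-normalised density $d_p(A,B):=e_G(A,B)/(p|A||B|)$ changes by at most $\e'$ upon passing to subsets $A\subseteq V_i$, $B\subseteq V_j$ of sizes at least $\e'|V_i|$ and $\e'|V_j|$; here $\e'=\e'(\e)$ is a small constant, to be fixed below together with a small density threshold $d=d(\e)$.

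Now clean $G$ by deleting every edge that (i) meets $V_0$, (ii) lies inside some part $V_i$, (iii) lies across an irregular pair, or (iv) lies across a pair $(V_i,V_j)$ with $d_p(V_i,V_j)<d$. Bounding the edges of $G$ inside each pair by $e_\Gamma(V_i,V_j)\le(1+o(1))p(n/k)^2$, and those meeting $V_0$ by $e_\Gamma(V_0,V(\Gamma))\le(1+o(1))p\e'n^2$, the total number of deleted edges is at most $\bigl(\e'+d+k^{-1}+o(1)\bigr)pn^2$, which is below $\e pn^2$ once $\e'$, $d$ and $k^{-1}$ are chosen small enough in terms of $\e$. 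If the cleaned graph $G'$ is triangle-free we are done. Otherwise $G'$ contains a triangle, and since every intra-part, irregular and low-density edge has been removed, its three vertices lie in three distinct non-exceptional parts $V_a,V_b,V_c$ whose bipartite subgraphs $G_{ab},G_{ac},G_{bc}$ are each $(\e',p)$-regular with $d_p\ge d$. To finish, it suffices to show that this configuration forces $G$ to contain more than $\delta p^3n^3$ triangles for a suitable $\delta=\delta(\e)>0$, contradicting the hypothesis; so the entire weight of the argument rests on a \emph{one-sided triangle counting lemma}: if $\Gamma$ is $(p,\beta)$-jumbled with $\beta\le cp^3n$, then three parts of size $\sim n/k$ with pairwise $(\e',p)$-regular bipartite subgraphs of $p$-normalised density at least $d$ span at least $c'(d)\,p^3(n/k)^3$ triangles, which exceeds $\delta p^3n^3$ whenever $\delta<c'(d)k^{-3}$.

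The triangle counting lemma is where I expect essentially all the difficulty to lie, and it is the one point where a genuine pseudorandomness hypothesis on $\Gamma$ is unavoidable. The dense-graph argument — embed a vertex $x\in V_a$, then count edges of $G_{bc}$ between $A:=N_{G_{ab}}(x)\subseteq V_b$ and $B:=N_{G_{ac}}(x)\subseteq V_c$ — collapses in the sparse regime, because for almost every $x$ these sets have size only $\sim d\,p\,(n/k)$, far below the threshold $\e'|V_b|$ at which $(\e',p)$-regularity of $G_{bc}$ says anything, and two ``random-looking'' subsets of density $p$ inside $V_c$ can be entirely disjoint. The way through is to bring in the jumbledness of $\Gamma$ to control the codegree and path-extension statistics that regularity cannot reach. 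Writing the triangle count as $T=\sum_{x\in V_a}e_{G_{bc}}(N_{G_{ab}}(x),N_{G_{ac}}(x))$ and the cherry count as $P=\sum_{x\in V_a}|N_{G_{ab}}(x)|\,|N_{G_{ac}}(x)|$, regularity alone already yields the one-sided bound $P\ge(1-o(1))d^2p^2(n/k)^3$ (most $x$ have both neighbourhoods of roughly their expected sizes); the task is then to show that restricting from $V_b\times V_c$ to the edge set of $G_{bc}$ retains a $\gtrsim dp$ fraction of the weight, i.e.\ $T\gtrsim dp\cdot P$. Via Cauchy--Schwarz this reduces to showing that for all but a negligible fraction of pairs $(y,z)\in V_b\times V_c$ the co-neighbourhood $|N_{G_{ab}}(y)\cap N_{G_{ac}}(z)|$ is close to its expected value $\sim d^2p^2(n/k)$, and the required concentration of co-neighbourhoods is exactly a bound on the number of paths of length two — equivalently of four-cycles — in the relevant bipartite graphs, which is supplied by the $(p,\beta)$-jumbledness of the host, each such count in a $G_{ij}$ being dominated by the corresponding count in $\Gamma$, which is $(1+o(1))$ times its random value precisely when $\beta$ is small enough relative to $p$ and $n$. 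Carrying out this error-term bookkeeping and balancing against $d$ is the delicate part; the cubic threshold $\beta\le cp^3n$ stated in the theorem is more than the individual estimates need and reflects the way they are organised rather than a real barrier — the counting lemma developed later in this paper lowers it substantially — but any argument along these lines should close the proof.
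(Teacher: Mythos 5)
Your outer skeleton is exactly the one the paper uses for its generalization (Theorem \ref{RemovalIntro}): apply the sparse regularity lemma, clean up low-density, irregular and internal edges using the jumbledness of $\Gamma$ (this is Lemma \ref{lem:cleaning}), and then invoke a one-sided counting lemma to turn a single surviving triangle into $\Omega(p^3n^3)$ triangles. That part is fine. The problem is that you have located all of the difficulty in the one-sided triangle counting lemma --- correctly --- and then your proposed proof of that lemma does not close.

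The gap is in the step where you claim the required codegree concentration ``is supplied by the $(p,\beta)$-jumbledness of the host, each such count in a $G_{ij}$ being dominated by the corresponding count in $\Gamma$.'' Write $\mu = q_{ab}q_{ac}|V_a| \sim d^2p^2|V_a|$ for the expected codegree. To conclude $T \gtrsim dp\cdot P$ you need the codegree $|N_{G_{ab}}(y)\cap N_{G_{ac}}(z)|$ to be close to $\mu$ for all but $o(p|V_b||V_c|)$ pairs $(y,z)$ --- negligible relative to the measure of $E(G_{bc})$, not relative to $|V_b||V_c|$, since the exceptional pairs could otherwise sit exactly on the edges of $G_{bc}$ (this is the same phenomenon as the $X_2'$ example in Section \ref{sec:countintro}). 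A second-moment argument for this requires an upper bound on $\sum_{y,z}\mathrm{codeg}(y,z)^2 = \sum_{x,x'}\mathrm{codeg}_{G_{ab}}(x,x')\,\mathrm{codeg}_{G_{ac}}(x,x')$ of order $d^4p^4|V_a|^2|V_b||V_c|$, i.e.\ a two-sided $C_4$/$K_{2,2}$-type counting lemma \emph{in $G$ itself}. Dominating these counts by the corresponding counts in $\Gamma$ only gives $\lesssim p^4|V_a|^2|V_b||V_c|$, which exceeds what you need by a factor $d^{-4}$; the resulting variance is of the same order as $\mu^2|V_b||V_c|$ and Chebyshev yields nothing. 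Establishing that the $C_4$-count in a DISC-regular subgraph of a jumbled graph really is $\approx q^4$ rather than merely $\lesssim p^4$ is precisely the content of the paper's two-sided counting lemma (Proposition \ref{prop:C4}, proved via doubling and densification), and the passage from that to concentration of codegrees in neighborhoods is the $K_{1,2,2}$ inheritance argument of Section \ref{sec:inherit}. None of this follows from jumbledness of $\Gamma$ plus DISC by soft domination; it is the main theorem of the paper. As a smaller point, your closing remark that the exponent $3$ ``reflects the way the estimates are organised rather than a real barrier'' is at odds with the paper: Alon's construction gives triangle-free $(p,cp^2n)$-jumbled graphs, the paper's own counting lemma for $K_3$ still requires exponent $3$, and whether the removal lemma holds at exponent $2$ is left as a conjecture (on which the authors and \cite{KRSS10} explicitly disagree).
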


Here we extend this result to all $H$. The {\it degeneracy $d(H)$} of
a graph $H$ is the smallest nonnegative integer $d$ for which there
exists an ordering of the vertices of $H$ such that each vertex has at
most $d$ neighbors which appear earlier in the ordering. Equivalently,
it may be defined as $d(H) = \max\{\delta(H'): H' \subseteq H\}$, where
$\delta(H)$ is the minimum degree of $H$. Throughout the paper, we will also use the
standard notation $\Delta(H)$ for the maximum degree of $H$.

The parameter we will use in our theorems, which we refer to as the $2$-degeneracy $d_2(H)$, is related to both of these natural parameters. Given an ordering $v_1, \dots, v_m$ of the vertices of $H$ and $i \leq j$, let $N_{i-1}(j)$ be the number of neighbors $v_h$ of $v_j$ with $h \leq i-1$. We then define $d_2(H)$ to be the minimum $d$ for which there is an ordering of the edges as $v_1, \dots, v_m$ such that for any edge $v_i v_j$ with $i < j$  the sum $N_{i-1}(i) + N_{i-1}(j) \leq 2d$. Note that $d_2(H)$ may be a half-integer. For comparison with degeneracy, note that $\frac{d(H)}{2} \leq d_2(H) \leq d(H) - \frac{1}{2}$ and both sides can be sharp.

\begin{theorem} \label{RemovalIntro}
For every graph $H$ and every $\e > 0$, there exist $\delta > 0$ and $c > 0$ such that if $\beta \leq c p^{d_2(H) + 3}n$ then any $(p, \beta)$-jumbled graph $\Gamma$ on $n$ vertices has the following property. Any subgraph of $\Gamma$ containing at most $\delta p^{e(H)} n^{v(H)}$ copies of  $H$ may be made $H$-free by removing at most $\e p n^2$ edges.
\end{theorem}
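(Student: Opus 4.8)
The plan is to run the regularity method, with the Kohayakawa--R\"odl sparse regularity lemma in place of Szemer\'edi's and with a new counting lemma for subgraphs of jumbled graphs in place of the dense counting lemma; essentially all the difficulty is in the counting lemma, so I describe the reduction first. Fix $H$ and $\e$, and let $\eta,\e'$ be small positive parameters (to be chosen, depending on $H$ and $\e$). Given an arbitrary subgraph $G$ of $\Gamma$: since $\beta=o(pn)$, the graph $\Gamma$, and hence $G$, satisfies the upper-uniformity hypothesis of the sparse regularity lemma (indeed $e_\Gamma(U,W)\le p\abs U\abs W+\beta\sqrt{\abs U\abs W}\le 2p\abs U\abs W$ for all not-too-small $U,W$), so we obtain an equitable partition $V(\Gamma)=V_1\cup\cdots\cup V_K$ with $K\le M(\e')$ in which all but at most $\e'K^2$ pairs $(V_i,V_j)$ are $(\e',p)$-regular for $G$. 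Delete from $G$ every edge that lies inside a single part, inside an irregular pair, or inside a pair $(V_i,V_j)$ whose relative density $d_{ij}:=e_G(V_i,V_j)/(p\abs{V_i}\abs{V_j})$ is less than $\eta$, and call the result $G'$.

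The jumbledness of $\Gamma$ bounds the number of deleted edges: using $e_\Gamma(V_i,V_j)\le p\abs{V_i}\abs{V_j}+\beta\sqrt{\abs{V_i}\abs{V_j}}$ and $\abs{V_i}\approx n/K$, the edges inside parts number $O(pn^2/K+\beta n)$, those in irregular pairs $O(\e'pn^2+\e'K\beta n)$, and the low-density edges $O(\eta pn^2)$; taking $K$ large, then $\e'$ and $\eta$ small, and finally $c$ small (in terms of $\e$, $\e'$, $K$) so that $\beta\le cp^{d_2(H)+3}n\le cpn$ is small compared with $\e pn/K$, the total is at most $\e pn^2$. It remains to prove that $G'$ is $H$-free provided $\delta$ is small enough. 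Suppose $G'$ contains a copy of $H$ with $v_j\mapsto V_{i_j}$. Adjacent vertices of $H$ then lie in distinct parts, and for every edge $jj'\in E(H)$ the pair $(V_{i_j},V_{i_{j'}})$ is $(\e',p)$-regular for $G$ (no edge of $G$ inside a retained pair was deleted) with relative density at least $\eta$. The counting lemma---which will assert that, whenever $\beta\le cp^{d_2(H)+3}n$ and an $(\e',p)$-regular tuple of relative densities $(d_e)_{e\in E(H)}$ lies inside some $G\subseteq\Gamma$, the number of labeled copies of $H$ respecting the part assignment is at least $\big(\prod_e d_e-O_H(\e')\big)\,p^{e(H)}\prod_j\abs{V_{i_j}}$ (a one-sided lower bound is all we use here)---then yields, using $d_e\ge\eta$, $\abs{V_{i_j}}\ge(1-\e')n/(2K)$, and $\e'$ small in terms of $\eta$ and $H$, at least $\tfrac12\eta^{e(H)}(n/2K)^{v(H)}p^{e(H)}$ copies of $H$ in $G'$, minus a negligible number of non-injective homomorphisms (at most $o(p^{e(H)}n^{v(H)})$, again by the jumbledness of $\Gamma$). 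As $G'\subseteq G$, this forces $G$ to contain at least $\delta_0(H,\e)\,p^{e(H)}n^{v(H)}$ copies of $H$, and taking $\delta<\delta_0$ contradicts the hypothesis; hence $G'$ is $H$-free, and at most $\e pn^2$ edges were removed.

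Thus everything reduces to the counting lemma, which is where the hypothesis $\beta\le cp^{d_2(H)+3}n$ is genuinely used, and which I expect to be the crux of the whole paper. Here the plan is the functional/densification approach of Gowers. Regard $G$ on each pair $(V_i,V_j)$ with $ij\in E(H)$ as its $\{0,1\}$-indicator $g_{ij}$ on $V_i\x V_j$, and write $\gamma$ for the indicator of $\Gamma$; the aim is to compare $\sum_x\prod_{ij\in E(H)}g_{ij}(x_i,x_j)$ with $\big(\prod_{ij}d_{ij}\big)$ times the same sum with each $g_{ij}$ replaced by $d_{ij}\gamma$, and then to observe that the latter equals $(1+o(1))\,p^{e(H)}\prod_j\abs{V_{i_j}}$ by jumbledness of $\Gamma$. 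The comparison is a telescoping argument that swaps the factors $g_{ij}$ for $d_{ij}\gamma$ one at a time, along an ordering of $V(H)$ witnessing the definition of $d_2(H)$. At each swap the error is the pairing of $g_{ij}-d_{ij}\gamma$---which has cut norm $O\big((\e'p+\beta/\abs{V_i})\abs{V_i}\abs{V_j}\big)$, by $(\e',p)$-regularity of the pair together with jumbledness---against the two-variable function obtained by summing out the remaining vertices. In the sparse regime this two-variable function is not uniformly bounded (the $-d_{ij}p$ part of $g_{ij}$ is spread out), so one cannot merely extract an $L^\infty$ bound; instead one applies Cauchy--Schwarz a bounded number of times, each step ``doubling'' a vertex of $H$ and reducing the estimate to a homomorphism count in $\Gamma$ of a small bipartite graph (a path, a $K_{2,t}$, or an iterated doubling thereof), which is exactly what jumbledness controls. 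Tracking these $\Gamma$-counts and the Cauchy--Schwarz losses calibrates the exponent: $d_2(H)$ comes from the largest back-neighbourhood sum $N_{i-1}(i)+N_{i-1}(j)$ met while densifying along the chosen order, and the remaining additive constant from the Cauchy--Schwarz steps that convert cut-norm control into a usable bound. The hard part is exactly this bookkeeping---choosing the order, keeping every intermediate $\Gamma$-count inside the budget $\beta\le cp^{d_2(H)+3}n$, and ensuring the accumulated error stays below $\big(\prod_e d_e\big)\,p^{e(H)}\prod_j\abs{V_{i_j}}$; once it is in hand the two preceding paragraphs are routine.
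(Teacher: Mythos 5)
Your first two paragraphs (sparse regularity, cleaning, and the contradiction via a one-sided count) are exactly the paper's reduction, so the whole question is whether the counting lemma you invoke is actually provable by the method you sketch. There is a genuine gap here: the telescoping-plus-Cauchy--Schwarz ("doubling") scheme you describe is the paper's route to the \emph{two-sided} counting lemma, and the jumbledness exponent it produces is governed by the line graph of $H$ --- roughly $k \geq \min\{\tfrac{\Delta(L(H))+4}{2}, \tfrac{d(L(H))+6}{2}\}$ --- not by $d_2(H)+3$. Each Cauchy--Schwarz step doubles a vertex and forces you to control a $\Gamma$-count of the doubled configuration, so the budget is set by the sum of the \emph{full} sparse degrees at the endpoints of the edge being removed, not by the back-neighbourhood sums $N_{i-1}(i)+N_{i-1}(j)$ that define $d_2$. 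Your claim that "$d_2(H)$ comes from the largest back-neighbourhood sum met while densifying along the chosen order" is the step that fails: for instance, for $H=K_{2,t}$ the doubling/densification method needs $k\gtrsim t/2$, while $d_2(K_{2,t})+3=\tfrac{s+5}{2}\big|_{s=2}$ is bounded independently of $t$. So for many $H$ your argument would prove the removal lemma only under a strictly stronger jumbledness hypothesis than the one in the statement.

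To reach the exponent $d_2(H)+3$ the paper uses a different mechanism for the one-sided bound: a greedy embedding of the vertices of $H$ one at a time along an ordering witnessing $d_2(H)$, where the target sets shrink by factors $\approx q_{ij}$ at each step. The crux is that discrepancy must be \emph{inherited} on these polynomially-small neighbourhoods, which is not automatic in the sparse setting; this inheritance (Propositions \ref{prop:inheritintro} and \ref{prop:inherit2}) is itself proved by combining the two-sided counting lemma for the small fixed graph $K_{1,2,2}$ with the fact that a de-meaned $C_4$-count bound implies DISC (Lemma \ref{lem:C4-DISC-subgraph}). In other words, the telescoping/doubling machinery is used only on bounded auxiliary graphs to certify inheritance, and the exponent $d_2(H)+3$ then comes out of bookkeeping the sizes $|T(i,j)|\gtrsim p^{N_{j}(i)}|X_i|$ of the successive target sets in the greedy embedding. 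Without this second layer your proof establishes the theorem only with $d_2(H)+3$ replaced by a line-graph-degeneracy exponent.
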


We remark that for many graphs $H$, the constant $3$ in the exponent
of this theorem may be improved, and this applies equally to all of the theorems stated below. While we will not dwell on this comment, we will call attention to it on occasion throughout the paper, pointing out where the above result may be improved.
Note that the above theorem generalizes the graph removal lemma by considering the case $\Gamma=K_n$, which is $(p,\beta)$-jumbled with $p=1$ and $\beta=1$. For the same reason, the other results we establish extend the original versions.

Green \cite{G05a} developed an arithmetic regularity lemma and used it to deduce an arithmetic removal lemma in abelian groups which extends Roth's theorem. Green's proof of the arithmetic regularity lemma relies on Fourier analytic techniques. Kr\'al, Serra and Vena \cite{KSV09} found a new proof of the arithmetic removal lemma using the removal lemma for directed cycles which extends to all groups. They proved that for each  $\epsilon>0$ and integer $m \geq 3$ there is $\delta>0$ such that if $G$ is a group of order $n$ and $A_1,\ldots,A_m$ are subsets of $G$ such that there are at most $\delta n^{m-1}$ solutions to the equation $x_1x_2 \cdots x_m=1$ with $x_i \in A_i$ for all $i$, then it is possible to remove at most $\epsilon n$ elements from each set $A_i$ so as to obtain sets $A_i'$ for which there are no solutions to $x_1x_2 \cdots x_m=1$ with $x_i \in A'_i$ for all $i$.

By improving the bound in Theorem \ref{RemovalIntro} for cycles, we obtain the following sparse extension of the removal lemma for groups. The {\it Cayley graph} $G(S)$ of a subset $S$ of a group
$G$ has vertex set $G$ and $(x,y)$ is an edge of $G$ if $x^{-1} y \in
S$. We say that a subset $S$ of a group $G$ is $(p,\beta)$-jumbled if
the Cayley graph $G(S)$ is $(p,\beta)$-jumbled. When $G$ is abelian, if
$\abs{\sum_{x \in S} \chi(x)} \leq \beta$ for all nontrivial
characters $\chi \colon G \to \CC$, then $S$ is $(\frac{\abs{S}}{\abs{G}},\beta)$-jumbled
(see \cite[Lemma 16]{KRSS10}). Let $k_3=3$, $k_4=2$, $k_{m}=1+\frac{1}{m-3}$ if $m \geq 5$ is odd, and $k_m=1+\frac{1}{m-4}$ if $m \geq 6$ is even. Note that $k_m$ tends to $1$ as $m \to \infty$.

\begin{theorem} \label{thm:removal-groups} For each $\epsilon>0$ and
  integer $m \geq 3$, there are $c,\delta>0$ such that the following
  holds. Suppose $B_1,\ldots,B_m$ are subsets of a group $G$ of order $n$ such that
  each $B_i$ is $(p,\beta)$-jumbled with $\beta \leq cp^{k_m}n$. If
  subsets $A_i \subseteq B_i$ for $i=1,\ldots,m$ are such that there
  are at most $\delta |B_1|\cdots|B_m|/n$ solutions to the equation
  $x_1x_2 \cdots x_m=1$ with $x_i \in A_i$ for all $i$, then it is
  possible to remove at most $\epsilon |B_i|$ elements from each set
  $A_i$ so as to obtain sets $A_i'$ for which there are no solutions
  to $x_1x_2 \cdots x_m=1$ with $x_i \in A'_i$ for all $i$.
\end{theorem}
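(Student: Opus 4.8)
The plan is to reduce Theorem~\ref{thm:removal-groups} to the removal lemma for the single cycle $C_m$ --- in the sharpened form for cycles, valid when $\beta\le cp^{k_m}n$ --- applied to an auxiliary multipartite graph attached to the sets $B_i$, following the strategy used by Kr\'al, Serra and Vena in the dense case. Work with indices in $\mathbb{Z}/m\mathbb{Z}$. Build an $m$-partite graph $\Lambda$ on vertex classes $V_1,\dots,V_m$, each a copy of $G$, by joining $g\in V_{i-1}$ to $h\in V_i$ precisely when $g^{-1}h\in B_i$ (a directed edge if $B_i\ne B_i^{-1}$), and let $\Gamma\subseteq\Lambda$ be the spanning subgraph obtained by instead requiring $g^{-1}h\in A_i$. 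The bipartite graph $\Lambda[V_{i-1},V_i]$ is, after relabelling, the Cayley graph $G(B_i)$ and hence is $(p,\beta)$-jumbled; thus $\Lambda$ is $(p,\beta)$-jumbled in the multipartite sense, with exactly the $p$ and $\beta$ we are given. Applying jumbledness with $X=Y=G$ also gives $|B_i|=pn+O(\beta)=(1+o(1))pn$.

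The key correspondence is that solutions of the equation are in bijection with copies of $C_m$ in $\Gamma$ that meet each class once and wind once around the cyclic order of the classes: a tuple $(x_1,\dots,x_m)$ with $x_i\in A_i$ gives, for each $g_0\in G$, the cycle $j\mapsto g_0x_1\cdots x_j$, and $x_1\cdots x_m=1$ is exactly the condition that it closes up, so each solution yields $n$ such copies and conversely each such copy comes from a unique solution. (For odd $m$ every copy of $C_m$ in $\Lambda$ is automatically of this type; for even $m$ one restricts to this type, equivalently one works with directed $C_m$.) Hence, if the number of solutions is at most $\delta|B_1|\cdots|B_m|/n$, then $\Gamma$ contains at most $n\cdot\delta|B_1|\cdots|B_m|/n\le \delta 2^m p^m n^m$ copies of $C_m$, which lies below the threshold $\delta_0 p^{e(C_m)}|V(\Lambda)|^{v(C_m)}=\delta_0 m^m p^m n^m$ of the $C_m$-removal lemma once $\delta\le\delta_0(m/2)^m$; we also take $c$ no larger than the associated jumbledness constant.

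Now invoke the (multipartite, and if necessary directed) removal lemma for $C_m$, in the strengthened form for cycles that needs only $\beta\le cp^{k_m}n$: it yields a set $E'$ of at most $\e_1 p\,|V(\Lambda)|^2\le\e_1 m^2 pn^2$ edges of $\Gamma$ whose removal destroys all copies of $C_m$. For each $i$, the edges of $\Lambda$ between $V_{i-1}$ and $V_i$ decompose into the perfect matchings $M_i(s)=\{(g,gs):g\in G\}$ over $s\in B_i$; let $R_i$ consist of those $s\in A_i$ with $|E'\cap M_i(s)|\ge n/(2m)$, and set $A_i'=A_i\setminus R_i$. Then $|R_i|\le\tfrac{2m}{n}|E'|\le 2m^3\e_1 pn\le\e|B_i|$ provided $\e_1$ is small enough in terms of $\e$ and $m$. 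If some solution survived, say $x_1\cdots x_m=1$ with every $x_i\in A_i'$, then among its $n$ associated copies of $C_m$ the number using an edge of $E'$ between $V_{i-1}$ and $V_i$ equals $|E'\cap M_i(x_i)|<n/(2m)$ because $x_i\notin R_i$; summing over the $m$ class-pairs, fewer than $n/2$ of the $n$ copies meet $E'$, so some copy survives in $\Gamma\setminus E'$, a contradiction. Therefore $A_1',\dots,A_m'$ admit no solution, as required.

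The genuinely substantial ingredient here is the removal lemma for $C_m$ under the weak hypothesis $\beta\le cp^{k_m}n$, that is, the improvement of Theorem~\ref{RemovalIntro} for cycles; its proof combines the sparse counting lemma with a good ordering of the vertices of $C_m$, and extracting the exponent $k_m$ in place of $d_2(C_m)+3$ requires pushing the counting lemma for cycles to its limit --- this is the main obstacle and the heart of the matter. By comparison the reduction above is bookkeeping: the only small wrinkles are the multipartite (and, for even $m$, directed) packaging of that removal lemma, forced on us because the equation $x_1\cdots x_m=1$ is ordered, and the matching-by-matching conversion of an edge deletion into an element deletion.
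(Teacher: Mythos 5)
Your proposal is correct and follows essentially the same route as the paper: the same auxiliary blow-up graph on $G\times\{1,\dots,m\}$, the same $n$-to-$1$ correspondence between solutions and (class-respecting) copies of $C_m$, an appeal to the sparse removal lemma for cycles under the weak jumbledness $\beta\le cp^{k_m}n$ (the paper's Proposition \ref{prop:removal-cycles}), and the same matching-by-matching conversion of deleted edges into deleted group elements. The only differences are cosmetic (threshold $n/(2m)$ versus $n/m$, and an explicit remark on the winding/parity issue that the paper handles by restricting to colored cycles).
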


This result easily implies a Roth-type theorem in quite sparse pseudorandom subsets of a group. We say that a subset $B$ of a group $G$ is {\it $(\epsilon,m)$-Roth} if for all integers $a_1,\ldots,a_m$ which satisfy $a_1+\cdots+a_m = 0$ and $\gcd(a_i,|G|)=1$ for $1 \leq i \leq m$, every subset $A \subseteq B$ which has no nontrivial solution to $x_1^{a_1}x_2^{a_2}\cdots x_m^{a_m}=1$ has $|A| \leq \epsilon |B|$.

\begin{corollary}\label{rothtype} For each $\epsilon>0$ and integer $m \geq 3$, there is $c>0$ such that the following holds. If $G$ is a group of order $n$ and $B$ is a $(p,\beta)$-jumbled subset of $G$ with $\beta \leq cp^{k_m}n$, then $B$ is $(\epsilon,m)$-Roth.
\end{corollary}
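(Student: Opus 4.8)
The plan is to run the by-now-standard deduction of a Roth-type density statement from a removal lemma: feed Theorem~\ref{thm:removal-groups} a configuration in which the only solutions are the ``diagonal'' ones, observe that these are too plentiful to be eliminated by a sparse set of deletions, and conclude that such a configuration cannot be large. Fix $\epsilon > 0$ and $m \ge 3$, set $\epsilon' = \epsilon/m$, let $\delta, c_0 > 0$ be the constants supplied by Theorem~\ref{thm:removal-groups} for the parameters $\epsilon'$ and $m$, and put $c = \min\{1/2, c_0\}$; as is customary we also take $n$ large in terms of $\epsilon$ and $m$. Suppose for contradiction that $B \subseteq G$ is $(p,\beta)$-jumbled with $\beta \le c p^{k_m} n$ but is not $(\epsilon,m)$-Roth, so there are integers $a_1,\dots,a_m$ with $\sum_i a_i = 0$ and $\gcd(a_i,|G|)=1$, and a subset $A \subseteq B$ with $|A| > \epsilon|B|$ containing no nontrivial solution (that is, no solution in which the $x_i$ are not all equal) of $x_1^{a_1}\cdots x_m^{a_m}=1$.

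Since $\gcd(a_i,|G|)=1$, each map $\sigma_i\colon x \mapsto x^{a_i}$ is a bijection of $G$; set $B_i = \sigma_i(B)$ and $A_i = \sigma_i(A) \subseteq B_i$, so $|B_i| = |B|$ and $|A_i| = |A|$. The product map $(x_1,\dots,x_m) \mapsto (x_1^{a_1},\dots,x_m^{a_m})$ is then a bijection between the solutions of $x_1^{a_1}\cdots x_m^{a_m}=1$ with all $x_i \in A$ and the solutions of $z_1 z_2 \cdots z_m = 1$ with $z_i \in A_i$. Because $\sum_i a_i = 0$, the diagonal tuples $(x,\dots,x)$ with $x \in A$ solve the first equation, and by hypothesis they are the only ones; hence there are exactly $|A| \le |B|$ solutions of $z_1 \cdots z_m = 1$ with $z_i \in A_i$, namely $z_i = x^{a_i}$ for $x \in A$. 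Moreover each $B_i$ is again $(p,\beta)$-jumbled: when $G$ is abelian, $\sigma_i$ is a group automorphism, so the Cayley graph of $B_i$ is isomorphic to that of $B$ and jumbledness is an isomorphism invariant of graphs. (For a general group $\sigma_i$ need not be an automorphism; confirming that $B_i$ is still jumbled in the nonabelian case is the one point needing a separate argument, and it is immediate at least when every $a_i = \pm 1$, since then $\sigma_i$ is the identity or inversion and $B^{-1}$ is jumbled whenever $B$ is.)

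It remains to verify the hypothesis of Theorem~\ref{thm:removal-groups}, namely that the number of solutions, which we have shown equals $|A| \le |B|$, is at most $\delta |B_1| \cdots |B_m| / n = \delta |B|^m / n$; for this it suffices that $\delta |B|^{m-1} \ge n$. Applying the jumbledness inequality with $X = Y = V(G)$ and using $c \le 1/2$ gives $|B| \ge pn/2$, while combining $\beta \le c p^{k_m} n$ with the fact that a graph on $n$ vertices is never $(p, o(\sqrt{pn}))$-jumbled when $p \le 1/2$ (if $p > 1/2$ then $|B| > n/2$ outright) yields $p \ge \kappa\, n^{-1/(2k_m - 1)}$ for some $\kappa = \kappa(c,m) > 0$; a short computation from the definition of $k_m$ shows $(m-1)\bigl(1 - \tfrac{1}{2k_m - 1}\bigr) > 1$ for every $m \ge 3$, so $\delta |B|^{m-1}$ is of order $n^{(m-1)(1 - 1/(2k_m-1))} \ge n$ once $n$ is large. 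Theorem~\ref{thm:removal-groups} now produces $A_i' \subseteq A_i$, obtained by deleting at most $\epsilon' |B_i| = \epsilon' |B|$ elements from each $A_i$, with no solution of $z_1 \cdots z_m = 1$ having all $z_i \in A_i'$. But the diagonal solution indexed by $x \in A$ survives in $A_1' \times \cdots \times A_m'$ unless $x^{a_i}$ was deleted from $A_i'$ for some $i$, and since $\sigma_i$ is injective each deleted element accounts for at most one such $x$; thus at most $m \epsilon' |B| = \epsilon |B| < |A|$ of the $|A|$ diagonal solutions are destroyed, so at least one survives --- contradicting the conclusion of Theorem~\ref{thm:removal-groups}. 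Hence $B$ is $(\epsilon, m)$-Roth.

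I expect the two places needing the most care to be: checking that $B_i = \{x^{a_i} : x \in B\}$ remains $(p,\beta)$-jumbled over a general, possibly nonabelian, group $G$, where the power map is no longer an automorphism; and the parameter bookkeeping in the last step --- specifically, verifying that the exponent $k_m$ in the jumbledness hypothesis is small enough that the optimal bound $\beta = \Omega(\sqrt{pn})$ forces $|B|$ to be polynomially large in $n$, which is precisely what makes the $|A| \le |B|$ diagonal solutions a negligible count for the purposes of Theorem~\ref{thm:removal-groups}.
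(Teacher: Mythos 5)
The paper never actually writes out a proof of Corollary~\ref{rothtype} (it only says Theorem~\ref{thm:removal-groups} ``easily implies'' it), so I am judging your argument on its own merits; it is exactly the standard deduction the authors intend, and nearly all of it checks out. The substitution $z_i=x_i^{a_i}$, the observation that the $|A|$ diagonal tuples are the only solutions (using $\sum_i a_i=0$ and the no-nontrivial-solution hypothesis), the bound $|B|\geq pn/2$ from jumbledness with $X=Y=G$, and the final deletion count (total deletions $m\epsilon'|B|=\epsilon|B|<|A|$, with injectivity of $\sigma_i$ ensuring each deleted element kills at most one diagonal solution) are all correct. Your parameter bookkeeping is also right: combining $\beta\leq cp^{k_m}n$ with the lower bound $\beta=\Omega(\sqrt{pn})$ for $p\leq 1/2$ gives $p\geq \kappa n^{-1/(2k_m-1)}$, and $(m-1)\bigl(1-\tfrac{1}{2k_m-1}\bigr)$ works out to $8/5$, $2$, $2$ and $2(m-1)/(m-2)$ in the four cases defining $k_m$, so it exceeds $1$ for every $m\geq 3$; this is precisely consistent with the paper's remark after the corollary that $p\geq Cn^{-1/(2k_m-1)}$ is the relevant density threshold. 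The implicit ``$n$ large'' assumption is harmless: for $p\leq 1/2$ it is forced by taking $c$ small, and the statement must in any case be read asymptotically, since it fails literally for bounded $n$ (e.g.\ $B=G=\mathbb{Z}_5$, $A=\{0\}$).

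The one genuine gap is the one you flagged yourself and did not close: Theorem~\ref{thm:removal-groups} requires each $B_i=\{x^{a_i}:x\in B\}$ to be $(p,\beta)$-jumbled, and for a nonabelian $G$ with $|a_i|\geq 2$ this does not follow from the jumbledness of $B$ by anything in the paper. The power map $\phi_a\colon x\mapsto x^a$ is a bijection (with inverse $\phi_b$ for $ab\equiv 1\pmod{|G|}$) but is neither a homomorphism nor an anti-homomorphism, so the Cayley graph of $\phi_a(B)$ is not a vertex-relabelling of that of $B$: one computes $e_{\phi_a(B)}(X,Y)=\#\{(x,y)\in X\times Y:(x^{-1}y)^b\in B\}$, which bears no evident relation to $e_B(X',Y')$ for any transformed sets $X',Y'$. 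Your proof is therefore complete for abelian $G$ (where $\phi_{a_i}$ is an automorphism and jumbledness transfers) and for arbitrary $G$ when every $a_i\in\{1,-1\}$, but in the general nonabelian case you would need either to prove that power maps preserve jumbledness --- which I do not see how to do and which may require restricting the statement --- or to find a reduction avoiding the sets $B_i$ altogether. This is arguably a gap in the paper's ``easily implies'' as much as in your write-up, but as written the corollary is not fully established for nonabelian groups with exponents of absolute value at least two.
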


Note that Roth's theorem on $3$-term arithmetic progressions in dense sets of integers, follows from the special case of this result with $B=G=\mathbb{Z}_n$, $m=3$, and $a_1=a_2=1$, $a_3=-2$. The rather weak pseudorandomness condition in Corollary \ref{rothtype} shows that even quite sparse pseudorandom subsets of a group have the Roth property. For example, if $B$ is optimally jumbled, in that $\beta=O(\sqrt{pn})$ and $p  \geq Cn^{-\frac{1}{2k_m-1}}$, then $B$ is $(\epsilon,m)$-Roth. This somewhat resembles a key part of the proof of the Green-Tao theorem that the primes contain arbitrarily long arithmetic progressions, where they show that pseudorandom sets of integers have the Szemer\'edi property. As Corollary \ref{rothtype} applies to quite sparse pseudorandom subsets, it may lead to new applications in number theory.

Our methods are quite general and also imply similar results for other well-known combinatorial theorems. We say that a graph $\Gamma$ is {\it $(H, \e)$-Tur\'an} if any subgraph of $\Gamma$ with at least
$$\left(1 - \frac{1}{\chi(H) - 1} + \e\right) e(\Gamma)$$
edges contains a copy of $H$. Tur\'an's theorem itself \cite{T41}, or rather a generalization known as the  Erd\H{o}s-Stone-Simonovits theorem \cite{ES46}, says that $K_n$ is $(H, \e)$-Tur\'an provided that $n$ is large enough.

To find other graphs which are $(H,\epsilon)$-Tur\'an, it is natural to try the random graph $G_{n,p}$. A recent result of Conlon and Gowers \cite{CG12}, also proved independently by Schacht \cite{S12}, states that for every $t \geq 3$ and $\e > 0$ there exists a constant $C$ such that if $p \geq C n^{-2/(t+1)}$ the graph $G_{n,p}$ is with high probability $(K_t, \e)$-Tur\'an. This confirms a conjecture of Haxell, Kohayakawa, \L uczak and R\"odl \cite{HKL95, KLR97} and, up to the constant $C$, is best possible. Similar results also hold for more general graphs $H$ and hypergraphs.

For pseudorandom graphs and, in particular, $(n, d, \lambda)$-graphs, Sudakov, Szab\'o and Vu \cite{SSV05} showed the following. A similar result, but in a slightly more general context, was proved by Chung \cite{C05}.

\begin{thm} \label{SSV}
For every $\epsilon > 0$ and every positive integer $t \geq 3$, there exists $c > 0$ such that if $\lambda \leq c d^{t-1}/n^{t-2}$ then any $(n, d, \lambda)$-graph is $(K_t, \epsilon)$-Tur\'an.
\end{thm}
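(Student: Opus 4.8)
The plan is to find a copy of $K_t$ directly, by showing that a sufficiently dense subgraph $G\subseteq\Gamma$ already contains one (indeed many of them). It is worth noting at the outset that the classical proof of Tur\'an's theorem does \emph{not} transfer to this setting: that argument passes to the neighbourhood of a maximum-degree vertex and recurses, but when $d=o(n)$ a single neighbourhood spans only $d=o(n)$ vertices and so meets only a vanishing fraction of the edges of $G$, so no useful density is inherited one step down. Instead, one should argue by an iterated clique-counting (supersaturation-type) scheme. Write $p=d/n$ and recall from the expander mixing lemma that $\Gamma$ is $(p,\lambda)$-jumbled, a property inherited by induced subgraphs. One builds the clique greedily, one vertex at a time: having chosen vertices $v_1,\dots,v_i$ that form a clique in $G$, consider the common $G$-neighbourhood $W_i=N_G(v_1)\cap\dots\cap N_G(v_i)$, check that $\Gamma[W_i]$ is still pseudorandom and that $G[W_i]$ is still dense relative to $\Gamma[W_i]$, and then choose $v_{i+1}\in W_i$ so as to maintain these properties. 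After $t-2$ steps, $G[W_{t-2}]$ is dense enough to contain an edge, and this completes a $K_t$.

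The numerology is what forces the exponent in $\lambda\le c\,d^{t-1}/n^{t-2}=c\,p^{t-1}n$. By jumbledness the common neighbourhood of an $r$-set has size $\approx p^{r}n$, and $\Gamma[W_i]$ behaves like an $(n',d',\lambda')$-graph with $n'\approx p^{i}n$, $d'\approx p^{i+1}n$ and $\lambda'\le\lambda$, hence has about $\tfrac12 p^{2i+1}n^2$ edges; in particular $\Gamma[W_{t-2}]$ has about $\tfrac12 p^{2t-3}n^2$ edges and relative density $\approx p$. For the density of $G$ to survive down to this level — that is, for the main term $\theta\cdot\tfrac12 p^{2t-3}n^2$ with $\theta=1-\tfrac1{t-1}+\epsilon$ to dominate — every error term picked up along the way (each a multiple of $\lambda$ times a common-neighbourhood size) must be negligible against it, and a short computation shows that this is exactly what $\lambda\le c\,p^{t-1}n$ with $c$ small in terms of $\epsilon$ and $t$ guarantees. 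The Tur\'an threshold $1-\tfrac1{t-1}$ itself drops out of the greedy recursion precisely as in the classical proof: at the step from $s$ remaining clique vertices to $s-1$, a relative density above $1-\tfrac1{s-1}$ in the current common neighbourhood is needed, and by convexity suffices up to the error terms, in order to pass a relative density above $1-\tfrac1{s-2}$ to the next one.

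The main obstacle is that $G$ is an \emph{arbitrary} dense subgraph of $\Gamma$ and need not itself be pseudorandom, so the sets $W_i$ cannot be controlled by brute force: a particular choice of $v_{i+1}$ could be ``unlucky'', in that the edges of $\Gamma[W_i]$ missing from $G$ happen to concentrate inside $W_{i+1}=N_G(v_{i+1})\cap W_i$ (and since $|W_{i+1}|\approx p|W_i|$ is much smaller than $|W_i|$, there is ample room for such concentration). The resolution is a second-moment (dependent-random-choice) argument: rather than picking $v_{i+1}$ arbitrarily, one averages the relevant counts over a suitably weighted random $v_{i+1}\in W_i$, expresses the average in terms of clique counts in $G[W_i]$, and lower-bounds it using a supersaturation estimate — available because the relative density exceeds $1-\tfrac1{s-1}\ge\tfrac12$ — together with the jumbledness of $\Gamma[W_i]$; the atypical choices of $v_{i+1}$ are then discarded. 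Making this averaging quantitative, and checking that the surviving density strictly beats the Tur\'an threshold at every one of the $t-2$ levels so that the process never stalls, is the technical heart of the proof.
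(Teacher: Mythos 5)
This is one of the statements the paper quotes without proof (it is Sudakov--Szab\'o--Vu's theorem), so there is no in-paper argument to compare against; note also that the machinery developed in this paper cannot reach it, since for cliques the regularity-plus-counting-lemma route only gives the weaker jumbledness exponent $t$ rather than $t-1$. Your strategy --- greedy descent into iterated common neighbourhoods, using that jumbledness is inherited by induced subgraphs so that $\Gamma[W_i]$ behaves like a pseudorandom graph on $\approx p^i n$ vertices, with the Tur\'an thresholds cascading $1-\tfrac{1}{t-1}\to 1-\tfrac{1}{t-2}\to\cdots\to 0$ --- is essentially the route of the original SSV proof, and your numerology correctly locates why $\lambda\le cp^{t-1}n$ is the natural boundary: it is exactly the condition under which $\Gamma[W_{t-2}]$ still has the expected number of edges.

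The gap is in the step you yourself flag as the technical heart, and it is not merely a routine computation. To choose $v_{i+1}\in W_i$ so that $G[W_{i+1}]$ still beats the relevant Tur\'an threshold, you average $e(G[N_G(v)\cap W_i])$ over $v\in W_i$; but this average is (three times) the number of triangles of $G[W_i]$, so the ``supersaturation estimate'' you invoke at level $i$ is precisely the counting form of the theorem for $K_3$ inside the jumbled graph $\Gamma[W_i]$ at relative density barely above $1-\tfrac{1}{t-1-i}$ (equal to $\tfrac12$ at the last descent). That estimate is not ``available because the density exceeds $\tfrac12$'' --- it is the inductive statement itself, and in the sparse host it already requires a Kruskal--Katona/Moon--Moser-type convexity argument in which every error term (a power of $p$ times $\lambda$ times a codegree) is tracked against the main term. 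As written, the single-clique greedy scheme is therefore circular: it must be restructured from the outset as an induction on $s$ of a quantitative lower bound on the number of copies of $K_s$ in any subgraph of relative density $>1-\tfrac{1}{s-1}+\epsilon$, with the averaging-plus-convexity step carried out once and for all in the induction step (this is what SSV do). You should also address the denominator in the density of $G[W_{i+1}]$: controlling $e(\Gamma[N_G(v)\cap W_i])$ for the chosen $v$ needs a separate codegree/second-moment argument, since $N_G(v)$ is an arbitrary subset of $N_\Gamma(v)$. Until the counting induction is set up and the convexity step with $\lambda$-errors is executed, ``a short computation shows that this is exactly what $\lambda\le cp^{t-1}n$ guarantees'' is an assertion rather than a proof.
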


For $t = 3$, an example of Alon \cite{A94} shows that this is best possible. His example gives something even stronger, a triangle-free $(n,d,\lambda)$-graph for which $\lambda \leq c \sqrt{d}$ and $d \geq n^{2/3}$. Therefore, no combinatorial statement about the existence of triangles as subgraphs can surpass the threshold $\lambda \leq c d^2/n$. It has also been conjectured \cite{FLS12, KS06, SSV05} that $\lambda \leq c d^{t-1}/n^{t-2}$ is a natural boundary for finding $K_t$ as a subgraph in a pseudorandom graph, but no examples of such graphs exist for $t \geq 4$.  Finding such graphs remains an important open problem on pseudorandom graphs.

For triangle-free graphs $H$, Kohayakawa, R\"odl, Schacht, Sissokho and Skokan \cite{KRSSS07} proved the following result which gives a jumbledness condition that implies that a graph is $(H, \e)$-Tur\'an.

\begin{thm} \label{KRSSS}
For any fixed triangle-free graph $H$ and any $\e > 0$, there exists $c > 0$ such that if $\beta \leq c p^{\nu(H)} n$ then any $(p, \beta)$-jumbled graph on $n$ vertices is $(H, \e)$-Tur\'an. Here $\nu(H) = \frac{1}{2}(d(H) + D(H) + 1)$, where $D(H) = \min\{2d(H), \D(H)\}$.
\end{thm}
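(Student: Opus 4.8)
The plan is to combine the Kohayakawa--R\"odl sparse regularity lemma with an embedding lemma for $H$ inside regular subgraphs of a jumbled graph, the latter being where the hypothesis $\beta \le c p^{\nu(H)} n$ is spent. Write $r = \chi(H)$, let $\Gamma$ be $(p,\beta)$-jumbled on $n$ vertices, and suppose $G \subseteq \Gamma$ has at least $\bigl(1 - \tfrac{1}{r-1} + \e\bigr)e(\Gamma)$ edges. First I would apply the sparse regularity lemma to $G$, obtaining an equipartition $V(\Gamma) = V_1 \cup \dots \cup V_k$ into a bounded number of parts so that all but an $\eta$-fraction of the pairs $(V_i,V_j)$ are $(\eta,p)$-regular, with $\eta = \eta(\e,H)$ small. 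Form the weighted reduced graph on $[k]$ whose weight on $ij$ is the $\Gamma$-relative density of $G$ between $V_i$ and $V_j$. Using $(p,\beta)$-jumbledness to control $e_\Gamma(V_i,V_j) = p|V_i||V_j| + O(\beta n/k)$ for all $i,j$, together with the lower bound on $e(G)$, a standard cleaning step --- discarding edges inside parts, edges across irregular pairs, and edges across pairs of relative density below $\e/4$ --- leaves a reduced graph of edge density exceeding $1 - \tfrac{1}{r-1}$.

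By Tur\'an's theorem this reduced graph contains a copy of $K_r$; that is, there are parts $V_{i_1},\dots,V_{i_r}$ that are pairwise $(\eta,p)$-regular in $G$ with relative density at least $\e/4$. Since $H$ is $r$-colorable, fixing a proper $r$-colouring embeds $H$ into the complete $r$-partite pattern on $V_{i_1},\dots,V_{i_r}$, and the remaining task --- the heart of the proof --- is an \emph{embedding lemma}: if $\Gamma$ is $(p,\beta)$-jumbled with $\beta \le cp^{\nu(H)}n$, then any system of pairwise $(\eta,p)$-regular bipartite subgraphs of $\Gamma$ of relative density at least $\e/4$, arranged in the pattern of $K_r$, contains a copy of $H$ (indeed $\gg (\e p)^{e(H)} n^{v(H)}$ of them). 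I would prove this by embedding the vertices of $H$ one at a time along an ordering realizing the degeneracy $d(H)$: when a new vertex is placed it must land in the intersection of the current candidate sets of its already-embedded neighbours. The key point is that these candidate sets stay large: regularity of a pair guarantees a typical vertex has $\approx p|V_{i_j}|$ neighbours in a given part and, more generally, that a candidate set shrinks by the expected factor at each step --- but in the sparse regime one cannot blindly iterate this, since neighbourhoods can be atypical. This is exactly where jumbledness of $\Gamma$ enters: it bounds the number of vertices with abnormally large or abnormally correlated neighbourhoods, so the inductive statement ``the neighbourhood of a regular pair inside a large set is again large and regular'' goes through, provided $\beta$ is smaller than the stated power of $p$.

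Tracking how small $\beta$ must be throughout this iteration yields the exponent $\nu(H) = \tfrac12(d(H)+D(H)+1)$: intermediate embedding steps are controlled by back-degrees bounded by the degeneracy $d(H)$, while the single costliest step is governed by $D(H) = \min\{2d(H),\Delta(H)\}$ (the last vertex of a densest subgraph, or a maximum-degree vertex), the two contributions combining with a factor $\tfrac12$. Triangle-freeness of $H$ is essential to getting $\Delta(H)$ here rather than something larger: intersecting the candidate sets of the neighbours of a vertex $v$ means intersecting sets indexed by an \emph{independent} set of vertices (the neighbourhood of $v$), and the jumbledness estimate needed to control $|N(u_1)\cap\dots\cap N(u_t)|$ for pairwise non-adjacent $u_i$ is far weaker than what would be required if those $u_i$ spanned edges among themselves.

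The main obstacle is the embedding lemma of the second paragraph. In the dense case it is routine, but in the sparse case ``regular pair plus large vertex subset implies many edges'' simply fails, and one must instead leverage the global pseudorandomness of $\Gamma$ to control the entire tree of iterated neighbourhoods encountered during the embedding. Pinning down precisely how small $\beta$ must be --- and in particular extracting the improvement that triangle-freeness buys, namely the passage from a degeneracy-type exponent to $\nu(H)$ --- is the delicate part; the rest is the standard regularity-method packaging.
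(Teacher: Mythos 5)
Theorem \ref{KRSSS} is a result of Kohayakawa, R\"odl, Schacht, Sissokho and Skokan that the paper \emph{cites} rather than proves; the paper's own analogue is Theorem \ref{TuranIntro}, which applies to arbitrary $H$ under $\beta \le cp^{d_2(H)+3}n$, with a refinement to $d_2(H)+2$ for triangle-free $H$ recorded at the end of Section \ref{sec:oneside}. Your overall plan --- sparse regularity lemma, cleaning (as in Lemma \ref{lem:cleaning}), Tur\'an in the reduced graph, then a sparse embedding lemma whose engine is inheritance of regularity along iterated neighbourhoods --- is structurally the same machinery the paper uses to prove Theorem \ref{TuranIntro}, and that part of the proposal is sound. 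But note that the paper's triangle-free exponent $d_2(H)+2$ is genuinely different from $\nu(H)=\tfrac12\bigl(d(H)+D(H)+1\bigr)$ (for example, for $K_{2,t}$ with $t\ge 4$ the paper gets $5/2$ while $\nu=7/2$), which already signals that the specific exponent is not forced by the shape of the argument alone: it comes out of a careful bookkeeping of how much each candidate set has shrunk.

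That bookkeeping is exactly the gap. You assert that ``intermediate steps are controlled by $d(H)$, the costliest step by $D(H)$, the two contributions combining with a factor $\tfrac12$,'' but you never justify this: without an explicit accounting, at each embedding step, of the sizes $|T(i_1,j)|,|T(i_2,j)|$ (roughly $p^{a_1}|V_{i_1}|, p^{a_2}|V_{i_2}|$ for suitable back-degree counts $a_1,a_2$ along the chosen ordering) and of the jumbledness $\beta\sqrt{|T(i_1,j)||T(i_2,j)|}\lesssim p\,|T(i_1,j)||T(i_2,j)|$ needed to keep the pair usable, you cannot conclude that $\beta\le cp^{\nu(H)}n$ suffices, and the ``factor $\tfrac12$'' is doing all the work while being unexplained. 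There is also a misidentification of where triangle-freeness is spent. You emphasize that the already-embedded back-neighbours of a vertex $v$ form an independent set, so the sets whose intersection defines $v$'s candidate set are indexed by pairwise non-adjacent vertices. The operative point --- made precise in the paper via Propositions \ref{prop:inheritintro} and \ref{prop:inherit2} --- is about inheritance: after embedding $v_j$, discrepancy must persist between $T(i_1,j)$ and $T(i_2,j)$ for any future pair $i_1,i_2>j$ with $v_{i_1}v_{i_2}\in E(H)$, and triangle-freeness guarantees that at most one of $v_{i_1},v_{i_2}$ is adjacent to $v_j$, so only one coordinate of the pair shrinks at each step. This is what lets one use the weaker ``one-sided'' inheritance (Proposition \ref{prop:inherit2}) throughout, saving a power of $p$ over the general case. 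Your phrasing points in the right direction but does not isolate this mechanism, and without it you have no way to recover the specific $\nu(H)$.
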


More recently, the case where $H$ is an odd cycle was studied by Aigner-Horev, H\`an and Schacht \cite{AHS12}, who proved the following result, optimal up to the logarithmic factors \cite{AK98}.

\begin{thm} \label{AHS}
For every odd integer $\ell \geq 3$ and any $\e > 0$, there exists $c > 0$ such that if $\beta \log^{\ell-3} n \leq c p^{1 + 1/(\ell-2)}n$ then any $(p, \beta)$-jumbled graph on $n$ vertices is $(C_\ell, \e)$-Tur\'an.
\end{thm}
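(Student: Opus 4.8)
The plan is to run the regularity method, but with the sparse regularity lemma of Kohayakawa and R\"odl in place of Szemer\'edi's, and --- since no counting lemma of the needed strength is available --- to replace the counting step by a \emph{one-sided} embedding argument that uses the jumbledness of $\Gamma$ directly. Let $G\subseteq\Gamma$ have at least $\left(\tfrac12+\e\right)e(\Gamma)$ edges. Since $\Gamma$ is $(p,\beta)$-jumbled with $\beta=o(pn)$, we have $e(\Gamma)=(1+o(1))p\binom n2$, so $G$ is a genuinely dense subgraph of $\Gamma$. Apply the sparse regularity lemma to $G$ inside $\Gamma$ --- legitimate because $G\subseteq\Gamma$ is automatically upper-uniform --- to obtain an equitable partition $V(\Gamma)=V_1\cup\dots\cup V_k$ into boundedly many parts, all but $\e'k^2$ of whose pairs are $(\e',p)$-regular, and form the reduced graph $R$ on $[k]$ by joining $i\sim j$ whenever $(V_i,V_j)$ is $(\e',p)$-regular of relative $G$-density at least $\e/4$. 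The edges of $G$ lying inside parts, in irregular pairs, in sparse pairs, or in low-density pairs together make up at most $\left(\tfrac14+o(1)\right)e(\Gamma)$ of $e(G)$, the jumbledness of $\Gamma$ being used to bound every $e_\Gamma(V_i,V_j)$ from above; hence $R$ has more than $k^2/4$ edges and, by Tur\'an's theorem, contains a triangle. It therefore suffices to find a copy of $C_\ell$ inside the union $W=V_{i_1}\cup V_{i_2}\cup V_{i_3}$ of three parts that are pairwise $(\e',p)$-regular with relative density at least $\delta:=\e/4$, living inside the $(p,\beta)$-jumbled graph $\Gamma[W]$. This is where oddness of $\ell$ is used: $C_\ell$ is $3$-chromatic and so embeds into the blow-up of a triangle.

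To build this copy I would fix a ``typical'' vertex $u\in V_{i_3}$ --- one whose $G$-neighbourhoods $A:=N_G(u)\cap V_{i_1}$ and $B:=N_G(u)\cap V_{i_2}$ have size $\approx\delta p|V_{i_j}|$ and consist almost entirely of vertices of near-average degree into the opposite part (most $u$ qualify, by an averaging argument that is needed since the $o(pn)$ atypical vertices could otherwise swallow $A$ or $B$ outright) --- and look for a path of length $\ell-2$ from $A$ to $B$ inside the regular pair $(V_{i_1},V_{i_2})$; then $u$ together with this path is a copy of $C_\ell$. The engine for the path is a \emph{neighbourhood-growth estimate}: if $S$ lies on one side of an $(\e',p)$-regular pair of relative density $\ge\delta$ and is supported on near-average-degree vertices, then writing $T:=N_G(S)\cap V_{i_2}$ one has on the one hand $e_G(S,T)\gtrsim\delta p|S||V_{i_2}|$ and on the other hand $e_G(S,T)\le e_\Gamma(S,T)\le p|S||T|+\beta\sqrt{|S||T|}$, and comparing these forces $|T|$ either to be a constant fraction of $|V_{i_2}|$ or to grow by the multiplicative factor $\gtrsim\delta^2p^2|V_{i_2}|^2/\beta^2$. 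Under the hypothesis $\beta\le c\,p^{1+1/(\ell-2)}n$ this factor is $\gtrsim p^{-2/(\ell-2)}$, so a set of size $\approx\delta p|V|$ reaches linear size after about $(\ell-2)/2$ such steps. One iterates the growth from both ``ends'' of the path, so that both fronts become linear-sized within the roughly $\ell-2$ vertices the cycle makes available, and then connects the fronts using $(\e',p)$-regularity. The exponent $1+\tfrac1{\ell-2}=\tfrac{\ell-1}{\ell-2}$ is exactly the $2$-density $m_2(C_\ell)$, and it is precisely the value at which this race --- growth to linear size against the length of $C_\ell$ --- is won; keeping the working set supported on near-average-degree vertices (so the estimate can be reapplied) and disjoint from the bounded set of already-used vertices (so the walk that emerges is a genuine path) at each of the $\ell-3$ intermediate stages forces the parameters to deteriorate by a constant factor per stage, which is the origin of the $\log^{\ell-3}n$ loss.

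The step I expect to be the real obstacle is exactly this last organisation: getting the two fronts to meet within the tight budget of steps the cycle allows, \emph{while} each front remains ``typical'' throughout, so that neither the growth estimate nor the final gluing breaks down. A crude implementation loses a couple of steps here and only yields the weaker condition $\beta\lesssim p^{3/2}n$; extracting the optimal exponent $m_2(C_\ell)$ requires running the expansion and the gluing with much more care --- including choosing where to place the single $V_{i_3}$-vertex along the cycle so that the two arms have the right parities and lengths --- and the averaging arguments that keep the fronts typical, re-run at each stage, are what accumulate into the logarithmic factors. Optimality of the exponent (up to those factors) then matches the pseudorandom lower-bound constructions. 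The smallest values of $\ell$ would be treated separately but routinely.
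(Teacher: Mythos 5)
First, a point of reference: this paper does not prove Theorem~\ref{AHS} at all --- it is quoted from Aigner-Horev, H\`an and Schacht \cite{AHS12}. The closest result proved here is the one-sided cycle counting lemma (Proposition~\ref{prop:cycle-one-sided}, via subdivision densification in Section~\ref{sec:subdivision}), which for odd $\ell\geq 5$ requires the slightly stronger exponent $1+\frac{1}{\ell-3}$ but has no logarithmic loss. So your proposal is necessarily being judged on its own terms, and it has a genuine gap at its core. The gap is the iteration of your neighbourhood-growth estimate. The estimate needs the current front $S$ to consist of vertices of near-average $G$-degree into the opposite part, but after a single expansion step the new front $T=N_G(S)\cap V_{i_2}$ has size only about $\delta p^{1-2/(\ell-2)}\abs{V_{i_2}}$, which for small $p$ is polynomially smaller than $\abs{V_{i_2}}$. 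The only control that $\DISC(q,p,\epsilon')$ gives on atypical vertices is that the set of vertices of $V_{i_2}$ with low $G$-degree into $V_{i_1}$ has size at most a constant fraction (roughly $\epsilon'/(\xi\delta)$) of $\abs{V_{i_2}}$. Since $\abs{T}$ is far below that threshold, $T$ could consist \emph{entirely} of atypical vertices, and the growth lemma cannot be reapplied. This is exactly the ``neighbourhoods are simply too small'' obstruction described in Section~\ref{sec:countintro}, and overcoming it (inheritance of regularity, or the $\DISC_\geq$-preserving path composition of Lemma~\ref{lem:G-2-super-disc} combined with boundedness in $\Gamma$) is the substantive content of both this paper and of \cite{AHS12}; your proposal acknowledges the issue but offers no mechanism for it.

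Two secondary problems. Your accounting for the $\log^{\ell-3}n$ factor --- ``a constant factor per stage'' over $\ell-3$ stages --- would produce a constant, not a power of $\log n$, so the claimed origin of the logarithms is not substantiated and the hypothesis $\beta\log^{\ell-3}n\leq cp^{1+1/(\ell-2)}n$ is never actually used. And the case $\ell=3$ is not ``routine'': there the path from $A$ to $B$ is a single edge between two sets of size $\approx\delta p\abs{V_i}$, discrepancy says nothing at that scale, and jumbledness gives only an upper bound on $e_\Gamma(A,B)$; this is the triangle case of Theorem~\ref{SSV}, which requires a genuinely different argument.
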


In this paper, we prove that a similar result holds, but for general graphs $H$ and, in most cases, with a better bound on $\beta$.

\begin{theorem} \label{TuranIntro}
For every graph $H$ and every $\e > 0$, there exists $c > 0$ such that if $\beta \leq c p^{d_2(H) + 3}n$ then any $(p, \beta)$-jumbled graph on $n$ vertices is $(H, \e)$-Tur\'an.
\end{theorem}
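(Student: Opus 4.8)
The plan is to run the regularity method in the sparse, $\Gamma$-relative setting, using the sparse regularity lemma of Kohayakawa and R\"odl together with our counting lemma (the main result of the paper) as the two principal inputs. Write $r = \chi(H)$, and let $G$ be a subgraph of $\Gamma$ with $e(G) \ge \bigl(1 - \tfrac{1}{r-1} + \e\bigr) e(\Gamma)$; the goal is to locate a copy of $H$ inside $G$. Since $d_2(H) + 3 > 1$ and $p \le 1$, the hypothesis $\beta \le c p^{d_2(H)+3} n$ in particular forces $\beta = o(pn)$, so $\Gamma$ is upper-uniform and the sparse regularity lemma applies to $G$ relative to $\Gamma$: it produces an equitable partition $V(\Gamma) = V_0 \cup V_1 \cup \dots \cup V_M$ with $|V_0| \le \e_0 n$, $1/\e_0 \le M \le M_0(\e_0)$, and all but at most $\e_0 M^2$ of the pairs $(V_i,V_j)$ being $(\e_0, \Gamma)$-regular, where $\e_0$ is chosen small in terms of $\e$ and $r$.

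Next comes the cleaning step. Form the reduced graph $R$ on vertex set $[M]$, joining $i$ to $j$ precisely when $(V_i,V_j)$ is $(\e_0,\Gamma)$-regular and has relative density $e_G(V_i,V_j)/e_\Gamma(V_i,V_j) \ge \eta$, for a small constant $\eta = \eta(\e, r)$ to be fixed. I would then bound the number of edges of $G$ \emph{not} captured by $R$: those inside the parts, those meeting $V_0$, those in irregular pairs, and those in low-density good pairs. Using jumbledness to estimate $e_\Gamma(V_i,V_j) = p|V_i||V_j| + O(\beta n/M)$, $e_\Gamma(X) = \tfrac12 p|X|^2 + O(\beta|X|)$, and $e(\Gamma) = p\binom n2 + O(\beta n)$, each of these four contributions is at most $\bigl(o(1) + \e_0 + \eta/2\bigr) p n^2$, so once $\e_0, \eta$ are small enough the edges of $G$ lying in good pairs number at least $\bigl(1 - \tfrac1{r-1} + \tfrac\e2\bigr) e(\Gamma)$. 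Since $e_G(V_i,V_j) \le e_\Gamma(V_i,V_j) \le (1 + o(1)) p (n/M)^2$ on each good pair, dividing gives $e(R) \ge \bigl(1 - \tfrac1{r-1} + \tfrac\e4\bigr)\binom M2$, which exceeds $\mathrm{ex}(M, K_r)$ once $M \ge 1/\e_0$ is large enough. By Tur\'an's theorem, $R$ contains a clique on some $r$ vertices $\{i_1,\dots,i_r\}$.

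Finally, I would invoke the counting lemma. Fix a proper $r$-colouring $\varphi\colon V(H) \to [r]$ and set $W_x := V_{i_{\varphi(x)}}$ for each $x \in V(H)$; then for every edge $xy$ of $H$ the pair $(W_x, W_y)$ is $(\e_0,\Gamma)$-regular with relative density at least $\eta$. The subgraph of $\Gamma$ induced on $V_{i_1} \cup \dots \cup V_{i_r}$ is still $(p,\beta)$-jumbled, on at least $\tfrac{r}{2M_0} n$ vertices, so after shrinking $c$ by a factor depending only on $M_0(\e_0)$ and $r$ the bound $\beta \le c p^{d_2(H)+3} n$ is exactly the jumbledness condition the counting lemma demands for $H$. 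The counting lemma then yields at least $(\eta/2)^{e(H)} (n/M)^{v(H)} (1 - o(1))$ labelled homomorphic copies of $H$ into $G$ respecting this part assignment; discarding the $O(n^{v(H)-1})$ non-injective tuples leaves a genuine copy of $H$ in $G$. Hence $\Gamma$ is $(H,\e)$-Tur\'an.

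The one genuinely hard ingredient is the counting lemma itself, which is proved separately; granting it, the argument is routine regularity-method bookkeeping, and the only delicate point is the order in which constants are chosen: $\e_0$ and $\eta$ must be small in terms of $\e$ and $r$ (to survive the cleaning step and to keep $e(R)$ above the Tur\'an threshold), and $c$ must then be small in terms of $\eta$, $r$ and the partition-size bound $M_0(\e_0)$ (so that the counting lemma still applies on the bounded-index pieces). I expect no obstacle beyond keeping this dependency chain consistent.
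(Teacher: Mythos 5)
You've correctly identified the three main ingredients — sparse regularity, a cleaning step producing a reduced graph $R$, and then the counting lemma to pull a genuine copy of $H$ back into $G$ — and your bookkeeping of constants and the use of jumbledness to control $e_\Gamma(V_i,V_j)$, the parts' sizes, and the edges lost in cleaning is in line with the paper's Lemma~\ref{lem:cleaning} and proof of Theorem~\ref{TuranIntro}. The place where your route genuinely diverges from the paper, and where it has a real gap, is the final embedding step.

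You apply Tur\'an's theorem to $R$ to find a clique on $r=\chi(H)$ vertices and then place all vertices of $H$ into these $r$ parts via a proper colouring $\varphi$. This necessarily puts several vertices of $H$ into the \emph{same} part whenever $\chi(H)<v(H)$, so the tuples counted by Theorem~\ref{thm:onesidedintro} are not automatically injective. Your dismissal of the ``$O(n^{v(H)-1})$ non-injective tuples'' does not go through in the sparse regime: the lower bound you get from the counting lemma is of order $(\eta p)^{e(H)}(n/M)^{v(H)}$, so the comparison you need is $p^{e(H)}n\gg 1$, which the jumbledness hypothesis does not supply. Concretely, take $H=K_{2,t}$ with $t\geq 7$: then $d_2(H)=1/2$, so the hypothesis $\beta\leq cp^{7/2}n$ together with the unavoidable lower bound $\beta=\Omega(\sqrt{pn})$ only forces $p\gtrsim n^{-1/6}$. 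With $\chi=2$ you put the two vertices of the small side into the same part; collapsing them gives the quotient $K_{1,t}$, whose homomorphism count into $\Gamma$ is $\Theta(p^{t}n^{t+1})$, while the claimed good count is $\Theta(p^{2t}n^{t+2})$. Their ratio is $\Theta(1/(p^t n))$, and $p^t n\to 0$ once $t>6$. So the non-injective homomorphisms can in fact swamp the total, and ``discarding'' them is not justified.

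The paper avoids this entirely by applying the Erd\H{o}s--Stone--Simonovits theorem (not just Tur\'an) to $R$ to locate a copy of $H$ itself in the reduced graph. That yields $v(H)$ pairwise-distinct parts $V_{i_1},\dots,V_{i_{v(H)}}$; the counting lemma is then applied with disjoint sets $X_a$, so every tuple it counts is automatically injective and $G(H)>0$ directly gives an embedding. You need essentially no new work to repair your proof — replace ``Tur\'an gives a $K_r$, proper-colour $H$ into it'' with ``ESS gives a copy of $H$ in $R$'' (which is exactly what the bound $e(R)\geq(1-\frac{1}{\chi(H)-1}+\frac{\epsilon}{4})\binom{k}{2}$ that you already derived is designed for), and the non-injectivity issue disappears. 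The rest of your argument — the cleaning step, the reduced-graph density estimate, and the order in which $\epsilon_0,\eta,c$ are chosen — matches the paper's proof.
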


We may also prove a structural version of this theorem, known as a stability result. In the dense case, this result, due to Erd\H{o}s and Simonovits \cite{Si68}, states that if an $H$-free
graph contains almost $\left(1 - \frac{1}{\chi(H) - 1}\right) \binom{n}{2}$ edges, then it must be very close to being $(\chi(H) - 1)$-partite.

\begin{theorem} \label{StabIntro}
For every graph $H$ and every $\e > 0$, there exist $\delta > 0$ and $c > 0$ such that if $\beta \leq c p^{d_2(H) + 3}n$ then any $(p, \beta)$-jumbled graph $\Gamma$ on $n$ vertices has the following property. Any $H$-free subgraph of $\Gamma$ with at least $\left(1 - \frac{1}{\chi(H) - 1} - \delta\right) p \binom{n}{2}$ edges may be made $(\chi(H) - 1)$-partite by removing at most $\epsilon p n^2$ edges.
\end{theorem}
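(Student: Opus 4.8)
The plan is to run the same regularity-plus-counting scheme that underlies Theorem~\ref{TuranIntro}, but to feed the reduced graph into the \emph{stability} version of Tur\'an's theorem for complete graphs rather than into Tur\'an's theorem itself. Write $r=\chi(H)$. Given $\e>0$, I would first fix small auxiliary parameters $\e',\eta$ (in terms of $\e$ and $H$) and apply the sparse regularity lemma of Kohayakawa and R\"odl to an $H$-free subgraph $G\subseteq\Gamma$, relative to $\Gamma$, obtaining an equitable partition $V(\Gamma)=V_1\cup\dots\cup V_k$ with $k$ between a large threshold and an absolute bound $M=M(\e',\eta)$, in which all but at most $\e'k^2$ pairs are $\e'$-regular relative to $p$; the jumbledness of $\Gamma$ guarantees the local-density precondition for this lemma. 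The point is that $M$ is independent of $\Gamma$, so only now would I fix $c=c(\e,H,M)$ and $\delta=\delta(\e,H,M)$ small. Since $d_2(H)+3>1$ and $p\le1$, the hypothesis $\beta\le cp^{d_2(H)+3}n$ forces $\beta\le cpn$, which is exactly what is needed so that $e_\Gamma(X,Y)\le p|X||Y|+\beta\sqrt{|X||Y|}=(1+o(1))\,p|X||Y|$ for all $X,Y$ of size $\Omega(n/M)$ — the basic estimate that keeps every pseudorandomness error term under control.

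Next I would build the reduced graph $R$ on $[k]$ by joining $i\sim j$ when $(V_i,V_j)$ is $\e'$-regular of relative density at least $\eta$. Bounding the edges of $G$ that lie inside a part, across an irregular pair, or across a regular pair of relative density below $\eta$, each family contributes at most a small multiple of $pn^2$ (using the density estimate above together with $\beta\le cpn$ and $c$ small relative to $M$), so $e(G)\le(1+o(1))\,e(R)\,p(n/k)^2+o(pn^2)$; together with $e(G)\ge(1-\tfrac{1}{r-1}-\delta)p\binom{n}{2}$ this gives $e(R)\ge(1-\tfrac{1}{r-1}-\delta')\binom{k}{2}$ with $\delta'$ small. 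Then $R$ is $K_r$-free: a clique $\{i_1,\dots,i_r\}$ in $R$, together with a proper $r$-colouring of $H$ placing the colour classes into $V_{i_1},\dots,V_{i_r}$, would give — by our counting lemma applied to $G$ inside $V_{i_1}\cup\dots\cup V_{i_r}$, whose induced restriction of $\Gamma$ is still jumbled with parameter small relative to $p^{d_2(H)+3}$ times the vertex count — a copy of $H$ in $G$, a contradiction. (This is the only place $d_2(H)$, and hence the jumbledness hypothesis, is used.) Applying the classical $K_r$-stability theorem \cite{Si68} to $R$ then yields a partition $[k]=U_1\cup\dots\cup U_{r-1}$ with at most $\gamma k^2$ edges of $R$ inside the $U_a$'s, where $\gamma\to0$ as $\delta'\to0$.

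Finally I would transfer this structure to $G$: put $W_a=\bigcup_{i\in U_a}V_i$ and delete every edge of $G$ that lies inside a part, across an irregular pair, across a regular pair of relative density below $\eta$, or across a pair $ij\in R$ with $i,j$ in the same $U_a$. Every remaining edge joins distinct classes $W_a$, so $G$ is now $(r-1)$-partite, and the four deleted families are bounded exactly as in the edge count above — by $O(pn^2/k)+O(\beta n)$, $O(\e'pn^2)+O(\e'k\beta n)$, $O(\eta pn^2)$, and $O((1+o(1))\gamma pn^2)$ respectively — so choosing $k$ large and $\e',\eta,\gamma,c$ small makes the total at most $\e pn^2$. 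I expect the main obstacle to be none of the individual steps but the bookkeeping that makes them fit together in the sparse regime: one must verify that every error term beyond the ``standard'' $o(pn^2)$ of the dense proof has the form $\beta n$ or $\e'k\beta n$, and then choose $c$ \emph{after} the regularity lemma has fixed $M$ so that these are forced below $\e pn^2$; the substantive mathematical input, namely that a $K_r$ in the reduced graph yields a copy of $H$ in $G$, is supplied by the counting lemma and is shared with the proof of Theorem~\ref{TuranIntro}.
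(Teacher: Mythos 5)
Your proposal is correct and follows essentially the same route as the paper: sparse regularity plus the cleaning step, a reduced graph with edge density close to $1-\frac{1}{\chi(H)-1}$, the dense stability theorem applied to the reduced graph, and a transfer of the resulting $(\chi(H)-1)$-partition back to $G$ by deleting the edges across the offending pairs, with the one-sided counting lemma supplying the contradiction and the constants $c,\delta$ chosen only after $M$ is fixed. The only (immaterial) difference is that you show the reduced graph is $K_{\chi(H)}$-free and invoke clique stability, whereas the paper shows it is $H$-free and invokes the Erd\H{o}s--Simonovits stability theorem for $H$ directly.
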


The final main result that we will prove concerns Ramsey's theorem \cite{R30}. This states that for any graph $H$ and positive integer $r$, if $n$ is sufficiently large, any $r$-coloring of the edges of $K_n$ contains a monochromatic copy of $H$.

To consider the analogue of this result in sparse graphs, let us say that a graph $\Gamma$ is {\it $(H, r)$-Ramsey} if, in any $r$-coloring of the edges of $\Gamma$, there is guaranteed to be a monochromatic copy of $H$. For $G_{n,p}$, a result of R\"odl and Ruci\'nski \cite{RR95} determines the threshold down to which the random graph is $(H,r)$-Ramsey with high probability. For most graphs, including the complete graph $K_t$, this threshold is the same as for the Tur\'an property. These results were only extended to hypergraphs comparatively recently, by Conlon and Gowers \cite{CG12} and by Friedgut, R\"odl and Schacht \cite{FRS10}.

Very little seems to be known about the $(H,r)$-Ramsey property relative to pseudorandom graphs. In the triangle case, results of this kind are implicit in some recent papers \cite{DR08, Lu07} on Folkman numbers, but no general theorem seems to be known. We prove the following.

\begin{theorem}\label{RamseyIntro}
For every graph $H$ and every positive integer $r \geq 2$, there exists $c > 0$ such that if $\beta \leq c p^{d_2(H) + 3}n$ then any $(p, \beta)$-jumbled graph on $n$ vertices is $(H, r)$-Ramsey.
\end{theorem}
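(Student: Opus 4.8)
The plan is to run the regularity method for Ramsey problems in the by-now-standard way, with the sparse regularity lemma of Kohayakawa and R\"odl and the counting lemma of this paper taking the place of their dense analogues. Fix $H$ with at least one edge (the edgeless case is trivial) and $r \ge 2$, and let $\Gamma$ be $(p,\beta)$-jumbled on $n$ vertices with $\beta \le c\,p^{d_2(H)+3}n$, where $c = c(H,r) > 0$ will be pinned down at the end. Given an $r$-colouring $E(\Gamma) = E_1 \cup \dots \cup E_r$, set $G_i := (V(\Gamma),E_i)$. Fix a regularity parameter $\varepsilon = \varepsilon(H,r) > 0$ (smallness specified below) and a threshold $t_0 = t_0(H,r,\varepsilon)$, and apply the sparse regularity lemma to obtain an equitable partition $V(\Gamma) = V_1 \cup \dots \cup V_t$, with $t_0 \le t \le t_1(\varepsilon,r)$, that is $(\varepsilon,p)$-regular with respect to all of $G_1,\dots,G_r$ simultaneously; since $\beta \le c\,p^{d_2(H)+3}n \le c\,pn$, the graph $\Gamma$ is sufficiently upper-uniform once $c$ is small, so the part-count bound $t_1$ does not depend on $p$ or $c$.

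Next I would form a multicoloured reduced graph $R$ on vertex set $[t]$. Call a pair $a \ne b$ \emph{good} if $(V_a,V_b)$ is $(\varepsilon,p)$-regular in every $G_i$; all but at most $r\varepsilon\binom{t}{2}$ pairs are good. For a good pair, jumbledness applied with $X = V_a$, $Y = V_b$ gives $e_\Gamma(V_a,V_b) = \bigl(1 \pm \beta t/(pn)\bigr)p|V_a||V_b| = (1 \pm ct)\,p|V_a||V_b|$, and since the $\Gamma$-edges between $V_a$ and $V_b$ are partitioned among the colour classes, the normalised colour densities satisfy $\sum_{i=1}^{r} d_{G_i}(V_a,V_b)/p = 1 \pm ct$. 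Hence, once $ct \le \tfrac12$, some colour $i(a,b)$ has $d_{G_i}(V_a,V_b) \ge p/(2r)$; colour the good pair $ab$ of $R$ with such an $i(a,b)$. As $R$ has at least $(1 - r\varepsilon)\binom{t}{2}$ edges, Tur\'an's theorem (using $r\varepsilon < 1/(N-1)$ and $t_0$ large, with $N := R_r(K_{v(H)})$ the $r$-colour Ramsey number) gives a clique of $R$ on $N$ vertices, and the ordinary finite Ramsey theorem applied to its induced $r$-colouring yields a colour $c_0$ and parts $V_{a_1},\dots,V_{a_{v(H)}}$ every pair of which is good and coloured $c_0$.

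Finally I would feed these $v(H)$ parts into the counting lemma: each has $n/t$ vertices, each pair among them is $(\varepsilon,p)$-regular in $G_{c_0}$ with normalised density at least $1/(2r)$, and, being subsets of $V(\Gamma)$, they inherit $(p,\beta)$-jumbledness with the same $\beta$. Since $\beta \le c\,p^{d_2(H)+3}n$ with $c$ small relative to $t_1$ and $v(H)$, the jumbledness hypothesis of the counting lemma is met (for $\varepsilon$ chosen small in terms of $H$, $r$, and the density lower bound $1/(2r)$), so $G_{c_0}$ contains at least $\delta\,(p/(2r))^{e(H)}(n/t)^{v(H)} > 0$ copies of $H$ — that is, a copy of $H$ all of whose edges have colour $c_0$. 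The dependencies compose without circularity: $\varepsilon$ and $t_0$ are chosen from $H$ and $r$ (with $\varepsilon$ beating $1/(r(N-1))$ and the counting lemma's threshold), then $t_1 = t_1(\varepsilon,r)$ is determined, and only then is $c$ taken small enough to secure the upper-uniformity estimate, the bound $ct \le \tfrac12$, and the counting lemma's jumbledness hypothesis for the shrunken host.

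The genuinely hard input is the counting lemma itself — embedding a fixed $H$ into regular subgraphs of a jumbled graph with $\beta \lesssim p^{d_2(H)+3}n$ — which is the technical heart of the paper and is used here only as a black box. Within this deduction the only points needing real care are checking that the sparse regularity lemma applies with a part-count independent of $p$, so that $c$ may be shrunk afterwards, and verifying that the normalised colour densities on a regular pair sum to $1 + o(1)$; the latter is exactly where we exploit the \emph{global} jumbledness of $\Gamma$ rather than mere regularity of the partition.
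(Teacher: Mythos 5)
Your proposal is correct, and its architecture — simultaneous sparse regularization of all colour classes, a reduced-graph argument, then the one-sided counting lemma as a black box — is the same as the paper's. The one place where you genuinely diverge is the reduced-graph step. The paper first runs a multicolour cleaning lemma (Lemma \ref{lem:colorcleaning}) that discards irregular, internal, and low-density pairs in each colour, builds a coloured reduced multigraph on the surviving pairs, and then invokes a \emph{robust} Ramsey lemma (Lemma \ref{RobustRamsey}): any $r$-colouring of a subgraph of $K_k$ of density at least $1-\eta$ already contains many monochromatic copies of $H$. You instead colour each good pair by a majority colour of normalised density at least $1/(2r)$ and run Tur\'an's theorem followed by ordinary Ramsey on a clique of order $R_r(K_{v(H)})$ — the classical Chv\'atal--R\"odl--Szemer\'edi--Trotter scheme, which the paper itself uses later for induced Ramsey universality. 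The trade-off is minor: the robust-Ramsey route only needs the pairs corresponding to edges of $H$ to be dense and regular in the winning colour and packages the "few bad pairs" bookkeeping into a single supersaturation statement, whereas your route demands a full monochromatic clique of regular pairs but avoids proving the robust lemma, at the cost of a slightly more delicate choice of $\varepsilon$ against $1/(r(R_r(K_{v(H)})-1))$. Your remaining checks — that jumbledness passes to the parts with $\beta$ unchanged so that $c$ need only be shrunk by a factor depending on the (p-independent) part count, and that the colour densities on a good pair sum to $1+O(ct)$ by global jumbledness — are exactly the estimates the paper carries out, and your dependency ordering of the constants is sound.
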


One common element to all these results is the requirement that $\beta \leq c p^{d_2(H) +3}n$. It is not hard to see that this condition is almost sharp. Consider the binomial random graph on $n$ vertices where each edge is chosen with probability $p = c n^{-2/d(H)}$, where $c < 1$.  By the definition of degeneracy, there exists some subgraph $H'$ of $H$ such that $d(H)$ is the minimum degree of $H'$. Therefore, $e(H') \geq v(H') d(H)/2$ and the expected number of copies of $H'$ is at most
\[p^{e(H')} n^{v(H')} \leq (p^{d(H)/2} n)^{v(H')} < 1.\]
We conclude that with positive probability $G_{n,p}$ does not contain a copy of $H'$ or, consequently, of $H$. On the other hand, with high probability, it is $(p, \beta)$-jumbled with
\[\beta = O(\sqrt{pn}) = O(p^{(d(H) + 2)/4} n).\]
Since $d_2(H)$ differs from $d(H)$ by at most a constant factor, we therefore see that, up to a multiplicative constant in the exponent of $p$, our results are best possible.

If $H = K_t$,  it is sufficient, for the various combinatorial theorems above to hold, that the graph $\Gamma$ be $(p, cp^t n)$-jumbled. For triangles, the example of Alon shows that there are $(p, cp^2 n)$-jumbled graphs which do not contain any triangles and, for $t \geq 4$, it is conjectured \cite{FLS12, KS06, SSV05} that there are $(p, cp^{t-1} n)$-jumbled graphs which do not contain a copy of $K_t$. If true, this would imply that in the case of cliques all of our results are sharp up to an additive constant of one in the exponent. A further discussion relating to the optimal exponent of $p$ for general graphs is in the concluding remarks.

\subsection{Regularity and counting lemmas} \label{sec:countintro}

One of the key tools in extremal graph theory is Szemer\'edi's regularity lemma \cite{Sz78}. Roughly speaking, this says that any graph may be partitioned into a collection of vertex subsets so that the bipartite graph between most pairs of vertex subsets is random-like. To be more precise, we introduce some notation. It will be to our advantage to be quite general from the outset.

A \emph{weighted graph} on a set of vertices $V$ is a symmetric function $G \colon V \x V \to [0,1]$. Here symmetric means that $G(x,y) = G(y,x)$. A weighted graph is bipartite (or multipartite) if it is supported on the edge set of a bipartite (or multipartite graph). A graph can be viewed as a weighted graph by taking $G$ to be the characteristic function of the edges.

Note that here and throughout the remainder of the paper, we will use integral notation for summing over vertices in a graph. For example, if $G$ is a bipartite graph with vertex sets $X$ and $Y$, and $f$ is any function $X \x Y \to \RR$, then we write
\[\int_{\substack{x \in X \\ y \in Y}} f(x,y) \ dxdy :=\frac{1}{\abs{X}\abs{Y}} \sum_{x \in X} \sum_{y \in Y} f(x,y).\]
The measure $dx$ will always denote the uniform probability distribution on $X$. The advantage of the integral notation is that we do not need to keep track of the number of vertices in $G$. All our formulas are, in some sense, scale-free with respect to the order of $G$. Consequently, our results also have natural extensions to graph limits~\cite{LS06}, although we do not explore this direction here.

\begin{definition}[DISC] \label{def:dense-uniform}
A weighted bipartite graph $G \colon X \x Y \to [0,1]$ is said to satisfy the \emph{discrepancy} condition $\DISC(q,\epsilon)$ if
  \begin{equation} \label{eq:disc-dense}
    \abs{\int_{\substack{x \in X
        \\ y \in Y}} (G(x,y) - q) u(x) v(y) \ dxdy } \leq \epsilon
  \end{equation}
  for all functions $u \colon X \to [0,1]$ and $v \colon Y \to [0,1]$. In any weighted graph $G$, if $X$ and $Y$ are subsets of vertices of $G$, we say that the pair $(X, Y)_G$ satisfies $\DISC(q,  \epsilon)$ if the induced weighted graph on $X \x Y$ satisfies $\DISC(q,\epsilon)$.
\end{definition}

The usual definition for discrepancy of an (unweighted) bipartite graph $G$ is that for all $X' \subseteq X$, $Y' \subseteq Y$, we have
$\abs{e(X',Y') - q \abs{X'}\abs{Y'}} \leq \epsilon\abs{X}\abs{Y}$. It is not hard to see that the two notions of discrepancy are equivalent (with the same $\epsilon$).

A partition $V(G) = V_1 \cup \dots \cup V_k$ is said to be {\it equitable} if all pieces in the partition are of comparable size, that is, if $||V_i| - |V_j|| \leq 1$ for all $i$ and $j$. Szemer\'edi's regularity lemma now says the following.

\begin{thm} [Szemer\'edi's regularity lemma] \label{RegLemma}
For every $\epsilon > 0$ and every positive integer $m_0$, there exists a positive integer $M$ such that any weighted graph $G$ has an equitable partition into $k$ pieces with $m_0 \leq k \leq M$ such that all but at most $\e k^2$ pairs of vertex subsets $(V_i, V_j)$ satisfy $\DISC(q_{ij}, \epsilon)$ for some $q_{ij}$.
\end{thm}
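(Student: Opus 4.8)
The plan is to run the classical energy-increment (mean-square density) argument of Szemer\'edi, carried out in the weighted setting. By the equivalence recorded just after Definition~\ref{def:dense-uniform}, it suffices to work with the subset form of discrepancy: the left side of \eqref{eq:disc-dense} is bilinear in $(u,v)$, so its supremum over $[0,1]$-valued functions is attained at $\{0,1\}$-valued ones, and hence $(X,Y)_G$ satisfies $\DISC(q,\epsilon)$ precisely when $\abs{\sum_{x \in X', y \in Y'} G(x,y) - q\abs{X'}\abs{Y'}} \le \epsilon\abs{X}\abs{Y}$ for all $X' \subseteq X$, $Y' \subseteq Y$. For a partition $\mathcal{P} = \{V_1,\dots,V_k\}$ of $V(G)$, write $q_{ij}$ for the average of $G$ over $V_i \x V_j$ and define the \emph{index}
\[ q(\mathcal{P}) = \sum_{i,j} \frac{\abs{V_i}\abs{V_j}}{\abs{V}^2}\, q_{ij}^2 \in [0,1], \]
the membership in $[0,1]$ being immediate from $0 \le G \le 1$. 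Two standard facts drive everything: refining a partition never decreases its index (Jensen applied to $t \mapsto t^2$), and a \emph{defect} form of Cauchy--Schwarz shows that if a pair $(V_i,V_j)$ fails $\DISC(q_{ij},\epsilon)$, then subdividing $V_i$ and $V_j$ by the two witnessing subsets increases the contribution of the block $V_i \x V_j$ to the index by at least $\epsilon^2\abs{V_i}\abs{V_j}/\abs{V}^2$.

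\textbf{The iteration.} Begin from any equitable partition into exactly $m_0$ parts. Given a partition $\mathcal{P}$ with $k$ parts in which more than $\epsilon k^2$ pairs fail the discrepancy condition, form a common refinement: inside each $V_i$ take the coarsest partition refining all witnessing subsets arising from irregular pairs $(V_i,V_j)$, which splits $V_i$ into at most $2^k$ pieces, so the refinement $\mathcal{P}'$ has at most $k 2^k$ parts. Since a refinement's contribution on each block $V_i \x V_j$ is at least what the $2\times 2$ witness refinement of that block gives, summing the per-block gains over the more than $\epsilon k^2$ irregular pairs yields $q(\mathcal{P}') \ge q(\mathcal{P}) + \delta$ with $\delta = \delta(\epsilon) > 0$ (on the order of $\epsilon^3$), independent of $\mathcal{P}$ and $G$. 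As $q$ is confined to $[0,1]$, this can occur at most $1/\delta$ times, so after $O(\delta^{-1})$ steps we reach a partition with at most $\epsilon k^2$ bad pairs; each step replaces $k$ by at most $k 2^k$, so the number of parts is bounded by a tower function of $m_0$ and $\epsilon$, giving the bound $M$. One easily keeps $k \ge m_0$ throughout.

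\textbf{Equitability.} These refinements need not be equitable, so a final cleanup is required. Having reached a regular partition with $K \le M$ parts, re-partition $V(G)$ by cutting each part into blocks of a common size $\floor{\abs{V}/(Kt)}$ for a large integer $t = t(\epsilon)$, collecting the fewer than $Kt$ leftover vertices into one extra part (or distributing them among existing ones). Every new part lies inside an old one and only an $O(1/t)$ fraction of vertices is disturbed, so each pair of new parts still satisfies a discrepancy condition with a slightly inflated parameter; choosing $t$ and the constants suitably produces an equitable partition into $k$ parts with $m_0 \le k \le M$ for which all but at most $\epsilon k^2$ pairs satisfy $\DISC(q_{ij},\epsilon)$.

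\textbf{Main obstacle.} The only genuinely non-formal ingredient is the defect-Cauchy--Schwarz increment lemma, together with the accounting that lets \emph{all} of the more than $\epsilon k^2$ irregular pairs contribute to the index simultaneously while the common refinement still has at most $k 2^k$ parts; once this is in hand, termination is forced by the boundedness of the index and the equitability adjustment is routine.
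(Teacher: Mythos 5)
The paper cites this as Szemer\'edi's classical theorem and gives no proof, so I am judging your argument on its own terms. Your overall strategy (mean-square-density energy increment, witness-subset common refinement, defect Cauchy--Schwarz) is the right one, and the per-block increment bound $\epsilon^2\abs{V_i}\abs{V_j}/\abs{V}^2$ for a pair failing $\DISC(q_{ij},\epsilon)$ is correct. But there is a genuine gap in the termination step. You claim that if more than $\epsilon k^2$ pairs fail, then $q(\mathcal{P}')\ge q(\mathcal{P})+\delta$ with $\delta=\delta(\epsilon)$ of order $\epsilon^3$, independent of $\mathcal{P}$ and $G$. Summing your per-block gains gives only $q(\mathcal{P}')-q(\mathcal{P})\ge \epsilon^2\sum_{(i,j)\text{ bad}}\abs{V_i}\abs{V_j}/\abs{V}^2$, and this is $\gtrsim\epsilon^3$ only when the parts all have size comparable to $\abs{V}/k$. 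Since you defer equitability to a single final cleanup, after the first common refinement the parts can be wildly unequal --- the witness sets $X'$ can be tiny, so atoms of the refinement can be single vertices --- and $\epsilon k^2$ bad pairs can then carry total measure $\sum\abs{V_i}\abs{V_j}/\abs{V}^2$ as small as $O(k^2/\abs{V})$. The per-step increment is therefore not bounded below by any function of $\epsilon$ alone, the number of iterations is not bounded independently of $G$, and the uniform bound $M$ does not follow.

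The missing ingredient is the standard one: re-equitize after \emph{every} refinement (chop each atom into blocks of one common size, absorbing or redistributing the small leftover) and verify that the energy still increases by $\epsilon^{O(1)}$ despite this perturbation; alternatively, run the iteration with the measure-weighted stopping criterion $\sum_{\text{bad}}\abs{V_i}\abs{V_j}\le\epsilon\abs{V}^2$, for which your increment computation is valid without equitability, and convert to the equitable, pair-counted statement at the end. Your final cleanup also quietly relies on the same control: restricting a good pair $(V_i,V_j)$ to blocks of size $\floor{\abs{V}/(Kt)}$ inflates the discrepancy parameter by a factor $\abs{V_i}\abs{V_j}(Kt)^2/\abs{V}^2$, which is $O(t^2)$ only if $\abs{V_i},\abs{V_j}=O(\abs{V}/K)$ --- again a property the un-equitized iteration does not supply, and one that otherwise forces $\epsilon$ to depend on $M$ while $M$ depends on $\epsilon$. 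The remaining components (equivalence of the functional and subset forms of $\DISC$, the $k2^k$ bound on the common refinement, monotonicity of the index under refinement, the tower-type bound on $M$) are fine.
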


On its own, the regularity lemma would be an interesting result. But what really makes it so powerful is the fact that the discrepancy condition allows us to count small subgraphs. In particular, we have the following result, known as a counting lemma.

\begin{proposition} [Counting lemma in dense graphs]
  \label{prop:dense-counting}
  Let $G$ be a weighted $m$-partite graph with vertex subsets $X_1, X_2,
  \dots, X_m$. Let $H$ be a graph with vertex set $\set{1, \dots, m}$
  and with $e(H)$ edges. For each edge $(i,j)$ in $H$, assume that the induced bipartite graph  $G(X_i,X_j)$ satisfies $\DISC(q_{ij}, \epsilon)$. Define
  \[
  G(H) := \int_{x_1 \in X_1, \dots, x_m \in X_m} \prod_{(i,j) \in E(H)} G(x_i,x_j) \ dx_1 \cdots dx_m
  \]
  and
  \[
  q(H) := \prod_{(i,j) \in E(H) } q_{ij}.
  \]
  Then
  \[
  \abs{G(H) - q(H)} \leq e(H) \epsilon.
  \]
\end{proposition}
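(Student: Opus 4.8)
The plan is to prove this by a hybrid (telescoping) argument, replacing the edge factors $G(x_i,x_j)$ by the constants $q_{ij}$ one edge at a time, and controlling each replacement using the discrepancy hypothesis. Fix an arbitrary enumeration $f_1, \dots, f_{e(H)}$ of the edges of $H$, write $f_s = (i_s, j_s)$, and for $0 \le t \le e(H)$ set
\[
W_t := \int_{x_1 \in X_1, \dots, x_m \in X_m} \paren{\prod_{s \le t} q_{f_s}} \prod_{s > t} G(x_{i_s}, x_{j_s}) \ dx_1 \cdots dx_m .
\]
Then $W_0 = G(H)$ and $W_{e(H)} = q(H)$, so by the triangle inequality it is enough to prove $\abs{W_{t-1} - W_t} \le \epsilon$ for each $t$; summing over the $e(H)$ values of $t$ then yields the stated bound.

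Fix $t$ and write $f_t = (a,b)$. Subtracting, the constant prefactor $\prod_{s<t} q_{f_s} \in [0,1]$ pulls out and
\[
W_{t-1} - W_t = \paren{\prod_{s<t} q_{f_s}} \int \paren{G(x_a,x_b) - q_{ab}} \prod_{s > t} G(x_{i_s}, x_{j_s}) \ dx_1 \cdots dx_m .
\]
The key point is that for each fixed choice of the variables $\set{x_c : c \ne a, b}$, the remaining product $\prod_{s>t} G(x_{i_s}, x_{j_s})$ factors as $u(x_a)\, v(x_b)$ for some functions $u \colon X_a \to [0,1]$ and $v \colon X_b \to [0,1]$ (depending on the frozen variables): each edge $f_s$ with $s > t$ is incident to at most one of $a, b$ since $H$ is simple, so it contributes either a function of $x_a$, a function of $x_b$, or a constant, and the constant can be absorbed into $u$. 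Hence, for each fixed value of those variables, $\DISC(q_{ab}, \epsilon)$ applied to the bipartite graph $G(X_a, X_b)$ gives
\[
\abs{\int_{x_a \in X_a, \ x_b \in X_b} \paren{G(x_a, x_b) - q_{ab}} u(x_a) v(x_b) \ dx_a dx_b} \le \epsilon .
\]
Integrating this over the frozen variables and using $\prod_{s<t} q_{f_s} \le 1$ gives $\abs{W_{t-1} - W_t} \le \epsilon$, as required.

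The step I expect to be the main obstacle is precisely the one above: $\DISC$ is a statement about rank-one test functions $u(x)v(y)$, so it cannot be applied to $G(x_a,x_b) - q_{ab}$ against an arbitrary weight on the pair $(x_a,x_b)$. If one instead integrated out the remaining variables first, one would be left with a genuinely two-variable weight $F(x_a, x_b) \in [0,1]$ to which $\DISC$ does not directly apply. The fix is to interchange the order of operations — apply $\DISC$ first, pointwise in the frozen variables where the weight does split as $u(x_a)v(x_b)$, and only afterwards integrate over the remaining variables. This is also where simplicity of $H$ enters, and it explains why the hypothesis is imposed for each edge of $H$ and for no other pair.
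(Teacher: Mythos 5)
Your proof is correct and follows essentially the same route as the paper: the paper proves this by the identical telescoping decomposition (illustrated there for the triangle via equation~\eqref{eq:telescope3}) and then applies $\DISC$ to each term. Your write-up simply carries out in full generality the step the paper leaves as ``the general case follows similarly,'' correctly noting that freezing the variables outside $\set{a,b}$ makes the residual weight split as $u(x_a)v(x_b)$ so that the rank-one discrepancy hypothesis applies.
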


The above result, for an unweighted graph, is usually stated in the
following equivalent way: the number of embeddings of $H$ into $G$,
where the vertex $i \in V(H)$ lands in $X_i$, differs from
$\abs{X_1}\abs{X_2}\cdots \abs{X_m} \prod_{(i,j) \in E(H) } q_{ij}$ by
at most $e(H) \epsilon \abs{X_1}\abs{X_2}\cdots \abs{X_m}$. Our
notation $G(H)$ can be viewed as the probability that a random
embedding of vertices of $H$ into their corresponding parts in $G$
gives rise to a valid embedding as a subgraph.

Proposition~\ref{prop:dense-counting} may be proven by telescoping (see, e.g., Theorem 2.7 in \cite{BCLSV08}). Consider, for example, the case where $H$ is a triangle. Then
 $G(x_1,x_2)G(x_1,x_3)G(x_2,x_3) - q_{12}q_{13}q_{23}$ may be rewritten as
\begin{equation} \label{eq:telescope3}
(G(x_1,x_2)- q_{12})G(x_1,x_3)G(x_2,x_3) +  q_{12}(G(x_1,x_3)-q_{13})G(x_2,x_3) + q_{12}q_{13} (G(x_2,x_3) - q_{23}).
\end{equation}
Applying the discrepancy condition~(\ref{eq:disc-dense}), we see that, after integrating the above expression over all $x_1 \in X_1, x_2 \in X_2, x_3 \in X_3$, each term in (\ref{eq:telescope3}) is at most $\epsilon$ in absolute value. The result follows for triangles. The general case follows similarly.

In order to prove extremal results in sparse graphs, we would like to transfer some of this machinery to the sparse setting. Because the number of copies of a subgraph in a sparse graph $G$ is small, the error between the expected count and the actual count must also be small for a counting lemma to be meaningful. Another way to put this is that we aim to achieve a small multiplicative error in our count.

Since we require smaller errors when counting in sparse graphs, we need stronger discrepancy hypotheses. In the following definition, we should view $p$ as the order of magnitude density of the graph, so that the error terms should be bounded in the same order of magnitude. In a dense graph, $p = 1$. We assume that $q \leq p$. It may be helpful to think of $q/p$ as bounded below by some positive constant, although this is not strictly required.

\begin{definition}[DISC]
 \label{def:disc-sparse}
 A weighted bipartite graph $G \colon X \x Y \to [0,1]$ is said to satisfy $\DISC(q,p, \epsilon)$ if
  \[\abs{\int_{\substack{x \in X \\ y \in Y}} (G(x,y) - q)u(x)v(y) \ dxdy} \leq \epsilon p\]
  for all functions $u \colon X \to [0,1]$ and $v \colon Y \to [0,1]$.
\end{definition}

Unfortunately, discrepancy alone is not strong enough for counting in
sparse graphs. Consider the following example. Let $G$ be a tripartite
graph with vertex sets $X_1, X_2, X_3$, such that $(X_1, X_2)_G$ and
$(X_2,X_3)_G$ satisfy $\DISC(q,p,\frac{\epsilon}{2})$. Let $X'_2$ be a
subset of $X_2$ with size $\frac{\epsilon}{2}p\abs{X_2}$. Let $G'$ be
modified from $G$ by adding the complete bipartite graph between $X_1$
and $X'_2$, as well as the complete bipartite graph between $X'_2$ and
$X_3$. The resulting pairs $(X_1, X_2)_{G'}$ and $(X_2, X_3)_{G'}$
satisfy $\DISC(q,p,\epsilon)$. Consider the number of paths in $G$ and
$G'$ with one vertex from each of $X_1, X_2, X_3$ in turn. Given the
densities, we expect there to be approximately
$q^2\abs{X_1}\abs{X_2}\abs{X_3}$ such paths, and we would like the
error to be $\delta p^2 \abs{X_1}\abs{X_2}\abs{X_3}$ for some small
$\delta$ that goes to zero as $\epsilon$ goes to zero. However, the
number of paths in $G'$ from $X_1$ to $X'_2$ to $X_3$ is
$\frac{\epsilon}{2} p\abs{X_1}\abs{X_2}\abs{X_3}$, which is already
too large when $p$ is small.

For our counting lemma to work, $G$ needs to be a relatively dense subgraph of a much more pseudorandom host graph $\Gamma$. In the dense case, $\Gamma$ can be the complete graph. In the sparse world, we require $\Gamma$ to satisfy the jumbledness condition. In practice, we will use the following equivalent definition. The equivalence follows by considering random subsets of $X$ and $Y$, where $x$ and $y$ are chosen with probabilities $u(x)$ and $v(y)$, respectively.

\begin{definition}[Jumbledness]
  \label{def:jumbled}
 A bipartite graph $\Gamma = (X \cup Y, E_\Gamma)$ is $(p,\gamma \sqrt{\abs{X}\abs{Y}})$-jumbled if
\begin{equation} \label{eq:jumbled}
\abs{\int_{\substack{x \in X \\ y \in Y}} (\Gamma(x,y) - p)u(x)v(y) \ dxdy} \leq \gamma\sqrt{\int_{x \in X} u(x) \ dx} \sqrt{ \int_{y \in Y} v(y) \ dy}
\end{equation}
for all functions $u \colon X \to [0,1]$ and $v \colon Y \to [0,1]$.
 \end{definition}

With the discrepancy condition defined as in Definition \ref{def:disc-sparse}, we may now state a regularity lemma for sparse graphs. Such a lemma was originally proved independently by Kohayakawa \cite{K97} and by R\"odl (see also \cite{GS05, Sc11}). The following result, tailored specifically to jumbled graphs, follows easily from the main result in \cite{K97}.

\begin{theorem}[Regularity lemma in jumbled graphs] \label{thm:sparsereg}
For every $\epsilon > 0$ and every positive integer $m_0$, there exists $\eta > 0$ and a positive integer $M$ such that if $\Gamma$ is a $(p, \eta p n)$-jumbled graph on $n$ vertices any weighted subgraph $G$ of $\Gamma$ has an equitable partition into $k$ pieces with $m_0 \leq k  \leq M$ such that all but at most $\e k^2$ pairs of vertex subsets $(V_i, V_j)$ satisfy $\DISC(q_{ij}, p, \epsilon)$ for some $q_{ij}$.
\end{theorem}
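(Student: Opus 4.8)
\emph{Proposal.} The plan is to run Szemer\'edi's original energy-increment argument in the sparse setting, exactly as in Kohayakawa \cite{K97}, using the jumbledness of $\Gamma$ to supply the one extra ingredient that the dense proof gets for free: an absolute upper bound on the densities that can occur, and hence on the normalized mean-square-density index.

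First I would isolate the only consequence of jumbledness we use. If $\Gamma$ is $(p,\eta p n)$-jumbled and $X, Y$ are vertex subsets with $|X|,|Y| \ge n/M$, then applying \eqref{eq:jumbled} with $u = \mathbf{1}_X$ and $v = \mathbf{1}_Y$ gives
\[
\int_{\substack{x \in X \\ y \in Y}} \Gamma(x,y)\, dxdy \;\le\; p + \frac{\eta p n}{\sqrt{|X||Y|}} \;\le\; p(1 + \eta M).
\]
Since a weighted subgraph $G$ of $\Gamma$ satisfies $0 \le G(x,y) \le \Gamma(x,y)$ pointwise (it is supported on the edges of $\Gamma$ and takes values in $[0,1]$), the same bound holds with $G$ in place of $\Gamma$: on every pair of not-too-small vertex subsets the weighted density of $G$ is at most $(1+\eta M)p$. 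This is precisely the sparse "upper-uniformity" hypothesis needed to make the regularity proof go through.

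Next I would set up the energy increment. For a partition $\mathcal{P} = \{V_1,\dots,V_k\}$ into parts of size $\ge n/M$, let $d_G(V_i,V_j)$ denote the weighted density of $G$ between $V_i$ and $V_j$ and set
\[
\mathrm{ind}(\mathcal{P}) \;=\; \frac{1}{p^2} \sum_{i,j} \frac{|V_i||V_j|}{n^2}\, d_G(V_i,V_j)^2 .
\]
By the displayed bound, $\mathrm{ind}(\mathcal{P}) \le (1+\eta M)^2$, which we will keep $\le 4$ by imposing $\eta M \le 1$ at the end. The standard defect-form Cauchy--Schwarz computation then shows that if more than $\epsilon k^2$ pairs $(V_i,V_j)$ fail $\DISC(q_{ij},p,\epsilon)$ for \emph{every} choice of $q_{ij}$ --- equivalently, using the function-free reformulation noted after Definition~\ref{def:dense-uniform}, fail the usual subset-form $(\epsilon,p)$-regularity with $q_{ij} = d_G(V_i,V_j)$ --- then subdividing each $V_i$ according to the bad witness sets produces a refinement $\mathcal{P}'$ (with $k' \le k\,2^k$ parts) satisfying $\mathrm{ind}(\mathcal{P}') \ge \mathrm{ind}(\mathcal{P}) + c(\epsilon)$ for some $c(\epsilon) > 0$ depending only on $\epsilon$. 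Hence, starting from any equitable partition into $m_0$ parts, the process stops after at most $4/c(\epsilon)$ refinements, and the final number of parts is bounded by a tower-type function $M = M(\epsilon, m_0)$. We then fix $\eta = 1/M$, which guarantees $(1+\eta M)^2 = 4$ as required; nothing in the argument used that $G$ is $\{0,1\}$-valued, so the weighted statement follows with no extra work.

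The main technical point --- and the only place the argument departs from the textbook dense proof --- is maintaining the hypotheses of the density bound throughout: one must ensure the number of parts never exceeds $M$, so that every part entering a density computation has size $\ge n/M$ and \eqref{eq:jumbled} applies, and one must handle the standard exceptional-set bookkeeping (collecting leftover vertices from non-divisible partitions) so that the partition stays equitable and the exceptional set remains negligible relative to $n/M$; shrinking $\eta$ further if necessary absorbs this. The equivalence between $\DISC(q,p,\epsilon)$ in the sense of Definition~\ref{def:disc-sparse} and the subset-form $(\epsilon,p)$-regularity, used to port the Kohayakawa--R\"odl conclusion into our language, is routine and was already noted in the text.
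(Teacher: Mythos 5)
Your proposal is correct and follows exactly the route the paper intends: the paper gives no proof of Theorem~\ref{thm:sparsereg} beyond citing that it ``follows easily from the main result in \cite{K97}'', and the content of that reduction is precisely your first step, that $(p,\eta p n)$-jumbledness forces every pair of sets of size at least $n/M$ to have density at most $(1+\eta M)p$, i.e.\ the upper-uniformity hypothesis under which Kohayakawa's energy-increment argument (which you then reproduce, with the correct resolution of the apparent circularity between $\eta$ and $M$) goes through. Nothing further is needed.
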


The main result of this paper is a counting lemma which complements
this regularity lemma. Proving such an embedding lemma has remained an important open problem
ever since Kohayakawa and R\"odl first proved the sparse regularity
lemma. Most of the work has focused on applying the sparse regularity
lemma in the context of random graphs. The key conjecture in this
case, known as the K\L R conjecture, concerns the probability
threshold down to which a random graph is, with high probability, such
that any regular subgraph contains a copy of a particular subgraph
$H$. This conjecture has only been resolved very recently \cite{BMS12,
  CGSS12}. For pseudorandom graphs, it has been a wide open problem to
prove a counting lemma which complements the sparse regularity
lemma. The first progress on proving such a counting lemma was made
recently in \cite{KRSS10}, where they proved a counting lemma for
triangles. Here, we prove a counting lemma which works for any graph
$H$. Even for triangles, our counting lemma gives an improvement over
the results in \cite{KRSS10}, since our results have polynomial-type
dependence on the discrepancy parameters, whereas the results in
\cite{KRSS10} require exponential dependence since a weak regularity
lemma was used as an immediate step during their proof of the triangle
counting lemma.

Our results are also related to the work of Green and Tao \cite{GT08}
on arithmetic progressions in the primes. What they really prove is
the stronger result that Szemer\'edi's theorem on arithmetic
progressions holds in subsets of the primes. In order to do this, they
first show that the primes, or rather the almost primes, are a
pseudorandom subset of the integers and then that Szemer\'edi's
theorem continues to hold relative to such pseudorandom sets. In the
language of their paper, our counting lemma is a generalized von
Neumann theorem.

Here is the statement of our first counting lemma. Note that, given a graph
$H$, the line graph $L(H)$ is the graph whose vertices are the edges
of $H$ and where two vertices are adjacent if their corresponding
edges in $H$ share an endpoint. Recall that $d(\cdot)$ is the
degeneracy and $\Delta(\cdot)$ is the maximum degree.

\begin{theorem} \label{thm:sparse-counting} Let $H$ be a graph with
  vertex set $\set{1, \dots, m}$ and with $e(H)$ edges. For
  every $\theta > 0$, there exist $c, \epsilon > 0$ of size at least
  polynomial in $\theta$ so that the following holds.

  Let $p > 0$ and let $\Gamma$ be a graph
  with vertex subsets $X_1, \dots, X_m$ and suppose that the bipartite
  graph $(X_i, X_j)_\Gamma$ is $(p, cp^k
  \sqrt{\abs{X_i}\abs{X_j}})$-jumbled for every $i < j$, where $k \geq
  \min\set{\frac{\Delta(L(H)) + 4}{2}, \frac{d(L(H)) + 6}{2}}$. Let $G$ be a
  subgraph of $\Gamma$, with the vertex $i$ of $H$ assigned to the vertex subset $X_i$
  of $G$. For each edge $ij$ in $H$, assume that $(X_i, X_j)_G$
  satisfies $\DISC(q_{ij}, p, \epsilon)$.
  Define
  \[
  G(H) := \int_{x_1 \in X_1, \dots, x_m \in X_m} \prod_{(i,j) \in E(H)} G(x_i,x_j) \ dx_1 \cdots dx_m
  \]
  and
  \[
  q(H) := \prod_{(i,j) \in E(H) } q_{ij}.
  \]
  Then
  \[
  \abs{G(H) - q(H)} \leq \theta p^{e(H)}.
  \]
\end{theorem}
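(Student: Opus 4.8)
\emph{Proof plan.} The plan is to follow Gowers's functional approach. Since
\[
G(H) - q(H) \;=\; \int_{x_1 \in X_1, \dots, x_m \in X_m}\paren{\prod_{(i,j) \in E(H)} G(x_i,x_j) - \prod_{(i,j) \in E(H)} q_{ij}}\,dx_1 \cdots dx_m,
\]
I would first fix an ordering $e_1, \dots, e_{e(H)}$ of the edges of $H$ --- to be chosen in the last step --- write $G_s$ and $q_s$ for the values of $G$ and of $q_{e_s}$ on the $s$-th edge, and telescope:
\[
\prod_{s=1}^{e(H)} G_s - \prod_{s=1}^{e(H)} q_s \;=\; \sum_{\ell=1}^{e(H)} \paren{\prod_{s<\ell} G_s}(G_\ell - q_\ell)\paren{\prod_{s>\ell} q_s}.
\]
Since $q_s \le p$, it then suffices to prove, for each $\ell$, a ``generalized von Neumann'' estimate of the form
\[
\abs{\int_{x_1 \in X_1, \dots, x_m \in X_m} \paren{\prod_{s<\ell} G_s}\,(G_\ell - q_\ell)\,dx_1 \cdots dx_m} \;\le\; \frac{\theta}{e(H)}\, p^{\ell};
\]
the case $\ell=1$ is immediate from $\DISC(q_1,p,\epsilon)$ with constant test functions once $\epsilon \le \theta/e(H)$, while for $\ell \ge 2$ the naive bound $\prod_{s<\ell} G_s \le 1$ loses a whole factor $p^{\ell-1}$ and must be improved.

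For $\ell \ge 2$, I would exploit $0 \le G_s(x,y) \le \Gamma(x,y)$ and apply Gowers's box-norm machinery: a sequence of Cauchy--Schwarz steps, each one duplicating a single vertex-variable of $H$ and pushing the $G$-factors incident to that vertex onto the ``discrepancy side'' while leaving behind weights equal to $\Gamma$ on the corresponding edges. After all edges of $\set{e_1, \dots, e_{\ell-1}}$ have been processed, a fixed power (a power of two, depending on $H$) of the left-hand side above is bounded by a product of two quantities: (i) a $\Gamma$-weighted homomorphism count over an auxiliary graph $F$ obtained from $H$ by iterated vertex-doubling, and (ii) a box-norm of $G_\ell - q_\ell$, against $\Gamma$-weights, on the pair of vertex classes joined by $e_\ell$. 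For (i) I would prove a counting lemma for the host graph alone --- that this count is $(1+o(1))p^{e(F)}$ --- by induction on the number of edges of $F$, embedding the vertices of $F$ one at a time and controlling each step via the jumbledness of the relevant bipartite pair. How badly the $\Gamma$-factors pile up at a vertex through the doublings is governed by how many edges incident to that vertex are still present when it is duplicated, which over the whole process is controlled by the degrees of the line graph $L(H)$; this is exactly what makes the hypothesis $\beta \le c p^{k}\sqrt{\abs{X_i}\abs{X_j}}$ with $k \ge \min\set{\frac{\Delta(L(H)) + 4}{2}, \frac{d(L(H)) + 6}{2}}$ the right one, the two terms of the minimum corresponding to two ways of organizing the bookkeeping: bounding uniformly by $\Delta(L(H))$, or processing the edges in a degeneracy order of $L(H)$ so that only $d(L(H))$ already-placed edges meet each new one. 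This is also why the edge ordering fixed at the start should be a degeneracy ordering of $L(H)$ for the second bound.

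For (ii), I would expand every $\Gamma$-weight as $p + (\Gamma - p)$. Each term containing at least one factor $\Gamma - p$ is absorbed using the jumbledness of $\Gamma$, just as in the previous step; the single remaining term --- in which all $\Gamma$-weights are replaced by the constant $p$ --- is a constant multiple of the integral of $G_\ell - q_\ell$ against a product of $[0,1]$-valued functions, hence at most $\epsilon p$ times that constant by $\DISC(q_\ell, p, \epsilon)$. Multiplying (i) and (ii), taking the appropriate root, and choosing $\epsilon$ and $c$ polynomially small in $\theta$ --- no step loses more than a polynomial factor, which is the source of the polynomial dependence claimed in the statement --- yields the estimate for each $\ell$; summing over $\ell$ finishes the proof.

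The main obstacle is the interplay of the Cauchy--Schwarz elimination with the counting lemma in $\Gamma$: one must arrange the duplications so that \emph{every} factor of $G$ is removed, the auxiliary graph $F$ stays simple and sparse enough that its $\Gamma$-count is controlled by the stated jumbledness, and the $\DISC$ factor for $e_\ell$ survives undegraded. Keeping precise track of how vertex-doubling inflates degrees is exactly what forces the exponent to depend on $L(H)$ rather than on $H$ directly, and squeezing out the optimal $\min\{\cdot,\cdot\}$ requires running this bookkeeping twice, in two genuinely different ways.
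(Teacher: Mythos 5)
Your overall architecture---telescoping over a degeneracy ordering of $L(H)$, iterated Cauchy--Schwarz to isolate the edge $e_\ell$, and a separate counting lemma in $\Gamma$ to control the $\Gamma$-weighted count of the doubled auxiliary graph $F$---matches the paper's strategy (its Section 3 handles exactly your part (i), and its doubling lemma is your Cauchy--Schwarz step). But there is a genuine gap at your step (ii), and it sits precisely where the real difficulty of the theorem lies. After the Cauchy--Schwarz eliminations, the quantity you must control is a box-norm-type expression, i.e.\ a degree-$4$ (or degree-$2^k$) polynomial in $f := G_\ell - q_\ell$ such as $\int f(x,y)f(x,y')f(x',y)f(x',y')\,dx\,dx'\,dy\,dy'$, and you need it to be at most $\epsilon' p^4$. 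Your plan to bound the constant-weight term "by $\DISC$ applied to the integral of $G_\ell - q_\ell$ against $[0,1]$-valued test functions" only produces a bound of order $\epsilon p$: for fixed $(x',y')$ the factors $f(x,y')$, $f(x',y)$, $f(x',y')$ must be bounded by $1$ in absolute value to serve as admissible test functions, and $\DISC(q_\ell,p,\epsilon)$ then yields $O(\epsilon p)$, which is larger than the required $\epsilon' p^4$ by a factor of $p^{-3}$. The discrepancy condition gives no improvement when the test functions are small on average (of average order $p$)---this is exactly the obstruction illustrated by the paper's $X_2'$ example in Section 1.3---so controlling the box norm of $G_\ell - q_\ell$ is \emph{equivalent} to the $C_4$-counting lemma for the pair $(X_a,X_b)$, one of the hardest special cases of the statement you are proving. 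Your reduction is therefore circular.

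The missing idea is the paper's \emph{densification} (Section 4.2): to count $C_4$ (equivalently, to bound the box norm), one replaces a singly subdivided edge by a weighted bipartite graph whose edge weights are normalized, capped path-counts through the middle vertex; this new bipartite graph is \emph{dense} and satisfies $\DISC$ with $p$ replaced by $1$, so the problem is transported to the dense regime where the naive telescoping argument does work. Combined with doubling, this breaks the circularity ($C_4 \to$ triangle with a dense edge $\to$ dense triangle). Without some such mechanism for upgrading $\DISC(q,p,\epsilon)$ plus jumbledness into control of the de-meaned $C_4$ count, your step (ii) cannot be completed, and the rest of the argument---which is otherwise sound---does not close.
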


\begin{table}[bt!]
  \centering
  \caption{Sufficient conditions on $k$ in the jumbledness hypothesis $(p, cp^k \sqrt{\abs{X_i}\abs{X_j}})$
    for the counting lemmas of various graphs. Two-sided counting
    refers to results of the form $\abs{G(H) - q(H)} \leq \theta
    p^{e(H)}$ while one-sided counting refers to result of the form
    $G(H) \geq q(H) - \theta p^{e(H)}$.}
  \label{tab:k}
  \begin{tabular}{lccl}
    \toprule
    &  Two-sided counting & One-sided counting  &\\
    $H$ & $k \geq $ & $k \geq $ \\
    \midrule
    $K_t$ & $t$ & $t$ & $t \geq 3$ \\
    $C_\ell$ & $2$  & $2$  & $\ell = 4$ \\
            & $2$  & $ 1 + \frac{1}{2\floor{(\ell-3)/2}}$ & $\ell \geq
            5$ \\
    $K_{2,t}$ & $\frac{t+2}{2}$ & $\frac{5}{2}$ & $t \geq 3$\\
    $K_{s,t}$ & $\frac{s+t+1}{2}$ & $\frac{s+3}{2}$ & $3 \leq s \leq
    t$ \\
    Tree  &  $\frac{\Delta(H) + 1}{2}$ & {No jumbledness
      needed} & See Prop.~\ref{prop:tree} and \ref{prop:tree-oneside}\\
    $K_{1,2,2}$ & $4$ & $4$  \\
    \bottomrule
  \end{tabular}
\end{table}

For some graphs $H$, our methods allow us to achieve slightly better
values of $k$ in Theorem \ref{thm:sparse-counting}. However, the value
given in the theorem is the cleanest general statement. See
Table~\ref{tab:k} for some example of hypotheses on $k$ for various
graphs $H$. To see that the value of $k$ is never far from best
possible, we first note that $\Delta(H) - 1 \leq d(L(H)) \leq
\Delta(H) + d(H) - 2$.

Let $H$ have maximum degree $\Delta$. By considering the random graph
$G_{n,p}$ with $p = n^{-1/\Delta}$, we can find a $(p, c p^{\Delta/2}
n)$-jumbled graph $\Gamma$ containing approximately $p^{e(H)}
n^{v(H)}$ labeled copies of $H$. We modify $\Gamma$ to form $\Gamma'$
by fixing one vertex $v$ and connecting it to everything else. It is
easy to check that the resulting graph $\Gamma'$ is $(p, c'
p^{\Delta/2} n)$-jumbled. However, the number of copies of $H$
disagrees with the expected count, since there are approximately $p^{e(H)}n^{v(H)}$
labeled copies from the original graph $\Gamma$ and a further
approximately $p^{e(H)-\Delta} n^{v(H)-1} = p^{e(H)}n^{v(H)}$ labeled
copies containing $v$. We conclude that for $k < \Delta/2$ we
cannot hope to have such a counting lemma and, therefore, the value of $k$
in Theorem \ref{thm:sparse-counting} is close to optimal.

Since we are dealing with sparse graphs, the discrepancy condition \ref{def:disc-sparse} appears, at first sight, to be rather weak. Suppose, for instance, that we have a sparse graph satisfying $\DISC(q, p, \epsilon)$ between each pair of sets from $V_1, V_2$ and $V_3$ and we wish to embed a triangle between the three sets. Then, a typical vertex $v$ in $V_1$ will have neighborhoods of size roughly $q|V_2|$ and $q|V_3|$ in $V_2$ and $V_3$, respectively. But now the condition $\DISC(q, p, \epsilon)$ tells us nothing about the density of edges between the two neighborhoods. They are simply too small.

To get around this, Gerke, Kohayakawa, R\"odl and Steger \cite{GKRS07} showed that if $(X,Y)$ is a pair satisfying $\DISC(q, p, \epsilon)$ then, with overwhelmingly high probability, a small randomly chosen pair of subsets $X' \subseteq X$ and $Y' \subseteq Y$ will satisfy $\DISC(q, p, \epsilon')$, where $\epsilon'$ tends to zero with $\epsilon$. We say that the pair inherits regularity. This may be applied effectively to prove embedding lemmas in random graphs (see, for example, \cite{GMS07, KRSSz11}). For pseudorandom graphs, the beginnings of such an approach may be found in \cite{KRSS10}.

Our approach in this paper works in the opposite direction. Rather than using the inheritance property to aid us in proving counting lemmas, we first show how one may prove the counting lemma and then use it to prove a strong form of inheritance in jumbled graphs. For example, we have the following theorem.

\begin{proposition}
  \label{prop:inheritintro}
  For any $\alpha > 0$, $\xi > 0$ and $\epsilon' > 0$, there exists $c > 0$ and
  $\epsilon > 0$ of size at least polynomial in $\alpha, \xi, \epsilon'$ such that the following holds.

  Let $p \in (0,1]$ and $q_{XY}, q_{XZ}, q_{YZ} \in [\alpha p,p]$. Let
  $\Gamma$ be a tripartite graph with vertex sets $X$, $Y$ and
  $Z$ and $G$ be a subgraph of $\Gamma$. Suppose that
  \begin{itemize}
  \item $(X,Y)_\Gamma$ is $(p, cp^4\sqrt{\abs{X}\abs{Y}})$-jumbled and
    $(X,Y)_G$ satisfies $\DISC(q_{XY}, p, \epsilon)$; and
  \item $(X,Z)_\Gamma$ is $(p, cp^2\sqrt{\abs{X}\abs{Z}})$-jumbled and
    $(X,Z)_G$ satisfies $\DISC(q_{XZ}, p, \epsilon)$; and
  \item $(Y,Z)_\Gamma$ is $(p, cp^3\sqrt{\abs{Y}\abs{Z}})$-jumbled and
    $(Y,Z)_G$ satisfies $\DISC(q_{YZ}, p, \epsilon)$.
  \end{itemize}
  Then at least $(1 - \xi)\abs{Z}$ vertices $z \in Z$ have the
  property that $\abs{N_X(z)} \geq (1 - \xi)q_{XZ}\abs{X}$,
  $\abs{N_Y(z)} \geq (1 - \xi) q_{YZ}\abs{Y}$, and $(N_X(z),
  N_Y(z))_G$ satisfies $\DISC(q_{XY}, p, \epsilon')$.
\end{proposition}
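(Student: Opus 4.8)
The plan is to read off the two degree statements directly from the discrepancy hypotheses and to reduce the discrepancy conclusion to a single weighted count of the graph $K_{1,2,2}$, which I then control using the counting lemma, Theorem~\ref{thm:sparse-counting}. Throughout, $N_X(z)$ and $N_Y(z)$ denote neighbourhoods in $G$, and we may assume $\xi\le\tfrac12$. For the degree statements, applying $\DISC(q_{XZ},p,\epsilon)$ to $(X,Z)_G$ with $u=\mathbf 1_X$ and $v$ the indicator of $\set{z\in Z:\abs{N_X(z)}<(1-\xi)q_{XZ}\abs X}$ bounds the size of that set by $\tfrac{\epsilon p}{\xi q_{XZ}}\abs Z\le\tfrac{\epsilon}{\xi\alpha}\abs Z$; the reverse inequality and the analogues with $Y$ are identical. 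Thus, once $\epsilon$ is a sufficiently small polynomial in $\alpha$ and $\xi$, all but at most $\tfrac\xi2\abs Z$ vertices $z$ are \emph{good}, meaning $\abs{N_X(z)}$ and $\abs{N_Y(z)}$ lie within a factor $1\pm\xi$ of $q_{XZ}\abs X$ and $q_{YZ}\abs Y$.

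I next reduce the discrepancy conclusion to a box-norm estimate. Put $\wt G=G-q_{XY}$ on $X\times Y$, so $\abs{\wt G}\le 1$. Two applications of the Cauchy--Schwarz inequality show that for every bipartite weighted graph $F$ on $(A,B)$ and all $u\colon A\to[0,1]$, $v\colon B\to[0,1]$,
\[
\abs{\int_{\substack{a\in A\\ b\in B}}F(a,b)u(a)v(b)\,da\,db}\le\left(\int_{\substack{a,a'\in A\\ b,b'\in B}}F(a,b)F(a',b)F(a,b')F(a',b')\,da\,da'\,db\,db'\right)^{1/4}.
\]
Applying this with $F=\wt G$ restricted to $N_X(z)\times N_Y(z)$, and converting the normalisation from $N_X(z)\times N_Y(z)$ back to $X\times Y$ (which for good $z$ costs a factor at most $(\alpha^2(1-\xi)^2p^2)^{-2}$), we see that a good $z$ satisfies the desired $\DISC(q_{XY},p,\epsilon')$ whenever $T(z)\le c_0p^8$, where $c_0:=((1-\xi)\alpha\epsilon')^4$ and
\[
T(z):=\int_{\substack{x,x'\in X\\ y,y'\in Y}}\wt G(x,y)\wt G(x',y)\wt G(x,y')\wt G(x',y')\,G(x,z)G(x',z)G(y,z)G(y',z)\,dx\,dx'\,dy\,dy'.
\]
Note $T(z)\ge 0$, since $T(z)=\int_{y,y'}G(y,z)G(y',z)\bigl(\int_x\wt G(x,y)\wt G(x,y')G(x,z)\,dx\bigr)^2\,dy\,dy'$.

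It remains to bound $\int_{z\in Z}T(z)\,dz$: Markov's inequality then produces at most $\tfrac\xi2\abs Z$ vertices with $T(z)>c_0p^8$, and combining this with the first paragraph gives the conclusion. Observe that $\int_zT(z)\,dz$ is exactly the weighted homomorphism count of $K_{1,2,2}$ --- its central vertex mapped into $Z$, the two vertices of one part into $X$, the two of the other into $Y$ --- with the four spokes carrying the weight $G$ and the four edges of the $4$-cycle carrying the weight $\wt G$. Expanding $\wt G=G-q_{XY}$ in each cycle edge gives
\[
\int_zT(z)\,dz=\sum_{S\subseteq E(C_4)}(-q_{XY})^{4-\abs S}\,G(H_S),
\]
where $H_S\subseteq K_{1,2,2}$ is formed by the four spokes together with $S$, and $G(H_S)$ is its weighted count as in Theorem~\ref{thm:sparse-counting}. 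Since every edge of every $H_S$ joins two of the parts $X,Y,Z$, the counting lemma applies under the stated jumbledness on the $X$-$Y$, $X$-$Z$ and $Y$-$Z$ pairs (and the $\DISC$ hypotheses on $G$, whose $\epsilon$ we take small enough), yielding $\abs{G(H_S)-q(H_S)}\le\theta p^{e(H_S)}$ with $q(H_S)=q_{XZ}^2q_{YZ}^2q_{XY}^{\abs S}$ and $e(H_S)=4+\abs S$. The main terms then contribute $q_{XZ}^2q_{YZ}^2q_{XY}^4\sum_{j=0}^4\binom4j(-1)^{4-j}=0$, while each of the $16$ error terms is at most $q_{XY}^{4-\abs S}\theta p^{4+\abs S}\le\theta p^8$ because $q_{XY}\le p$. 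Hence $\abs{\int_zT(z)\,dz}\le 16\theta p^8$, which is at most $\tfrac\xi2c_0p^8$ provided $\theta$ --- and therefore $c$ and $\epsilon$, which the counting lemma allows us to take polynomial in $\theta$ --- is a sufficiently small polynomial in $\alpha,\xi,\epsilon'$.

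I expect the main obstacle to be a bookkeeping subtlety rather than a conceptual one: the counting lemma must be invoked for $H_S=K_{1,2,2}$ itself under the \emph{non-uniform} jumbledness exponents $4,2,3$ on the pairs $X$-$Y$, $X$-$Z$, $Y$-$Z$, whereas the uniform exponent in the statement of Theorem~\ref{thm:sparse-counting} is larger. To get around this one uses the edge-by-edge version of the counting lemma implicit in its proof: the jumbledness actually consumed on a pair depends only on the local structure of $H$ near the corresponding edges, and for $K_{1,2,2}$, processed with the $Z$-vertex first and the two $X$-vertices ahead of the two $Y$-vertices, the exponents that appear are precisely $4$, $2$ and $3$ --- which is exactly why these asymmetric values occur in the proposition.
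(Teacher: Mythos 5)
Your argument is correct and is essentially identical to the paper's proof: the degree bounds via discrepancy, the reduction of $\DISC$ on $(N_X(z),N_Y(z))$ to a rescaled $C_4$-type box-norm bound via two applications of Cauchy--Schwarz (the paper's Lemmas~\ref{lem:C4-DISC} and \ref{lem:C4-DISC-subgraph}), the identification of $\int_z T(z)\,dz$ with a weighted $K_{1,2,2}$ count whose expansion in $\wt G = G - q_{XY}$ has cancelling main terms and $O(\theta p^8)$ error terms, the nonnegativity of $T(z)$ (the paper's Lemma~\ref{lem:K11-K12-K22}), and Markov's inequality all appear in the same roles in Section~\ref{sec:inherit}. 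Your closing remark about the non-uniform jumbledness exponents $4,2,3$ is also exactly how the paper handles this point, by appealing to the edge-by-edge accounting of Section~\ref{sec:tut-jumble} (Example~\ref{ex:tut-K122}).
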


The question now arises as to why one would prove that the inheritance
property holds if we already know its intended
consequence. Surprisingly, there is another counting lemma, giving only a lower bound on $G(H)$, which is
sufficient to establish the various extremal results but typically requires a much weaker jumbledness assumption. The proof of this statement relies on the inheritance property in a critical way. The notations $G(H)$ and $q(H)$ were defined in Theorem~\ref{thm:sparse-counting}.

\begin{theorem} \label{thm:onesidedintro} For every fixed graph $H$ on
  vertex set $\{1,2, \dots, m\}$ and every $\alpha, \theta > 0$, there
  exist constants $c > 0$ and $\epsilon > 0$ such that the following
  holds.

Let $\Gamma$ be a graph with vertex subsets $X_1, \dots, X_m$ and
suppose that the bipartite graph $(X_i, X_j)_\Gamma$ is $(p, cp^{d_2(H)
  + 3} \sqrt{\abs{X_i}\abs{X_j}})$-jumbled for every $i < j$
with $ij \in E(H)$. Let $G$ be a subgraph of $\Gamma$, with the vertex
$i$ of $H$ assigned to the vertex subset $X_i$ of $G$. For each edge
$ij$ of $H$, assume that $(X_i, X_j)_G$ satisfies $\DISC(q_{ij}, p,
\epsilon)$, where $\alpha p \leq q_{ij} \leq p$. Then
\[G(H) \geq (1 - \theta) q(H).\]
\end{theorem}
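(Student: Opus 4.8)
The plan is to prove the one-sided counting lemma by induction on the structure of $H$, embedding vertices one at a time in a degeneracy-type order and using the inheritance property (Proposition~\ref{prop:inheritintro}, or rather a multi-partite generalization of it) to ensure that at each stage the not-yet-embedded structure still sits inside pairs satisfying a discrepancy condition relative to the already-chosen neighborhoods. Concretely, fix an ordering $v_1, \dots, v_m$ of $V(H)$ that witnesses the value $d_2(H)$ in the sense of the definition given before Theorem~\ref{RemovalIntro}. We embed $v_1, v_2, \dots$ successively. Having embedded $v_1, \dots, v_{i-1}$ at points $x_1, \dots, x_{i-1}$, we want the following invariant to hold for "most" choices: for every not-yet-placed vertex $v_j$ ($j \geq i$), its candidate set $S_j$ — the intersection of the neighborhoods (in $G$) of those $x_h$ with $h < i$ and $v_h v_j \in E(H)$ — has size at least $(1-o(1)) \bigl(\prod_{h<i,\,v_hv_j\in E(H)} q_{hj}\bigr)|X_j|$, and moreover every pair $(S_j, S_{j'})_G$ with $v_jv_{j'} \in E(H)$ still satisfies $\DISC(q_{jj'}, p, \epsilon_i)$ with a slowly degrading parameter $\epsilon_i$. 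Given this invariant for all $i$, placing the final vertex $v_m$ in $S_m$ (of the right size) and using the lower-bound half of the discrepancy inequality for the edges from $v_m$ downward yields $G(H) \geq (1-\theta)q(H)$, since at each step we lose only a $q_{ij}$ factor for each edge of $H$ incident to an earlier vertex, and these factors multiply to exactly $q(H)$.

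The engine that propagates the invariant is an iterated form of inheritance. When we pass from step $i$ to step $i+1$ by choosing $x_i \in S_i$, we must simultaneously (a) control $|N_G(x_i) \cap S_j|$ for each later neighbor $v_j$ of $v_i$, and (b) ensure that the new candidate sets $S_j' = S_j \cap N_G(x_i)$ still form discrepancy-regular pairs with each other. Point (a) is a one-variable discrepancy statement and follows from $\DISC(q_{ij}, p, \epsilon)$ on $(S_i, S_j)_G$ directly; point (b) is exactly the content of Proposition~\ref{prop:inheritintro}, applied with the roles $(X, Y, Z) = (S_j, S_{j'}, S_i)$, which tells us that for all but a $\xi$-fraction of $x_i \in S_i$ the pair $(N_{S_j}(x_i), N_{S_{j'}}(x_i))_G$ inherits $\DISC(q_{jj'}, p, \epsilon')$. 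The jumbledness exponent needed for this is governed by the pairs involved: the inheritance statement for the pair $(v_j, v_{j'})$ being "restricted through" $v_i$ costs a jumbledness exponent roughly $1 + \tfrac12(N_{i-1}(j) + N_{i-1}(j'))$ on the various edges — this is precisely where the quantity $N_{i-1}(i) + N_{i-1}(j) \leq 2d_2(H)$ from the definition of $2$-degeneracy enters, and after accounting for the constant overhead in Proposition~\ref{prop:inheritintro} (the $p^4$, $p^3$, $p^2$ appearing there) one arrives at the stated hypothesis $\beta \leq c\,p^{d_2(H)+3}\sqrt{|X_i||X_j|}$ on every edge-pair of $G$. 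One must also track that $\epsilon, \epsilon', \xi$ can be chosen at the start so that after $m$ rounds of degradation the final parameters are still good enough; this forces $\epsilon$ to be a (tower-free, since we are not invoking any regularity lemma here) function of $\theta, \alpha, m$ — polynomial control is not claimed in this statement, only existence.

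The main obstacle, and the reason the inheritance property had to be proved first (as the paper emphasizes), is point (b): naive discrepancy is far too weak to say anything about edges between the tiny sets $N_{S_j}(x_i)$, which have relative size $\sim q \cdot p^{(\text{number of earlier neighbors})}$ and hence can be much sparser than the discrepancy error $\epsilon p$ would allow us to detect. Proposition~\ref{prop:inheritintro} circumvents this by using the full counting lemma (Theorem~\ref{thm:sparse-counting}) for an auxiliary graph to show inheritance holds for almost every anchor vertex, so the bulk of the difficulty is really packaged into that earlier result and into verifying that its hypotheses are met at every step of the induction. Two bookkeeping points deserve care: first, only edges $ij \in E(H)$ carry a jumbledness hypothesis in the statement, so the auxiliary inheritance applications must be arranged to only ever reference pairs $(X_i, X_j)$ with $ij \in E(H)$ — this is possible because the restriction of $(v_j, v_{j'})$ through $v_i$ only uses edges $v_iv_j, v_iv_{j'}$ (present in $H$) and $v_jv_{j'}$ (also present, since we only need the invariant for pairs that are edges of $H$); second, one should choose the ordering so that, when placing $v_i$, the already-placed neighbors of $v_i$ are not too numerous relative to $d_2(H)$, which is exactly what an optimal $2$-degeneracy ordering guarantees. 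Modulo these, the argument is a clean induction: invariant in, one application of inheritance plus one application of one-variable discrepancy, invariant out.
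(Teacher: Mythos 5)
Your architecture is the same as the paper's: embed the vertices in a $2$-degeneracy order, maintain target sets $T(i,j)$ together with lower bounds on their sizes and a discrepancy invariant between them, control sizes by one applications of $\DISC$ and re-establish discrepancy by inheritance, and multiply the per-step losses to get $(1-\theta)q(H)$. The exponent bookkeeping you sketch (the $p^{N_{i-1}(j)}$-scale shrinkage of the target sets combined with the fixed overhead in the inheritance proposition, giving $d_2(H)+3$) is also how the paper arrives at its hypothesis.

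There is, however, one concrete gap. You only invoke Proposition~\ref{prop:inheritintro}, whose conclusion concerns the pair $(N_X(z), N_Y(z))$ with \emph{both} sides restricted to the neighborhood of the anchor. Your parenthetical remark that the restriction of $(v_j, v_{j'})$ through $v_i$ ``only uses edges $v_iv_j, v_iv_{j'}$ (present in $H$)'' quietly assumes that whenever $v_jv_{j'}\in E(H)$ and $v_iv_j\in E(H)$, also $v_iv_{j'}\in E(H)$. For general $H$ this fails: if $v_{j'}$ is not a neighbor of $v_i$, then $S_{j'}$ must \emph{not} be intersected with $N_G(x_i)$ (doing so would discard legitimate embeddings and destroy the product structure $q(H)$), yet the pair $(S_j\cap N_G(x_i),\, S_{j'})$ still needs its discrepancy re-established for later steps, and Proposition~\ref{prop:inheritintro} does not apply to this asymmetric situation. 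This is exactly why the paper proves and uses the one-sided inheritance statement, Proposition~\ref{prop:inherit2} (whose jumbledness hypotheses are placed only on $(X,Y)$ and $(X,Z)$, i.e.\ on the edges $v_jv_{j'}$ and $v_iv_j$ of $H$, so your concern about pairs without a jumbledness hypothesis is still resolved). As written, your induction closes only for cliques, or more generally for graphs in which any two adjacent unembedded vertices have identical sets of earlier neighbors; adding the appeal to Proposition~\ref{prop:inherit2} for the mixed case (as the paper does with the sets $C_{i_1,i_2}(j)$ alongside the $B_{i_1,i_2}(j)$) repairs the argument and brings it in line with the paper's proof.
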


We refer to Theorem~\ref{thm:onesidedintro} as a one-sided counting
lemma, as we get a lower bound for $G(H)$ but no upper bound. However, in order to prove the theorems of Section \ref{sec:extremal}, we only need a lower bound. The proof of Theorem \ref{thm:onesidedintro} is a sparse version of a classical embedding strategy in regular graphs (see, for example, \cite{CRST83, FS09, GRR00}). Note that, as in the theorems of Section \ref{sec:extremal}, the exponent $d_2(H) + 3$ can be improved for certain graphs $H$. We will say more about this later. Moreover, one cannot hope to do better than $\beta = O(p^{(d(H) + 2)/4} n)$, so that the condition on $\beta$ is sharp up to a multiplicative constant in the exponent of $p$. We suspect that the exponent may even be sharp up to an additive constant.

{\bf \vspace{3mm}\noindent Organization.} We will begin in the next
section by giving a high level overview of the proof of our counting
lemmas. In Section~\ref{sec:counting-Gamma}, we prove some useful
statements about counting in the pseudorandom graph $\Gamma$. Then, in
Section \ref{sec:counting-G}, we prove the sparse counting lemma,
Theorem \ref{thm:sparse-counting}. The short proof of Proposition
\ref{prop:inheritintro} and some related propositions about
inheritance are given in Section \ref{sec:inherit}. The proof of the
one-sided counting lemma, which uses inheritance, is then given in
Section \ref{sec:oneside}. In Section \ref{sec:cycles}, we take a
closer look at one-sided counting in cycles. The sparse counting lemma
has a large number of applications extending many classical results to
the sparse setting. In Section \ref{sec:applications}, we discuss a
number of them in detail, including sparse extensions of
the Erd\H{o}s-Stone-Simonovits theorem, Ramsey's theorem, the graph removal
lemma and the removal lemma for groups. In Section~\ref{sec:conclude} we
briefly discuss a number of other applications, such as relative
quasirandomness, induced Ramsey numbers, algorithmic applications and
multiplicity results.

\section{Counting strategy} \label{sec:countingstrat}

In this section, we give a general overview of our approach to
counting. There are two types of counting results: two-sided counting
and one-sided counting. Two-sided counting refers to results of the
form $\abs{G(H) - q(H)} \leq \theta p^{e(H)}$ while one-sided counting
refers to results of the form $G(H) \geq q(H) - \theta
p^{e(H)}$. One-sided counting is always implied by two-sided counting,
although sometimes we are able to obtain one-sided counting results
under weaker hypotheses.

\subsection{Two-sided counting}

There are two main ingredients to the proof: \emph{doubling} and
\emph{densification}. These two procedures reduce the
problem of counting embeddings of $H$ to the same problem for some other graphs
$H'$.

If $a \in V(H)$, \emph{the graph $H$ with $a$ doubled}, denoted $H_{a
  \x 2}$, is the graph created from $V(H)$ by adding a new vertex $a'$
whose neighbors are precisely the neighbors of $a$. In the assignment
of vertices of $H_{a \x 2}$ to vertex subsets of $\Gamma$, the new vertex
$a'$ is assigned to the same vertex subset of $\Gamma$. For example, the
following figure shows a triangle with a vertex doubled.
\begin{center}
 \begin{tikzpicture}
    \begin{scope}
      \node[p, label=right:$\x 2$] (a) at (0.5,.8) {};
      \node[p] (b) at (0,0) {};
      \node[p] (c) at (1,0) {};
      \draw (a)--(b)--(c)--(a);
    \end{scope}
    \node at (2,0.5) {$=$};
    \begin{scope}[xshift=3cm]
      \node[p] (a) at (0.25,.8) {};
      \node[p] (a2) at (0.75,.8) {};
      \node[p] (b) at (0,0) {};
      \node[p] (c) at (1,0) {};
      \draw (b)--(a)--(c)--(a2)--(b)--(c);
    \end{scope}
  \end{tikzpicture}
\end{center}

A typical reduction using doubling is summarized in
Figure~\ref{fig:doubling}. Each graph represents the claim that the number of embeddings of the graph
drawn, where the straight edges must land in $G$ and the wavy edges
must land in $\Gamma$, is approximately what one would expect from
multiplying together the appropriate edge densities between the vertex subsets of $G$
and $\Gamma$.

The top arrow in Figure~\ref{fig:doubling} is the doubling step. This
allows us to reduce the problem of counting $H$ to that of counting a
number of other graphs, each of which may have some edges which embed into $G$ and some which embed into $\Gamma$.
For example, if we let $H_{-a}$ be the graph that we get by omitting every edge
which is connected to a particular vertex $a$, we are interested in the number of copies of $H_{-a}$ in both $G$ and $\Gamma$. We are also interested in the original graph $H$, but now on the understanding that the edges incident with $a$ embed into $G$ while those that do not touch $a$ embed into $\Gamma$. Finally, we are interested in the graph $H_{a\x 2}$ formed by doubling the vertex $a$, but again the edges which do not touch $a$ or its copy $a'$ only have to embed into $\Gamma$. This reduction, which is justified by an application of the Cauchy-Schwarz inequality, will be detailed in Section~\ref{sec:doubling}.

The bottom two arrows in Figure~\ref{fig:doubling} are representative of another reduction, where
we can reduce the problem of counting a particular graph, with edges that map to both $G$ and $\Gamma$, into one where we only care about the edges that
embed into $G$. We can make such a reduction because counting
embeddings into $\Gamma$ is much easier due to its
jumbledness. We will discuss this reduction, amongst other properties of jumbled graphs $\Gamma$, in Section~\ref{sec:counting-Gamma}.

\begin{figure}[!htp]
  \centering
    \begin{tikzpicture}[scale=.5]
    \begin{scope}[shift={(0,0)}]
      \node[p, label=right:$a$] (a) at (0,2) {};
      \node[p] (1) at (-1.5,0) {};
      \node[p] (2) at (-.5,0) {};
      \node[p] (3) at (.5,0) {};
      \node[p] (4) at (1.5,0) {};
      \node[p] (5) at (-1,-1) {};
      \node[p] (6) at (0,-1) {};
      \node[p] (7) at (1,-1) {};
      \draw (a)--(1) (a)--(2) (a)--(3) (a)--(4);
      \draw (1)--(5)--(3)--(6)--(7)--(4) (2)--(3);
      \node at (0,-2) {$G(H)$};
      \node at (0,-3.5) {\LargeUparrow};
      \draw[thick] decorate [decoration={brace, amplitude=1cm}] {(-12,-7) -- (12,-7)};
    \end{scope}
    \begin{scope}[shift={(-9,-9)}]
      \node[p] (1) at (-1.5,0) {};
      \node[p] (2) at (-.5,0) {};
      \node[p] (3) at (.5,0) {};
      \node[p] (4) at (1.5,0) {};
      \node[p] (5) at (-1,-1) {};
      \node[p] (6) at (0,-1) {};
      \node[p] (7) at (1,-1) {};
      \draw (1)--(5)--(3)--(6)--(7)--(4) (2)--(3);
      \node at (0,-2) {$G(H_{-a})$};
    \end{scope}
    \begin{scope}[shift={(-3,-9)}]
      \node[p] (a) at (-.5,2) {};
      \node[p] (a2) at (.5,2) {};
      \node[p] (1) at (-1.5,0) {};
      \node[p] (2) at (-.5,0) {};
      \node[p] (3) at (.5,0) {};
      \node[p] (4) at (1.5,0) {};
      \node[p] (5) at (-1,-1) {};
      \node[p] (6) at (0,-1) {};
      \node[p] (7) at (1,-1) {};
      \draw (a)--(1)--(a2)--(2)--(a)--(3)--(a2)--(4)--(a);
      \draw decorate [Gamma] { (1)--(5)--(3)--(6)--(7)--(4) (2)--(3)};
      \node at (0,-2) {$g(H_{a \x 2})$};
      \node at (0,-3.5) {\LargeUparrow};
    \end{scope}
    \begin{scope}[shift={(3,-9)}]
      \node[p] (a) at (0,2) {};
      \node[p] (1) at (-1.5,0) {};
      \node[p] (2) at (-.5,0) {};
      \node[p] (3) at (.5,0) {};
      \node[p] (4) at (1.5,0) {};
      \node[p] (5) at (-1,-1) {};
      \node[p] (6) at (0,-1) {};
      \node[p] (7) at (1,-1) {};
      \draw (a)--(1) (a)--(2) (a)--(3) (a)--(4);
            \node at (0,-2) {$g(H)$};
      \draw decorate [Gamma] { (1)--(5)--(3)--(6)--(7)--(4) (2)--(3)};
            \node at (0,-3.5) {\LargeUparrow};
    \end{scope}
    \begin{scope}[shift={(9,-9)}]
      \node[p] (1) at (-1.5,0) {};
      \node[p] (2) at (-.5,0) {};
      \node[p] (3) at (.5,0) {};
      \node[p] (4) at (1.5,0) {};
      \node[p] (5) at (-1,-1) {};
      \node[p] (6) at (0,-1) {};
      \node[p] (7) at (1,-1) {};
      \draw decorate [Gamma] { (1)--(5)--(3)--(6)--(7)--(4) (2)--(3)};
            \node at (0,-2) {$\Gamma(H_{-a})$};
    \end{scope}
    \begin{scope}[shift={(-3,-16)}]
      \node[p] (a) at (-.5,2) {};
      \node[p] (a2) at (.5,2) {};
      \node[p] (1) at (-1.5,0) {};
      \node[p] (2) at (-.5,0) {};
      \node[p] (3) at (.5,0) {};
      \node[p] (4) at (1.5,0) {};
            \node at (0,-1) {$G(H_{a,a\x2})$};
      \draw (a)--(1)--(a2)--(2)--(a)--(3)--(a2)--(4)--(a);
    \end{scope}
    \begin{scope}[shift={(3,-16)}]
      \node[p] (a) at (0,2) {};
      \node[p] (1) at (-1.5,0) {};
      \node[p] (2) at (-.5,0) {};
      \node[p] (3) at (.5,0) {};
      \node[p] (4) at (1.5,0) {};
            \node at (0,-1) {$G(H_{a})$};
      \draw (a)--(1) (a)--(2) (a)--(3) (a)--(4);
    \end{scope}
  \end{tikzpicture}
  \caption{The doubling reduction. Each graph represents some counting
  lemma. The straight edges must embed into $G$ while wavy edges must
  embed into the jumbled graph $\Gamma$.}
  \label{fig:doubling}
\end{figure}
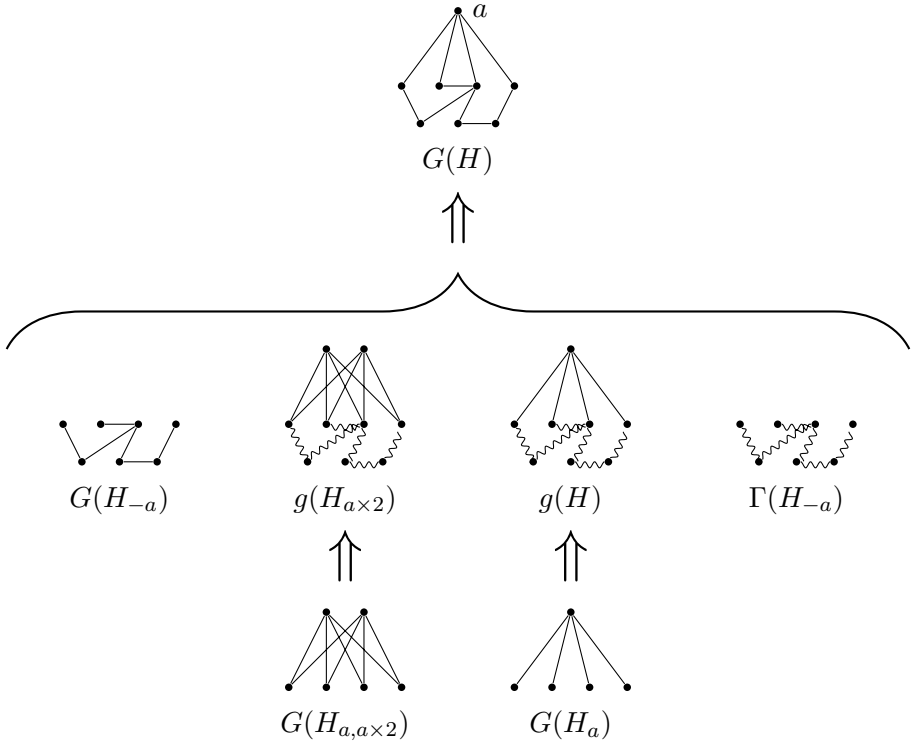

For triangles, a similar reduction is shown in Figure~\ref{fig:triangle-doubling}.
In the end, we have changed the task of counting triangles to the task of
counting the number of cycles of length $4$. It would be natural now to apply doublng
to the $4$-cycle but, unfortunately, this process is circular. Instead, we introduce an
alternative reduction process which we refer to as densification.

\begin{figure}[!htp]
  \centering
  \begin{tikzpicture}[scale=.5]
    \begin{scope}[shift={(0,.5)}]
      \node[p,label=right:$a$] (a) at (0,1.6) {};
      \node[p] (b) at (-1,0) {};
      \node[p] (c) at (1,0) {};
      \draw (b)--(a)--(c);
      \draw (b)--(c);
      \node at (0,-1.5) {\LargeUparrow};
      \draw[thick] decorate [decoration={brace, amplitude=1cm}] {(-10,-4.5) -- (10,-4.5)};
    \end{scope}
    \begin{scope}[shift={(-9,-6)}]
      \node[p] (b) at (-1,0) {};
      \node[p] (c) at (1,0) {};
      \draw (b)--(c);
    \end{scope}
    \begin{scope}[shift={(-3,-6)}]
      \node[p] (a) at (-.5,1.6) {};
      \node[p] (a2) at (.5, 1.6) {};
      \node[p] (b) at (-1,0) {};
      \node[p] (c) at (1,0) {};
      \draw (b)--(a)--(c)--(a2)--(b);
      \draw decorate [Gamma] {(b)--(c)};
      \node at (0,-1.5) {\LargeUparrow};
    \end{scope}
    \begin{scope}[shift={(-3,-11)}]
      \node[p] (a) at (-.5,1.6) {};
      \node[p] (a2) at (.5, 1.6) {};
      \node[p] (b) at (-1,0) {};
      \node[p] (c) at (1,0) {};
      \draw (b)--(a)--(c)--(a2)--(b);
    \end{scope}
    \begin{scope}[shift={(3,-6)}]
      \node[p] (a) at (0,1.6) {};
      \node[p] (b) at (-1,0) {};
      \node[p] (c) at (1,0) {};
      \draw (b)--(a)--(c);
      \draw decorate [Gamma] {(b)--(c)};
      \node at (0,-1.5) {\LargeUparrow};
    \end{scope}
    \begin{scope}[shift={(3,-11)}]
      \node[p] (a) at (0,1.6) {};
      \node[p] (b) at (-1,0) {};
      \node[p] (c) at (1,0) {};
      \draw (b)--(a)--(c);
    \end{scope}
    \begin{scope}[shift={(9,-6)}]
      \node[p] (b) at (-1,0) {};
      \node[p] (c) at (1,0) {};
      \draw decorate [Gamma] {(b)--(c)};
    \end{scope}
  \end{tikzpicture}
  \caption{The doubling reduction for counting triangles.}
  \label{fig:triangle-doubling}
\end{figure}
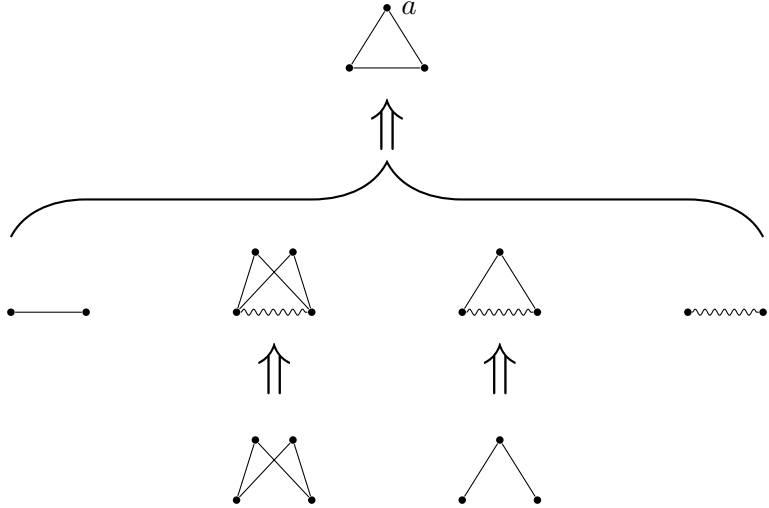

In the above reduction from triangles to $4$-cycles, two of the vertices of the 4-cycle are
embedded into the same part $X_i$ of $G$. We actually consider
the more general setting where the vertices of the 4-cycle lie in
different parts, $X_1, X_2, X_3, X_4$, of $G$.

Assume without loss of generality that there is no edge between $X_1$
and $X_3$ in $G$. Let us add a weighted graph between $X_1$ and
$X_3$, where the weight on the edge $x_1x_3$ is proportional to
the number of paths $x_1x_4x_3$ for $x_4 \in X_4$. Since $(X_1, X_4)_G$
and $(X_3,X_4)_G$ satisfy discrepancy, the number of paths will be on
the order of $q_{14}q_{34} \abs{X_4}$ for most pairs $(x_1,
x_3)$. After discarding a negligible set of pairs $(x_1, x_3)$ that
give too many paths, and then appropriately rescaling the weights of the
other edges $x_1x_3$, we create a weighted bipartite graph between $X_1$
and $X_3$ that behaves like a \emph{dense} weighted graph satisfying
discrepancy. Furthermore, counting 4-cycles in $X_1, X_2, X_3, X_4$ is
equivalent to counting triangles in $X_1, X_2, X_3$ due to the choice
of weights. We call this process densification. It is
illustrated below. In the figure, a thick edge
signifies that the bipartite graph that it embeds into is dense.

\begin{center}
  \begin{tikzpicture}[scale=.5]
    \begin{scope}
      \node[p] (1) at (1,0) {};
      \node[p] (2) at (0,1) {};
      \node[p] (3) at (-1,0) {};
      \node[p] (4) at (0,-1) {};
      \draw (1)--(2)--(3)--(4)--(1);
    \end{scope}

    \node at (4,0) {\LargeLeftarrow};

    \begin{scope}[shift={(8,0)}]
      \node[p] (1) at (1,0) {};
      \node[p] (2) at (0,1) {};
      \node[p] (3) at (-1,0) {};
      \draw (1)--(2)--(3);
      \draw [dense] (1)--(3);
    \end{scope}
  \end{tikzpicture}
\end{center}

More generally, if $b$ is a vertex of $H$ of degree 2, with neighbors
$\set{a,c}$, such that $a$ and $c$ are not adjacent, then
densification allows us to transform $H$ by removing the edges $ab$
and $bc$ and adding a dense edge $ac$, as illustrated below. For
more on this process, we refer the reader to Section \ref{sec:densification}.

\begin{center}
  \begin{tikzpicture}[scale=.5]
    \begin{scope}
      \node[p,label=above:$a$] (a) at (-1,0) {};
      \node[p,label=above:$b$] (b) at (0,.5) {};
      \node[p,label=above:$c$] (c) at (1,0) {};
      \draw (a)--(b)--(c);
      \draw (a) -- +(-.3,-1) (a) -- +(0,-1)  (a) -- +(.3,-1) ;
      \draw (c) -- +(-.3,-1) (c) -- +(0,-1) (c) -- +(.3,-1) ;
      \draw (0,-1.5) ellipse (2cm and 1cm);
    \end{scope}
    \node at (4,-1) {\LargeLeftarrow};
    \begin{scope}[shift={(8,0)}]
      \node[p,label=above:$a$] (a) at (-1,0) {};
      \node[p,label=above:$c$] (c) at (1,0) {};
      \draw[dense] (a)--(c);
      \draw (a) -- +(-.3,-1) (a) -- +(0,-1)  (a) -- +(.3,-1) ;
      \draw (c) -- +(-.3,-1) (c) -- +(0,-1) (c) -- +(.3,-1) ;
      \draw (0,-1.5) ellipse (2cm and 1cm);
    \end{scope}
  \end{tikzpicture}
\end{center}

We needed to count 4-cycles in order to count triangles, so it seems
at first as if our reduction from $4$-cycles to triangles is circular. However, instead of counting triangles in a sparse graph, we now have a
dense bipartite graph between one of the pairs of vertex subsets. Since it is easier to
count in dense graphs than in sparse graphs, we have made
progress. The next step is to do doubling again. This is shown in
Figure~\ref{fig:triangle-one-dense-edge}. The bottommost arrow is
another application of densification.

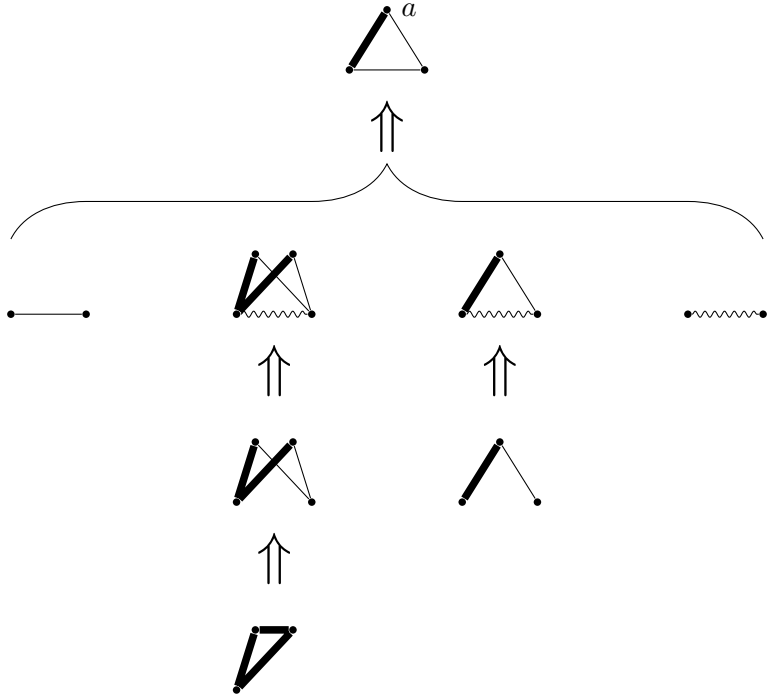
\begin{figure}[!htp]
  \centering
  \begin{tikzpicture}[scale=.5]
    \begin{scope}[shift={(0,.5)}]
      \node[p, label=right:$a$] (a) at (0,1.6) {};
      \node[p] (b) at (-1,0) {};
      \node[p] (c) at (1,0) {};
      \draw[dense] (a)--(b);
      \draw (a)--(c);
      \draw (b)--(c);
      \node at (0,-1.5) {\LargeUparrow};
      \draw decorate [decoration={brace, amplitude=1cm}] {(-10,-4.5) -- (10,-4.5)};
    \end{scope}
    \begin{scope}[shift={(-9,-6)}]
      \node[p] (b) at (-1,0) {};
      \node[p] (c) at (1,0) {};
      \draw (b)--(c);
    \end{scope}
    \begin{scope}[shift={(-3,-6)}]
      \node[p] (a) at (-.5,1.6) {};
      \node[p] (a2) at (.5, 1.6) {};
      \node[p] (b) at (-1,0) {};
      \node[p] (c) at (1,0) {};
      \draw [dense] (a)--(b)--(a2);
      \draw (a)--(c)--(a2);
      \draw decorate [Gamma] {(b)--(c)};
      \node at (0,-1.5) {\LargeUparrow};
    \end{scope}
    \begin{scope}[shift={(-3,-11)}]
      \node[p] (a) at (-.5,1.6) {};
      \node[p] (a2) at (.5, 1.6) {};
      \node[p] (b) at (-1,0) {};
      \node[p] (c) at (1,0) {};
      \draw [dense] (a)--(b)--(a2);
      \draw (a)--(c)--(a2);
      \node at (0,-1.5) {\LargeUparrow};
    \end{scope}
    \begin{scope}[shift={(-3,-16)}]
      \node[p] (a) at (-.5,1.6) {};
      \node[p] (a2) at (.5, 1.6) {};
      \node[p] (b) at (-1,0) {};
      \draw [dense] (a)--(b)--(a2)--(a);
    \end{scope}

    \begin{scope}[shift={(3,-6)}]
      \node[p] (a) at (0,1.6) {};
      \node[p] (b) at (-1,0) {};
      \node[p] (c) at (1,0) {};
      \draw [dense] (a)--(b);
      \draw (a)--(c);
      \draw decorate [Gamma] {(b)--(c)};

      \node at (0,-1.5) {\LargeUparrow};
    \end{scope}
    \begin{scope}[shift={(3,-11)}]
      \node[p] (a) at (0,1.6) {};
      \node[p] (b) at (-1,0) {};
      \node[p] (c) at (1,0) {};
      \draw [dense] (a)--(b);
      \draw (a)--(c);
    \end{scope}
    \begin{scope}[shift={(9,-6)}]
      \node[p] (b) at (-1,0) {};
      \node[p] (c) at (1,0) {};
      \draw decorate [Gamma] {(b)--(c)};
    \end{scope}
  \end{tikzpicture}
  \caption{The doubling reduction for triangles with one dense edge.}
  \label{fig:triangle-one-dense-edge}
\end{figure}

We have therefore reduced the problem of counting triangles in a sparse graph
to that of counting triangles in a dense weighted graph, which we already know
how to do. This completes the counting lemma for 4-cycles.

In Figure~\ref{fig:doubling}, doubling reduces counting in a general $H$ to
counting $H$ with one vertex deleted (which we handle by induction) as
well as graphs of the form $K_{1,t}$ and $K_{2,t}$. Trees like $K_{1,t}$ are not too
hard to count. It therefore remains to count $K_{2,t}$. As with counting $C_4$
(the case $t=2$), we first perform a densification.
\begin{center}
  \begin{tikzpicture}
    \begin{scope}
      \node[p] (a) at (0,1) {};
      \node[p] (b) at (1,1) {};
      \node[p] (0) at (-1,0) {};
      \node[p] (1) at (0,0) {};
      \node[p] (2) at (1,0) {};
      \node[p] (3) at (2,0) {};
      \draw (b)--(0)--(a)--(1)--(b)--(2)--(a)--(3)--(b);
      \node at (3.5,0.5) {\LargeLeftarrow};
    \end{scope}
    \begin{scope}[shift={(6,0)}]
      \node[p] (a) at (0,1) {};
      \node[p] (b) at (1,1) {};
      \node[p] (0) at (-1,0) {};
      \node[p] (1) at (0,0) {};
      \node[p] (2) at (1,0) {};
      \draw[dense] (a)--(b);
      \draw (b)--(0)--(a)--(1)--(b)--(2)--(a);
    \end{scope}
  \end{tikzpicture}
\end{center}
The graph on the right can be counted using doubling and induction, as
shown in Figure~\ref{fig:K2t-doubling}. Note that the $C_4$ count is
required as an input to this step. This then completes the
proof of the counting lemma.

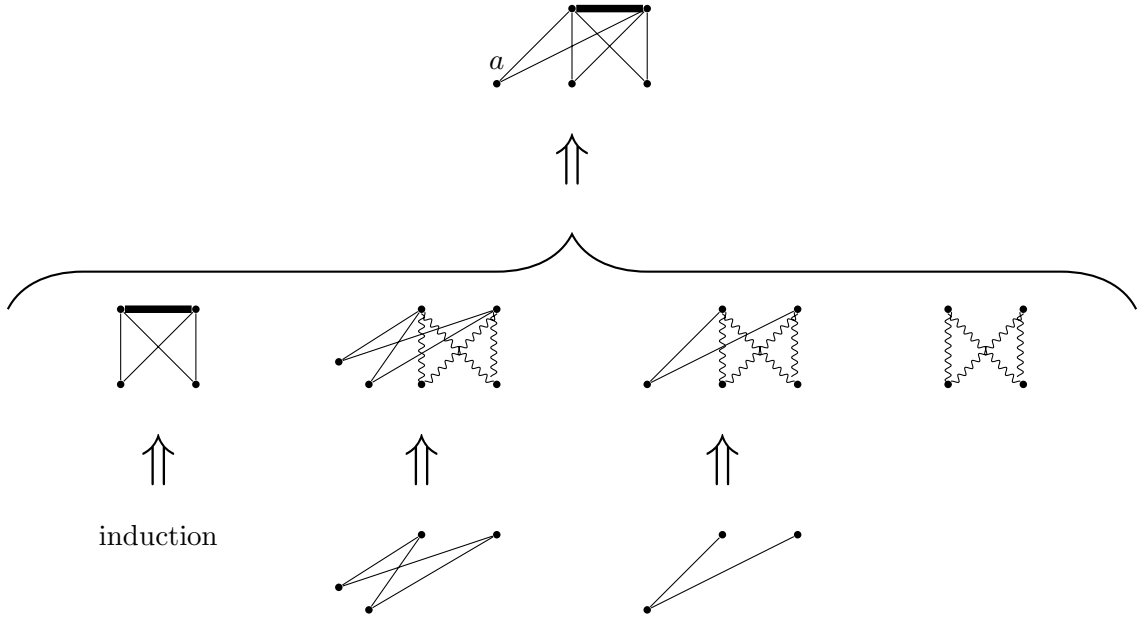
\begin{figure}[!htp]
  \centering
  \begin{tikzpicture}
    \begin{scope}[shift={(0,1)}]
        \node[p] (a) at (0,1) {};
        \node[p] (b) at (1,1) {};
        \node[p,label=above:$a$] (0) at (-1,0) {};
        \node[p] (1) at (0,0) {};
        \node[p] (2) at (1,0) {};
        \draw[dense] (a)--(b);
        \draw (b)--(0)--(a)--(1)--(b)--(2)--(a);
      \node at (0,-1) {\LargeUparrow};
      \draw[thick] decorate [decoration={brace, amplitude=1cm}]
      {(-7.5,-3) -- (7.5,-3)};
    \end{scope}
    \begin{scope}[shift={(-6,-3)}]
        \node[p] (a) at (0,1) {};
        \node[p] (b) at (1,1) {};
        \node[p] (1) at (0,0) {};
        \node[p] (2) at (1,0) {};
        \draw[dense] (a)--(b);
        \draw (a)--(1)--(b)--(2)--(a);
        \node at (.5,-1) {\LargeUparrow};
        \node at (.5, -2) {induction};
    \end{scope}
    \begin{scope}[shift={(-2,-3)}]
        \node[p] (a) at (0,1) {};
        \node[p] (b) at (1,1) {};
        \node[p] (0) at (-.7,0) {};
        \node[p] (0') at (-1.1, 0.3) {};
        \node[p] (1) at (0,0) {};
        \node[p] (2) at (1,0) {};
        \draw (a)--(0')--(b)--(0)--(a);
        \draw decorate[Gamma] {(a)--(1)--(b)--(2)--(a)};
      \node at (0,-1) {\LargeUparrow};
    \end{scope}
    \begin{scope}[shift={(-2,-6)}]
        \node[p] (a) at (0,1) {};
        \node[p] (b) at (1,1) {};
        \node[p] (0) at (-.7,0) {};
        \node[p] (0') at (-1.1, 0.3) {};
        \draw (a)--(0')--(b)--(0)--(a);
    \end{scope}
    \begin{scope}[shift={(2,-3)}]
        \node[p] (a) at (0,1) {};
        \node[p] (b) at (1,1) {};
        \node[p] (0) at (-1,0) {};
        \node[p] (1) at (0,0) {};
        \node[p] (2) at (1,0) {};
        \draw (b)--(0)--(a);
        \draw decorate[Gamma] {(a)--(1)--(b)--(2)--(a)};
      \node at (0,-1) {\LargeUparrow};
    \end{scope}
    \begin{scope}[shift={(2,-6)}]
        \node[p] (a) at (0,1) {};
        \node[p] (b) at (1,1) {};
        \node[p] (0) at (-1,0) {};
        \draw (b)--(0)--(a);
    \end{scope}
    \begin{scope}[shift={(5,-3)}]
        \node[p] (a) at (0,1) {};
        \node[p] (b) at (1,1) {};
        \node[p] (1) at (0,0) {};
        \node[p] (2) at (1,0) {};
        \draw decorate[Gamma] {(a)--(1)--(b)--(2)--(a)};
    \end{scope}
  \end{tikzpicture}
  \caption{The doubling reduction for counting $K_{2,t}$.}
  \label{fig:K2t-doubling}
\end{figure}

\subsection{One-sided counting}

For one-sided counting, we embed the vertices of $H$ into those of $G$
one at a time. By making a choice for where a vertex $a$ of $H$ lands
in $G$, we shrink the set of possible targets for each neighbor of
$a$.  These target sets shrink by a factor roughly corresponding to
the edge densities of $G$, as most vertices of $G$ have close to the
expected number of neighbors due to discrepancy. This allows us to
obtain a lower bound on the number of embeddings of $H$ into $G$.

The above argument is missing one important ingredient. When we shrink
the set of possible targets of vertices in $H$, we do not know if $G$
restricted to these smaller vertex subsets still satisfies
the discrepancy condition, which is needed for embedding later vertices. When $G$ is dense, this is not an issue, since the
restricted vertex subsets have size at least a constant factor of the
original vertex subsets, and thus discrepancy is inherited. When $G$
is sparse, the restricted vertex subsets can become much smaller than the
original vertex subsets, so discrepancy is not automatically
inherited.

To address this issue, we observe that discrepancy between two vertex
sets follows from some variant of the $K_{2,2}$ count (and the
counting lemma shows that they are in fact equivalent). By our
counting lemma, we also know that the graph below has roughly the expected
count. This in turn implies that discrepancy is inherited in the
neighborhoods of $G$ since, roughly speaking, it implies that almost
every vertex has roughly the expected number of $4$-cycles in its
neighborhood. The one-sided counting approach sketched above then
carries through. For further details on inheritance of discrepancy,
see Section~\ref{sec:inherit}. The proof of the one-sided counting
lemma may be found in Section~\ref{sec:oneside}.
\[
  \begin{tikzpicture}
    \node[p] (x) at (-90:1) {};
    \node[p] (y) at (130:1) {};
    \node[p] (y1) at (170:1) {};
    \node[p] (z) at (50:1) {};
    \node[p] (z1) at (10:1) {};
    \draw (x)--(y)--(z)--(x)--(y1)--(z1)--(x)
    (y1)--(z) (y)--(z1);
  \end{tikzpicture}
\]

We also prove a one-sided counting lemma for large cycles using much weaker
jumbledness hypotheses. The idea is to extend densification to more
than two edges at a time. We will show how to transform a multiply subdivided
edge into a single dense edge, as illustrated below.
\[
  \begin{tikzpicture}[scale=.6]
    \begin{scope}
      \node[p] (a) at (-2,0) {};
      \node[p] (b1) at (-1.2,.6) {};
      \node[p] (b2) at (0,.8) {};
      \node[p] (b3) at (1.2,.6) {};
      \node[p] (c) at (2,0) {};
      \draw (a)--(b1)--(b2)--(b3)--(c);
      \draw (a) -- +(-.3,-1) (a) -- +(0,-1)  (a) -- +(.3,-1) ;
      \draw (c) -- +(-.3,-1) (c) -- +(0,-1) (c) -- +(.3,-1) ;
      \draw (0,-1.5) ellipse (3cm and 1cm);
    \end{scope}

    \node at (5,-.5) {\LargeLeftarrow};

    \begin{scope}[shift={(10,0)}]
      \node[p] (a) at (-2,0) {};
      \node[p] (c) at (2,0) {};
      \draw[dense] (a)--(c);
      \draw (a) -- +(-.3,-1) (a) -- +(0,-1)  (a) -- +(.3,-1) ;
      \draw (c) -- +(-.3,-1) (c) -- +(0,-1) (c) -- +(.3,-1) ;
      \draw (0,-1.5) ellipse (3cm and 1cm);
    \end{scope}
  \end{tikzpicture}
\]
Starting with a long cycle, we can perform two such densifications, as
shown below. The resulting triangle is easy to count, since a typical
embedding of the top vertex gives a linear-sized neighborhood. The full
details may be found in Section \ref{sec:cycles}.
\[
  \begin{tikzpicture}
    \path[use as bounding box] (-2,-1) rectangle (6,1);
    \begin{scope}
      \foreach \i in {0,1,2,3,4,5,6}{
        \node[p] (\i) at (360/7 *\i + 90:1) {};
      }
      \draw (0)--(1)--(2)--(3)--(4)--(5)--(6)--(0);
    \end{scope}

    \node at (2,0) {\LargeLeftarrow};

    \begin{scope}[shift={(4,0)}]
      \foreach \i in {0,3,4}{
        \node[p] (\i) at (360/7 *\i + 90:1) {};
      }
      \draw[dense] (3)--(0)--(4);
      \draw (3)--(4);
    \end{scope}
  \end{tikzpicture}
\]

\section{Counting in \texorpdfstring{$\Gamma$}{Gamma}} \label{sec:counting-Gamma}

In this section, we develop some tools for counting in $\Gamma$.
Here is the setup for this section.

\begin{setup} \label{set:Gamma}
Let $\Gamma$ be a graph with
vertex subsets $X_1, \dots, X_m$. Let $p, c \in (0,1]$ and $k
\geq 1$. Let $H$ be a graph with vertex set
$\set{1, \dots, m}$, with vertex $a$ assigned to $X_a$. For every edge
$ab$ in $H$, one of the
following two holds:
\begin{itemize}
\item $(X_a, X_b)_\Gamma$ is $(p, cp^k
  \sqrt{\abs{X_a}\abs{X_b}})$-jumbled, in which case we set $p_{ab} =
  p$ and  say that $ab$
  is a \emph{sparse edge}, or
\item $(X_a, X_b)_\Gamma$ is a complete bipartite graph, in which case
  we set $p_{ab} = 1$ and say that $ab$ is a \emph{dense edge}.
\end{itemize}
Let $H^\spa$ denote the subgraph of $H$ consisting of sparse edges.
\end{setup}

\subsection{Example: counting triangles in \texorpdfstring{$\Gamma$}{Gamma}}

We start by showing, as an example, how to prove the counting
lemma in $\Gamma$ for triangles. Most of the ideas found in the rest of this
section can already be found in this special case.

\begin{proposition}
  \label{prop:Gamma-triangle}
  Assume Setup~\ref{set:Gamma}. Let $H$ be a triangle with vertices
  $\set{1,2,3}$. Assume that $k \geq 2$. Then $\abs{\Gamma(H) - p^3} \leq 5cp^3$.
\end{proposition}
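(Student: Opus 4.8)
## Proof Proposal

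The plan is to estimate $\Gamma(H) = \int_{x_1 \in X_1, x_2 \in X_2, x_3 \in X_3} \Gamma(x_1,x_2)\Gamma(x_1,x_3)\Gamma(x_2,x_3)\, dx_1 dx_2 dx_3$ by a telescoping argument analogous to the one used for the dense counting lemma, but with the jumbledness condition playing the role of the discrepancy condition at each step. I would write
\[
\Gamma(x_1,x_2)\Gamma(x_1,x_3)\Gamma(x_2,x_3) - p^3 = (\Gamma(x_1,x_2) - p)\Gamma(x_1,x_3)\Gamma(x_2,x_3) + p(\Gamma(x_1,x_3) - p)\Gamma(x_2,x_3) + p^2(\Gamma(x_2,x_3) - p),
\]
and bound the integral of each of the three terms separately. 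The last term is the easiest: $\abs{p^2 \int (\Gamma(x_2,x_3) - p)\, dx_2 dx_3} \leq p^2 \cdot cp^k \leq cp^3$ directly from jumbledness (taking $u \equiv v \equiv 1$ and using $k \geq 2$). The middle term is similar in spirit but requires first handling the extra factor $\Gamma(x_2,x_3)$.

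The main obstacle is that in the first and middle terms, after integrating out the "difference" edge, one is left with a factor like $\Gamma(x_1,x_3)\Gamma(x_2,x_3)$ which is not of the simple product form $u(x)v(y)$ needed to apply Definition~\ref{def:jumbled} directly. The key trick is to integrate one variable at a time and absorb the remaining graph factors into the test functions. For instance, for the first term, fix $x_3$ and view $\int_{x_1, x_2}(\Gamma(x_1,x_2) - p)\Gamma(x_1,x_3)\Gamma(x_2,x_3)\, dx_1 dx_2$; here $u(x_1) := \Gamma(x_1,x_3)$ and $v(x_2) := \Gamma(x_2,x_3)$ are functions into $[0,1]$ (for each fixed $x_3$), so jumbledness of $(X_1,X_2)_\Gamma$ gives a bound of $cp^k \sqrt{\int_{x_1} \Gamma(x_1,x_3)\,dx_1}\sqrt{\int_{x_2}\Gamma(x_2,x_3)\,dx_2} \leq cp^k$ since each integral is at most $1$. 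Then integrating over $x_3 \in X_3$ costs nothing, yielding $\abs{\int (\Gamma(x_1,x_2)-p)\Gamma(x_1,x_3)\Gamma(x_2,x_3)} \leq cp^k \leq cp^3$ (using $k\geq 2$, so $cp^k \leq cp^2$, but we want $cp^3$ — so actually I need a slightly finer bound here).

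Let me reconsider: to get the $p^3$ scaling rather than $p^2$, I should be more careful and use that the surviving $\Gamma$-factors are typically of size $\approx p$, not $1$. Concretely, bound $\int_{x_1}\Gamma(x_1,x_3)\,dx_1 \leq p + cp^k \leq 2p$ by jumbledness (with $u\equiv 1$, $v$ the indicator of $\{x_3\}$-neighbors — or rather, apply jumbledness to the pair with $v$ constant), and similarly for the $x_2$-integral; this gives $cp^k \cdot \sqrt{2p}\cdot\sqrt{2p} = 2cp^{k+1} \leq 2cp^3$ when $k \geq 2$. The middle term: $\abs{\int p(\Gamma(x_1,x_3)-p)\Gamma(x_2,x_3)} = p \abs{\int_{x_1,x_3}(\Gamma(x_1,x_3)-p)\left(\int_{x_2}\Gamma(x_2,x_3)\,dx_2\right)}$, where $v(x_3) := \int_{x_2}\Gamma(x_2,x_3)\,dx_2 \in [0,1]$ and $u\equiv 1$, giving $p \cdot cp^k \sqrt{1}\cdot\sqrt{v\text{-average}} \leq p\cdot cp^k \sqrt{2p} \leq 2cp^{k+1} \leq 2cp^3$. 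Summing the three bounds $2cp^3 + 2cp^3 + cp^3 = 5cp^3$ gives the claim. I would double-check the constant bookkeeping — the factor $5$ suggests the intended split is $2 + 2 + 1$ or $2+1+2$, and the precise constants in "$\int \Gamma \leq p + cp^k \leq 2p$" deserve care since this needs $cp^{k-1} \leq 1$, which holds as $c \leq 1$ and $k \geq 2$ and $p \leq 1$ — actually $cp^{k-1} \le c \le 1$, fine. The heart of the argument is the repeated "fix a variable, reinterpret remaining factors as test functions, apply jumbledness" move, and the one subtlety worth flagging is ensuring each such test function genuinely maps into $[0,1]$ and that we extract the correct power of $p$ from the leftover $\Gamma$-factors rather than crudely bounding them by $1$.
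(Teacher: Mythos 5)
Your telescoping decomposition is exactly the one the paper uses, and your treatment of the second and third terms is correct: for the middle term, absorbing $\int_{x_2}\Gamma(x_2,x_3)\,dx_2$ into the test function $v(x_3)$ and using that its \emph{average} over $x_3$ is at most $(1+c)p$ is precisely the paper's argument. The gap is in the first term. After fixing $x_3$ and applying jumbledness of $(X_1,X_2)_\Gamma$ with $u(x_1)=\Gamma(x_1,x_3)$ and $v(x_2)=\Gamma(x_2,x_3)$, you are left with $cp^k\sqrt{\int_{x_1}\Gamma(x_1,x_3)\,dx_1}\,\sqrt{\int_{x_2}\Gamma(x_2,x_3)\,dx_2}$, and you then invoke the \emph{pointwise} bound $\int_{x_1}\Gamma(x_1,x_3)\,dx_1\le p+cp^k\le 2p$ for every $x_3$. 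That bound does not follow from jumbledness: taking $v=1_{\{x_3\}}$ in Definition~\ref{def:jumbled} only yields $\abs{\int_{x_1}\Gamma(x_1,x_3)\,dx_1-p}\le cp^k\sqrt{\abs{X_3}}$, which is vacuous for a single vertex, while taking $v$ constant controls only the average degree. Jumbledness is compatible with individual vertices of abnormally large degree (this is the point of the modified examples in Sections~\ref{sec:pseudos} and \ref{sec:countintro}, and of Lemma~\ref{lem:deg-est}, which bounds only the \emph{number} of deviant vertices), so the pointwise claim is simply false.

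The repair is the step you skipped: do not bound the square roots pointwise, but integrate over $x_3$ and apply the Cauchy--Schwarz inequality,
\[
\int_{x_3}\sqrt{\int_{x_1}\Gamma(x_1,x_3)\,dx_1}\,\sqrt{\int_{x_2}\Gamma(x_2,x_3)\,dx_2}\;dx_3
\le \sqrt{\int_{x_1,x_3}\Gamma(x_1,x_3)\,dx_1dx_3}\,\sqrt{\int_{x_2,x_3}\Gamma(x_2,x_3)\,dx_2dx_3},
\]
which reduces everything to average degrees, each at most $(1+c)p$; this gives $c(1+c)p^{k+1}\le c(1+c)p^3$ for the first term and, combined with your other two estimates, the stated $5cp^3$. (Alternatively one could split off the small exceptional set of high-degree vertices $x_3$ via Lemma~\ref{lem:deg-est} and bound its contribution separately, but that requires an extra argument you did not supply.) This Cauchy--Schwarz move is not a cosmetic detail: it is the mechanism the paper iterates in Lemma~\ref{lem:Gamma-cauchy} to count general graphs in $\Gamma$.
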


\begin{proof}
  In the following integrals, we assume that $x,y$ and $z$ vary uniformly over $X_1, X_2$ and $X_3$, respectively.
  We have the telescoping sum
  \begin{multline} \label{eq:Gamma-triangle-telescope}
    \Gamma(H) - p^3
    = \int_{x,y,z} (\Gamma(x,y) - p)\Gamma(x,z)\Gamma(y,z) \ dxdydz \\
    + \int_{x,y,z} p(\Gamma(x,z)-p)\Gamma(y,z) \ dxdydz
    + \int_{x,y,z} p^2(\Gamma(y,z)-p) \ dxdydz.
  \end{multline}
  The third integral on the right-hand side of
  \eqref{eq:Gamma-triangle-telescope} is bounded in absolute value by
  $cp^3$ by the jumbledness of $\Gamma$. In particular, this implies
  that $\int_{y,z} \Gamma(y,z) \ dydz \leq (1+c)p$. Similarly we have
  $\int_{x,z} \Gamma(x,z) \ dxdz \leq (1+c)p$. Using
  \eqref{eq:jumbled}, the second integral is bounded in absolute
  value by
  \[
  \int_y c p^3 \sqrt{\int_z \Gamma(y,z) \ dz} \ dy \leq cp^3 \sqrt{\int_{y,z} \Gamma(y,z) \ dydz} \leq c\sqrt{(1+c)p}p^3.
  \]
  Finally, the first integral on the right-hand
  side of \eqref{eq:Gamma-triangle-telescope} is bounded in absolute
  value by, using \eqref{eq:jumbled} and the Cauchy-Schwarz inequality,
  \begin{equation} \label{eq:Gamma-triangle-cauchy}
  \int_z cp^2 \sqrt{\int_x \Gamma(x,z) \ dx}\sqrt{\int_y \Gamma(y,z) \
    dy} \ dz
  \leq cp^2 \sqrt{\int_{x,z} \Gamma(x,z) \ dxdz}\sqrt{\int_{y,z} \Gamma(y,z) \
    dydz}
  \leq c(1+c)p^3.
  \end{equation}
  Therefore, \eqref{eq:Gamma-triangle-telescope} is bounded in
  absolute value by $5cp^3$.
\end{proof}

\begin{remark}
  (1) In the more general proof, the step corresponding to
  \eqref{eq:Gamma-triangle-cauchy} will be slightly different but is
  similar in its application of the Cauchy-Schwarz
  inequality.

  (2)
  The proof shows that we do not need the full strength of the
  jumbledness everywhere---we only need
  $(p,cp^{3/2}\sqrt{\abs{X}\abs{Z}})$-jumbledness for $(X,Z)_\Gamma$
  and $(p,cp\sqrt{\abs{Y}\abs{Z}})$-jumbledness for $(Y,Z)_\Gamma$. In
  Section~\ref{sec:oneside}, it will be useful to have a counting lemma
  with such non-balanced jumbledness assumptions in order to optimize our
  result. To keep things simple and clear, we will assume balanced
  jumbledness conditions here and remark later on the changes needed when we
  wish to have optimal non-balanced ones.
\end{remark}

\subsection{Notation} \label{sec:notation}

In the proof of the counting lemmas we often meet
expressions such as $G(x_1,x_2)G(x_1,x_3)G(x_2,x_3)$ and their
integrals. We introduce some compact notation for such products and
integrals. Note that if we are counting copies of $H$, we will usually
assign each vertex $a$ of $H$ to some vertex subset $X_a$ and we will
only be interested in counting those embeddings where each vertex of
$H$ is mapped into the vertex subset assigned to it. If $U \subseteq
V(H)$, a map $U \to V(G)$ or $U \to V(\Gamma)$ is called
\emph{compatible} if each vertex of $U$ gets mapped into the vertex
set assigned to it. We can usually assume without loss of generality
that the vertex subsets $X_a$ are disjoint for different vertices of
$H$, as we can always create a new multipartite graph with disjoint
vertex subsets $X_a$ with the same $H$-embedding counts as the
original graph.

If $f$ is a symmetric function on pairs of vertices of $G$ (actually
we only care about its values on $X_a \x X_b$ for $ab \in E(H)$) and $\bx
\colon V(H) \to V(G)$ is any compatible map (we write $\bx(a) = x_a$), then we define
\[f\cond{H}{\bx} := \prod_{ab \in E(H)} f(x_a, x_b).\]
By taking the expectation as $\bx$ varies uniformly over all compatible maps $V(H) \to V(G)$, we can define the value of a function on a graph.
\[f(H) := \EE_{\bx}\sqb{f\cond{H}{\bx}} = \int_{\bx} f\cond{H}{\bx} \
d\bx.\] We shall always assume that the measure $d \bx$ is the uniform
probability measure on compatible maps.

For unweighted graphs, we use $G$ and $\Gamma$ to denote also the characteristic function of
the edge set of the graph, so that $G(H)$ is the probability that a uniformly
random compatible map $V(H) \to V(G)$ is a graph homomorphism from $H$
to $G$. For weighted graphs, the value on the edges are the edge
weights. For counting lemmas, we are interested in comparing $G(H)$
with $q(H)$, which comes from setting $q(x_a,x_b)$ to be some constant
$q_{ab}$ for each $ab \in E(H)$.

It will be useful to have some notation for the conditional sum of a function $f$ given that some vertices have been fixed. If $U \subseteq V(H)$ and $\by \colon U \to V(G)$ is any compatible map, then
\[
f\cond{H}{\by} := \EE_{\bx}\sqcond{f\cond{H}{\bx}}{\bx\vert_{U} = \by} = \int_{\bz} f\cond{H}{\by, \bz} \ d\bz,
\]
where, in the integral, $\bz$ varies uniformly over all compatible maps $V(H)\setminus U \to V(G)$, and the notation $\by,\bz$ denotes the compatible map $V(H) \to V(G)$ built from combining $\by$ and $\bz$. Note that when $U = \emptyset$, $f\cond{H}{\by} = f(H)$. When $U = V(H)$, the two definitions of $f\cond{H}{\by}$ agree. When $U = \set{a_1, \dots, a_t}$, we sometimes write $\by$ as $a_1 \to y_1$, \dots, $a_t \to y_t$, so we can write $f\cond{H}{a_1 \to y_1, \dots, a_t \to y_t}$.

Since we work with approximations frequently, it will be convenient if we introduce some shorthand. If $A, B, P$ are three quantities, we write
\[A \approxmod^P_{c,\epsilon} B\]
to mean that for every $\theta > 0$, we can find $c, \epsilon > 0$ of size
at least polynomial in $\theta$ (i.e., $c,\epsilon \geq
\Omega(\theta^r)$ as $\theta \to 0$ for some $r > 0$) so that $\abs{A - B} \leq \theta P$. Sometimes one of $c$
or $\epsilon$ is omitted from the $\approxmod$ notation if
$\theta$ does not depend on the parameter. Note that the dependencies
do not depend on the parameters $p$ and $q$, but may depend on the
graphs to be embedded, e.g., $H$. For instance, a counting lemma can
be phrased in the form
\[
G(H) \approxmod_{c,\epsilon}^{p(H)} q(H).
\]

\subsection{Counting graphs in \texorpdfstring{$\Gamma$}{Gamma}} \label{sec:counting-graphs-Gamma}

We begin by giving a counting lemma in $\Gamma$, which is
significantly easier than counting in $G$. We remark that a similar
counting lemma for $\Gamma$ an $(n,d,\lambda)$ regular graph was
proven by Alon (see \cite[Thm.~4.10]{KS06}).

\begin{proposition} \label{prop:Gamma-counting} Assume
  Setup~\ref{set:Gamma}. If $k \geq \frac{d(L(H^\spa)) + 2}{2}$, then
  \[
  \abs{\Gamma (H) - p(H)} \leq \paren{(1+c)^{e(H^\spa)}  - 1} p(H).
  \]
\end{proposition}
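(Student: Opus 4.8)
The plan is to extend the telescoping argument used for triangles in Proposition~\ref{prop:Gamma-triangle} to a general graph $H$ under Setup~\ref{set:Gamma}. The key structural observation is that $d(L(H^\spa))$, the degeneracy of the line graph of the sparse part, governs the order in which we can process the sparse edges so that each one is controlled by the jumbledness inequality \eqref{eq:jumbled} combined with a single Cauchy--Schwarz step. Recall that degeneracy $d(L(H^\spa)) \le d$ means we may order the sparse edges $f_1, f_2, \dots, f_{e(H^\spa)}$ so that each $f_i$ shares an endpoint with at most $d$ of the edges $f_j$ with $j > i$; equivalently, reversing the order, each $f_i$ shares an endpoint with at most $d$ \emph{earlier} edges.

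First I would set up the telescoping sum: writing $p(x_a,x_b) = p_{ab}$ (which is $p$ for sparse edges, $1$ for dense edges), I expand $G(H) - p(H)$, where here $G = \Gamma$, as a sum over sparse edges $f_i = a_ib_i$ of terms of the form
\[
T_i := \int_{\bx} \Bigl(\prod_{j < i}\Gamma(x_{a_j},x_{b_j})\Bigr)\,\bigl(\Gamma(x_{a_i},x_{b_i}) - p\bigr)\,\Bigl(\prod_{j > i} p_{a_jb_j}\Bigr)\Bigl(\prod_{ab \in E(H^\spa{}^c)}\Gamma(x_a,x_b)\Bigr)\, d\bx,
\]
so that $\Gamma(H) - p(H) = \sum_i T_i$ (the dense edges, being identically $1$ in $\Gamma$, contribute trivially, and only sparse edges get telescoped). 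For each $T_i$, I integrate out first all variables not incident to $f_i$ and not among the ``kept'' earlier edges: the variables appearing only in factors $\Gamma(x_a,x_b)$ with $ab$ not touching $f_i$ can each be bounded by $\int \Gamma \le (1+c)p$ times the relevant scaling (exactly as in the triangle proof, where $\int_{y,z}\Gamma(y,z)\,dydz \le (1+c)p$ was used). The crucial step is then to apply the jumbledness condition \eqref{eq:jumbled} to the factor $\Gamma(x_{a_i},x_{b_i}) - p$, viewing the product of the surviving earlier-edge factors $\prod_{j<i, f_j \ni a_i}\Gamma$ as the function $u(x_{a_i})$ and $\prod_{j<i, f_j \ni b_i}\Gamma$ as $v(x_{b_i})$ (after integrating out everything else). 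Since each of these has values in $[0,1]$, \eqref{eq:jumbled} gives a bound of $cp^k \sqrt{\int u}\sqrt{\int v}$, and then a Cauchy--Schwarz / AM--GM step distributes the square roots over the at most $d$ edges at $a_i$ and the at most $d$ edges at $b_i$ (using $N_{i-1}$-type counts bounded by the degeneracy ordering). Each square root of $\int \Gamma \le (1+c)p$ contributes a factor $p^{1/2}$, so we need $k \ge \frac{d}{2} + 1$ roughly — more precisely the bookkeeping gives exactly $k \ge \frac{d(L(H^\spa))+2}{2}$, which is the hypothesis.

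To get the clean multiplicative bound $\bigl((1+c)^{e(H^\spa)} - 1\bigr)p(H)$ rather than a cruder additive one, I would instead argue by induction on $e(H^\spa)$, peeling off one sparse edge at a time in the degeneracy order. Removing one sparse edge $f$ changes $H^\spa$ to a graph with one fewer sparse edge whose line graph still has degeneracy at most $d(L(H^\spa))$, so the inductive hypothesis applies; the single peeled edge is handled by one application of jumbledness plus Cauchy--Schwarz as above, and the errors multiply rather than add, producing the geometric factor $(1+c)^{e(H^\spa)}-1$. The base case $e(H^\spa) = 0$ is trivial since then $\Gamma(H) = p(H) = \Gamma(H^\spa{}^c)$ exactly (all dense edges).

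The main obstacle I anticipate is purely organizational: carefully choosing which variables to integrate out first and correctly matching the degeneracy ordering of $L(H^\spa)$ to the telescoping so that, at each step, the two ``sides'' $u$ and $v$ fed into \eqref{eq:jumbled} genuinely only depend on $x_{a_i}$ and $x_{b_i}$ respectively (requiring that all other variables have already been integrated and bounded). The degeneracy condition on the line graph is precisely what guarantees we never need to control more than $d$ incident edges on either side at the moment we process $f_i$, so that the exponent of $p$ we accumulate stays at most $k - 1$ from each side's worth of square roots. The actual inequalities — repeated use of $\sqrt{\int fg} \le \sqrt{\int f}\sqrt{\int g}$-type bounds and $\int \Gamma \le (1+c)p$ — are routine generalizations of \eqref{eq:Gamma-triangle-cauchy}, so I would not belabor them.
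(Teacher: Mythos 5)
Your proposal matches the paper's proof in essentials: induct on $e(H^{\mathrm{sp}})$, peel off a sparse edge $ab$ chosen so that $\deg_{H^{\mathrm{sp}}}(a)+\deg_{H^{\mathrm{sp}}}(b) \le d(L(H^{\mathrm{sp}}))+2 \le 2k$ (guaranteed by line-graph degeneracy), handle the peeled edge by one jumbledness-plus-Cauchy--Schwarz application (which the paper packages as Lemma~\ref{lem:Gamma-cauchy}), and apply the induction hypothesis to $H_{-ab}$ and $H_{-a,-b}$, with the resulting errors compounding to give the $(1+c)^{e(H^{\mathrm{sp}})}-1$ coefficient. Your initial ``integrate out variables one at a time'' telescoping sketch is murkier than the paper's clean Cauchy--Schwarz lemma, but you correctly pivot to the inductive argument, which is exactly what the paper does.
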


The exact coefficient of $p(H)$ in the bound is not important. Any
bound of the form $O(c)p(H)$ suffices.

Dense edges play no role, so it suffices to consider the case when all
edges of $H$ are sparse. We prove Proposition~\ref{prop:Gamma-counting} by iteratively applying the
following inequality.

\begin{lemma} \label{lem:Gamma-cauchy}
    Let $H$ be a graph with vertex set $\set{1,\dots,m}$.
  Let $\Gamma$ be a graph with vertex subsets $X_1, \dots, X_m$.
  Let $ab \in E(H)$. Let $H_{-ab}$ denote $H$
  with the edge $ab$ removed. Let $H_{-a,-b}$ denote $H$ with all
  edges incident to $a$ or $b$ removed. Assume that $\Gamma(X_a, X_b)$ is $(p,
  \gamma\sqrt{\abs{X_a}\abs{X_b}})$-jumbled. Let $f \colon V(\Gamma) \x
  V(\Gamma) \to [0,1]$ be any symmetric function. Then
  \[
  \abs{\int_{\substack{x \in X_a \\ y \in X_b}} (\Gamma(x,y) - p)
    f\cond{H_{-ab}}{a \to x, b \to y} \ dxdy} \leq \gamma \sqrt{
    f(H_{-ab})f(H_{-a,-b})}.
  \]
\end{lemma}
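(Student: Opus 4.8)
The plan is to bound the integral by applying the jumbledness hypothesis for $(X_a,X_b)_\Gamma$, after first pulling out the dependence on the vertices $x \in X_a$ and $y \in X_b$ cleanly. The key observation is that the product $f\cond{H_{-ab}}{a\to x, b\to y}$ factors, once we fix $x$ and $y$, into three pieces according to how edges of $H_{-ab}$ touch the set $\set{a,b}$: edges incident to $a$ (but not $b$), edges incident to $b$ (but not $a$), and edges incident to neither. Concretely, for each fixed $x,y$ define
\[
u(x) := f\cond{H_{\mathrm{star}(a)}}{a \to x} \qquad\text{and}\qquad v(y) := f\cond{H_{\mathrm{star}(b)}}{b \to y},
\]
where $H_{\mathrm{star}(a)}$ is the subgraph of $H_{-ab}$ consisting of edges incident to $a$, and similarly for $b$; more precisely one takes these as conditional integrals over the other endpoints. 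Since $f$ takes values in $[0,1]$, both $u$ and $v$ take values in $[0,1]$, so they are legitimate test functions for the jumbledness inequality \eqref{eq:jumbled}.

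First I would write out the integral over all of $V(H)$ as an iterated integral, integrating last over $x \in X_a$ and $y \in X_b$. Because the edges of $H_{-ab}$ split into those touching $a$, those touching $b$, and those touching neither (the last group contributing exactly $f(H_{-a,-b})$ as a constant once $x,y$ are removed — wait, more carefully, the edges touching neither $a$ nor $b$ depend on the remaining vertices, so one integrates them out too), the integrand at fixed $x,y$ has the form $(\Gamma(x,y)-p)\,\tilde u(x)\,\tilde v(y)$ after integrating out everything except $x$ and $y$, where $\tilde u(x)$ and $\tilde v(y)$ are $[0,1]$-valued. Applying \eqref{eq:jumbled} gives the bound
\[
\gamma \sqrt{\int_{x \in X_a} \tilde u(x)\ dx}\,\sqrt{\int_{y \in X_b} \tilde v(y)\ dy}.
\]
It then remains to identify the two integrals under the square roots. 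Integrating $\tilde u$ over $x \in X_a$ reconstitutes $f$ on the subgraph of $H$ consisting of all edges of $H_{-ab}$ together with the edges at $a$ — but since $b$ has been integrated out independently, the edges at $b$ also reappear; a moment's thought shows $\int_x \tilde u(x)\,dx = f(H_{-b})$ where $H_{-b}$ is $H$ with all edges at $b$ removed (equivalently, $H_{-ab}$ with edge $ab$ already gone means this is $H$ minus the edges at $b$). Hmm — I need to be careful here: actually $\int_x \tilde u(x)\,dx$ equals $f$ evaluated on $H_{-ab}$ with the edges at $a$ no longer ``anchored'', i.e. on $H$ with edge $ab$ and all edges at $b$ removed. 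Symmetrically $\int_y \tilde v(y)\,dy$ is $f$ on $H$ with edge $ab$ and all edges at $a$ removed. So the bound becomes $\gamma\sqrt{f(H\setminus(\{ab\}\cup \mathrm{star}(b)))\cdot f(H\setminus(\{ab\}\cup\mathrm{star}(a)))}$, and since removing $ab$ is subsumed once we remove a full star at an endpoint, this is exactly $\gamma\sqrt{f(H_{-ab})\,f(H_{-a,-b})}$ once we match: one factor keeps all edges of $H_{-ab}$ except we must double-check which of $f(H_{-ab})$ vs.\ $f(H_{-a,-b})$ each square-root factor is. The cleanest bookkeeping is: by symmetry the product of the two factors is $f(H_{-ab})\cdot f(H_{-a,-b})$, because one factor has all edges at $b$ present and those at $a$ absent while the other has the reverse, and multiplying pairs them as ``both stars present but $ab$ absent'' ($=H_{-ab}$) times ``neither star'' ($=H_{-a,-b}$).

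The main obstacle — really the only nonroutine point — is this combinatorial bookkeeping: verifying that the two $[0,1]$-valued factors produced by the jumbledness inequality, when their integrals are multiplied together, give precisely $f(H_{-ab})f(H_{-a,-b})$ and not some other pair of subgraph counts. I would handle this by writing $E(H_{-ab})$ as the disjoint union $E_a \sqcup E_b \sqcup E_0$ (edges meeting $a$ only, meeting $b$ only, meeting neither), noting $\Gamma(x,y) = \Gamma(x,y)$ carries the edge $ab$ separately, and tracking that the factor from $u$ integrates out $x$ while leaving $E_b \sqcup E_0$ anchored, the factor from $v$ integrates out $y$ leaving $E_a \sqcup E_0$ anchored; their product over the full remaining integration then has edge set $E_a \sqcup E_b \sqcup E_0 = E(H_{-ab})$ from one pairing and $E_0 = E(H_{-a,-b})$ from the other. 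Everything else — positivity of $u,v$, the application of \eqref{eq:jumbled}, Tonelli to reorder the integrals — is immediate.
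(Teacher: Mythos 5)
There is a genuine gap, and it occurs at the very first step. You claim that after integrating out the vertices of $V(H)\setminus\set{a,b}$ the integrand takes the product form $(\Gamma(x,y)-p)\,\tilde u(x)\,\tilde v(y)$. This is false in general: writing $\bz$ for the embedding of the remaining vertices, the quantity $\int_{\bz} f\cond{H_{-a,-b}}{\bz}\, f\cond{H_{a,-ab}}{a\to x,\bz}\, f\cond{H_{b,-ab}}{b\to y,\bz}\ d\bz$ is a genuinely joint function of $(x,y)$ whenever the star at $a$ and the star at $b$ are coupled through shared vertices or through $H_{-a,-b}$. Already for the path $a$--$c$--$b$ one gets $\int_{z} f(x,z)f(z,y)\,dz$, which does not factor. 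So the single application of \eqref{eq:jumbled} with test functions $\tilde u,\tilde v$ is not available. Moreover, even if a factorization were granted, your own computation of the two integrals gives $\gamma\sqrt{f(H_{-a})\,f(H_{-b})}$ (each factor missing the full star at one endpoint), and the ``pairing'' by which you convert this into $\gamma\sqrt{f(H_{-ab})\,f(H_{-a,-b})}$ is not an identity: in the path example the first product is $(\EE f)^2$ while the second is $\EE_z\bigl[\bigl(\EE_x f(x,z)\bigr)\bigl(\EE_y f(z,y)\bigr)\bigr]$, and these differ by exactly a Cauchy--Schwarz.

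The missing idea is the order of operations. One must \emph{first} condition on $\bz$: for each fixed $\bz$ the integrand does split as $(\Gamma(x,y)-p)$ times the $[0,1]$-valued functions $x\mapsto f\cond{H_{a,-ab}}{a\to x,\bz}$ and $y\mapsto f\cond{H_{b,-ab}}{b\to y,\bz}$, so jumbledness applies pointwise in $\bz$ and yields the bound $\gamma\sqrt{f\cond{H_{a,-ab}}{\bz}}\sqrt{f\cond{H_{b,-ab}}{\bz}}$. Integrating this against the weight $f\cond{H_{-a,-b}}{\bz}$ and then applying the Cauchy--Schwarz inequality in $\bz$ (splitting the weight between the two factors) produces exactly $\gamma\sqrt{f(H_{-a,-b})\,f(H_{-ab})}$. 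This Cauchy--Schwarz step over $\bz$ is entirely absent from your proposal, and it is what generates the asymmetric pair $H_{-ab}$, $H_{-a,-b}$ on the right-hand side; no amount of bookkeeping on two independently integrated factors can recover it.
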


\begin{proof}
  Let $H_{a,-ab}$ denote the edges of $H_{-ab}$ incident to $a$, and
  let $H_{b,-ab}$ be the edges of $H_{-ab}$ incident to $b$. Then
  $H_{-ab} = H_{-a,-b} \uplus H_{a,-ab} \uplus H_{b,-ab}$, as a disjoint union
  of edges. In the following calculation, $\bz$ varies uniformly over
  compatible maps $V(H)\setminus\set{a,b} \to V(\Gamma)$, $x$ varies
  uniformly over $X_a$, and $y$ varies uniformly over $X_b$. The three
  inequalities that appear in the calculation follow from, in order,
  the triangle inequality, the jumbledness condition, and the
  Cauchy-Schwarz inequality.
  {\allowdisplaybreaks
  \begin{align*}
    & \abs{\int_{x,y} (\Gamma(x,y) - p) f\cond{H_{-ab}}{a \to x, b \to y} \
      dxdy}
    \\
    &= \abs{\int_{\bz} f\cond{H_{-a,-b}}{\bz} \int_{x,y} (\Gamma(x,y) - p)
      f\cond{H_{a,-ab}}{a \to x,\bz} f\cond{H_{b,-ab}}{b \to y,\bz} \ dxdy d\bz}
    \\
    &\leq \int_{\bz} f\cond{H_{-a,-b}}{\bz} \abs{\int_{x,y}
      (\Gamma(x,y) - p)f\cond{H_{a,-ab}}{a \to x,\bz}f\cond{H_{b,-ab}}{b \to
        y,\bz} \ dxdy} d\bz
    \\
    &\leq \int_{\bz} f\cond{H_{-a,-b}}{\bz} \gamma
    \sqrt{\int_{x} f\cond{H_{a,-ab}}{a \to x,\bz} \ dx}
    \sqrt{\int_{y} f\cond{H_{b,-ab}}{b \to y,\bz} \ dy} d\bz
    \\
    &= \gamma \int_{\bz} f\cond{H_{-a,-b}}{\bz}
    \sqrt{f\cond{H_{a,-ab}}{\bz}}\sqrt{f\cond{H_{b,-ab}}{\bz}} d\bz
    \\
    &\leq \gamma \sqrt{\int_{\bz} f\cond{H_{-a,-b}}{\bz} \ d\bz}
    \sqrt{\int_{\bz}
      f\cond{H_{-a,-b}}{\bz}f\cond{H_{a,-ab}}{\bz} f\cond{H_{b,-ab}}{\bz} d\bz}
    \\
    &= \gamma \sqrt{ f(H_{-a,-b}) f(H_{-ab})}.
  \end{align*}
  }
\end{proof}

\begin{proof}
  [Proof of Proposition~\ref{prop:Gamma-counting}]
  As remarked after the statement of the proposition, it suffices to
  prove the result in the case when all edges of $H$ are sparse.
  We induct on the number of edges of $H$. If $H$ has no edges, then
  $\Gamma(H) = p(H) = 1$. So assume that $H$ has at least one edge. Since $k \geq \frac{1}{2}(d(L(H))
  + 2)$, we can find an edge $ab$ of $H$ such that $\deg_H(a) +
  \deg_H(b) \leq d(L(H)) + 2 \leq 2k$. Let
  $H_{-ab}$ and $H_{-a,-b}$ be as in Lemma~\ref{lem:Gamma-cauchy}. Since $L(H)$
  is $(2k-2)$-degenerate, the line graph of any subgraph of $H$ is
  also $(2k-2)$-degenerate. By the
  induction hypothesis, we have $\abs{\Gamma(H_{-ab}) - p(H_{-ab})}
  \leq ((1+c)^{e(H)-1} - 1)p(H_{-ab})$ and
  $\abs{\Gamma(H_{-a,-b}) - p(H_{-a,-b})} \leq ((1+c)^{e(H)-1} - 1)p(H_{-a,-b})$. We have
  \begin{equation*}
    \label{eq:Gamma-decomp}
    \Gamma(H) - p(H) = p\cdot \paren{\Gamma(H_{-ab}) - p(H_{-ab})}
    + \int_{\substack{x \in X_a \\ y \in X_b}} (\Gamma(x,y) -
    p)\Gamma\cond{H_{-ab}}{a\to x, b \to y} \ dxdy.
  \end{equation*}
  The first term on the right is bounded in absolute value by
  $((1+c)^{e(H)-1} - 1)p(H)$. For the second term, by Lemma~\ref{lem:Gamma-cauchy} and the induction hypothesis, we have
  \begin{equation} \label{eq:Gamma-p}
    \begin{split}
    \abs{\int_{x,y} (\Gamma(x,y) - p)\Gamma\cond{H_{-ab}}{a\to x, b \to y}
      \ dxdy}
    &\leq cp^k \sqrt{\Gamma(H_{-ab}) \Gamma(H_{-a,-b})}
    \\
    &\leq cp^k (1+c)^{e(H)-1} \sqrt{p(H_{-ab})p(H_{-a,-b})}
    \\
    &\leq c (1+c)^{e(H)-1} p(H).
    \end{split}
  \end{equation}
  The last inequality is where we used $2k \geq \deg_H(a) + \deg_H(b)$.
  Combining the two estimates gives the desired result.
\end{proof}

\subsection{Counting partial embeddings into \texorpdfstring{$\Gamma$}{Gamma}} \label{sec:Gamma-partial}

As outlined in Section~\ref{sec:countingstrat}, we need to count
embeddings of $H$ where some edges are embedded into $G$ (the
straight edges in the figures) and some edges are embedded into
$\Gamma$ (the wavy edges). We prove counting estimates for these embeddings here. The
main result of this section is summarized in the figure below. The
proofs are almost identical to that of
Proposition~\ref{prop:Gamma-counting}. We just need to be a little more
careful with the exponents on the jumbledness parameter.

\begin{center}
  \begin{tikzpicture}[scale=.75]
      \begin{scope}
      \node[p] (a) at (-.5,2) {};
      \node[p] (a2) at (.5,2) {};
      \node[p] (1) at (-1.5,0) {};
      \node[p] (2) at (-.5,0) {};
      \node[p] (3) at (.5,0) {};
      \node[p] (4) at (1.5,0) {};
      \node[p] (5) at (-1,-1) {};
      \node[p] (6) at (0,-1) {};
      \node[p] (7) at (1,-1) {};
      \draw (1)--(a2)--(2)--(a)--(3)--(a2)--(4)--(a) (3)--(5);
      \draw decorate [Gamma] { (a)--(1)--(5)(3)--(6)--(7)--(4) (2)--(3)};
      \node at (4,0.5) {\LargeLeftarrow};
    \end{scope}
      \begin{scope}[shift={(8,0)}]
      \node[p] (a) at (-.5,2) {};
      \node[p] (a2) at (.5,2) {};
      \node[p] (1) at (-1.5,0) {};
      \node[p] (2) at (-.5,0) {};
      \node[p] (3) at (.5,0) {};
      \node[p] (4) at (1.5,0) {};
      \node[p] (5) at (-1,-1) {};
      \draw (1)--(a2)--(2)--(a)--(3)--(a2)--(4)--(a) (3)--(5);
    \end{scope}
  \end{tikzpicture}
\end{center}

First we consider the case where exactly one edge needs to be embedded
into $\Gamma$ and the other edges are embedded into some subgraph of
$\Gamma$. To state the result requires a little notation. Suppose that $H = H' \cup H''$ is an edge disjoint partition of the
graph $H$ into two subgraphs $H'$ and $H''$. We define $d(L(H', H''))$ to be
the smallest $d$ such there is an ordering of the edges of $H$ with the edges
of $H'$ occurring before the edges of $H''$ such that every edge $e$ has at most
$d$ neighbors, that is, edges containing either of the endpoints of $e$, which appear earlier in the ordering.

\begin{lemma}\label{lem:Gamma-partial-one-edge}
  Assume Setup~\ref{set:Gamma}.
  Let $ab \in E(H)$ and $H_{-ab}$ be the graph $H$ with edge $ab$
  removed. Assume $k \geq \frac{d(L(H_{-ab}^\spa, ab^\spa)) + 2}{2}$.
  Let $G$ be any weighted subgraph of $\Gamma$ (i.e., $0 \leq G \leq
  \Gamma$ as functions). Let $g$ denote the function that agrees with
  $\Gamma$ on $X_a \x X_b$ and with $G$
  everywhere else. Then
  \[
  \abs{g(H) - p_{ab}G(H_{-ab})} \leq c(1+c)^{e(H^{\spa})-1} p(H).
  \]
\end{lemma}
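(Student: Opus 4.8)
The plan is to mimic the proof of Proposition~\ref{prop:Gamma-counting} almost verbatim, tracking the one dense edge split $H = H_{-ab} \cup \{ab\}$ so that the reduction peels off the edge $ab$ \emph{last}. Concretely, I would use the decomposition
\[
g(H) - p_{ab} G(H_{-ab}) = \int_{\substack{x \in X_a \\ y \in X_b}} (\Gamma(x,y) - p_{ab}) G\cond{H_{-ab}}{a \to x, b \to y} \ dx dy,
\]
which holds because $g$ agrees with $\Gamma$ on $X_a \times X_b$ and with $G$ on every edge of $H_{-ab}$; if $ab$ is a dense edge the left side is identically zero, so we may assume $ab$ is sparse and $p_{ab} = p$. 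Applying Lemma~\ref{lem:Gamma-cauchy} with $f = G$ and $\gamma = cp^k\sqrt{\abs{X_a}\abs{X_b}}$ bounds this by $cp^k\sqrt{G(H_{-ab}) G(H_{-a,-b})}$.

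The next step is to control $G(H_{-ab})$ and $G(H_{-a,-b})$. Since $0 \le G \le \Gamma$ pointwise, we have $G(H_{-ab}) \le \Gamma(H_{-ab})$ and $G(H_{-a,-b}) \le \Gamma(H_{-a,-b})$, and the degeneracy hypothesis $k \ge \tfrac{1}{2}(d(L(H_{-ab}^\spa, ab^\spa)) + 2)$ is precisely what is needed so that Proposition~\ref{prop:Gamma-counting} applies to $H_{-ab}$ and to $H_{-a,-b}$ (their sparse line graphs are $(2k-2)$-degenerate, being subgraphs of the relevant ordered line graph). This gives $\Gamma(H_{-ab}) \le (1+c)^{e(H^\spa)-1} p(H_{-ab})$ and $\Gamma(H_{-a,-b}) \le (1+c)^{e(H^\spa)-1} p(H_{-a,-b})$. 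Substituting, the error is at most
\[
c p^k (1+c)^{e(H^\spa)-1} \sqrt{p(H_{-ab}) p(H_{-a,-b})}
= c(1+c)^{e(H^\spa)-1} \, p^k \, \frac{p(H)}{p_{ab}} \cdot \sqrt{\frac{p(H_{-a,-b})}{p(H_{-ab})}\cdot\frac{1}{p_{ab}}}\,,
\]
and the point of the definition of $d(L(H_{-ab}^\spa, ab^\spa))$ — which forces the ordering to place $ab$ last with at most $2k$ line-graph neighbors, i.e.\ $\deg_{H^\spa}(a) + \deg_{H^\spa}(b) \le 2k$ — is exactly that the missing powers of $p$ from the edges at $a$ and $b$ in $p(H_{-ab})$ versus $p(H_{-a,-b})$ are covered by $p^k$. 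So $p^k \sqrt{p(H_{-ab}) p(H_{-a,-b})} \le p(H)$, exactly as in display~\eqref{eq:Gamma-p}, yielding the claimed bound $c(1+c)^{e(H^\spa)-1} p(H)$.

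The main thing to get right — and the only genuine subtlety — is the bookkeeping of exponents in the final inequality, i.e.\ verifying that the ordered-degeneracy condition $k \ge \tfrac{1}{2}(d(L(H_{-ab}^\spa, ab^\spa)) + 2)$ is both what lets Proposition~\ref{prop:Gamma-counting} apply to the two smaller graphs \emph{and} what makes $p^k\sqrt{p(H_{-ab})p(H_{-a,-b})} \le p(H)$; the asymmetry between $d(L(\cdot))$ and the two-argument $d(L(\cdot,\cdot))$ is what makes this lemma slightly more delicate than Proposition~\ref{prop:Gamma-counting}. Everything else is a direct transcription of the earlier argument, so no induction is even needed here — Proposition~\ref{prop:Gamma-counting} is invoked as a black box on $H_{-ab}$ and $H_{-a,-b}$, and the single application of Lemma~\ref{lem:Gamma-cauchy} handles the edge $ab$.
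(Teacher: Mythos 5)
Your argument is correct and is exactly the paper's intended proof: the paper merely remarks that the lemma follows from the same calculation as \eqref{eq:Gamma-p}, taking $ab$ as the first edge to remove and then using $G \leq \Gamma$ to pass to $\Gamma$-counts via Proposition~\ref{prop:Gamma-counting}, which is precisely what you do. One cosmetic slip: your displayed rewriting of $\sqrt{p(H_{-ab})p(H_{-a,-b})}$ carries a spurious extra factor of $1/p_{ab}$ under the square root, but this does not matter since the inequality you actually use, $p^k\sqrt{p(H_{-ab})p(H_{-a,-b})} \leq p(H)$, is stated and correctly justified from $\deg_{H^\spa}(a)+\deg_{H^\spa}(b)\leq 2k$.
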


The lemma follows from essentially the same
calculation as \eqref{eq:Gamma-p}, except that we take $ab$ as our
first edge to remove (this is why there is a stronger requirement on
$k$) and then use $G \leq \Gamma$.

Iterating the lemma, we obtain the following result where multiple
edges need to be embedded into $\Gamma$. It can be proved by iterating
Lemma~\ref{lem:Gamma-partial-one-edge} or mimicking the proof of Proposition~\ref{prop:Gamma-counting}.

\begin{lemma} \label{lem:Gamma-partial}
  Assume Setup~\ref{set:Gamma}. Let $H'$ be a subgraph of $H$. Assume
  $k \geq \frac{d(L(H'^{\spa}, (H \setminus H')^{\spa})) +
  2}{2}$. Let $G$ be a weighted subgraph of $\Gamma$. Let $g$ be a
  function that agrees with $\Gamma$ on $X_a \x X_b$ when $ab \in
  E(H \setminus H')$ and with $G$ otherwise. Then
  \[
  \abs{g(H) - p(H \setminus H')G(H')} \leq \paren{(1+c)^{e(H^{\spa})} -
    1}p(H).
  \]
\end{lemma}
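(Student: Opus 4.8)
The plan is to induct on $e(H\setminus H')$, transcribing the proof of Proposition~\ref{prop:Gamma-counting} with a little extra care on the exponents of $p$. First I would dispose of the dense edges of $H\setminus H'$: if $ab\in E(H\setminus H')$ is dense, then $g$ and $\Gamma$ are both identically $1$ on $X_a\x X_b$ and $p_{ab}=1$, so $g(H)=g(H_{-ab})$ and $p(H\setminus H')=p((H\setminus H')_{-ab})$, while $e(H^{\spa})$, $p(H)$ and the degeneracy hypothesis are unchanged; hence the statement for $H$ reduces to that for $H_{-ab}$. So it suffices to treat the case in which every edge of $H\setminus H'$ is sparse, and the base case $E(H\setminus H')=\emptyset$ is immediate, since then $g=G$ on every pair relevant to the $H$-count, $H=H'$, and $p(H\setminus H')=1$.

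For the inductive step I would use the hypothesis $k\geq\frac{1}{2}(d(L(H'^{\spa},(H\setminus H')^{\spa}))+2)$ to fix an ordering of $E(H^{\spa})$ in which the edges of $H'^{\spa}$ precede those of $(H\setminus H')^{\spa}$ and every edge has at most $2k-2$ neighbours earlier in $L(H^{\spa})$, and let $ab$ be the last edge of this ordering; then $ab$ is a sparse edge of $H\setminus H'$ and, being last, has exactly $\deg_{H^{\spa}}(a)+\deg_{H^{\spa}}(b)-2$ earlier neighbours, so $s_a+s_b\leq 2k$, where $s_a:=\deg_{H^{\spa}}(a)$ and $s_b:=\deg_{H^{\spa}}(b)$. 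Writing $g(x,y)=\Gamma(x,y)=p+(\Gamma(x,y)-p)$ on $X_a\x X_b$, integrating, and using $p(H\setminus H')=p\cdot p((H\setminus H')_{-ab})$, I obtain
\[
g(H)-p(H\setminus H')G(H')=p\bigl(g(H_{-ab})-p((H\setminus H')_{-ab})G(H')\bigr)+\int_{\substack{x\in X_a\\ y\in X_b}}(\Gamma(x,y)-p)\,g\cond{H_{-ab}}{a\to x,\,b\to y}\ dxdy.
\]
The ordering restricted to $E(H_{-ab}^{\spa})$ shows $H_{-ab}$ still satisfies the hypothesis, so by induction the first summand has absolute value at most $((1+c)^{e(H^{\spa})-1}-1)p(H)$. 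For the second summand I would apply Lemma~\ref{lem:Gamma-cauchy} with $f=g$ (which is symmetric and $[0,1]$-valued) to bound it by $cp^{k}\sqrt{g(H_{-ab})\,g(H_{-a,-b})}$, then use $g\leq\Gamma$ together with Proposition~\ref{prop:Gamma-counting} applied to $H_{-ab}$ and to $H_{-a,-b}$ (whose degeneracy hypotheses again follow by restricting the ordering) to get $g(H_{-ab})\leq(1+c)^{e(H^{\spa})-1}p(H)/p$ and $g(H_{-a,-b})\leq(1+c)^{e(H^{\spa})-s_a-s_b+1}p(H)/p^{s_a+s_b-1}$.

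Combining these, the second summand is at most $c\,(1+c)^{e(H^{\spa})-(s_a+s_b)/2}\,p(H)\,p^{k-(s_a+s_b)/2}$; since $s_a+s_b\leq 2k$ the leftover power of $p$ is nonnegative, hence at most $1$, and since $s_a,s_b\geq 1$ the exponent of $1+c$ is at most $e(H^{\spa})-1$, so this summand is at most $c\,(1+c)^{e(H^{\spa})-1}p(H)$. Adding the two bounds gives $((1+c)^{e(H^{\spa})-1}(1+c)-1)p(H)=((1+c)^{e(H^{\spa})}-1)p(H)$, closing the induction. The only even mildly delicate point is this last exponent bookkeeping — in particular, that the ``last edge'' choice together with the degeneracy hypothesis is exactly what forces $s_a+s_b\leq 2k$ and thereby renders the leftover factor $p^{k-(s_a+s_b)/2}$ harmless; everything else is a direct copy of the proof of Proposition~\ref{prop:Gamma-counting}. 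An equivalent route, which I would mention briefly, is to iterate Lemma~\ref{lem:Gamma-partial-one-edge}, peeling the sparse edges of $H\setminus H'$ off one at a time in reverse order of the chosen ordering; this simply repackages the same computation.
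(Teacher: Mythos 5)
Your proof is correct and follows exactly the route the paper indicates: the paper gives no written proof of this lemma, saying only that it "can be proved by iterating Lemma~\ref{lem:Gamma-partial-one-edge} or mimicking the proof of Proposition~\ref{prop:Gamma-counting}," and you carry out the second option in full (with the key bookkeeping point — that taking the last edge of the ordering forces $\deg_{H^\spa}(a)+\deg_{H^\spa}(b)\leq 2k$ and so neutralizes the leftover factor $p^{k-(s_a+s_b)/2}$ — handled correctly) while noting the first as an equivalent repackaging.
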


\subsection{Exceptional sets} \label{sec:Gamma-exceptional}

This section contains a couple of lemmas about $\Gamma$ that we will need later
on. The reader may choose to skip this section until the results are needed.

We begin with a standard estimate for the number of vertices in a jumbled graph whose degrees deviate
from the expected value. The proof follows immediately from the
definition of jumbledness.

\begin{lemma} \label{lem:deg-est} Let $\Gamma$ be a $(p,
  \gamma\sqrt{\abs{X}\abs{Y}})$-jumbled graph between vertex subsets $X$
  and $Y$. Let $v \colon Y \to [0,1]$ and let $\xi >
  0$. If
  \[
  U \subseteq \setcond{ x \in X}{\int_{y \in Y} \Gamma(x,y)v(y) \ dy \geq (1
    + \xi)p \EE v}
  \]
  or
  \[
  U \subseteq \setcond{ x \in X}{\int_{y \in Y} \Gamma(x,y)v(y) \ dy \leq (1
    - \xi)p \EE v},
  \]
  then
  \[
  \frac{\abs{U}}{\abs{X}} \leq \frac{\gamma^2}{\xi^2 p^2 \EE v}.
  \]
\end{lemma}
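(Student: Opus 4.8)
The idea is to apply the jumbledness inequality~\eqref{eq:jumbled} with the test function $v$ on the $Y$-side and the indicator of the bad set $U$ on the $X$-side. Concretely, suppose first that $U \subseteq \setcond{x \in X}{\int_{y} \Gamma(x,y)v(y)\,dy \geq (1+\xi)p\,\EE v}$. Let $u = \mathbf{1}_U$, so $\int_x u(x)\,dx = \abs{U}/\abs{X}$. On one hand,
\[
\int_{\substack{x \in X \\ y \in Y}} \Gamma(x,y)u(x)v(y)\,dxdy
= \int_{x} u(x) \paren{\int_{y} \Gamma(x,y)v(y)\,dy}\,dx
\geq (1+\xi)p\,(\EE v)\,\frac{\abs{U}}{\abs{X}},
\]
since every $x \in U$ contributes at least $(1+\xi)p\,\EE v$. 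On the other hand, the jumbledness condition bounds the deviation from the ``expected'' value $p\,(\EE v)\,\abs{U}/\abs{X}$:
\[
\int_{\substack{x \in X \\ y \in Y}} \Gamma(x,y)u(x)v(y)\,dxdy
\leq p\,(\EE v)\,\frac{\abs{U}}{\abs{X}} + \gamma\sqrt{\int_{x} u(x)\,dx}\sqrt{\int_y v(y)\,dy}.
\]
Subtracting, and using $\int_x u\,dx = \abs{U}/\abs{X}$ and $\int_y v\,dy = \EE v$, I get
\[
\xi p\,(\EE v)\,\frac{\abs{U}}{\abs{X}} \leq \gamma\sqrt{\frac{\abs{U}}{\abs{X}}}\sqrt{\EE v},
\]
and squaring both sides (or dividing through by $\sqrt{\abs{U}/\abs{X}}$ and then squaring) yields $\abs{U}/\abs{X} \leq \gamma^2/(\xi^2 p^2\,\EE v)$, as claimed.

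For the lower-tail case, where $U \subseteq \setcond{x}{\int_y \Gamma(x,y)v(y)\,dy \leq (1-\xi)p\,\EE v}$, the argument is symmetric: with $u = \mathbf{1}_U$ one has $\int \Gamma(x,y)u(x)v(y)\,dxdy \leq (1-\xi)p\,(\EE v)\,\abs{U}/\abs{X}$, while jumbledness gives a lower bound of $p\,(\EE v)\,\abs{U}/\abs{X} - \gamma\sqrt{\abs{U}/\abs{X}}\sqrt{\EE v}$; comparing these two gives the same inequality $\xi p\,(\EE v)\,\abs{U}/\abs{X} \leq \gamma\sqrt{\abs{U}/\abs{X}}\sqrt{\EE v}$ and hence the same conclusion. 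One should also note the trivial edge cases: if $\abs{U}=0$ the bound is immediate, and if $\EE v = 0$ then no $x$ can satisfy the upper-tail condition with $\xi>0$ (and the lower-tail set is all of $X$, but then the stated bound is vacuous/infinite, so there is nothing to prove) — so we may assume $\EE v > 0$ and divide by it freely.

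There is no real obstacle here; the only point requiring a moment's care is the bookkeeping when passing from $\xi p (\EE v)(\abs{U}/\abs{X}) \leq \gamma \sqrt{\abs{U}/\abs{X}}\,\sqrt{\EE v}$ to the final bound — one divides both sides by $\sqrt{\abs{U}/\abs{X}} > 0$, then by $\xi p\,\EE v > 0$, and squares. The lemma is essentially an unpacking of Definition~\ref{def:jumbled} against the indicator test function, which is exactly why the excerpt remarks that ``the proof follows immediately from the definition of jumbledness.''
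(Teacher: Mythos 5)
Your proof is correct and is exactly the argument the paper intends: the paper itself supplies no explicit proof, only the remark that it ``follows immediately from the definition of jumbledness,'' and applying Definition~\ref{def:jumbled} with $u = \mathbf{1}_U$ and the given $v$, then rearranging and squaring, is precisely that. The edge-case bookkeeping (when $\abs{U}=0$ or $\EE v = 0$) that you flag is harmless and correctly dispatched.
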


The next lemma says that restrictions of the count $\Gamma(H)$ to
small sets of vertices or pairs of vertices yield small counts. This
will be used in Section~\ref{sec:densification} to bound the
contributions from exceptional sets.

\begin{lemma}
  \label{lem:small-restrict}
  Assume Setup~\ref{set:Gamma} with $k \geq \frac{d(L(H^\spa)) +
    2}{2}$. Let $\bx \colon V(H) \to V(\Gamma)$ vary uniformly over
  compatible maps. Let $u \colon V(\Gamma) \to [0,1]$ be any function
  and write $u(\bx) = \prod_{a \in V(H)} u(x_a)$. Let $E'$ be a
  weighted graph with the same vertices as $\Gamma$ whose edge
  set is supported on $X_a \x X_b$ for $ab \notin H^\spa$. Let $H'$ be
  any graph with the same vertices as $H$. Then
  \[
  \int_\bx \Gamma\cond{H}{\bx} u(\bx)
    E'\cond{H'}{\bx} \ d\bx \leq \paren{(1+c)^{e(H^\spa)} - 1 +
    \int_\bx  u(\bx)
    E'\cond{H'}{\bx} \ d\bx}p(H).
  \]
\end{lemma}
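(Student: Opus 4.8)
The plan is to prove the stated bound by induction on the number of sparse edges $e(H^\spa)$, peeling off one sparse edge at a time in the manner of Proposition~\ref{prop:Gamma-counting}, but carrying the vertex weights $u$ and the edge weights $E'$ along for the ride. First note that we may assume $E(H')$ is disjoint from $E(H^\spa)$: if some edge of $H'$ is a sparse edge of $H$, then by hypothesis $E'$ vanishes on the corresponding pair of vertex subsets, so $E'\cond{H'}{\bx}=0$ for every compatible $\bx$ and the left-hand side is $0$, while the right-hand side is manifestly nonnegative. The base case $e(H^\spa)=0$ is immediate, since then every edge of $H$ is dense, so $\Gamma\cond{H}{\bx}=1$ for all compatible $\bx$ and $p(H)=1$, and both sides equal $\int_\bx u(\bx)E'\cond{H'}{\bx}\,d\bx$.

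For the inductive step, assume $e(H^\spa)\ge 1$. Since $k\ge\tfrac12(d(L(H^\spa))+2)$, the line graph $L(H^\spa)$ is $(2k-2)$-degenerate, hence has a vertex of degree at most $2k-2$; that is, there is a sparse edge $ab$ of $H$ with $\deg_{H^\spa}(a)+\deg_{H^\spa}(b)\le 2k$. Writing $\Gamma(x_a,x_b)=p+(\Gamma(x_a,x_b)-p)$ and letting $H_{-ab}$ be $H$ with the edge $ab$ deleted,
\begin{equation*}
\int_\bx \Gamma\cond{H}{\bx}u(\bx)E'\cond{H'}{\bx}\,d\bx = p\int_\bx \Gamma\cond{H_{-ab}}{\bx}u(\bx)E'\cond{H'}{\bx}\,d\bx + \int_\bx (\Gamma(x_a,x_b)-p)\,\Gamma\cond{H_{-ab}}{\bx}u(\bx)E'\cond{H'}{\bx}\,d\bx.
\end{equation*}
The graph $H_{-ab}$ has one fewer sparse edge; its sparse part is a subgraph of $H^\spa$, so the degeneracy hypothesis on $k$ is inherited, and the support condition on $E'$ still holds. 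Hence the induction hypothesis applied to $H_{-ab}$ (with the same $u$, $E'$, $H'$), together with $p\cdot p(H_{-ab})=p(H)$, bounds the first term by $\paren{(1+c)^{e(H^\spa)-1}-1+\int_\bx u(\bx)E'\cond{H'}{\bx}\,d\bx}p(H)$.

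For the second term I would argue exactly as in Lemma~\ref{lem:Gamma-cauchy}. Fix a compatible map $\bz$ on $V(H)\setminus\{a,b\}$ and group the surviving factors according to whether their edge touches $a$, touches $b$, or neither; since $ab\notin E(H')$ no surviving weighted edge joins $X_a$ to $X_b$, so the integrand for this $\bz$ has the form $(\Gamma(x_a,x_b)-p)W_a(x_a,\bz)W_b(x_b,\bz)W_0(\bz)$ with $W_a(\cdot,\bz)\colon X_a\to[0,1]$ (collecting $u(x_a)$ and the $\Gamma$- and $E'$-factors on edges incident to $a$), $W_b(\cdot,\bz)\colon X_b\to[0,1]$ similarly, and $W_0(\bz)\in[0,1]$ collecting everything else. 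The jumbledness of $(X_a,X_b)_\Gamma$ bounds $\abs{\int_{x\in X_a,\,y\in X_b}(\Gamma(x,y)-p)W_a(x,\bz)W_b(y,\bz)\,dxdy}$ by $cp^k\sqrt{\int_{x\in X_a}W_a(x,\bz)\,dx}\sqrt{\int_{y\in X_b}W_b(y,\bz)\,dy}$ for each $\bz$, and Cauchy--Schwarz in $\bz$ then bounds the second term in absolute value by $cp^k$ times the geometric mean of $\int_\bz W_0(\bz)\int_{x\in X_a}W_a(x,\bz)\,dx\,d\bz$ and $\int_\bz W_0(\bz)\int_{y\in X_b}W_b(y,\bz)\,dy\,d\bz$. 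Discarding the factors $u,E'\le 1$ and the dense edges, the first of these is at most $\Gamma\paren{(H_{-b})^\spa}$ and the second at most $\Gamma\paren{(H_{-a})^\spa}$, where $(H_{-v})^\spa$ is the graph of sparse edges of $H$ not incident to $v$; Proposition~\ref{prop:Gamma-counting} (hypothesis on $k$ again inherited) bounds these by $(1+c)^{e(H^\spa)-\deg_{H^\spa}(b)}p^{e(H^\spa)-\deg_{H^\spa}(b)}$ and $(1+c)^{e(H^\spa)-\deg_{H^\spa}(a)}p^{e(H^\spa)-\deg_{H^\spa}(a)}$. Multiplying the square roots and setting $\sigma:=\tfrac12(\deg_{H^\spa}(a)+\deg_{H^\spa}(b))$, the second term is at most $c(1+c)^{e(H^\spa)-\sigma}p^{k-\sigma}p^{e(H^\spa)}$; since $1\le\sigma\le k$ (the lower bound because $ab\in H^\spa$, the upper bound by the choice of $ab$) and $p\le 1$, this is at most $c(1+c)^{e(H^\spa)-1}p(H)$. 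Adding this to the bound on the first term gives $\paren{(1+c)^{e(H^\spa)}-1+\int_\bx u(\bx)E'\cond{H'}{\bx}\,d\bx}p(H)$, closing the induction.

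The only step requiring care is the exponent bookkeeping when the peeled edge $ab$ has unbalanced degrees at its two ends: this is why the Cauchy--Schwarz in $\bz$ must be split symmetrically and the two side-counts estimated via Proposition~\ref{prop:Gamma-counting} rather than trivially by $1$, and the inequality $\sigma\ge 1$ is precisely what converts the gain from peeling a single edge into the extra factor $1+c$ needed to close the induction. Everything else is passive: by the support hypothesis on $E'$, the weights $u$ and $E'$ never interact with the jumbled edge being peeled, so they are simply carried through the argument and bounded by $1$ wherever convenient.
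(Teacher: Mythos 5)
Your proof is correct and follows essentially the route the paper sketches for this lemma: induct on the sparse edges as in Proposition~\ref{prop:Gamma-counting}, apply jumbledness to the peeled edge first, and only then bound $u$ and $E'$ by $1$. The only (harmless) deviation is your symmetric Cauchy--Schwarz split, which yields $\sqrt{\Gamma((H_{-a})^\spa)\,\Gamma((H_{-b})^\spa)}$ in place of the paper's $\sqrt{\Gamma(H_{-ab})\,\Gamma(H_{-a,-b})}$; the exponent bookkeeping comes out the same either way.
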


Lemma~\ref{lem:small-restrict} follows by showing that
\[
\abs{\int_\bx \paren{\Gamma\cond{H}{\bx} - p(H)} u(\bx)
  E'\cond{H'}{\bx} \ d\bx} \leq \paren{(1+c)^{e(H^\spa)} - 1}p(H).
\]
The proof is similar to that of
Proposition~\ref{prop:Gamma-counting}. In the step analogous to
\eqref{eq:Gamma-p}, after applying the jumbledness condition as our
first inequality, we bound $u$ and $E'$ by $1$ and then continue
exactly the same way.

\section{Counting in $G$} \label{sec:counting-G}

In this section we develop the counting lemma for subgraphs $G$ of $\Gamma$, as outlined in
Section~\ref{sec:countingstrat}. The two key ingredients are doubling
and densification, which are discussed in Sections~\ref{sec:doubling}
and \ref{sec:densification}, respectively.
Here is the common setup for this section.

\begin{setup} \label{set:G}
  Assume Setup~\ref{set:Gamma}. Let $\epsilon > 0$. Let $G$ be a weighted subgraph of $\Gamma$. For
  every edge $ab \in E(H)$, assume that $(X_a, X_b)_G$ satisfies
  $\DISC(q_{ab}, p_{ab}, \epsilon)$, where $0 \leq q_{ab} \leq p_{ab}$.
\end{setup}

Unlike in Section~\ref{sec:counting-Gamma}, we do not make an effort
to keep track of the unimportant coefficients of $p(H)$ in the error
bounds, as it would be cumbersome to do so. Instead, we use the
$\approxmod$ notation introduced in Section~\ref{sec:notation}.

The goal of this section is to prove the following counting
lemma. This is slightly more general than
Theorem~\ref{thm:sparse-counting} in that it allows $H$ to have both
sparse and dense edges.

\begin{proposition}
  \label{prop:G-counting}
  Assume Setup~\ref{set:G} with $k \geq
  \min\set{\frac{\Delta(L(H^\spa)) + 4}{2}, \frac{d(L(H^\spa)) +
      6}{2}}$. Then
  \[
  G(H) \approxmod_{c,\epsilon}^{p(H)} q(H).
  \]
\end{proposition}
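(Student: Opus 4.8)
The plan is to induct on $e(H^\spa)$, the number of sparse edges of $H$, reducing each time to graphs with strictly fewer sparse edges via the two operations described in Section~\ref{sec:countingstrat}: \emph{doubling} and \emph{densification}. The base case is when $H$ has no sparse edges at all, i.e.\ $H = H^{\mathrm{dense}}$; then the statement is exactly the dense counting lemma (Proposition~\ref{prop:dense-counting}) applied to the complete bipartite ``host'' structure, since the $\DISC(q_{ab},1,\epsilon)$ hypothesis on each dense edge is precisely the dense discrepancy condition. So assume $H$ has at least one sparse edge and that the proposition holds for all graphs (with prescribed sparse/dense edge labels) having fewer sparse edges, as long as the relevant line-graph degeneracy/maximum-degree bound on $k$ is met. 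The key point to check throughout is that every auxiliary graph produced by doubling or densification still satisfies the hypothesis $k \geq \min\set{\tfrac{\Delta(L(\cdot^\spa))+4}{2}, \tfrac{d(L(\cdot^\spa))+6}{2}}$ — this is where the choice of the bound on $k$, and ultimately the exponent $d_2(H)+3$ in the applications, is forced.

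\textbf{Densification.} If $H$ has a sparse vertex $b$ of degree $2$ with sparse neighbors $a,c$ where $ac \notin E(H)$, I would replace the two sparse edges $ab, bc$ by a single \emph{dense} edge $ac$: after using $\DISC$ on $(X_a,X_b)_G$ and $(X_b,X_c)_G$ together with the jumbledness of $\Gamma$ to show that, for all but a negligible (controlled by Lemma~\ref{lem:deg-est} and Lemma~\ref{lem:small-restrict}) set of pairs $(x_a,x_c)$, the number of paths $x_a x_b x_c$ in $G$ is within $(1\pm o(1))$ of $q_{ab}q_{bc}\abs{X_b}$, one defines a weighted graph on $X_a\x X_c$ whose weight at $(x_a,x_c)$ is $(\text{\# such paths})/(q_{ab}q_{bc}\abs{X_b})$, truncated to $[0,1]$. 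This weighted graph satisfies a dense $\DISC(q_{ab}q_{bc}/?,1,\epsilon')$ condition (with $\epsilon'$ polynomial in $\epsilon$), and by construction $G(H)$ equals, up to a $\theta p(H)$ error, $q_{ab}q_{bc}\abs{X_b}/\ldots$ times the count of $H'$, where $H'$ is $H$ with $a,b,c$ replaced by the dense edge $ac$. The exceptional-pair contributions are bounded using Lemma~\ref{lem:small-restrict} applied to $\Gamma$. Since $H'$ has two fewer sparse edges, induction applies.

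\textbf{Doubling.} If no such degree-$2$ sparse vertex exists (or to handle the remaining structure), pick a sparse edge, say incident to a vertex $a$, split the integral defining $G(H)$ over the position $x_a$ of $a$, and apply Cauchy--Schwarz in $x_a$. This bounds $G(H)$ in terms of $g(H_{a\x 2})$, the count where $a$ is doubled and the non-incident edges are relaxed to embed only into $\Gamma$; by Lemma~\ref{lem:Gamma-partial} this in turn reduces to $G(H_{a\x 2}\text{ restricted to }G\text{-edges})\cdot(\text{a }\Gamma\text{-count})$, and similarly for the lower bound via a matching Cauchy--Schwarz in the opposite direction together with $g(H_{-a})$ and $g(H)$. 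The resulting graphs are unions of stars $K_{1,t}$ (trees, handled directly: see Prop.~\ref{prop:tree}) and graphs $K_{2,t}$ with one dense edge (by a preliminary densification on the doubled pair), each of which is counted by a further doubling/induction (Figures~\ref{fig:doubling}, \ref{fig:K2t-doubling}). As Figure~\ref{fig:doubling} shows, after peeling off vertex $a$ the ``hard core'' left is $H_{-a}$, on which induction applies since $\Delta(L(\cdot))$ and $d(L(\cdot))$ do not increase under deleting a vertex, so the hypothesis on $k$ is preserved.

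\textbf{Main obstacle.} The hardest part is organizing the induction so that the two reductions interlock without circularity: densification turns sparse edges into dense ones but the $K_{2,t}$-type graphs it produces require the $C_4$ count as an input, and doubling reduces to $K_{2,t}$, which itself needs densification. The resolution is to make the induction parameter $e(H^\spa)$ (or a suitable lexicographic refinement tracking both $e(H^\spa)$ and $v(H)$), and to verify at each reduction step that $e(H^\spa)$ strictly decreases — densification removes two sparse edges, doubling followed by the $\Gamma$-relaxation of Lemma~\ref{lem:Gamma-partial} strictly decreases the number of $G$-edges that remain to be counted — while simultaneously checking that the line-graph parameter governing the jumbledness requirement never exceeds the $k$ we started with. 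Tracking these exponents carefully, and bounding all exceptional-set contributions by $\theta p(H)$ using Lemmas~\ref{lem:deg-est} and \ref{lem:small-restrict}, is the technical heart of the argument; once it is in place, each individual estimate is a Cauchy--Schwarz application of the type already illustrated in the proof of Proposition~\ref{prop:Gamma-counting}.
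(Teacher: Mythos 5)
Your overall strategy is the same as the paper's: induct by peeling off one vertex at a time via doubling, count the stars $H_a$ with the tree lemma and the graphs $H_{a,a\x 2} \cong K_{2,t}$ via densification plus a further doubling with $C_4$ as the base case, and handle fully dense graphs with the dense counting lemma. The decomposition, the role of Lemma~\ref{lem:Gamma-partial} in converting the $\Gamma$-relaxed counts back into $G$-counts, and the resolution of the apparent circularity between densification and the $C_4$/$K_{2,t}$ counts all match Section~\ref{sec:counting-G}.

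However, you have deferred exactly the step that constitutes the paper's proof of this particular proposition, and the justification you do offer for it is not adequate. The doubling reduction (Lemmas~\ref{lem:doubling-reduction} and \ref{lem:counting-induct}) does not merely require that the hypothesis on $k$ be inherited by $H_{-a}$ --- for which your remark that $\Delta(L(\cdot))$ and $d(L(\cdot))$ are monotone under vertex deletion would suffice --- it requires the condition $k \geq \frac{1}{2}\paren{d(L(H_{a,a\x 2}^\spa, H_{-a}^\spa)) + 2}$, where the relevant line graph is that of the \emph{doubled} graph with a prescribed edge ordering, and doubling \emph{increases} line-graph degrees rather than preserving them. The content of the paper's proof is precisely the two inequalities $d(L(H_{a,a\x2}^\spa, H_{-a}^\spa)) \leq \Delta(L(H_{a\x 2}^\spa)) \leq \Delta(L(H^\spa)) + 2$ (doubling $a$ raises each degree of $H$ by at most one) and $d(L(H_{a,a\x2}^\spa, H_{-a}^\spa)) \leq d(L(H^\spa)) + 4$ (each edge of $H_{-a}^\spa$ meets at most four edges of $H_{a, a\x 2}^\spa$); these are exactly where the constants $4$ and $6$ in the statement of the proposition come from, and they show that \emph{any} choice of the vertex $a$ works. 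Without this verification your plan assembles the reduction machinery but does not establish that the stated threshold on $k$ is the one that makes it run.
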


The requirement on $k$ stated in Proposition~\ref{prop:G-counting} is
not necessarily best possible. The proof of the counting lemma
will be by induction on the vertices of $H$, removing one vertex at a
time. A better bound on $k$ can sometimes be obtained by tracking the
requirements on $k$ at each step of the procedure, as explained in a
tutorial in Section~\ref{sec:tut-jumble}.

\subsection{Doubling} \label{sec:doubling}

Doubling is a technique used to reduce the problem of counting
embeddings of $H$ in $G$ to the problem of counting embeddings of $H$
with one vertex deleted.

If $a \in V(H)$, $H_{a \x 2}$ is the graph $H$ with vertex $a$
doubled. In the assignment of vertices of $H_{a \x 2}$ to vertex
subsets of $\Gamma$, the new vertex $a'$ is assigned to the same
vertex subset as $a$. Let $H_a$ be the subgraph of $H$ consisting of
edges with $a$ as an endpoint, and let $H_{a,a\x 2}$ be $H_a$ with $a$
doubled. Let $H_{-a}$ be the subgraph of $H$ consisting of edges not
having $a$ as an endpoint. We refer to Figure~\ref{fig:doubling} for
an illustration.

\begin{lemma}[Doubling]
  \label{lem:doubling}
  Let $H$ be a graph with vertex set $\set{1, \dots, m}$. Let
  $\Gamma$ be a weighted graph with vertex subsets $X_1, \dots, X_m$, and let $G$
  be a weighted subgraph of $\Gamma$. For each edge $bc$ of $H$, we have
  numbers $0 \leq q_{bc} \leq p_{bc} \leq 1$. Let $g$ be a function
  that agrees with $G$ on $X_i
  \x X_j$ whenever $a \in \set{i,j}$ and with $\Gamma$ on $X_i
  \x X_j$ whenever $a \notin \set{i,j}$. Then
  \begin{multline} \label{eq:doubling}
    \abs{G(H) - q(H)} \\
    \leq q(H_{a})\abs{G(H_{-a}) - q(H_{-a})}
    + G(H_{-a})^{1/2}\paren{ g(H_{a\x 2}) - 2q(H_a) g(H) + q(H_{a})^2
      \Gamma(H_{-a})}^{1/2}.
  \end{multline}
\end{lemma}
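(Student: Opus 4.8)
The plan is to prove the doubling inequality \eqref{eq:doubling} via a single application of the Cauchy--Schwarz inequality, after isolating the role of the vertex $a$. First I would split the embedding of $H$ into the choice of where the vertices $V(H) \setminus \{a\}$ go, followed by the choice of where $a$ goes. Writing $\by$ for a compatible map $V(H)\setminus\{a\} \to V(G)$, every edge of $H$ is either an edge of $H_{-a}$ (whose value under $G$ depends only on $\by$) or an edge of $H_a$ (whose value depends on $\by$ and on the image $x_a$ of $a$). Thus
\[
G(H) = \int_{\by} G\cond{H_{-a}}{\by}\, G\cond{H_a}{\by}\, d\by,
\qquad
q(H) = q(H_{-a})\, q(H_a),
\]
where $G\cond{H_a}{\by} = \int_{x_a} G\cond{H_a}{a \to x_a, \by}\, dx_a$ is the (conditional) count of the star $H_a$ with the center ranging over $X_a$. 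The idea is then to compare $G\cond{H_a}{\by}$ with the constant $q(H_a)$, pointwise in $\by$, and to control the average discrepancy by Cauchy--Schwarz.

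Concretely, I would write
\[
G(H) - q(H) = \int_{\by} G\cond{H_{-a}}{\by}\paren{G\cond{H_a}{\by} - q(H_a)}\, d\by
\; + \; q(H_a)\paren{G(H_{-a}) - q(H_{-a})},
\]
which accounts for the first term $q(H_a)\abs{G(H_{-a}) - q(H_{-a})}$ in \eqref{eq:doubling} after applying the triangle inequality. For the remaining integral, since $0 \le G\cond{H_{-a}}{\by} \le 1$ is not quite the right weight, I would instead use $G\cond{H_{-a}}{\by}^{1/2} \cdot G\cond{H_{-a}}{\by}^{1/2}$ and apply Cauchy--Schwarz in the form
\[
\abs{\int_{\by} G\cond{H_{-a}}{\by}\paren{G\cond{H_a}{\by} - q(H_a)}\, d\by}
\le
\paren{\int_{\by} G\cond{H_{-a}}{\by}\, d\by}^{1/2}
\paren{\int_{\by} G\cond{H_{-a}}{\by}\paren{G\cond{H_a}{\by} - q(H_a)}^2\, d\by}^{1/2}.
\]
The first factor is exactly $G(H_{-a})^{1/2}$. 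For the second factor, I would expand the square:
\[
\int_{\by} G\cond{H_{-a}}{\by}\paren{G\cond{H_a}{\by}^2 - 2 q(H_a)\, G\cond{H_a}{\by} + q(H_a)^2}\, d\by.
\]
Now I identify each of the three resulting terms. The term $\int_{\by} G\cond{H_{-a}}{\by}\, G\cond{H_a}{\by}^2\, d\by$ is, by the definition of doubling, exactly the count of $H_{a \x 2}$ where the doubled star $H_{a,a\x 2}$ is embedded into $G$ and the remaining edges (those of $H_{-a}$) into $G$ as well; however the stated inequality has $g$ there, where $g$ agrees with $G$ on edges touching $a$ and with $\Gamma$ on the rest. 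Since $G \le \Gamma$, we have $G\cond{H_{-a}}{\by} \le g\cond{H_{-a}}{\by} = \Gamma\cond{H_{-a}}{\by}$ pointwise, and $G\cond{H_a}{\by}^2 = g\cond{H_{a,a\x 2}}{\by}$, so this term is at most $g(H_{a \x 2})$. Similarly $\int_{\by} G\cond{H_{-a}}{\by}\, G\cond{H_a}{\by}\, d\by = G(H) \le g(H)$ wait --- one must be careful about sign here, since this term appears with a negative coefficient $-2q(H_a)$; I would instead keep it as $G(H)$ exactly, noting $G(H) \ge 0$ and that replacing it by $g(H) \ge G(H)$ only helps because of the minus sign (increasing a subtracted nonnegative quantity decreases the bound --- so here I actually need the reverse, and should observe that the middle term equals $q(H_a) G(H)$ and that $g(H) \ge G(H)$ makes $-2q(H_a)g(H) \le -2q(H_a)G(H)$, so the bound with $g(H)$ is an upper bound on the bound with $G(H)$, which is what we want). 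The last term $\int_{\by} G\cond{H_{-a}}{\by}\, q(H_a)^2\, d\by = q(H_a)^2\, G(H_{-a}) \le q(H_a)^2\, \Gamma(H_{-a})$. Assembling these three bounds gives exactly the second term of \eqref{eq:doubling}.

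The main subtlety --- not really an obstacle, but the one point requiring care --- is the bookkeeping of signs when passing from $G$ to $g$ in each of the three terms of the expanded square: the $g(H_{a\x 2})$ and $q(H_a)^2 \Gamma(H_{-a})$ terms enter with positive sign so one replaces $G$ by its larger majorant $g$ (resp.\ $\Gamma$) upward, whereas the cross term $-2q(H_a) g(H)$ enters with negative sign, so replacing $G(H)$ by the larger $g(H)$ makes the whole bracket larger, consistent with an upper bound. One should also double-check that $g$ is well-defined (the two prescriptions agree --- they don't overlap, since an edge either touches $a$ or it doesn't) and that $g\cond{H_{a,a\x2}}{\by}$ genuinely equals $G\cond{H_a}{\by}^2$ because the two copies of the star at $a$ and at $a'$ share no edges and both land in $G$. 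No further input beyond Cauchy--Schwarz and the pointwise bound $G \le \Gamma$ is needed; in particular this lemma does not use jumbledness or $\DISC$, which is why it is stated for arbitrary weighted $\Gamma$ and $G$.
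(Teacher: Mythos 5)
Your setup and both of the first two steps coincide exactly with the paper's proof: the same splitting of $G(H)-q(H)$ into $q(H_a)\paren{G(H_{-a})-q(H_{-a})}$ plus the integral $\int_{\by}\paren{G\cond{H_a}{\by}-q(H_a)}G\cond{H_{-a}}{\by}\,d\by$, and the same Cauchy--Schwarz with weight $G\cond{H_{-a}}{\by}$. The gap is in your final step. After expanding the square you try to majorize the three terms one at a time, and for the cross term your own bookkeeping defeats you: from $g(H)\ge G(H)$ you get $-2q(H_a)\,g(H)\le -2q(H_a)\,G(H)$, which means the expression containing $-2q(H_a)\,g(H)$ is a \emph{lower} bound for the one containing $-2q(H_a)\,G(H)$, not an upper bound. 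So the term-by-term replacement $G(H)\rightsquigarrow g(H)$ goes the wrong way, and as written the chain of inequalities does not close. (You flagged the issue yourself mid-paragraph and then talked yourself into the wrong conclusion.)

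The fix, which is what the paper does, is to apply the pointwise bound $G\cond{H_{-a}}{\by}\le\Gamma\cond{H_{-a}}{\by}$ \emph{before} expanding, using that the other factor $\paren{G\cond{H_a}{\by}-q(H_a)}^2$ is nonnegative:
\begin{equation*}
\int_{\by}\paren{G\cond{H_a}{\by}-q(H_a)}^2 G\cond{H_{-a}}{\by}\,d\by
\le \int_{\by}\paren{G\cond{H_a}{\by}-q(H_a)}^2 \Gamma\cond{H_{-a}}{\by}\,d\by .
\end{equation*}
Now expanding the right-hand side gives, with \emph{equality},
$g(H_{a\x2})-2q(H_a)\,g(H)+q(H_a)^2\,\Gamma(H_{-a})$, since
$g(H_{a\x2})=\int_{\by}G\cond{H_a}{\by}^2\,\Gamma\cond{H_{-a}}{\by}\,d\by$,
$g(H)=\int_{\by}G\cond{H_a}{\by}\,\Gamma\cond{H_{-a}}{\by}\,d\by$, and
$\Gamma(H_{-a})=\int_{\by}\Gamma\cond{H_{-a}}{\by}\,d\by$. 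No term-by-term sign analysis is needed; the only inequality is the single application of $G\le\Gamma$ to the nonnegative integrand. With that reordering your argument is correct and identical to the paper's.
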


\begin{proof}
    Let $\by$ vary uniformly over compatible maps $V(H) \setminus \set{a} \to
    V(G)$ where $\by(b) \in X_b$ for each $b \in
    V(H)\setminus\set{a}$.
  We have
  \[
  G(H) - q(H) = q(H_a)(G(H_{-a}) - q(H_{-a})) + \int_{\by} \paren{G\cond{H_a}{\by} - q(H_a)}
  G\cond{H_{-a}}{\by} \ d\by.
  \]
  It remains to bound the integral, which we can do using the Cauchy-Schwarz inequality.
  \begin{align*}
    & \paren{\int_{\by} \paren{G\cond{H_a}{\by} - q(H_a)}
      G\cond{H_{-a}}{\by} \ d\by}^2
    \\
    & \leq \paren{\int_{\by} G\cond{H_{-a}}{\by} \
      d\by} \paren{\int_{\by} \paren{G\cond{H_a}{\by} - q(H_a)}^2
      G\cond{H_{-a}}{\by} \ d\by}
    \\
    & = G(H_{-a}) \int_{\by} \paren{G\cond{H_a}{\by} - q(H_a)}^2
    G\cond{H_{-a}}{\by} \ d\by
    \\
    & \leq G(H_{-a}) \int_{\by} \paren{G\cond{H_a}{\by} - q(H_a)}^2 \Gamma
    \cond{H_{-a}}{\by} \ d\by
    \\
    & = G(H_{-a})\paren{g(H_{a\x 2}) - 2q(H_a) g(H) + q(H_{a})^2
      \Gamma(H_{-a})}.
   \end{align*}
\end{proof}

Using Lemma~\ref{lem:Gamma-partial}, we know that under appropriate
hypotheses, we have
\begin{align*}
  g(H_{a\x 2}) &\approxmod_{c}^{p(H_{a\x 2})} p(H_{-a}) G(H_{a,a\x 2}),
  \\
  g(H) &\approxmod_{c}^{p(H)} p(H_{-a})G(H_{a})
  \\
  \text{and} \quad \Gamma(H_{-a}) &\approxmod_{c}^{p(H_{-a})} p(H_{-a}).
\end{align*}
If we can show that
\begin{align*}
  G(H_{a,a\x 2}) &\approxmod_{c,\epsilon}^{p(H_{a,a\x 2})} q(H_{a,a\x 2})
  \\
  \text{and} \quad
  G(H_{a}) &\approxmod_{c,\epsilon}^{p(H_a)} q(H_{a}),
\end{align*}
then the rightmost term in \eqref{eq:doubling} is
$\approxmod_{c,\epsilon}^{p(H)} 0$, which would reduce the problem to
showing that $G(H_{-a}) \approxmod_{c,\epsilon}^{p(H_{-a})}
q(H_{-a})$. This reduction step is spelled out below. See
Figure~\ref{fig:doubling-reduction} for an illustration.

\begin{figure}[!htp]
  \centering
  \begin{tikzpicture}[scale=.5]
    \begin{scope}[shift={(0,0)}]
      \node[p, label=right:$a$] (a) at (0,2) {};
      \node[p] (1) at (-1.5,0) {};
      \node[p] (2) at (-.5,0) {};
      \node[p] (3) at (.5,0) {};
      \node[p] (4) at (1.5,0) {};
      \node[p] (5) at (-1,-1) {};
      \node[p] (6) at (0,-1) {};
      \node[p] (7) at (1,-1) {};
      \draw (a)--(1) (a)--(2) (a)--(3) (a)--(4);
      \draw[dense] (a)--(1) (3)--(5);
      \draw (1)--(5)(3)--(6)--(7)--(4) (2)--(3);
      \node at (0,-2) {$G(H)$};
      \node at (0,-3.5) {\LargeUparrow};
      \draw[thick] decorate [decoration={brace, amplitude=1cm}] {(-9,-7) -- (9,-7)};
    \end{scope}
    \begin{scope}[shift={(-6,-9)}]
      \node[p] (1) at (-1.5,0) {};
      \node[p] (2) at (-.5,0) {};
      \node[p] (3) at (.5,0) {};
      \node[p] (4) at (1.5,0) {};
      \node[p] (5) at (-1,-1) {};
      \node[p] (6) at (0,-1) {};
      \node[p] (7) at (1,-1) {};
      \draw[dense] (5)--(3);
      \draw (1)--(5)(3)--(6)--(7)--(4) (2)--(3);
      \node at (0,-2) {$G(H_{-a})$};
    \end{scope}
    \begin{scope}[shift={(0,-9)}]
      \node[p] (a) at (-.5,2) {};
      \node[p] (a2) at (.5,2) {};
      \node[p] (1) at (-1.5,0) {};
      \node[p] (2) at (-.5,0) {};
      \node[p] (3) at (.5,0) {};
      \node[p] (4) at (1.5,0) {};
      \node at (0,-2) {$G(H_{a,a\x2})$};
      \draw[dense] (a)--(1)--(a2);
      \draw (a2)--(2)--(a)--(3)--(a2)--(4)--(a);

    \end{scope}
    \begin{scope}[shift={(6,-9)}]
      \node[p] (a) at (0,2) {};
      \node[p] (1) at (-1.5,0) {};
      \node[p] (2) at (-.5,0) {};
      \node[p] (3) at (.5,0) {};
      \node[p] (4) at (1.5,0) {};
      \node at (0,-2) {$G(H_{a})$};
      \draw[dense] (a)--(1);
      \draw (a)--(2) (a)--(3) (a)--(4);
    \end{scope}
  \end{tikzpicture}
  \caption{The doubling reduction.}
  \label{fig:doubling-reduction}
\end{figure}
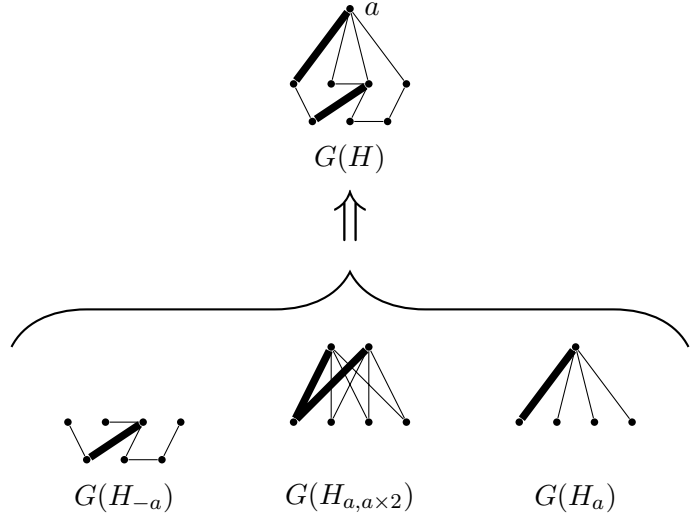

\begin{lemma}
  \label{lem:doubling-reduction}
  Assume Setup~\ref{set:G}.
  Let $a \in V(H)$.
  Suppose that $k \geq \frac{d(L(H_{a,a\x 2}^\spa, H_{-a}^\spa)) +
    2}{2}$. Suppose that
  \[
  G(H_{-a}) \approxmod_{c,\epsilon}^{p(H_{-a})} q(H_{-a}), \quad
  G(H_{a}) \approxmod_{c,\epsilon}^{p(H_{a})} q(H_{a}) \text{ and}
  \quad
  G(H_{a,a\x2}) \approxmod_{c,\epsilon}^{p(H_{a,a\x2})} q(H_{a,a\x2}).
  \]
  Then
  \[
  G(H) \approxmod_{c,\epsilon}^{p(H)} q(H).
  \]
\end{lemma}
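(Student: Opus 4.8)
The plan is to run the doubling inequality of Lemma~\ref{lem:doubling}, convert the mixed $G/\Gamma$ counts on its right-hand side into pure $G$-counts via Lemma~\ref{lem:Gamma-partial} and Proposition~\ref{prop:Gamma-counting}, and then substitute the three hypothesized estimates, at which point the main error term collapses.

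Concretely, Lemma~\ref{lem:doubling} gives
\[
\abs{G(H) - q(H)} \leq q(H_a)\abs{G(H_{-a}) - q(H_{-a})} + G(H_{-a})^{1/2}\,\Lambda^{1/2},
\]
where $\Lambda := g(H_{a\x 2}) - 2q(H_a)g(H) + q(H_a)^2\Gamma(H_{-a})$ (nonnegative, being an integral of a square) and $g$ agrees with $G$ on pairs meeting $X_a$ and with $\Gamma$ on the remaining pairs. Throughout I would use the identities $p(H) = p(H_a)p(H_{-a})$, $p(H_{a\x 2}) = p(H_a)^2p(H_{-a}) = p(H_a)p(H)$, $p(H_{a,a\x2}) = p(H_a)^2$ and $q(H_{a,a\x2}) = q(H_a)^2$, together with $q(\cdot)\leq p(\cdot)$. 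The first term is at most $p(H_a)\cdot\theta p(H_{-a}) = \theta p(H)$ by the hypothesis on $G(H_{-a})$, which also gives $G(H_{-a})^{1/2} = O\paren{p(H_{-a})^{1/2}}$, so the whole task reduces to bounding $\Lambda$.

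The key step is to show $\Lambda \approxmod_{c,\epsilon}^{p(H_a)p(H)} 0$. Apply Lemma~\ref{lem:Gamma-partial} to $H_{a\x 2}$ with $H' = H_{a,a\x2}$ (so $H_{a\x 2}\setminus H' = H_{-a}$) to get $g(H_{a\x 2}) \approxmod_{c}^{p(H_{a\x 2})} p(H_{-a})G(H_{a,a\x2})$; apply it to $H$ with $H' = H_a$ to get $g(H) \approxmod_{c}^{p(H)} p(H_{-a})G(H_a)$; and apply Proposition~\ref{prop:Gamma-counting} to get $\Gamma(H_{-a}) \approxmod_{c}^{p(H_{-a})} p(H_{-a})$. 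One checks that the hypothesis $k \geq \frac{1}{2}\paren{d(L(H_{a,a\x2}^\spa, H_{-a}^\spa)) + 2}$ suffices for all three: it is exactly what the first application needs, and for the other two the relevant parameter ($d(L(H_a^\spa,H_{-a}^\spa))$, respectively $d(L(H_{-a}^\spa))$) is no larger, since deleting the copied edges $a'b$ from any admissible ordering of $E(H_{a\x 2})$ only decreases the number of earlier neighbours of each surviving edge. Substituting these, and absorbing the resulting errors (each of order $\theta\,p(H_a)p(H)$ after using $q(H_a)\leq p(H_a)$ and the displayed $p$-identities), turns $\Lambda$ into $p(H_{-a})\paren{G(H_{a,a\x2}) - 2q(H_a)G(H_a) + q(H_a)^2}$ up to an additive $O(\theta\,p(H_a)p(H))$. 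Finally the hypotheses $G(H_{a,a\x2}) \approxmod_{c,\epsilon}^{p(H_a)^2} q(H_a)^2$ and $G(H_a) \approxmod_{c,\epsilon}^{p(H_a)} q(H_a)$ make the parenthesized expression equal to $q(H_a)^2 - 2q(H_a)^2 + q(H_a)^2 = 0$ up to $O(\theta\,p(H_a)^2)$, and since $p(H_{-a})p(H_a)^2 = p(H_a)p(H)$ we conclude $\Lambda = O(\theta\,p(H_a)p(H))$.

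Putting the estimates together, the second term of the doubling inequality is $O\paren{p(H_{-a})^{1/2}\,(\theta\,p(H_a)p(H))^{1/2}} = O(\theta^{1/2}p(H))$ because $p(H_{-a})p(H_a) = p(H)$, so $\abs{G(H) - q(H)} = O(\theta^{1/2})\,p(H)$. As the constants $c,\epsilon$ handed to the three auxiliary estimates are each at least polynomial in $\theta$, choosing $\theta$ a fixed power of the target accuracy yields $G(H) \approxmod_{c,\epsilon}^{p(H)} q(H)$, as required. I expect the only genuine friction to be the monotonicity check that the $k$-hypothesis is inherited by the three auxiliary applications — the statement about $d(L(\cdot,\cdot))$ under deleting $a'$ — together with the routine bookkeeping of powers of $p$; beyond that the argument is pure substitution.
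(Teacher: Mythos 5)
Your proposal is correct and follows exactly the route the paper intends: it is the reduction sketched in the paragraph preceding the lemma (apply Lemma~\ref{lem:doubling}, convert $g(H_{a\x2})$, $g(H)$ and $\Gamma(H_{-a})$ into pure $G$-counts via Lemma~\ref{lem:Gamma-partial} and Proposition~\ref{prop:Gamma-counting}, and observe that the quadratic $q(H_a)^2-2q(H_a)^2+q(H_a)^2$ collapses), and your monotonicity check that the single hypothesis on $k$ covers all three auxiliary applications is the right bookkeeping. Nothing further is needed.
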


\begin{remark}
  We do not always need the full strength of Setup~\ref{set:G}
  (although it is convenient to state it as such). For
  example, when $H$ is a triangle with vertices $\set{1,2,3}$,
  $H_{-1}$ is a single edge, so we do not need discrepancy on
  $(X_2,X_3)_G$ to obtain $G(H_{-a}) \approxmod_{c,\epsilon}^p
  q_{23}$. In particular, our approach gives the triangle counting lemma in the
  form stated in Kohayakawa et al.~\cite{KRSS10}, where discrepancy is assumed for only two of the
  three pairs of vertex subsets of $G$.
\end{remark}

\subsection{Densification} \label{sec:densification}

Densification is the technique that allows us to transform a
subdivided edge of $H$ into a single dense edge, as summarized in the
figure below.  This section also contains a counting lemma for trees
(Proposition~\ref{prop:tree}).

\begin{center}
  \begin{tikzpicture}[scale=.5]
    \begin{scope}
      \node[p] (a) at (-1,0) {};
      \node[p] (b) at (0,.5) {};
      \node[p] (c) at (1,0) {};
      \draw (a)--(b)--(c);
      \draw (a) -- +(-.3,-1) (a) -- +(0,-1)  (a) -- +(.3,-1) ;
      \draw (c) -- +(-.3,-1) (c) -- +(0,-1) (c) -- +(.3,-1) ;
      \draw (0,-1.5) ellipse (2cm and 1cm);
    \end{scope}
    \node at (4,-1) {\LargeLeftarrow};
    \begin{scope}[shift={(8,0)}]
      \node[p] (a) at (-1,0) {};
      \node[p] (c) at (1,0) {};
      \draw[dense] (a)--(c);
      \draw (a) -- +(-.3,-1) (a) -- +(0,-1)  (a) -- +(.3,-1) ;
      \draw (c) -- +(-.3,-1) (c) -- +(0,-1) (c) -- +(.3,-1) ;
      \draw (0,-1.5) ellipse (2cm and 1cm);
    \end{scope}
  \end{tikzpicture}
\end{center}

We introduce the following notation for  the density analogues of degree and codegree. If $\Gamma$ (and similarly $G$)
is a weighted graph with vertex subsets $X, Y, Z$, then for $x \in X$
and $z \in Z$, we write
\begin{align*}
  G(x,Y) &= \int_{y \in Y} G(x,y) \ dy, \\
  \text{and} \quad
  G(x,Y,z) &= \int_{y \in Y} G(x,y)G(y,z) \ dz.
\end{align*}

Now we state the goal of this section.

\begin{lemma}[Densification]
  \label{lem:densification}
  Assume Setup~\ref{set:G} with $k \geq \frac{d(L(H^\spa)) + 2}{2}$.
  Let $1,2,3$ be vertices in $H$ such that $1$ and $3$ are the only
  neighbors of $2$ in $H$, and $13 \notin E(H)$. Replace the induced
  bipartite graph $(X_1, X_3)_G$ by the weighted bipartite graph
  defined by
  \[
  G(x_1,x_3) = \frac{1}{2p_{12}p_{23}} \min\set{G(x_1,X_2,x_3), 2p_{12}p_{23}}.
  \]
  Let $H'$ denote the graph obtained from $H$ by deleting edges
  $12$ and $23$ and adding edge $13$. Let $q_{13} =
  \frac{q_{12}q_{23}}{2p_{12}p_{23}}$ and $p_{13} = 1$. Then $(X_1, X_3)_G$ satisfies
  $\DISC(q_{13}, 1, 2\epsilon + 18c)$ and
  \[
  \abs{G(H) - 2p_{12}p_{23} G(H')} \leq ((1+c)^{e(H^{\spa})} - 1 + 26c^2)p(H).
  \]
\end{lemma}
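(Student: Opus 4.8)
The plan is to prove the two assertions in tandem, since both come down to understanding the \emph{codegree function}
\[ \kappa(x_1,x_3) := G(x_1,X_2,x_3) = \int_{x_2 \in X_2} G(x_1,x_2)G(x_2,x_3)\,dx_2 \]
and the effect of truncating it at level $2p_{12}p_{23}$; indeed the new dense graph is exactly $\bar G(x_1,x_3) = \tfrac{1}{2p_{12}p_{23}}\min\{\kappa(x_1,x_3),2p_{12}p_{23}\}$, and the discrepancy claim will reduce to the same ingredients applied to the path $P_3$ on $\{1,2,3\}$ together with one round of telescoping.

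For the counting comparison, integrating out $x_2$ first (legitimate since $2$ has only the neighbors $1,3$ in $H$) gives the exact identity
\[ G(H) - 2p_{12}p_{23}G(H') = \int_{\bx'}\bigl(\kappa(x_1,x_3) - 2p_{12}p_{23}\bigr)^{+}\prod_{ab \in E(H)\setminus\{12,23\}} G(x_a,x_b)\,d\bx', \]
$\bx'$ ranging over compatible maps $V(H)\setminus\{2\}\to V(G)$; in particular the difference is nonnegative, so it is enough to bound it above. I would bound the surviving $G$-edges by the corresponding $\Gamma$-edges and invoke the elementary inequality $(\kappa(x_1,x_3) - 2p_{12}p_{23})^{+}\le \mathbf{1}_{B}(x_1,x_3)\,\Gamma(x_1,X_2,x_3)$, where $B = \{(x_1,x_3): \Gamma(x_1,X_2,x_3) > 2p_{12}p_{23}\}$ (this uses $G \le \Gamma$ and monotonicity of the positive part). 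Re-expanding $\Gamma(x_1,X_2,x_3)$ as an $x_2$-integral recovers $\int_{\bx}\mathbf{1}_B(x_1,x_3)\,\Gamma\cond{H}{\bx}\,d\bx$, and since $13 \notin E(H)$ (hence $13 \notin H^{\spa}$), Lemma~\ref{lem:small-restrict}, applied with the all-ones weight and $E' = \mathbf{1}_B$ supported on the pair $(X_1,X_3)$, bounds this by $\bigl((1+c)^{e(H^{\spa})} - 1 + \mu(B)\bigr)p(H)$ with $\mu(B) := |B|/(|X_1||X_3|)$; the hypothesis $k \ge \tfrac12(d(L(H^{\spa}))+2)$ is exactly what that lemma needs. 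So the count follows once $\mu(B) \le 26c^2$, which I would prove by running Lemma~\ref{lem:deg-est} twice on the pairs $(X_1,X_2)_\Gamma$ and $(X_2,X_3)_\Gamma$: fixing $\xi$ with $(1+\xi)^2 < 2$, all but an $O(c^2 p^{2k-2}/\xi^2)$-fraction of $x_1$ have $\Gamma(x_1,X_2) \in [(1-\xi)p,(1+\xi)p]$, and for each such $x_1$, applying Lemma~\ref{lem:deg-est} again with $v$ the indicator of $N_\Gamma(x_1)\cap X_2$ shows all but an $O(c^2 p^{2k-3}/\xi^2)$-fraction of $x_3$ satisfy $\Gamma(x_1,X_2,x_3) \le (1+\xi)^2 p^2 < 2p_{12}p_{23}$. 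Since $k \ge 1$ always and $k \ge 3/2$ whenever $12$ and $23$ are both sparse (so that $d(L(H^{\spa}))\ge 1$), these powers of $p$ are nonnegative and $\mu(B) = O(c^2/\xi^2)$; a suitable $\xi$ makes this $\le 26c^2$. The mixed cases (one of $12,23$ dense, which removes a factor of $p$) and the all-dense case ($B = \varnothing$) are easier.

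For the discrepancy of $(X_1,X_3)_G$, write, for $u\colon X_1\to[0,1]$ and $v\colon X_3\to[0,1]$,
\[ 2p_{12}p_{23}\bigl(\bar G(x_1,x_3) - q_{13}\bigr) = \bigl(\kappa(x_1,x_3) - q_{12}q_{23}\bigr) - \bigl(\kappa(x_1,x_3) - 2p_{12}p_{23}\bigr)^{+} \]
and integrate against $u(x_1)v(x_3)$. The positive-part term is exactly the ``$H=P_3$'' instance of the count above, so Lemma~\ref{lem:small-restrict} together with $\mu(B)\le 26c^2$ bounds it by $O(c)\,p_{12}p_{23}$. For the first term, telescope $G(x_1,x_2)G(x_2,x_3) - q_{12}q_{23} = (G(x_1,x_2)-q_{12})G(x_2,x_3) + q_{12}(G(x_2,x_3)-q_{23})$. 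The second piece is controlled directly by $\DISC(q_{23},p_{23},\epsilon)$ (test against the constant function on $X_2$) and contributes $\le \epsilon p_{12}p_{23}$. For the first piece, integrate out $x_3$ to obtain $\psi(x_2) := \int_{x_3}G(x_2,x_3)v(x_3)\,dx_3 \in [0,1]$ and split $\psi$ at level $2p_{23}$: on $\{\psi\le 2p_{23}\}$, testing $\DISC(q_{12},p_{12},\epsilon)$ against $\psi/(2p_{23})\in[0,1]$ gives a contribution $\le 2\epsilon p_{12}p_{23}$, while the overflow $(\psi-2p_{23})^{+}$ is supported on $\{x_2:\Gamma(x_2,X_3)>2p_{23}\}$ and, after bounding $G\le\Gamma$ there and applying Lemmas~\ref{lem:small-restrict} and~\ref{lem:deg-est}, contributes $O(c)\,p_{12}p_{23}$. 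Dividing by $2p_{12}p_{23}$ then yields $\abs{\int(\bar G - q_{13})uv} \le 2\epsilon + 18c$.

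The main obstacle throughout is bookkeeping so that no error survives division by the small quantity $2p_{12}p_{23}$: a naive single application of $\DISC(q_{12},p_{12},\epsilon)$ against an arbitrary $[0,1]$-weight produces an error of order $\epsilon p_{12}$, which is $\epsilon/p_{23}$ after normalization --- useless when $p_{23}$ is small. The truncation-at-$2p_{23}$ device, with the overflow confined to a set exceptional for the jumbled graph $\Gamma$, is precisely what converts every error term into one of order $p_{12}p_{23}$. The second delicate point is that $\mu(B)$ must be shown to be genuinely quadratic in $c$: a second-moment/Chebyshev estimate only gives $\mu(B) = O(1)$, whereas the two-pass argument through the vertex-deviation bound of Lemma~\ref{lem:deg-est}, with $\xi$ chosen so that $(1+\xi)^2<2$, is what produces the needed factor $c^2$.
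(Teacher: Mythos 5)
Your proposal is correct and follows essentially the same route as the paper: the exact identity expressing $G(H)-2p_{12}p_{23}G(H')$ as the integral of the truncation overflow, the bound $\mu(B)\leq 26c^2$ via two passes through Lemma~\ref{lem:deg-est} (the paper's Lemma~\ref{lem:two-densify-step-ss}), the use of Lemma~\ref{lem:small-restrict} to absorb the exceptional set into the count, and the discrepancy argument that telescopes and truncates the test weight at $2p_{23}$ are all precisely the paper's steps (Lemmas~\ref{lem:G-squared} and~\ref{lem:G-square-bound}). The only difference is cosmetic: you fold the paper's two discrepancy lemmas into a single decomposition of $2p_{12}p_{23}(\bar G - q_{13})$.
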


Note that $q(H) = 2p_{12}p_{23} q(H')$. So we obtain the following
reduction step as a corollary.

\begin{corollary}
  \label{cor:densification}
  Continuing with Lemma~\ref{lem:densification}. If $G(H')
  \approxmod_{c,\epsilon}^{p(H')} q(H')$ then $G(H)
  \approxmod_{c,\epsilon}^{p(H)} q(H)$ in the original graph.
\end{corollary}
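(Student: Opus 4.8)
The plan is to derive the corollary directly from Lemma~\ref{lem:densification} by a single application of the triangle inequality, using the identity $q(H)=2p_{12}p_{23}\,q(H')$ recorded just after the lemma, together with $p(H)=p_{12}p_{23}\,p(H')$. The latter holds because $E(H')$ is obtained from $E(H)$ by deleting the edges $12$ and $23$ and inserting $13$ with $p_{13}=1$, so that $p(H')=p(H)/(p_{12}p_{23})$. I also note that $H'^{\spa}\subseteq H^{\spa}$, so the setup and the jumbledness hypothesis $k\geq\frac{d(L(H^{\spa}))+2}{2}$ are inherited by $H'$, which is what makes the hypothesis $G(H')\approxmod_{c,\epsilon}^{p(H')}q(H')$ consistent with the present configuration.

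Concretely, fix $\theta>0$; I want to produce $c,\epsilon>0$ of size at least polynomial in $\theta$ with $\abs{G(H)-q(H)}\leq\theta p(H)$ in the original graph. I would start from
\[
\abs{G(H)-q(H)}\leq \abs{G(H)-2p_{12}p_{23}\,G(H')}+2p_{12}p_{23}\,\abs{G(H')-q(H')}.
\]
For the first term, Lemma~\ref{lem:densification} gives the bound $\bigl((1+c)^{e(H^{\spa})}-1+26c^2\bigr)p(H)$, which is $O(c)\,p(H)$ with an implied constant depending only on $H$; so there is $c_0$ of size linear in $\theta$ making this term at most $\tfrac12\theta p(H)$ whenever $c\leq c_0$. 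For the second term I would invoke the hypothesis $G(H')\approxmod_{c,\epsilon}^{p(H')}q(H')$. This is legitimate because Lemma~\ref{lem:densification} tells us the replaced pair $(X_1,X_3)_G$ satisfies $\DISC(q_{13},1,2\epsilon+18c)$ while every other edge of $H'$ keeps its original $\DISC$ condition. Applying the hypothesis with target $\theta':=\theta/4$ yields $c_1,\epsilon_1>0$ polynomial in $\theta$ so that $\abs{G(H')-q(H')}\leq\theta'p(H')$ once all discrepancy parameters for $H'$ are at most $\epsilon_1$ and the jumbledness constant is at most $c_1$. Choosing $\epsilon:=\epsilon_1/20$ and $c:=\min\{c_0,c_1,\epsilon_1/20\}$ (still polynomial in $\theta$) guarantees $2\epsilon+18c\leq\epsilon_1$, whence
\[
2p_{12}p_{23}\,\abs{G(H')-q(H')}\leq 2p_{12}p_{23}\,\theta'p(H')=2\theta'p(H)=\tfrac12\theta p(H),
\]
and summing the two estimates gives $\abs{G(H)-q(H)}\leq\theta p(H)$, as required.

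The argument is essentially bookkeeping, since all the substantive work is already contained in Lemma~\ref{lem:densification}. I expect the only mildly delicate point to be the parameter coordination: the single pair $c,\epsilon$ must simultaneously keep $\Gamma$ jumbled, keep $G$ regular on the edges of $H$, control the densification error term, and — through the combination $2\epsilon+18c$ — make the freshly created dense edge $13$ regular enough for the hypothesis on $H'$ to fire, all while remaining polynomial in $\theta$. No analytic obstacle arises beyond this.
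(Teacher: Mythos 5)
Your proof is correct and matches what the paper intends: the paper dispatches the corollary with the one-line remark that $q(H)=2p_{12}p_{23}\,q(H')$, leaving the triangle-inequality bookkeeping (and the coordination of the discrepancy parameter $2\epsilon+18c$ for the new dense edge) implicit, which is exactly what you spell out. The identities $q(H)=2p_{12}p_{23}\,q(H')$ and $p(H)=p_{12}p_{23}\,p(H')$ are verified correctly, and the choice $\epsilon=\epsilon_1/20$, $c=\min\{c_0,c_1,\epsilon_1/20\}$ properly ensures $2\epsilon+18c\leq\epsilon_1$ while staying polynomial in $\theta$.
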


The proof of Lemma~\ref{lem:densification} consists of the following
steps:
\begin{enumerate}
\item Show that the weighted graph on $X_1 \x X_3$ with weights
  $G(x_1, X_2, x_3)$ satisfies discrepancy.
\item Show that the capping of weights has negligible effect on discrepancy.
\item Show that the capping of weights has negligible effect on the $H$-count.
\end{enumerate}

Steps 2 and 3 are done by bounding the contribution from pairs of
vertices in $X_1 \x X_3$ which have too high co-degree with $X_2$ in
$\Gamma$.

We shall focus on the more difficult case when both edges $12$ and
$23$ are sparse. The case when at least one of the two edges is dense
is analogous and much easier. Let us start with a warm-up by showing
how to do step 1 for the latter dense case. We shall omit the rest of the
details in this case.

\begin{lemma}
  \label{lem:G-squared-d}
  Let $0 \leq q_1 \leq p_1\leq 1$, $0 \leq q_2 \leq 1$, $\epsilon > 0$.
  Let $G$ be a weighted graph with vertex subsets $X, Y, Z$, such that
  $(X,Y)_G$ satisfies $\DISC(q_1, p_1, \epsilon)$ and $(Y,Z)_G$
  satisfies $\DISC(q_2, 1, \epsilon)$. Then the graph $G'$ on
  $(X,Z)$ defined by
  $G'(x,z) = G(x,Y,z)$
  satisfies $\DISC(q_1q_2, p_1, 2\epsilon)$.
\end{lemma}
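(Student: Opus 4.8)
The plan is a direct telescoping argument using only the two discrepancy hypotheses; no jumbledness is needed. First I would record two preliminary sanity checks. The map $G'$ is a legitimate weighted graph, since $G'(x,z) = \int_{y \in Y} G(x,y)G(y,z)\ dy \le \int_{y \in Y} 1 \ dy = 1$; and $q_1 q_2 \le p_1$ because $q_1 \le p_1$ and $q_2 \le 1$, so the claimed conclusion $\DISC(q_1 q_2, p_1, 2\epsilon)$ is sensibly stated.

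Next, fix arbitrary test functions $u \colon X \to [0,1]$ and $w \colon Z \to [0,1]$; the quantity to control is $\abs{\int_{x \in X,\, z \in Z} (G'(x,z) - q_1 q_2) u(x) w(z) \ dx dz}$. Unfolding the definition of $G'$ and using the identity
\[
G(x,y)G(y,z) - q_1 q_2 = (G(x,y) - q_1) G(y,z) + q_1 (G(y,z) - q_2),
\]
I split this into two triple integrals over $X \x Y \x Z$. For the first, $\int_{x,y,z} (G(x,y) - q_1) u(x) w(z) G(y,z) \ dx dy dz$, I integrate out $z$ first: the function $y \mapsto \int_{z \in Z} G(y,z) w(z) \ dz$ takes values in $[0,1]$, so applying $\DISC(q_1, p_1, \epsilon)$ for $(X,Y)_G$ with test functions $u$ and this function bounds the piece by $\epsilon p_1$. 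For the second, $q_1 \int_{x,y,z} (G(y,z) - q_2) u(x) w(z) \ dx dy dz$, I integrate out $x$ first, picking up a factor $\int_{x \in X} u(x) \ dx \in [0,1]$, and then apply $\DISC(q_2, 1, \epsilon)$ for $(Y,Z)_G$ with the constant test function $1$ on $Y$ and $w$ on $Z$; together with $q_1 \le p_1$ this bounds the second piece by $\epsilon p_1$ as well. Summing gives $2\epsilon p_1$, and since $u, w$ were arbitrary this is precisely $\DISC(q_1 q_2, p_1, 2\epsilon)$.

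There is no substantive obstacle here — this is the easy, "one dense edge" warm-up for Step~1 of the densification lemma (Lemma~\ref{lem:densification}). The only points requiring care are keeping straight the asymmetric roles of the two hypotheses (the pair $(X,Y)$ carries the scale $p_1$ while $(Y,Z)$ carries scale $1$), and checking that each auxiliary function obtained after integrating out one of the three variables still takes values in $[0,1]$, so that the discrepancy conditions apply verbatim with no loss. The genuinely hard case, handled later in Section~\ref{sec:densification}, is when both edges $12$ and $23$ are sparse: there one cannot telescope so cleanly, since the inner function $\int_{z} G(y,z) w(z)\ dz$ is no longer automatically well-controlled relative to the sparse scale, and one must instead pass through the jumbled host graph $\Gamma$ and bound the contribution of exceptional pairs with large codegree.
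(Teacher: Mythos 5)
Your proof is correct and is essentially identical to the paper's: the same telescoping decomposition $G(x,y)G(y,z) - q_1q_2 = (G(x,y)-q_1)G(y,z) + q_1(G(y,z)-q_2)$, followed by applying each discrepancy hypothesis with the appropriate $[0,1]$-valued auxiliary test functions. The paper merely states the two bounds without spelling out the choice of test functions, which you do explicitly.
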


\begin{proof}
  Let $u \colon X \to [0,1]$ and $w \colon Z \to [0,1]$ be arbitrary
  functions. In the following integrals, let $x,y$ and $z$ vary uniformly
  over $X, Y$ and $Z$, respectively. We have
  \begin{align}
    &\int_{x,z} u(x) (G(x,Y,z) - q_1q_2) w(z) \ dxdz \nonumber
    \\
    &= \int_{x,y,z} u(x) (G(x,y)G(y,z) - q_1q_2) w(z) \ dxdydz \nonumber
    \\
    &= \int_{x,y,z} u(x) (G(x,y) - q_1)G(y,z)w(z) \ dxdydz + q_1
    \int_{x,y,z} u(x)(G(y,z)-q_2)w(z) \ dxdydz. \label{eq:G-squared-telescope}
  \end{align}
  Each of the two integrals in the last sum is bounded by
  $\epsilon p_1$ in absolute value by the discrepancy
  hypotheses. Therefore $(X,Z)_{G'}$ satisfies $\DISC(q_1q_2, p_1, 2\epsilon)$.
\end{proof}

The next lemma is step 1 for the sparse case.

\begin{lemma}
  \label{lem:G-squared}
  Let $c,p,\epsilon \in (0,1]$ and $q_1, q_2 \in [0,p]$.
  Let $\Gamma$ be a graph with vertex subsets $X, Y, Z$, and $G$ a
  weighted subgraph of $G$. Suppose that
  \begin{itemize}
  \item  $(X,Y)_\Gamma$ is $(p,
    cp^{3/2}\sqrt{\abs{X}\abs{Y}})$-jumbled and $(X,Y)_G$ satisfies
    $\DISC(q_1, p, \epsilon)$; and
  \item $(Y,Z)_\Gamma$ is $(p, cp^{3/2}\sqrt{\abs{Y}\abs{Z}})$-jumbled
    and $(Y,Z)_G$ satisfies $\DISC(q_2,p,\epsilon)$.
  \end{itemize}
  Then the graph $G'$ on $(X,Z)$ defined by $G'(x,z) = G(x,Y,z)$
  satisfies $\DISC(q_1q_2, p^2, 3\epsilon + 6c)$.
\end{lemma}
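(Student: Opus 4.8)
The plan is to combine the dense-case telescoping of Lemma~\ref{lem:G-squared-d} with the jumbledness of $\Gamma$ to handle the fact that neighborhoods in $G$ can be much smaller than the ambient sets. Fix functions $u \colon X \to [0,1]$ and $w \colon Z \to [0,1]$, and as in the proof of Lemma~\ref{lem:G-squared-d}, write
\[
\int_{x,z} u(x)(G(x,Y,z) - q_1 q_2) w(z)\ dxdz = A + q_1 B,
\]
where $A = \int_{x,y,z} u(x)(G(x,y) - q_1) G(y,z) w(z)\ dxdydz$ and $B = \int_{x,y,z} u(x)(G(y,z) - q_2) w(z)\ dxdydz$. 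The point is that neither the discrepancy of $(X,Y)_G$ nor that of $(Y,Z)_G$ directly bounds these: in $A$, for a fixed $y$ the inner "function'' on $X$ is $u$, which is fine, but the weight $\int_z G(y,z) w(z)\ dz$ attached to $y$ is only of order $q_2 \leq p$, not bounded below, so integrating the discrepancy bound $\epsilon p$ over $y$ would lose a factor $1/p$. The resolution is to first replace $G$ by $\Gamma$ in the "outer'' factor and control the error using jumbledness.

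Concretely, for $A$ I would fix $y$ and apply $\DISC(q_1, p, \epsilon)$ to the pair $(X,Y)_G$ with the test function $u$ on $X$ and the indicator-like function $y \mapsto$ (value of $\int_z G(y,z)w(z)\,dz$ rescaled). Since $G \le \Gamma$, the weight $\int_z G(y,z) w(z)\ dz$ is at most $\int_z \Gamma(y,z)\ dz$, and by jumbledness of $(Y,Z)_\Gamma$ the average over $y$ of $\int_z \Gamma(y,z)\ dz$ is at most $(1+c)p$. So after splitting $Y$ into the bulk where $\int_z \Gamma(y,z)\,dz \le 2p$, say, and a small exceptional set (bounded via Lemma~\ref{lem:deg-est} or directly via the jumbledness inequality applied with $v \equiv 1$), the discrepancy bound on the bulk gives $|A| \lesssim \epsilon p \cdot p = \epsilon p^2$ up to the exceptional contribution. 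The exceptional set of $y$'s has measure $O(c^2/p)$ by jumbledness, and on it the integrand is at most $\int_z \Gamma(y,z) w(z)\,dz$, whose contribution is $O(c^2/p)\cdot O(p) = O(c^2)$ — wait, that is only $O(c^2)$ not $O(c p^2)$, so I would instead bound the exceptional contribution more carefully by $\int_{y \in Y_{\mathrm{exc}}} \int_z \Gamma(y,z)\,dz\ dy$ and use that jumbledness controls $\int_{y\in S, z}\Gamma(y,z)$ for any set $S$ by $p|S|/|Y| + cp^{3/2}\sqrt{|S|/|Y|}$, which for $|S|/|Y| = O(c^2/p^2)$ (the right bound from Lemma~\ref{lem:deg-est} with $\gamma = cp^{3/2}$, $\xi$ a constant, $\mathbb{E}v = p$) gives $O(c p^2) + O(c \cdot cp^{3/2}\cdot p^{-1}\cdot p) = O(cp^2)$. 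For $B$, the same idea applies even more simply: fix $x$, apply $\DISC(q_2,p,\epsilon)$ to $(Y,Z)_G$ with test functions $1$ on $Y$ and $w$ on $Z$ — this actually gives $|B| \le \epsilon p$ directly with no exceptional set needed. Then $q_1 B$ contributes $q_1 \epsilon p \le \epsilon p^2$.

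Collecting terms, $|A + q_1 B| \le \epsilon p^2 + O(cp^2) + \epsilon p^2 \le (3\epsilon + 6c)p^2$ after tracking the constants (the dense-case telescoping contributed $2\epsilon$, and the jumbledness cleanup of the exceptional $y$'s plus the slack from replacing $G$ by $\Gamma$ in the weight accounts for the extra $\epsilon + 6c$). This is exactly $\DISC(q_1 q_2, p^2, 3\epsilon + 6c)$ for $(X,Z)_{G'}$.

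The main obstacle is the bookkeeping in the exceptional-set estimate: one must be careful that the small set of $y \in Y$ with anomalously large $\Gamma$-degree into $Z$ (where discrepancy on $G$ cannot be trusted because the relevant weight is not bounded below) contributes only $O(cp^2)$ and not $O(c)$ or $O(c\sqrt{p})$. This is where the precise exponent $3/2$ on the jumbledness parameter $cp^{3/2}\sqrt{|X||Y|}$ enters and is needed: a weaker jumbledness bound of the form $cp\sqrt{|X||Y|}$ would make the exceptional set too large to absorb. Everything else is the routine telescoping already carried out in Lemma~\ref{lem:G-squared-d}.
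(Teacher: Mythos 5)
Your decomposition and overall strategy are exactly those of the paper's proof: telescope as in Lemma~\ref{lem:G-squared-d}, bound the second integral by $\epsilon p^2$ directly from discrepancy, and for the first integral introduce $v(y)=\int_z G(y,z)w(z)\,dz$, cut out the exceptional set $Y'$ of vertices $y$ with $\Gamma(y,Z)>2p$, apply $\DISC(q_1,p,\epsilon)$ to $u$ and the rescaled function $\tfrac{1}{2p}v1_{Y\setminus Y'}$ on the bulk, and absorb the contribution of $Y'$ using jumbledness (the paper routes this through Lemma~\ref{lem:small-restrict}; your direct application of \eqref{eq:jumbled} to $1_{Y'}$ is the same mechanism).

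The one place your write-up does not close is the size of the exceptional set, and this is not just bookkeeping: as stated your numbers do not yield the claimed bound. From Lemma~\ref{lem:deg-est} applied to $(Y,Z)_\Gamma$ with $\gamma=cp^{3/2}$, $\xi=1$ and $v\equiv 1$ (so $\EE v=1$), one gets $\abs{Y'}/\abs{Y}\le c^2p^3/p^2=c^2p$, not $O(c^2/p)$ or $O(c^2/p^2)$ as you write. With your value $\sigma=c^2/p^2$ the main term $p\sigma=c^2/p$ in the jumbledness bound $p\sigma+cp^{3/2}\sqrt{\sigma}$ is nowhere near $O(cp^2)$, so the estimate as written fails; with the correct $\sigma=c^2p$ it gives $c^2p^2+cp^{3/2}\cdot cp^{1/2}=2c^2p^2$, which is exactly what is needed. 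A second, minor point: your closing remark attributes the necessity of the exponent $3/2$ to the $(X,Y)$ jumbledness, but it is the $(Y,Z)$ jumbledness that controls $\abs{Y'}$ and the mass of $\Gamma$ on $Y'\x Z$; the paper notes in the remark following the lemma that the exponent on $(X,Y)_\Gamma$ can in fact be relaxed from $\tfrac32$ to $1$.
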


\begin{remark}
  By unraveling the proof of Lemma~\ref{lem:small-restrict}, we see
  that the exponent of $p$ in the jumbledness of $(X,Y)_\Gamma$ can be
  relaxed from $\frac{3}{2}$ to $1$.
\end{remark}

\begin{proof}
  We begin the proof the same way as Lemma~\ref{lem:G-squared-d}. In
  \eqref{eq:G-squared-telescope}, the second integral is bounded in
  absolute value by $\epsilon p^2$. We need to do more work to bound
  the first integral.

  Define $v \colon Y \to [0,1]$ by
  \[
  v(y) = \int_z G(y,z)w(z) \ dz.
  \]
  So the first integral in \eqref{eq:G-squared-telescope}, the
  quantity we need to bound, equals
  \begin{equation}
    \label{eq:G-squared-v}
    \int_{x,y}u(x)(G(x,y) - q_1)v(y) \ dxdy.
  \end{equation}
  If we apply discrepancy immediately, we get a bound of $\epsilon p$,
  which is not small enough, as we need a bound on the order of
  $o(p^2)$. The key observation is that $v(y)$ is bounded above by $2p$ on
  most of $Y$. Indeed, let
  \[
  Y' = \setcond{y \in Y}{\Gamma(y, Z) > 2p}.
  \]
  By Lemma~\ref{lem:deg-est} we have $\abs{Y'} \leq c^2p \abs{Y}$. Since $v1_{Y\setminus Y'}$ is bounded
  above by $2p$, we can apply discrepancy on $(X,Y)_G$ with the functions
  $u$ and $\frac{1}{2p} v1_{Y\setminus Y'}$ to obtain
  \[
  \abs{\int_{x,y}u(x)(G(x,y) - q_1)v(y)1_{Y\setminus Y'} \ dxdy} \leq
  2\epsilon p^2.
  \]
  In the following calculation, the first inequality follows from the
  triangle inequality; the second inequality follows from expanding
  $v(y)$ and using $G \leq \Gamma$ and $u, w \leq 1$; the third
  inequality follows from Lemma~\ref{lem:small-restrict} (applied with
  $u$ in the Lemma being the function $1_{Y'}$ on $Y$ and $1$
  everywhere else, and $H'$ the empty graph so that $E'\cond{H'}{\bx}
  = 1$ always).
  \begin{align*}
    &\abs{\int_{x,y}u(x)(G(x,y) - q_1)v(y)1_{Y'}(y) \ dxdy}
    \\
    &\leq \int_{x,y}u(x)G(x,y)v(y)1_{Y'} \ dxdy + \int_{x,y}
    u(x)q_1v(y)1_{Y'}(y) \ dxdy
    \\
    & \leq \int_{x,y,z} \Gamma(x,y) 1_{Y'}(y) \Gamma(y,z) \ dxdydz + q_1
    \int_{y,z}  1_{Y'}(y) \Gamma(y,z)\ dydz
    \\
    &
    \leq \paren{(1+c)^2 - 1 + \frac{\abs{Y'}}{\abs{Y}}}p^2 +
    q_1\paren{(1+c) - 1 + \frac{\abs{Y'}}{\abs{Y}}}p
    \\
    & \leq 6cp^2.
  \end{align*}
  Therefore, \eqref{eq:G-squared-v} is at most $(2\epsilon + 6c)p^2$
  in absolute value. Recall that the second integral in
  \eqref{eq:G-squared-telescope} was bounded by $\epsilon p^2$. The
  result follows from combining these two estimates.
\end{proof}

The technique used in Lemma~\ref{lem:G-squared} also allows us to count trees
in $G$.

\begin{proposition} \label{prop:tree}
  Assume Setup~\ref{set:G} with $H$ a tree and $k
  \geq \frac{\Delta(H^\spa) + 1}{2}$. Then
  \[
  G(H) \approxmod_{c,\epsilon}^{p(H)} q(H).
  \]
\end{proposition}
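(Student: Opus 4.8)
The plan is to prove Proposition~\ref{prop:tree} by induction on the number of vertices of $H$, peeling off one leaf at a time; this is a direct generalization of the scheme used for Lemma~\ref{lem:G-squared}. If $H$ has no edges then $G(H) = q(H) = p(H) = 1$ and we are done, so assume $H$ has an edge, pick a leaf $a$ with unique neighbour $b$, and write $T := H - a$, a tree with $E(H) = \{ab\} \sqcup E(T)$, $p(H) = p_{ab}\, p(T)$ and $q(H) = q_{ab}\, q(T)$. Integrating out $x_a$ first and writing $\phi(x_b) := G\cond{T}{b \to x_b} \in [0,1]$ for the partial-embedding count of $T$ with $b$ pinned at $x_b$, one obtains
\[
G(H) - q(H) = q_{ab}\bigl(G(T) - q(T)\bigr) + \int_{x_b \in X_b} \bigl(G(x_b, X_a) - q_{ab}\bigr)\,\phi(x_b)\, dx_b .
\]
Since $\Delta(T^\spa) \le \Delta(H^\spa)$, the induction hypothesis applies to $T$ and bounds the first term by $q_{ab}\theta' p(T) \le \theta' p(H)$ for suitable $c,\epsilon$.

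The heart of the matter is estimating the error integral $\mathrm{E} := \int_{x_b}(G(x_b,X_a) - q_{ab})\phi(x_b)\,dx_b$. Writing $G(x_b, X_a) - q_{ab} = \int_{x_a}(G(x_a,x_b) - q_{ab})\,dx_a$ and feeding $\phi$ into $\DISC(q_{ab},p_{ab},\epsilon)$ as a test function gives only $\abs{\mathrm{E}} \le \epsilon p_{ab}$, which is useless once $T$ has sparse edges, since then $p(H) = p_{ab}\,p^{e(T^\spa)} \ll p_{ab}$. The point --- exactly as in Lemma~\ref{lem:G-squared} --- is that $\phi$ really lives at scale $p(T) = p^{e(T^\spa)}$, not at scale $1$, so we must separate the bulk of $X_b$ from the few vertices $x_b$ with anomalously large $\phi(x_b)$. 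Set $\lambda := 1/c$ and $\mathrm{Bad} := \{x_b \in X_b : \Gamma\cond{T}{b \to x_b} > \lambda\, p(T)\}$. By Proposition~\ref{prop:Gamma-counting} (applicable since $k \ge \tfrac{\Delta(T^\spa)+1}{2} = \tfrac{d(L(T^\spa))+2}{2}$, using $d(L(F)) = \Delta(F)-1$ for a forest $F$) we have $\int_{x_b} \Gamma\cond{T}{b\to x_b}\, dx_b = \Gamma(T) \le (1+c)^{e(T^\spa)} p(T)$, so Markov's inequality gives $\abs{\mathrm{Bad}}/\abs{X_b} = O(c)$.

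On $\mathrm{Good} := X_b \setminus \mathrm{Bad}$ one has $\phi(x_b) \le \Gamma\cond{T}{b\to x_b} \le \lambda\, p(T)$, so $x_b \mapsto \phi(x_b)\,1_{\mathrm{Good}}(x_b)/(\lambda p(T))$ is $[0,1]$-valued; applying $\DISC(q_{ab},p_{ab},\epsilon)$ with this function bounds the $\mathrm{Good}$-part of $\mathrm{E}$ by $\epsilon\, p_{ab}\,\lambda\, p(T) = (\epsilon/c)\,p(H)$. For the $\mathrm{Bad}$-part, use $\abs{G(x_b,X_a) - q_{ab}} \le \Gamma(x_b,X_a) + q_{ab}$ together with $\phi \le \Gamma\cond{T}{b\to\cdot}$ to bound it by $\int_{\bx} 1_{\mathrm{Bad}}(x_b)\,\Gamma\cond{H}{\bx}\,d\bx + q_{ab}\int_{\bx} 1_{\mathrm{Bad}}(x_b)\,\Gamma\cond{T}{\bx}\,d\bx$; each integral is controlled by Lemma~\ref{lem:small-restrict}, taking $u$ to be the indicator $1_{\mathrm{Bad}}$ on $X_b$ and $1$ elsewhere and $E' \equiv 1$. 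The application to $H$ is precisely where the hypothesis $k \ge \tfrac{\Delta(H^\spa)+1}{2} = \tfrac{d(L(H^\spa))+2}{2}$ is used, and it yields a bound of $\bigl((1+c)^{e(H^\spa)} - 1 + \abs{\mathrm{Bad}}/\abs{X_b}\bigr)p(H) + p_{ab}\bigl((1+c)^{e(T^\spa)} - 1 + \abs{\mathrm{Bad}}/\abs{X_b}\bigr)p(T) = O(c)\, p(H)$.

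Combining the three pieces, $\abs{G(H) - q(H)} \le \theta' p(H) + O(c)\, p(H) + (\epsilon/c)\, p(H)$; taking $\theta' = \theta/3$, then $c$ a sufficiently small multiple of $\theta$ (also small enough for the recursive call), and finally $\epsilon$ small relative to $c\theta$ --- all of size polynomial in $\theta$, with exponents depending only on $H$ since the recursion has depth $v(H)-1$ --- gives $\abs{G(H) - q(H)} \le \theta\, p(H)$, as required. (Dense edges cause no trouble anywhere: when $ab$ is dense one has $p_{ab}=1$ and $\Gamma(x_b,X_a) = 1$, and the estimates above go through verbatim.) The only nonroutine ingredient is the exceptional-set estimate of the third paragraph, which is exactly what upgrades the deceptively weak $\DISC(q,p,\epsilon)$ hypothesis into an error bound at the correct scale $p(H)$.
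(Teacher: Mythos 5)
Your proof is correct and follows essentially the same route the paper sketches for Proposition~\ref{prop:tree}: a leaf-peeling induction in which the partial-embedding weight $\phi$ is truncated at scale $O(p(T))$ using the $\Gamma$-count (Proposition~\ref{prop:Gamma-counting}), the truncated part is fed into $\DISC$ as a rescaled test function exactly as in Lemma~\ref{lem:G-squared}, and the exceptional set is controlled by Lemma~\ref{lem:small-restrict}. The only (cosmetic) difference is that you run the induction via a direct telescoping identity rather than through the paper's weighted formulation (Lemma~\ref{lem:tree-weighted}), which amounts to the same mechanism.
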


In fact, it can be shown that the error has the form
\[
\abs{G(H) - q(H)} \leq M_H (c + \epsilon)p(H)
\]
for some real number $M_H > 0$ depending on $H$.

To prove Proposition~\ref{prop:tree}, we formulate a weighted version
and induct on the number of
edges. The weighted version is stated below.

\begin{lemma} \label{lem:tree-weighted} Assume the same setup as in
  Proposition~\ref{prop:tree}. Let $u \colon V(G) \to [0,1]$ be any
  function. Let $\bx$ vary uniformly over all compatible maps $V(H)
  \to V(G)$. Write $u(\bx) = \prod_{a \in V(H)} u(x_a)$. Then,
  \[
  \int_{\bx} G\cond{H}{\bx} u(\bx) \ d\bx
  \approxmod_{c,\epsilon}^{p(H)} q(H) \int_{\bx} u(\bx) \ d\bx.
  \]
\end{lemma}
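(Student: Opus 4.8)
The plan is to prove Lemma~\ref{lem:tree-weighted} by induction on $e(H)$, peeling off a leaf at each step. When $H$ has no edges both sides equal $\int_\bx u(\bx)\,d\bx$, and when $H$ is a single edge $ab$ the statement is exactly $\DISC(q_{ab},p_{ab},\epsilon)$ for $(X_a,X_b)_G$ applied with the test functions $u$ restricted to $X_a$ and to $X_b$, giving error $\epsilon p_{ab}=\epsilon p(H)$.

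For the inductive step, choose a leaf $\ell$ of $H$, let $a$ be its unique neighbour, and let $F:=H-\ell$, a tree with $e(F)=e(H)-1$; since $F^\spa\subseteq H^\spa$ we have $\Delta(F^\spa)\le\Delta(H^\spa)$, so the inductive hypothesis applies to $F$. Writing $\by$ for the restriction of a compatible map $\bx$ to $V(F)$, $x_a:=\by(a)$, and $w(x_a):=\int_{x_\ell\in X_\ell}G(x_a,x_\ell)u(x_\ell)\,dx_\ell$, we have
\[
\int_\bx G\cond{H}{\bx}\,u(\bx)\,d\bx=\int_\by G\cond{F}{\by}\,u(\by)\,w(x_a)\,d\by.
\]
I would split this according to whether $x_a$ lies in the exceptional set $A':=\{x_a\in X_a:\Gamma(x_a,X_\ell)>2p_{\ell a}\}$, which is empty when $\ell a$ is dense; by Lemma~\ref{lem:deg-est}, applied to the $(p,cp^k\sqrt{\abs{X_a}\abs{X_\ell}})$-jumbled pair $(X_a,X_\ell)_\Gamma$ with the constant function $1$ on $X_\ell$, one gets $\abs{A'}/\abs{X_a}\le c^2p^{2k-2}\le c^2$. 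On $X_a\setminus A'$ one has $w(x_a)\le\Gamma(x_a,X_\ell)\le 2p_{\ell a}$, so letting $\tilde u$ agree with $u$ off $X_a$ and equal $(2p_{\ell a})^{-1}w(x_a)u(x_a)$ on $X_a\setminus A'$ and $0$ on $A'$ defines a $[0,1]$-valued weight with $u(\by)w(x_a)1_{X_a\setminus A'}(x_a)=2p_{\ell a}\,\tilde u(\by)$. Hence the part of the integral with $x_a\in X_a\setminus A'$ equals $2p_{\ell a}\int_\by G\cond{F}{\by}\tilde u(\by)\,d\by$, to which the inductive hypothesis for $F$ applies, giving $2p_{\ell a}\,q(F)\int_\by\tilde u(\by)\,d\by$ up to an error $O(\theta')p(H)$ (using $p_{\ell a}p(F)=p(H)$). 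Here $\int_{x_a}\tilde u(x_a)\,dx_a=(2p_{\ell a})^{-1}\int_{x_a\in X_a\setminus A'}u(x_a)w(x_a)\,dx_a$, which $\DISC(q_{\ell a},p_{\ell a},\epsilon)$ for $(X_a,X_\ell)_G$, together with $\abs{A'}/\abs{X_a}\le c^2$, evaluates to $(2p_{\ell a})^{-1}q_{\ell a}\bar u_\ell\bar u_a+O(c^2+\epsilon)$ with $\bar u_b:=\int_{x_b}u(x_b)\,dx_b$; since $q_{\ell a}q(F)=q(H)$ and $\bar u_\ell\bar u_a\prod_{b\in V(F)\setminus\{a\}}\bar u_b=\int_\bx u(\bx)\,d\bx$, this part contributes $q(H)\int_\bx u(\bx)\,d\bx+O(c^2+\epsilon+\theta')p(H)$.

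It remains to bound the leftover part $\int_\by G\cond{F}{\by}\,u(\by)\,w(x_a)\,1_{A'}(x_a)\,d\by$. Using $G\le\Gamma$, $u\le 1$, and $w(x_a)\le\Gamma(x_a,X_\ell)=\int_{x_\ell}\Gamma(x_a,x_\ell)\,dx_\ell$, and then reinstating the variable $x_\ell$, this is at most $\int_\bx\Gamma\cond{H}{\bx}\,1_{A'}(x_a)\,d\bx$ --- exactly the sort of restricted $\Gamma$-count controlled by Lemma~\ref{lem:small-restrict} (applied with $E'$ trivial and cutoff function $1_{A'}$ on $X_a$, $1$ elsewhere). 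Since $H^\spa$ is a forest we have $d(L(H^\spa))\le\Delta(H^\spa)-1$, so the hypothesis $k\ge\frac{\Delta(H^\spa)+1}{2}$ supplies the $k\ge\frac{d(L(H^\spa))+2}{2}$ that Lemma~\ref{lem:small-restrict} requires, and the lemma bounds this term by $\bigl((1+c)^{e(H^\spa)}-1+\abs{A'}/\abs{X_a}\bigr)p(H)=O(c)\,p(H)$. Adding the two parts gives $\int_\bx G\cond{H}{\bx}u(\bx)\,d\bx=q(H)\int_\bx u(\bx)\,d\bx+O(c+\epsilon+\theta')p(H)$; taking $\theta'$ a fixed fraction of the target $\theta$ and then $c,\epsilon$ small enough in $\theta$ --- consistent, since the required polynomial dependencies compose --- yields the $\approxmod_{c,\epsilon}^{p(H)}$ conclusion, and one in fact reads off the bound $\abs{G(H)-q(H)}\le M_H(c+\epsilon)p(H)$ mentioned after the statement.

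The step I expect to be the main obstacle is the sparse bookkeeping that forces every error to be proportional to $p(H)=p^{e(H^\spa)}$ rather than to a smaller power of $p$: a naive estimate of the exceptional piece would only give $O(c)\,p^{\deg_{H^\spa}(a)}$, a power of $p$ too small. This is precisely what Lemma~\ref{lem:small-restrict} is designed to fix, while absorbing the pendant contribution $w(x_a)$ into the rescaled weight $\tilde u$ --- after discarding the small set $A'$ so that $\tilde u\le 1$ --- is what lets the inductive hypothesis carry the bulk term at the correct power of $p$. This mirrors the split into $Y\setminus Y'$ and $Y'$ in the proof of Lemma~\ref{lem:G-squared}.
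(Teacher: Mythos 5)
Your proof is correct and follows exactly the route the paper sketches for this lemma (peel off a leaf, transfer its weight to the neighbour via a rescaled $[0,1]$-valued weight after discarding the exceptional set of high-$\Gamma$-degree vertices, and control the exceptional contribution with Lemma~\ref{lem:small-restrict}); the paper omits the details, and your version supplies them correctly, including the key observation that $d(L(H^\spa))\le\Delta(H^\spa)-1$ for forests so that the hypothesis $k\ge\frac{\Delta(H^\spa)+1}{2}$ suffices for Lemma~\ref{lem:small-restrict}.
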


To prove Lemma~\ref{lem:tree-weighted}, we remove one leaf of $H$ at a
time and use the technique in the proof of Lemma~\ref{lem:G-squared}
to transfer the weight of the leaf to its unique neighboring
vertex and use Lemma~\ref{lem:small-restrict} to bound the
contributions of the vertices with large degrees in $\Gamma$. We omit the details.

Continuing with the proof of densification, the following estimate is
needed for steps 2 and 3.

\begin{lemma}
  \label{lem:two-densify-step-ss}
  Let $\Gamma$ be a graph with vertex subsets $X, Y, Z$, such that
  $(X,Y)_\Gamma$ is $(p, cp\sqrt{\abs{X}\abs{Y}})$-jumbled and
  $(Y,Z)_\Gamma$ is $(p, cp^{3/2}\sqrt{\abs{Y}\abs{Z}})$-jumbled. Let
  \[
  E' = \setcond{(x,z) \in X \x Z}{\Gamma(x,Y,z) > 2p^2}.
  \]
  Then $\abs{E'} \leq 26c^2\abs{X}\abs{Z}$.
\end{lemma}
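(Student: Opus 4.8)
Write $U_x := \setcond{z \in Z}{\Gamma(x,Y,z) > 2p^2}$, so that $\frac{\abs{E'}}{\abs{X}\abs{Z}} = \int_{x \in X} \frac{\abs{U_x}}{\abs{Z}}\,dx$ and it suffices to bound this integral by $26c^2$. The plan is to classify the vertices $x \in X$ according to their degree $\Gamma(x,Y) = \int_{y \in Y}\Gamma(x,y)\,dy$ into $Y$, using the elementary fact that $\Gamma(x,Y,z) \leq \Gamma(x,Y)$ for every $z$. For the atypical $x$ — those whose degree deviates from $p$ by a constant factor — the jumbledness of $(X,Y)_\Gamma$ forces them to be a set of measure $O(c^2)$, and we simply bound $\abs{U_x}/\abs{Z} \leq 1$. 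For the typical $x$, we bound $\abs{U_x}$ directly using the jumbledness of $(Y,Z)_\Gamma$.

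\textbf{The two applications of Lemma~\ref{lem:deg-est}.} First I would apply Lemma~\ref{lem:deg-est} to the $(p, cp\sqrt{\abs{X}\abs{Y}})$-jumbled pair $(X,Y)_\Gamma$ with $v \equiv 1$ (so $\EE v = 1$) and $\xi = \tfrac12$: both $\setcond{x}{\Gamma(x,Y) \geq \tfrac32 p}$ and $\setcond{x}{\Gamma(x,Y) \leq \tfrac12 p}$ have measure at most $\tfrac{(cp)^2}{(1/2)^2 p^2} = 4c^2$, so these $x$ contribute at most $8c^2$ to the integral. Now fix $x$ with $\tfrac12 p < \sigma < \tfrac32 p$, where $\sigma := \Gamma(x,Y)$, and apply Lemma~\ref{lem:deg-est} to the $(p, cp^{3/2}\sqrt{\abs{Y}\abs{Z}})$-jumbled pair $(Y,Z)_\Gamma$ with the test function $v(y) = \Gamma(x,y) \in [0,1]$ on $Y$, which has $\EE v = \sigma$. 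Since $\Gamma(x,Y,z) = \int_{y \in Y}\Gamma(y,z)v(y)\,dy$, the inequality $\Gamma(x,Y,z) > 2p^2$ is exactly $\int_{y}\Gamma(y,z)v(y)\,dy > (1+\xi)p\,\EE v$ with $1+\xi = 2p/\sigma$, and $\xi > 0$ because $\sigma < \tfrac32 p < 2p$. Lemma~\ref{lem:deg-est} then yields
\[
\frac{\abs{U_x}}{\abs{Z}} \;\leq\; \frac{(cp^{3/2})^2}{\xi^2 p^2 \sigma} \;=\; \frac{c^2 p\,\sigma}{(2p-\sigma)^2}\;\leq\; 6c^2,
\]
the last inequality because $t \mapsto t/(2-t)^2$ is increasing on $(-\infty,2)$, so $\tfrac{p\sigma}{(2p-\sigma)^2}$ attains its maximum on $[\tfrac12 p,\tfrac32 p]$ at $\sigma = \tfrac32 p$, where it equals $6$. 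Integrating over the typical $x$ contributes at most $6c^2$. Adding the three pieces gives $\abs{E'} \leq (8c^2 + 6c^2)\abs{X}\abs{Z} = 14c^2\abs{X}\abs{Z} \leq 26c^2\abs{X}\abs{Z}$.

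\textbf{Main obstacle.} The crux — and the reason the hypothesis asks for the \emph{stronger} exponent $3/2$ on $(Y,Z)_\Gamma$ while only exponent $1$ is needed on $(X,Y)_\Gamma$ — is the second application of Lemma~\ref{lem:deg-est}: the numerator $(cp^{3/2})^2 = c^2 p^3$ is precisely what cancels the $p^2\sigma \approx p^3$ in the denominator, leaving a clean $O(c^2)$ bound with no stray powers of $p$; with exponent $1$ there one would only get $O(c^2/p)$. The only estimate requiring genuine attention is ensuring that $1/\xi^2 = \sigma^2/(2p-\sigma)^2$ does not blow up, which is exactly why one must first peel off the $x$ with $\Gamma(x,Y)$ outside the window $(\tfrac12 p, \tfrac32 p)$ using the (weaker) jumbledness of $(X,Y)_\Gamma$; everything else is routine.
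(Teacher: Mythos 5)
Your proof is correct and follows essentially the same route as the paper's: peel off the $x$ with $\Gamma(x,Y)$ outside $(\tfrac12 p,\tfrac32 p)$ via Lemma~\ref{lem:deg-est} applied to $(X,Y)_\Gamma$ (contributing $8c^2$), then for each remaining $x$ bound the bad $z$'s via Lemma~\ref{lem:deg-est} applied to $(Y,Z)_\Gamma$ with test function $v(y)=\Gamma(x,y)$. Your only deviation is optimizing $\xi$ as a function of $\sigma=\Gamma(x,Y)$ rather than using the uniform choice $\xi=\tfrac13$ and the crude lower bound $\EE v\geq p/2$, which improves the paper's per-vertex bound of $18c^2$ to $6c^2$ and the final constant from $26c^2$ to $14c^2$.
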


\begin{proof}
  Let
  \[
  X' = \setcond{x \in X}{\abs{\Gamma(x, Y) - p} > \frac{p}{2}}.
  \]
  Then, by Lemma~\ref{lem:deg-est}, $\abs{X'} \leq 8c^2 \abs{X}$. For every $x \in X$, let
  \[
  Z'_x = \setcond{z \in Z}{\Gamma(x,Y,z) > 2p^2}.
  \]
  For $x \in X \setminus X'$, we have, again by Lemma~\ref{lem:deg-est}, that $\abs{Z'_x} \leq 18c^2
  \abs{Z}$. The result follows by noting that $E' \subseteq (X' \x Z)
  \cup \setcond{(x,z)}{x \in X \setminus X', z \in Z'_x}$.
\end{proof}

The following lemma is step 2 in the program.

\begin{lemma}
  \label{lem:G-square-bound} Let $c,\epsilon,p \in (0,1]$ and $q_1,
  q_2 \in [0,p]$.  Let $\Gamma$ be a graph with vertex subsets $X, Y,
  Z$, and $G$ a weighted subgraph of $\Gamma$. Suppose that
  \begin{itemize}
  \item $(X,Y)_\Gamma$ is $(p, cp\sqrt{\abs{X}\abs{Y}})$-jumbled and
    $(X, Y)_G$ satisfies $\DISC(q_1, p, \epsilon)$; and
  \item $(Y,Z)_\Gamma$ is $(p, cp^{3/2}\sqrt{\abs{Y}\abs{Z}})$-jumbled
    and $(Y,Z)_G$ satisfies $\DISC(q_2,p,\epsilon)$.
  \end{itemize} Then the graph $G'$ on $(X,Z)$ defined by $G'(x,z) =
  \min\set{G(x,Y,z), 2p^2}$ satisfies $\DISC(q_1q_2, p^2, 3\epsilon +
  35c)$.
\end{lemma}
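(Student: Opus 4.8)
The plan is to deduce the bound from the codegree discrepancy estimate already proved in Lemma~\ref{lem:G-squared}, together with a bound on the error introduced by the capping at $2p^2$. Write $G''(x,z):=G(x,Y,z)$ for the uncapped codegree weight, so that $G'(x,z)=\min\{G''(x,z),2p^2\}$. First I would invoke Lemma~\ref{lem:G-squared}---noting, via the remark following it, that its hypothesis on $(X,Y)_\Gamma$ may be weakened from $(p,cp^{3/2}\sqrt{\abs X\abs Y})$-jumbledness to $(p,cp\sqrt{\abs X\abs Y})$-jumbledness, which is exactly what is assumed here---to conclude that $(X,Z)_{G''}$ satisfies $\DISC(q_1q_2,p^2,3\epsilon+6c)$. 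It then remains only to estimate the discrepancy cost of passing from $G''$ to $G'$.

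Next I would note that $G'$ and $G''$ coincide off the exceptional set $E'=\setcond{(x,z)\in X\x Z}{\Gamma(x,Y,z)>2p^2}$ of Lemma~\ref{lem:two-densify-step-ss}. Indeed, since $G\le\Gamma$ we have $G''(x,z)=G(x,Y,z)\le\Gamma(x,Y,z)$, so on the complement of $E'$ we get $G''(x,z)\le 2p^2$ and hence $G'(x,z)=G''(x,z)$; and in general $0\le G''(x,z)-G'(x,z)=\max\{G(x,Y,z)-2p^2,0\}\le\Gamma(x,Y,z)$ pointwise. Hence, for any $u\colon X\to[0,1]$ and $w\colon Z\to[0,1]$,
\[
\abs{\int_{x,z}u(x)(G'(x,z)-q_1q_2)w(z)\,dxdz}\le\abs{\int_{x,z}u(x)(G''(x,z)-q_1q_2)w(z)\,dxdz}+\int_{x,z}\Gamma(x,Y,z)1_{E'}(x,z)\,dxdz,
\]
and the first term on the right is at most $(3\epsilon+6c)p^2$ by the previous paragraph.

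The remaining task---and the only delicate point---is to bound $\int_{x,z}\Gamma(x,Y,z)1_{E'}(x,z)\,dxdz$ by $O(c)p^2$. The naive estimate, bounding $\Gamma(x,Y,z)\le1$ and using $\abs{E'}\le 26c^2\abs X\abs Z$ from Lemma~\ref{lem:two-densify-step-ss}, gives only $O(c^2)$, which is too weak once $p$ is small; so one must keep track of the powers of $p$. The trick is to rewrite the integral as $\int_{x,y,z}\Gamma(x,y)\Gamma(y,z)1_{E'}(x,z)\,dxdydz$, a restricted path count in $\Gamma$, and apply Lemma~\ref{lem:small-restrict} (in its non-balanced form, exactly as invoked in the remark following Lemma~\ref{lem:G-squared}---or, equivalently, by redoing its two-step telescoping directly, removing the edge $23$ first so that only exponent $1$ is needed on $(X,Y)_\Gamma$): take $H$ to be the path $1$--$2$--$3$ with $X_1=X$, $X_2=Y$, $X_3=Z$ and both edges sparse, take $u\equiv1$, take the weighted graph $1_{E'}$ supported on $X_1\x X_3$ (not an edge of $H^\spa$), and take $H'$ to be the single edge $13$. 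Since $\int_{x,z}1_{E'}(x,z)\,dxdz=\abs{E'}/(\abs X\abs Z)\le26c^2$, Lemma~\ref{lem:small-restrict} gives
\[
\int_{x,y,z}\Gamma(x,y)\Gamma(y,z)1_{E'}(x,z)\,dxdydz\le\big((1+c)^2-1+26c^2\big)p^2=(2c+27c^2)p^2\le29cp^2
\]
for $c\le1$. Combining the three estimates yields $\abs{\int_{x,z}u(x)(G'(x,z)-q_1q_2)w(z)\,dxdz}\le(3\epsilon+6c+29c)p^2=(3\epsilon+35c)p^2$, which is exactly the claimed discrepancy bound. The main obstacle throughout is this last estimate: the capping error must be controlled on the scale $p^2$ rather than merely $p$, which is what forces one to treat $E'$ as the support of a restricted count and use the $\Gamma$-counting machinery rather than any trivial bound.
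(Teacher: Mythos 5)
Your proposal is correct and follows essentially the same route as the paper's proof: split off the uncapped codegree graph (handled by Lemma~\ref{lem:G-squared}), localize the capping error to the exceptional set $E'$ of Lemma~\ref{lem:two-densify-step-ss}, and control the restricted path count $\int_{x,y,z}\Gamma(x,y)\Gamma(y,z)1_{E'}(x,z)\,dxdydz$ via Lemma~\ref{lem:small-restrict}, arriving at the same $3\epsilon+35c$. Your explicit justification of the weakened jumbledness exponent on $(X,Y)_\Gamma$ when invoking Lemmas~\ref{lem:G-squared} and \ref{lem:small-restrict} is a point the paper leaves implicit, and is handled correctly.
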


\begin{proof}
  Let $u \colon X \to [0,1]$ and $w \colon Z \to [0,1]$ be any
  functions. In the following integrals, $x, y$ and $z$ vary uniformly over
  $X, Y$ and $Z$, respectively. We have
  \begin{multline*}
    \int_{x,z} (G'(x,z) - q_1q_2)u(x)w(z) \ dxdz \\
    = \int_{x,z} (G(x,Y,z) - q_1q_2)u(x)w(z) \ dxdz
    - \int_{x,z} (G(x,Y,z) - G'(x,z))u(x)w(z) \ dxdz.
  \end{multline*}
  The first integral on the right-hand side can be bounded in absolute
  value by $(3\epsilon+6c)p^2$ by Lemma~\ref{lem:G-squared}. For the
  second integral, let $E' = \setcond{(x,z) \in X \x Z}{\Gamma(x,Y,z)
    > 2p^2}$. We have
  \begin{align*}
    0 &\leq \int_{x,z} (G(x,Y,z) - G'(x,z))u(x)w(z) \ dxdz \\
    &\leq \int_{x,z} G(x,Y,z)1_{E'}(x,z) \ dxdz \\
    &\leq \int_{x,y,z} \Gamma(x,y)\Gamma(y,z) 1_{E'}(y,z) \ dxdz \\
    &\leq \paren{(1+c)^2 - 1 + \frac{\abs{E'}}{\abs{X}\abs{Z}}} p^2 \\
    &\leq 29cp^2
  \end{align*}
  by Lemmas~\ref{lem:small-restrict} and \ref{lem:two-densify-step-ss}. The result follows by combining
  the estimates.
\end{proof}

Finally we prove step 3 in the program, thereby completing the proof
of densification.

\begin{proof}
  [Proof of Lemma~\ref{lem:densification}]
  We prove the result when both edges $12$ and $23$ are sparse. When
  at least one of $12$ and $23$ is dense, the proof is analogous and
  easier.

  Lemma~\ref{lem:G-square-bound} implies that $(X_1, X_3)_G$ satisfies
  $\DISC(q_{13}, \frac{1}{2}, 3\epsilon + 35c)$, and hence it must also
  satisfy $\DISC(q_{13}, 1, 2\epsilon + 18c)$.

  Let $E' = \setcond{(x_1, x_3)}{\Gamma(x_1, X_2, x_3) >
    2p_{12}p_{23}}$. We have $\abs{E'} \leq 26c^2\abs{X_1}\abs{X_3}$ by Lemma~\ref{lem:two-densify-step-ss}. In the following integrals, let $\bx \colon V(H)
  \to V(\Gamma)$ vary uniformly over all compatible maps. Then
  \[
    0 \leq G(H) - 2p_{12}p_{23} G(H')
    \leq \int_{\bx} \Gamma\cond{H}{\bx}1_{E'}(x_1, x_2) \ d\bx
    \leq ((1+c)^{e(H^{\spa})} - 1 + 26c^2)p(H)
  \]
  by Lemma~\ref{lem:small-restrict}.
\end{proof}

\subsection{Counting $C_4$} \label{sec:C4}

With the tools of doubling and densification, we are now ready to
count embeddings in $G$. We start by showing how to count $C_4$, as
it is an important yet tricky step.

\begin{proposition}
  \label{prop:C4}
  Assume Setup~\ref{set:G} with $H = C_4$ and $k \geq 2$. Then
  \[
  \abs{G(C_4) - q(C_4)} \leq 100(c+\epsilon)^{1/2} p(C_4).
  \]
\end{proposition}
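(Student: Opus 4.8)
The plan is to run the densification--doubling reduction of Section~\ref{sec:countingstrat} in its first nontrivial instance, tracking the resulting errors carefully; I describe the harder case in which all four edges of $C_4$ are sparse, the cases with a dense edge being strictly easier. Label the vertices $1,2,3,4$ with edges $12,23,34,41$. Since $1$ and $3$ are non-adjacent, I would first apply densification (Lemma~\ref{lem:densification}) at vertex $2$: this replaces the sparse edges $12,23$ by a single \emph{dense} weighted edge on $X_1 \times X_3$ with density $q_{13}=q_{12}q_{23}/(2p_{12}p_{23})$, introduces an error of size $O(c+\epsilon)\,p(C_4)$, and --- crucially for later --- guarantees that the new edge satisfies $\DISC(q_{13},1,O(\epsilon+c))$. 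As $q(C_4)=2p_{12}p_{23}\,q(H')$ for the resulting triangle $H'$ on $\{1,3,4\}$ (dense edge $13$, sparse edges $34$ and $41$), Corollary~\ref{cor:densification} reduces the problem to proving $|G(H')-q(H')|\leq O((c+\epsilon)^{1/2})\,p(H')$. Note that the hypothesis $k\geq 2$ is exactly what Lemma~\ref{lem:densification} demands at this step, since $L(C_4)=C_4$ has degeneracy $2$.

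To count $H'$ I would apply the doubling reduction (Lemma~\ref{lem:doubling-reduction}) at the vertex $a=1$ incident to the dense edge. This reduces $G(H')\approx q(H')$ to three subcounts: $G(H'_{-1})$ --- a single sparse edge, immediate from $\DISC$; $G(H'_1)$ --- a two-edge path (one dense and one sparse edge), a tree handled by Proposition~\ref{prop:tree}; and $G(H'_{1,1\times 2})$ --- the $K_{2,2}$ on $\{1,1'\}\times\{3,4\}$ whose two edges at $3$ are dense and whose two edges at $4$ are sparse. The $k$-requirement of Lemma~\ref{lem:doubling-reduction} here is again $k\geq 2$ (the relevant union of sparse line graphs is $L(K_{1,3})=K_3$, of degeneracy $2$), and in verifying its hypotheses one uses Lemmas~\ref{lem:Gamma-partial} and \ref{lem:Gamma-partial-one-edge} to peel off the $\Gamma$-edges appearing in the Cauchy--Schwarz term of Lemma~\ref{lem:doubling}, together with $\Gamma(\text{single edge})\approx p$ from Proposition~\ref{prop:Gamma-counting}. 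The algebraic identity that makes the doubling bracket $g(H'_{1\times2})-2q(H'_1)g(H')+q(H'_1)^2\Gamma(H'_{-1})$ collapse is $q(H'_{1,1\times2})=q(H'_1)^2$; the square root in Lemma~\ref{lem:doubling} then turns the $O(c+\epsilon)$ error inside the bracket into the final $O((c+\epsilon)^{1/2})$ error, which after bookkeeping of the powers of $p$ yields the constant $100$ claimed in Proposition~\ref{prop:C4}.

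It remains to count the $K_{2,2}$ of the third subcount, and this is the only point where the scheme threatens to loop back to counting $C_4$; resolving this is the main obstacle. The fix is a \emph{second} densification, performed at the vertex $4$, whose neighbours $1,1'$ are non-adjacent and are joined to it by the \emph{sparse} edges: Lemma~\ref{lem:densification} (the sparse case, which needs only $k\geq 3/2$) replaces those edges by a dense edge $11'$. Because the edges $13,1'3$ were already made dense by the first densification, the outcome is a triangle on $\{1,1',3\}$ \emph{all} of whose edges are dense and satisfy $\DISC(\cdot,1,O(\epsilon+c))$, so its count is supplied by the dense counting lemma, Proposition~\ref{prop:dense-counting}, with absolute error $O(\epsilon+c)$; via $q(H'_{1,1\times2})=2p_{14}p_{1'4}\,q(\text{dense triangle})$ this gives $|G(H'_{1,1\times2})-q(H'_{1,1\times2})|\leq O(\epsilon+c)\,p(H'_{1,1\times2})$, the last of the three inputs to the doubling reduction. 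One must densify at $4$ and not at $3$ --- densifying at $3$ would only return another triangle with one dense edge, re-entering the loop. Beyond this structural point, the proof is a careful but routine propagation of the $\DISC$ and jumbledness errors through Lemmas~\ref{lem:Gamma-partial}, \ref{lem:Gamma-partial-one-edge}, \ref{lem:small-restrict} and the Cauchy--Schwarz step of Lemma~\ref{lem:doubling}, keeping track of the powers of $p$ so that the final error scale is $p(C_4)=p^4$.
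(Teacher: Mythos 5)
Your proposal reproduces the paper's argument exactly: densification at one vertex of $C_4$ to obtain a triangle with a dense edge, doubling at a vertex of that dense edge, and a second densification at the vertex carrying the two sparse edges of the resulting $K_{2,2}$ to land on an all-dense triangle — this is precisely the chain in Figure~\ref{fig:C4}, and your accounting of the $k$-requirements, of the identity $q(H_{a,a\times2})=q(H_a)^2$, and of how the Cauchy--Schwarz step produces the $(c+\epsilon)^{1/2}$ is all correct. The only cosmetic difference is that you invoke Proposition~\ref{prop:tree} for the two-edge path where the paper uses Lemma~\ref{lem:G-squared-d}; both suffice.
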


The constant 100 is unimportant. It can be obtained by unraveling
the calculations. We omit the details.

\begin{figure}[!htp]
  \centering
    \begin{tikzpicture}[scale=.5]
      \begin{scope}[shift={(0,5.5)}]
      \node[p] (a) at (-1,2) {};
      \node[p] (d) at (1,2) {};
      \node[p] (b) at (-1,0) {};
      \node[p] (c) at (1,0) {};
      \draw (a)--(b)--(c)--(d)--(a);
      \node at (0,-1.5) {\LargeUparrow};
    \end{scope}
    \begin{scope}[shift={(0,.5)}]
      \node[p] (a) at (1,2) {};
      \node[p] (b) at (-1,0) {};
      \node[p] (c) at (1,0) {};
      \draw[dense] (a)--(b);
      \draw (a)--(c);
      \draw (b)--(c);
      \node at (0,-1.5) {\LargeUparrow};
      \draw decorate [decoration={brace, amplitude=1cm}] {(-8,-4.5) -- (8,-4.5)};
    \end{scope}
    \begin{scope}[shift={(-6,-6)}]
      \node[p] (b) at (-1,0) {};
      \node[p] (c) at (1,0) {};
      \draw (b)--(c);
    \end{scope}
    \begin{scope}[shift={(0,-6)}]
      \node[p] (a) at (0,2) {};
      \node[p] (a2) at (1.4, 2) {};
      \node[p] (b) at (-1,0) {};
      \node[p] (c) at (1,0) {};
      \draw [dense] (a)--(b)--(a2);
      \draw (a)--(c)--(a2);
      \node at (0,-1.5) {\LargeUparrow};
    \end{scope}
    \begin{scope}[shift={(0,-11)}]
      \node[p] (a) at (0,1.6) {};
      \node[p] (a2) at (1.4, 1.6) {};
      \node[p] (b) at (-1,0) {};
      \draw [dense] (a)--(b)--(a2)--(a);
    \end{scope}
    \begin{scope}[shift={(6,-6)}]
      \node[p] (a) at (0,1.6) {};
      \node[p] (b) at (-1,0) {};
      \node[p] (c) at (1,0) {};
      \draw [dense] (a)--(b);
      \draw (a)--(c);
    \end{scope}
  \end{tikzpicture}
  \caption{The proof that $G(C_4) \approxmod_{c,\epsilon}^{p^4}
    q(C_4)$. The vertical arrows correspond to densification,
    doubling the top vertex and densification, respectively.}
  \label{fig:C4}
\end{figure}

Proposition~\ref{prop:C4} follows from repeated applications of
doubling (Lemma~\ref{lem:doubling-reduction}) and densification
(Corollary~\ref{cor:densification}). The chain of implications is
summarized in Figure~\ref{fig:C4} in the case when all four edges of
$C_4$ are sparse (the other cases are easier). In the figure, each graph represents a claim of the
form $G(H) \approxmod_{c,\epsilon}^{p(H)} q(H)$. The sparse and dense
edges are distinguished by thickness. The claim for the dense triangle
follows from the counting lemma for dense graphs
(Proposition~\ref{prop:dense-counting}) and the claim for the
rightmost graph follows from Lemma~\ref{lem:G-squared-d}.

\subsection{Finishing the proof of the counting lemma} \label{sec:finish}

Given a graph $H$, we can use the doubling lemma, Lemma~\ref{lem:doubling-reduction}, to
reduce the problem of counting $H$ in $G$ to the problem of counting
$H_{-a}$ in $G$, where $H_{-a}$ is $H$ with some vertex $a$ deleted,
provided we can also count $H_{a}$ and $H_{a,a\x 2}$. Suppose $a$ has
degree $t$ in $H$ and degree $t'$ in $H^\spa$. The graph $H_a$ is
isomorphic to some $K_{1,t}$. Since $K_{1,t}$ is a tree, we can count copies using
Proposition~\ref{prop:tree}, provided that the exponent of
$p$ in the jumbledness of $\Gamma$ satisfies $k \geq \frac{t' +
  1}{2}$. The following lemma shows that we can count embeddings of
$H_{a,a \x 2}$ as well.

\begin{lemma} \label{lem:K2t}
  Assume Setup~\ref{set:G} where $H = K_{2,t}$ with vertices
  $\set{a_1, a_2; b_1, \dots, b_t}$. Assume that the edges $a_ib_j$
  are sparse for $1 \leq j \leq t'$ and dense for $j > t'$.
  If $k \geq \frac{t'+2}{2}$, then
  \[
  G(H) \approxmod_{c,\epsilon}^{p(H)} q(H).
  \]
\end{lemma}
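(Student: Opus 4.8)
The plan is to induct on $t'$, the number of \emph{sparse columns} (columns $j$ with both $a_1b_j$ and $a_2b_j$ sparse). In the base case $t'=0$ every edge is dense, so $p(H)=1$ and the assumption that each $(X_{a_i},X_{b_j})_G$ satisfies $\DISC(q_{a_ib_j},1,\epsilon)$ is precisely the hypothesis of the dense counting lemma (Proposition~\ref{prop:dense-counting}); it yields $\abs{G(H)-q(H)}\le e(H)\epsilon=2t\epsilon$, and the requirement $k\ge 1=\frac{0+2}{2}$ is automatic, so $G(H)\approxmod_{c,\epsilon}^{p(H)}q(H)$.

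For the inductive step ($t'\ge1$) I would first densify a sparse column, say $b_1$: its only neighbours are $a_1$ and $a_2$, and $a_1a_2\notin E(H)$. Since $H^{\spa}=K_{2,t'}$, whose line graph is $t'$-regular and hence $t'$-degenerate, the requirement $k\ge\frac{d(L(H^{\spa}))+2}{2}=\frac{t'+2}{2}$ of Lemma~\ref{lem:densification} holds. Applying Lemma~\ref{lem:densification} and Corollary~\ref{cor:densification}, it then suffices to show $G(H')\approxmod_{c,\epsilon}^{p(H')}q(H')$, where $H'$ is obtained from $H$ by deleting $a_1b_1$ and $a_2b_1$ (so $b_1$ becomes isolated) and inserting a \emph{dense} edge $a_1a_2$, with $(X_{a_1},X_{a_2})_G$ the dense weighted graph prescribed there, satisfying $\DISC(q_{13},1,2\epsilon+18c)$; the freshly worsened discrepancy constant $2\epsilon+18c$ is still polynomial in $\max(c,\epsilon)$, which is all the $\approxmod$ notation requires.

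Next I would apply the doubling reduction (Lemma~\ref{lem:doubling-reduction}) to the \emph{hub} vertex $a_1$ of $H'$. This produces three auxiliary counts. The graphs $H'_{-a_1}$ (the star centred at $a_2$ with leaves $b_2,\dots,b_t$) and $H'_{a_1}$ (the star centred at $a_1$ with leaves $a_2,b_2,\dots,b_t$, the edge $a_1a_2$ dense) are trees with $\Delta(H^{\spa})=t'-1$, so Proposition~\ref{prop:tree} counts them provided $k\ge\frac{(t'-1)+1}{2}=\frac{t'}{2}$. The doubled graph $H'_{a_1,a_1\x2}$ is a copy of $K_{2,t}$ with $2$-side $\{a_1,a_1'\}$ and other side $\{a_2,b_2,\dots,b_t\}$, in which the columns $a_2$ and $b_{t'+1},\dots,b_t$ are dense and $b_2,\dots,b_{t'}$ sparse — that is, $K_{2,t}$ with only $t'-1$ sparse columns — so the induction hypothesis applies, requiring $k\ge\frac{(t'-1)+2}{2}=\frac{t'+1}{2}$. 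Lastly Lemma~\ref{lem:doubling-reduction} itself requires $k\ge\frac{d(L(H'^{\spa}_{a_1,a_1\x2},\,H'^{\spa}_{-a_1}))+2}{2}$; here $H'^{\spa}_{a_1,a_1\x2}=K_{2,t'-1}$ and $H'^{\spa}_{-a_1}$ is the star of the edges $a_2b_2,\dots,a_2b_{t'}$, and ordering the $K_{2,t'-1}$-edges first (back-degree at most $t'-1$) and then the edges $a_2b_j$ (the last of which sees the already-placed $a_1b_j$ and $a_1'b_j$ together with the remaining $t'-2$ edges at $a_2$) shows that this degeneracy is at most $t'$, so the requirement is once more $k\ge\frac{t'+2}{2}$, exactly the hypothesis. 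Chaining Lemma~\ref{lem:doubling-reduction} with the two tree estimates and the induction hypothesis gives $G(H')\approxmod_{c,\epsilon}^{p(H')}q(H')$, and the bounded induction depth keeps $c$ and $\epsilon$ polynomial in $\theta$.

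The step I expect to be the main obstacle, and the reason for doubling the hub, is this last one. If instead a degree-two vertex $b_j$ of $H'$ were doubled, then $H'_{b_j,b_j\x2}$ would be a $4$-cycle, which through Proposition~\ref{prop:C4} demands $k\ge2$ — too strong when $t'$ is small — and $H'_{-b_j}$ would be ``$K_{2,s}$ together with an extra dense edge'', leaving the $K_{2,\cdot}$ family and forcing one to prove a more general statement. Doubling the hub $a_1$ instead keeps every auxiliary graph either a tree or a $K_{2,t}$ with strictly fewer sparse columns, so the induction closes cleanly, and the bookkeeping above shows all three exponent constraints — densification, the line-graph degeneracy in the doubling reduction, and the induction hypothesis — are met precisely at $k=\frac{t'+2}{2}$, which is exactly the hypothesis of the lemma.
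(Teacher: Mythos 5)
Your proof is correct and reaches the stated threshold $k\ge\frac{t'+2}{2}$, but it diverges from the paper's argument at the doubling step. The paper also starts by densifying one sparse column into a dense edge $a_1a_2$, but it then doubles one of the remaining degree-two vertices $b_j$: the three auxiliary counts are a $C_4$ with all edges sparse (supplied by Proposition~\ref{prop:C4}), a cherry $K_{1,2}$ (Proposition~\ref{prop:tree}), and the densified graph with one fewer column, so the induction runs on the number of columns within the augmented family ``$K_{2,s}$ plus a dense hub edge.'' You instead double the hub $a_1$, obtaining two stars plus a $K_{2,t}$ with one fewer \emph{sparse} column, and induct on $t'$; your degeneracy computation $d(L(H'^{\spa}_{a_1,a_1\times2},H'^{\spa}_{-a_1}))\le t'$ is right, and the tree counts and inductive count all require at most $\frac{t'+1}{2}$, so everything closes at $k=\frac{t'+2}{2}$. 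What your route buys is self-containment: Proposition~\ref{prop:C4} is not needed as an input, since the case $t=t'=2$ of your induction reproduces its proof (densify, double, densify, dense triangle). I will note, though, that your stated objection to doubling a degree-two vertex is partly a red herring: the $C_4$ requirement $k\ge2$ only arises when $t'\ge2$, where the hypothesis already gives $k\ge\frac{t'+2}{2}\ge 2$ (for $t'\le1$ the densified graph is entirely dense and no doubling is needed), so the paper's choice works; your second point, that $H'_{-b_j}$ leaves the pure $K_{2,\cdot}$ family, is accurate but harmless, since the paper simply runs its induction on the densified family. Both decompositions are valid; yours is a clean alternative.
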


\begin{proof}
  When $t' = 0$, all edges of $H$ are dense, so the
  result follows from the dense counting lemma. So assume $t' \geq
  1$. First we apply densification as follows:
  \begin{center}
    \begin{tikzpicture}
      \begin{scope}
        \node[p] (a) at (0,1) {};
        \node[p] (b) at (1,1) {};
        \node[p] (0) at (2,0) {};
        \node[p] (1) at (1,0) {};
        \node[p] (2) at (0,0) {};
        \node[p] (3) at (-1,0) {};
        \draw [dense] (a)--(0)--(b);
        \draw (a)--(1)--(b)--(2)--(a)--(3)--(b);
        \node at (3.5,0.5) {\LargeLeftarrow};
      \end{scope}
      \begin{scope}[shift={(6,0)}]
        \node[p] (a) at (0,1) {};
        \node[p] (b) at (1,1) {};        \node[p] (0) at (2,0) {};
        \node[p] (1) at (1,0) {};
        \node[p] (2) at (0,0) {};
        \draw [dense] (a)--(0)--(b)--(a);
        \draw (a)--(1)--(b)--(2)--(a);
      \end{scope}
    \end{tikzpicture}
  \end{center}
  When $t' = 1$, we get a dense graph so we are done. Otherwise, the
  result follows by induction using doubling as shown below, where we
  use Propositions~\ref{prop:C4} and \ref{prop:tree} to count
  $C_4$ and $K_{1,2}$, respectively.
  \begin{center}
    \begin{tikzpicture}
    \begin{scope}[shift={(-.5,0)}]
      \node[p] (a) at (0,1) {};
        \node[p] (b) at (1,1) {};
        \node[p] (0) at (2,0) {};
        \node[p] (1) at (1,0) {};
        \node[p] (2) at (0,0) {};
        \draw [dense] (a)--(0)--(b)--(a);
        \draw (a)--(1)--(b)--(2)--(a);
    \end{scope}
    \node at (0,-1) {\LargeUparrow};
    \draw decorate [decoration={brace, amplitude=1cm}] {(-4,-3) -- (4,-3)};
    \begin{scope}[shift={(-3.5,-4)}]
      \node[p] (a) at (0,1) {};
        \node[p] (b) at (1,1) {};
        \node[p] (0) at (2,0) {};
        \node[p] (1) at (1,0) {};
        \draw [dense] (a)--(0)--(b)--(a);
        \draw (a)--(1)--(b);
    \end{scope}
    \begin{scope}[shift={(-.5,-4)}]
      \node[p] (a) at (0,1) {};
        \node[p] (b) at (1,1) {};
        \node[p] (2) at (0,0) {};
        \node[p] (2') at (0.4,0) {};
        \draw (a)--(2)--(b)--(2')--(a);
    \end{scope}
    \begin{scope}[shift={(2.5,-4)}]
      \node[p] (a) at (0,1) {};
        \node[p] (b) at (1,1) {};
        \node[p] (2) at (0,0) {};
        \draw (b)--(2)--(a);
    \end{scope}
  \end{tikzpicture}
  \end{center}
\end{proof}

Once we can count $H_{a}$ and $H_{a,a \x 2}$, we obtain the following
reduction result via doubling.

\begin{lemma}
  \label{lem:counting-induct}
  Assume Setup~\ref{set:G}. Let $a$ be a vertex of $H$. If $k \geq \frac{d(L(H_{a,a\x 2}^\spa, H_{-a}^\spa)) +
      2}{2}$, then
  \[
  G(H) \approxmod_{c,\epsilon}^{p(H)} q(H_{a})G(H_{-a}).
  \]
\end{lemma}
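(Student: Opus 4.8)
The plan is to re-run the doubling computation of Lemma~\ref{lem:doubling} but to stop before any induction hypothesis on $H_{-a}$ is invoked, closing the estimate instead with the counting lemmas already available for stars and for $K_{2,t}$. Write $t = \deg_H(a)$, $t' = \deg_{H^\spa}(a)$, and let $\by$ range over compatible maps $V(H)\setminus\set{a}\to V(G)$. Since $E(H) = E(H_a)\uplus E(H_{-a})$ and $G\cond{H_{-a}}{\by}$ does not depend on $x_a$, one has the exact identity
\[
G(H) - q(H_a)G(H_{-a}) = \int_{\by}\paren{G\cond{H_a}{\by} - q(H_a)}\,G\cond{H_{-a}}{\by}\,d\by ,
\]
so, applying Cauchy--Schwarz and then $G\le\Gamma$ exactly as in the proof of Lemma~\ref{lem:doubling},
\[
\abs{G(H) - q(H_a)G(H_{-a})} \le G(H_{-a})^{1/2}\paren{g(H_{a\x 2}) - 2q(H_a)g(H) + q(H_a)^2\Gamma(H_{-a})}^{1/2},
\]
where $g$ agrees with $G$ on edges incident to a copy of $a$ and with $\Gamma$ elsewhere; the bracket is nonnegative because it equals $\int_\by\paren{G\cond{H_a}{\by} - q(H_a)}^2\Gamma\cond{H_{-a}}{\by}\,d\by$. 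It therefore suffices to show the bracket is $\approxmod_{c,\epsilon}^{p(H_a)^2 p(H_{-a})} 0$: combined with $G(H_{-a})\le\Gamma(H_{-a})\le 2p(H_{-a})$ (Proposition~\ref{prop:Gamma-counting}, $c$ small) and $p(H) = p(H_a)p(H_{-a})$, this gives $\abs{G(H) - q(H_a)G(H_{-a})}^2 \le 2\theta\,p(H)^2$, and running the bracket estimate with $\theta$ replaced by $\theta^2/2$ yields $\abs{G(H) - q(H_a)G(H_{-a})}\le\theta\,p(H)$, i.e.\ the claimed $\approxmod_{c,\epsilon}^{p(H)}$ bound.

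\textbf{Handling the bracket.} I would first apply Lemma~\ref{lem:Gamma-partial} three times: to the pair $(H_{a,a\x 2}^\spa, H_{-a}^\spa)$ inside $H_{a\x 2}$, to $(H_a^\spa, H_{-a}^\spa)$ inside $H$, and to $H_{-a}^\spa$, obtaining
\[
g(H_{a\x 2}) \approxmod_c^{p(H_a)^2 p(H_{-a})} p(H_{-a})G(H_{a,a\x 2}),\quad g(H) \approxmod_c^{p(H_a)p(H_{-a})} p(H_{-a})G(H_a),\quad \Gamma(H_{-a}) \approxmod_c^{p(H_{-a})} p(H_{-a}),
\]
using $p(H_{a\x 2}) = p(H_{a,a\x 2})p(H_{-a}) = p(H_a)^2 p(H_{-a})$. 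Now $H_a\cong K_{1,t}$ is a tree whose sparse part is the star $K_{1,t'}$, so Proposition~\ref{prop:tree} gives $G(H_a)\approxmod_{c,\epsilon}^{p(H_a)} q(H_a)$; and $H_{a,a\x 2}\cong K_{2,t}$ has exactly $t'$ sparse edges at each of $a,a'$, so Lemma~\ref{lem:K2t} gives $G(H_{a,a\x 2})\approxmod_{c,\epsilon}^{p(H_a)^2} q(H_a)^2$ (note $q(H_{a,a\x 2}) = q(H_a)^2$). Feeding these in, each summand of the bracket equals, at scale $p(H_a)^2 p(H_{-a})$, respectively $p(H_{-a})q(H_a)^2$, then $2q(H_a)\cdot p(H_{-a})q(H_a)$, then $q(H_a)^2 p(H_{-a})$; since $1 - 2 + 1 = 0$ the bracket is $\approxmod_{c,\epsilon}^{p(H_a)^2 p(H_{-a})} 0$, as required.

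\textbf{Main obstacle.} The one point needing care is that the single hypothesis $k \ge \tfrac{1}{2}\paren{d(L(H_{a,a\x 2}^\spa, H_{-a}^\spa)) + 2}$ must suffice for all three sub-applications. For Lemma~\ref{lem:Gamma-partial} this is a monotonicity statement: deleting from an optimal ordering for $(H_{a,a\x 2}^\spa, H_{-a}^\spa)$ the edges at $a'$ (resp.\ all edges at $a$ and $a'$) yields a valid ordering for $(H_a^\spa, H_{-a}^\spa)$ (resp.\ for $H_{-a}^\spa$) in which no edge gains an earlier neighbour, so $d(L(H_a^\spa, H_{-a}^\spa))$ and $d(L(H_{-a}^\spa))$ are both at most $d(L(H_{a,a\x 2}^\spa, H_{-a}^\spa))$. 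For Proposition~\ref{prop:tree} and Lemma~\ref{lem:K2t} we need $k\ge\tfrac{t'+1}{2}$ and $k\ge\tfrac{t'+2}{2}$ respectively, and both follow because $L(H_{a,a\x 2}^\spa) = L(K_{2,t'})$ is $t'$-regular (for $t'\ge 1$), hence $d(L(H_{a,a\x 2}^\spa, H_{-a}^\spa)) \ge d(L(H_{a,a\x 2}^\spa)) \ge t'$. Everything else --- the Cauchy--Schwarz step, bookkeeping of the powers of $p$, and converting the final $\sqrt\theta$ loss back into a polynomial dependence on the target error --- is routine and parallels the proof of Lemma~\ref{lem:doubling-reduction}, from which this lemma differs only in that it assumes nothing about $G(H_{-a})$.
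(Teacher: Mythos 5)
Your proof is correct and follows the same route the paper intends for this lemma: the doubling identity and Cauchy--Schwarz step from Lemma~\ref{lem:doubling}, reduction of the $g(H_{a\x 2})$, $g(H)$ and $\Gamma(H_{-a})$ terms via Lemma~\ref{lem:Gamma-partial}, and the counts of $H_a \cong K_{1,t}$ and $H_{a,a\x 2} \cong K_{2,t}$ supplied by Proposition~\ref{prop:tree} and Lemma~\ref{lem:K2t}. Your explicit check that the single hypothesis $k \geq \frac{1}{2}\paren{d(L(H_{a,a\x 2}^\spa, H_{-a}^\spa)) + 2}$ dominates all the sub-requirements (via monotonicity of $d(L(\cdot,\cdot))$ under edge deletion and $d(L(K_{2,t'})) = t'$) is precisely the bookkeeping the paper leaves implicit, and it is carried out correctly.
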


The proof of the counting lemma follows once we keep track of the
requirements on $k$.

\begin{proof}
  [Proof of Proposition~\ref{prop:G-counting}]
  When $H$ has no sparse edges, the result follows from the dense
  counting lemma (Proposition~\ref{prop:dense-counting}). Otherwise,
  using Lemma~\ref{lem:counting-induct}, it remains to show that if $k
  \geq \min\left\{\frac{\Delta(L(H^\spa)) + 4}{2}, \frac{d(L(H^\spa)) +
      6}{2}\right\}$, then there exists some vertex $a$ of $H$ satisfying $k \geq \frac{d(L(H_{a,a\x 2}^\spa, H_{-a}^\spa)) +
    2}{2}$.  Actually, the hypothesis on $k$ is strong enough
  that any $a$ will do. Indeed, we have $\Delta(L(H^\spa)) + 2 \geq
  \Delta(L(H_{a \x 2}^\spa)) \geq d(L(H_{a,a\x2}^\spa,
  H_{-a}^\spa))$ since doubling $a$ increases the degree of every
  vertex by at most $1$. We also have $d(L(H^\spa)) \geq d(L(H_{a,a\x2}^\spa,
  H_{-a}^\spa)) - 4$ since every edge in $H_{-a}^\spa$ shares an
  endpoint with at most $4$ edges in $H_{a, a \x 2}^\spa$.
\end{proof}

\subsection{Tutorial: determining jumbledness requirements} \label{sec:tut-jumble}

The jumbledness requirements stated in our counting lemmas are often
not the best that come out of our proofs. We had to make a tradeoff
between strength and simplicity while formulating the results. In this
section, we give a short tutorial on finding the jumbledness
requirements needed for our counting lemma to work for any particular graph
$H$. These fine-tuned bounds can be extracted from a careful
examination of our proofs, with no new ideas introduced in this
section.

We work in a more general setting where we allow non-balanced
jumbledness conditions between vertex subsets of $\Gamma$. This
will arise naturallly in Section~\ref{sec:oneside} when we prove a one-sided counting lemma.

\begin{setup} \label{set:nonuniform} Let $\Gamma$ be a graph with
  vertex subsets $X_1, \dots, X_m$. Let $p, c \in (0,1]$. Let $H$ be a
  graph with vertex set $\set{1, \dots, m}$, with vertex $a$ assigned
  to $X_a$. For every edge $ab$ in $H$, one of the following two
  holds:
\begin{itemize}
\item $(X_a, X_b)_\Gamma$ is $(p, cp^{k_{ab}}
  \sqrt{\abs{X_a}\abs{X_b}})$-jumbled for some $k_{ab} \geq 1$, in which case we set $p_{ab} =
  p$ and say that $ab$
  is a \emph{sparse edge}, or
\item $(X_a, X_b)_\Gamma$ is a complete bipartite graph, in which case
  we set $p_{ab} = 1$ and say that $ab$ is a \emph{dense edge}.
\end{itemize}
Let $H^\spa$ denote the subgraph of $H$ consisting of sparse edges.

Let $\epsilon > 0$. Let $G$ be a weighted subgraph of $\Gamma$. For
every edge $ab \in E(H)$, assume that $(X_a, X_b)_G$ satisfies
$\DISC(q_{ab}, p_{ab}, \epsilon)$, where $0 \leq q_{ab} \leq p_{ab}$.
\end{setup}

In the figures in this section, we label the edges by the lower bounds
on $k_{ab}$ that are sufficient for the two-sided counting lemma to
hold. For instance, the figure below shows the jumbledness conditions
that are
sufficient for the triangle counting lemma\footnote{As mentioned in
the remark after Lemma~\ref{lem:doubling-reduction}, we do not actually
need DISC on $(X_a, X_b)_G$, since edge density is enough. We do not dwell
on this point in this section and instead focus on jumbledness requirements.}, namely $k_{ab}
\geq 3$, $k_{bc} \geq 2$, $k_{ac} \geq \frac{3}{2}$.
\begin{center}
  \begin{tikzpicture}[scale=.8]
    \node[p,label=left:$a$] (a) at (-1,0) {};
    \node[p,label=right:$b$] (b) at (1,0) {};
    \node[p,label=above:$c$] (c) at (0,1.73) {};
    \draw (a)--(b) node[midway,l]{3}
          (b)--(c) node[midway,l]{2}
          (c)--(a) node[midway,l]{$\frac32$};
  \end{tikzpicture}
\end{center}
Although we are primarily interested in embeddings of $H$ into $G$, we
need to consider partial embeddings where some of the edges of $H$
are allowed to embed into $\Gamma$. So we encounter three types of edges
of $H$, summarized in Table~\ref{tab:H-edges}. (Note that for dense edges $ab$,
$(X_a, X_b)_\Gamma$ is a complete bipartite graph, so such embeddings
are trivial and $ab$ can be ignored.)

\begin{table}[htbp]
  \caption{Types of edges in $H$.}
  \centering
  \label{tab:H-edges}
\begin{tabular}{cll}
  \toprule
  Figure & Name & Description \\
  \midrule
  \tikz \draw decorate[Gamma] {(0,0) -- (1,0) node[midway,l] {$\kappa$}}; &
  Jumbled edge & An edge to be embedded in $(X_a,X_b)_\Gamma$ with
  $p_{ab} = p$ and $k_{ab} \geq \kappa$.
  \\
  \tikz \draw[dense] (0,0) -- (1,0); &
  Dense edge & An edge to be embedded in $(X_a, X_b)_G$ with $p_{ab} =
  1$.
  \\
  \tikz \draw (0,0) -- (1,0) node[midway,l] {$\kappa$}; &
  Sparse edge & An edge to be embedded in $(X_a, X_b)_G$ with $p_{ab}
  = p$ and $k_{ab} \geq \kappa$.
  \\
  \bottomrule
\end{tabular}
\end{table}

Our counting lemma is proved through a number of reduction
procedures. At each step, we transform $H$ into one or more other
graphs $H'$. At the end of the reduction procedure, we should arrive
at a graph which only has dense edges. To determine the jumbledness
conditions required to count some $H$, we perform these reduction
steps and keep
track of the requirements at each step. We explain how to do this for
each reduction procedure.

\paragraph{Removing a jumbled edge.}
To remove a jumbled edge $ab$ from $H$, we need $k_{ab}$ to be at
least the average of the sparse degrees (i.e., counting both sparse
and jumbled edges) at the endpoints of $ab$, i.e.,
$k_{ab} \geq \frac{1}{2}(\deg_{H^\spa}(a) + \deg_{H^\spa} (b))$. See
Lemma~\ref{lem:Gamma-partial-one-edge}. For
example, $k_{ab} \geq \frac{5}{2}$ is sufficient to remove the edge
$ab$ in the graph below.
\[
  \begin{tikzpicture}[scale=.7]
    \begin{scope}
      \node[p,label=below:$a$] (a) at (-1,0) {};
      \node[p,label=below:$b$] (b) at (1,0) {};
      \node[p] (c) at (-2,1.7) {};
      \node[p] (d) at (0,1.7) {};
      \node[p] (e) at (2,1.7) {};
      \draw decorate[Gamma] {(a) -- (b) node[midway,l]{$\frac52$}
        --(e)};
      \draw (c)--(d)--(a) (e)--(d)--(b);
      \draw [dense] (a)--(c);
    \end{scope}

    \node at (3,.8) {\LargeLeftarrow};

    \begin{scope}[shift={(6,0)}]
      \node[p] (a) at (-1,0) {};
      \node[p] (b) at (1,0) {};
      \node[p] (c) at (-2,1.7) {};
      \node[p] (d) at (0,1.7) {};
      \node[p] (e) at (2,1.7) {};
      \draw decorate[Gamma] {(b)
        --(e)};
      \draw (c)--(d)--(a) (e)--(d)--(b);
      \draw [dense] (a)--(c);
    \end{scope}
  \end{tikzpicture}
\]
By removing jumbled edges one at a time, we can find conditions
that are sufficient for counting embeddings into $\Gamma$
(Proposition~\ref{prop:Gamma-counting}). The following figure shows
how this is done for a $4$-cycle.
\[
  \begin{tikzpicture}[scale=.7]
    \begin{scope}
      \node[p] (a) at (1,-1) {};
      \node[p] (b) at (1,1) {};
      \node[p] (c) at (-1,1) {};
      \node[p] (d) at (-1,-1) {};
      \draw decorate[Gamma]{
        (a)--(b) node[midway,l] {$3/2$}
        --(c) node[midway,l] {$3/2$}
        --(d) node[midway,l] {$1$}
        --(a) node[midway,l] {$2$}
      };
      \node at (3,0) {\LargeLeftarrow};
    \end{scope}
    \begin{scope}[shift={(6,0)}]
      \node[p] (a) at (1,-1) {};
      \node[p] (b) at (1,1) {};
      \node[p] (c) at (-1,1) {};
      \node[p] (d) at (-1,-1) {};
      \draw decorate[Gamma]{
        (a)--(b) node[midway,l] {$3/2$}
        --(c) node[midway,l] {$3/2$}
        --(d) node[midway,l] {$1$}
      };
      \node at (3,0) {\LargeLeftarrow};
    \end{scope}
    \begin{scope}[shift={(12,0)}]
      \node[p] (b) at (1,1) {};
      \node[p] (c) at (-1,1) {};
      \node[p] (d) at (-1,-1) {};
      \draw decorate[Gamma]{
        (b)--(c) node[midway,l] {$3/2$}
        --(d) node[midway,l] {$1$}
      };
      \node at (3,0) {\LargeLeftarrow};
    \end{scope}
    \begin{scope}[shift={(18,0)}]
      \node[p] (c) at (-1,1) {};
      \node[p] (d) at (-1,-1) {};
      \draw decorate[Gamma]{
        (c)--(d) node[midway,l] {$1$}
      };
    \end{scope}
  \end{tikzpicture}
\]

\paragraph{Doubling}
The figure below illustrates doubling. If the jumbledness hypotheses
are sufficient to count the two graphs on the right, then they are
sufficient to count the original graph. The first graph is produced by
deleting all edges with $a$ as an endpoint, and the second graph is
produced by doubling $a$ and then, for all edges not adjacent to $a$,
deleting the dense edges and converting sparse edges to jumbled ones.
\[
    \begin{tikzpicture}[scale=.7]
      \begin{scope}[shift={(0,0)}]
      \node[p, label=right:$a$] (a) at (0,2) {};
      \node[p] (1) at (-1.5,0) {};
      \node[p] (2) at (-.5,0) {};
      \node[p] (3) at (.5,0) {};
      \node[p] (4) at (1.5,0) {};
      \node[p] (5) at (-1,-1) {};
      \node[p] (6) at (0,-1) {};
      \node[p] (7) at (1,-1) {};
      \draw[dense] (a)--(1);
      \draw (a)--(2) (a)--(3) (a)--(4);
      \draw[dense] (3)--(6);
      \draw (1)--(5)--(3)(6)--(7)--(4) (2)--(3);
    \end{scope}
    \node at (3.5,0) {\LargeLeftarrow};
    \begin{scope}[shift={(7,0)}]
      \node[p] (1) at (-1.5,0) {};
      \node[p] (2) at (-.5,0) {};
      \node[p] (3) at (.5,0) {};
      \node[p] (4) at (1.5,0) {};
      \node[p] (5) at (-1,-1) {};
      \node[p] (6) at (0,-1) {};
      \node[p] (7) at (1,-1) {};
      \draw[dense] (3)--(6);
      \draw (1)--(5)--(3)(6)--(7)--(4) (2)--(3);
    \end{scope}
    \begin{scope}[shift={(12,0)}]
      \node[p] (a) at (-.5,2) {};
      \node[p] (a2) at (.5,2) {};
      \node[p] (1) at (-1.5,0) {};
      \node[p] (2) at (-.5,0) {};
      \node[p] (3) at (.5,0) {};
      \node[p] (4) at (1.5,0) {};
      \node[p] (5) at (-1,-1) {};
      \node[p] (6) at (0,-1) {};
      \node[p] (7) at (1,-1) {};
            \draw[dense] (a)--(1)--(a2);
      \draw (a2)--(2)--(a)--(3)--(a2)--(4)--(a);
      \draw decorate [Gamma] { (1)--(5)--(3)(6)--(7)--(4) (2)--(3)};
    \end{scope}
  \end{tikzpicture}
\]

\paragraph{Densification}
To determine the jumbledness needed to perform densification, delete
all dense edges, transform all sparse edges into jumbled edges, and
use the earlier method to determine the jumbledness required to count
embeddings into $\Gamma$.
For example, the jumbledness on the left
figure below shows the requirements on $C_4$ needed to perform the
densification step. It may be the case that even stronger hypotheses
are needed to count the new graph (although for this example this is
not the case).
\[
  \begin{tikzpicture}[scale=.7]
    \begin{scope}
      \node[p] (a) at (1,-1) {};
      \node[p] (b) at (1,1) {};
      \node[p] (c) at (-1,1) {};
      \node[p] (d) at (-1,-1) {};
      \draw
      (a)--(b) node[midway,l] {$3/2$}
      --(c) node[midway,l] {$3/2$}
      --(d) node[midway,l] {$1$}
      --(a) node[midway,l] {$2$}
      ;
    \end{scope}
    \node at (2.5,0) {\LargeLeftarrow};
    \begin{scope}[shift={(5,0)}]
      \node[p] (a) at (1,-1) {};
      \node[p] (b) at (1,1) {};
      \node[p] (d) at (-1,-1) {};
      \draw[dense] (b)--(d);
      \draw (b)--(a)--(d);
    \end{scope}
  \end{tikzpicture}
\]

\paragraph{Trees}
To determine the jumbledness needed to count some tree $H$, delete all
dense edges in $H$ and transform all sparse edges into jumbled edges
and use the earlier method, removing one leaf at a time to determine
the jumbledness required to count embeddings into $\Gamma$
(Proposition \ref{prop:tree}).

\begin{example} [$C_4$]
  \label{ex:tut-C4}
  Let us check that the labeling of $C_4$
  in the densification paragraph gives sufficient jumbledness to count
  $C_4$. It remains to check that the jumbledness hypotheses are
  sufficient to count the triangle with a single edge. We can double
  the top vertex so that it remains to check the first graph below (the other graph produced from doubling is a single edge, which
  is trivial to count). We can remove the jumbled edge, and then
  perform densification to get a dense triangle, which we know how to count.
  \[
    \begin{tikzpicture}[scale=.7]
      \begin{scope}
      \node[p] (a) at (1,-1) {};
      \node[p] (b) at (0,1) {};
      \node[p] (b') at (1,1) {};
      \node[p] (d) at (-1,-1) {};
      \draw (b)--(a) node[pos=.6,l]{$\frac32$}
              --(b') node[midway,l]{$\frac32$};

      \draw[dense] (b)--(d)--(b');
      \draw decorate[Gamma] {(a)--(d) node[midway,l]{$2$}};
      \node at (3,0) {\LargeLeftarrow};
      \end{scope}
            \begin{scope}[shift={(6,0)}]
      \node[p] (a) at (1,-1) {};
      \node[p] (b) at (0,1) {};
      \node[p] (b') at (1,1) {};
      \node[p] (d) at (-1,-1) {};
      \draw (b)--(a) node[pos=.6,l]{$\frac32$}
              --(b') node[midway,l]{$\frac32$};

      \draw[dense] (b)--(d)--(b');
      \node at (3,0) {\LargeLeftarrow};
      \end{scope}
      \begin{scope}[shift={(12,0)}]
      \node[p] (b) at (0,1) {};
      \node[p] (b') at (1,1) {};
      \node[p] (d) at (-1,-1) {};
      \draw[dense] (b)--(d)--(b')--(b);
      \end{scope}
    \end{tikzpicture}
  \]
\end{example}

\begin{example}[$K_3$]
  \label{ex:tut-K3}
  The following diagram illustrates the process of checking sufficient
  jumbledness hypotheses to count triangles (again, the first graph
  resulting from doubling is a single edge and is thus omitted from the
  figure). The sufficiency for $C_4$ follows from the previous example.
  \[
  \begin{tikzpicture}[scale=1]
    \begin{scope}
      \node[p] (a) at (-1,0) {};
      \node[p] (b) at (1,0) {};
      \node[p] (c) at (0,1.73) {};
      \draw (a)--(b) node[midway,l]{3}
      (b)--(c) node[midway,l]{2}
      (c)--(a) node[midway,l]{$\frac32$};
      \node at (2,.8) {\LargeLeftarrow};
    \end{scope}
    \begin{scope}[shift={(4,0)}]
      \node[p] (a) at (-1,0) {};
      \node[p] (b) at (1,0) {};
      \node[p] (c) at (-.5,1.73) {};
            \node[p] (c') at (.5,1.73) {};
      \draw decorate[Gamma]{(a)--(b) node[midway,l]{3}};
      \draw   (c)--(b) node[midway,l]{2}
                 --(c') node[midway,l]{2}
                 --(a) node[midway,l]{$\frac32$}
                 --(c) node[midway,l]{$\frac32$};
      \node at (2,.8) {\LargeLeftarrow};
    \end{scope}
    \begin{scope}[shift={(8,0)}]
      \node[p] (a) at (-1,0) {};
      \node[p] (b) at (1,0) {};
      \node[p] (c) at (-.5,1.73) {};
      \node[p] (c') at (.5,1.73) {};
      \draw   (c)--(b) node[midway,l]{2}
                 --(c') node[midway,l]{2}
                 --(a) node[midway,l]{$\frac32$}
                 --(c) node[midway,l]{$\frac32$};
    \end{scope}
  \end{tikzpicture}
  \]
\end{example}

\begin{example}[$K_{2,t}$]
  \label{ex:tut-K2t}
  The following diagram shows sufficient jumbledness to count
  $K_{2,4}$.  The same pattern holds for $K_{2,t}$. The reduction
  procedure was given in the proof of Lemma~\ref{lem:K2t}. First we
  perform densification to the two leftmost edges, and then apply
  doubling to the remaining middle vertices in order from left to right.
  \[
    \begin{tikzpicture}
      \begin{scope}
      \node[p] (a) at (1.5,1) {};
      \node[p] (b) at (1.5,-1) {};
      \node[p] (1) at (0,0) {};
      \node[p] (2) at (1,0) {};
      \node[p] (3) at (2,0) {};
      \node[p] (4) at (3,0) {};
      \draw (a)--(1) node[midway,l] {$1$}
               --(b) node[midway,l] {$\frac32$}
               --(2) node[midway,l] {$\frac32$}
               --(a) node[midway,l] {$2$}
               --(3) node[midway,l] {$2$}
               --(b) node[midway,l] {$\frac52$}
               --(4) node[midway,l] {$\frac52$}
               --(a) node[midway,l] {$3$};
      \node at (4,0) {\LargeLeftarrow};
      \end{scope}
      \begin{scope}[shift={(5,0)}]
      \node[p] (a) at (0,1) {};
      \node[p] (b) at (0,-1) {};
      \node[p] (2) at (1,0) {};
      \node[p] (3) at (2,0) {};
      \node[p] (4) at (3,0) {};
      \draw[dense](a)--(b);
      \draw (b)--(2) node[midway,l] {$\frac32$}
               --(a) node[midway,l] {$2$}
               --(3) node[midway,l] {$2$}
               --(b) node[midway,l] {$\frac52$}
               --(4) node[midway,l] {$\frac52$}
               --(a) node[midway,l] {$3$};
      \node at (3.5,0) {\LargeLeftarrow};
      \draw decorate [decoration={brace, amplitude=.3cm}] {(4.5,-3.5) -- (4.5,3.5)};
      \end{scope}

      \begin{scope}[shift={(10,2)}]
      \node[p] (a) at (0,1) {};
      \node[p] (b) at (0,-1) {};
      \node[p] (2) at (.7,.2) {};
      \node[p] (2') at (.7,-.2) {};
      \node[p] (3) at (1.8,0) {};
      \node[p] (4) at (3,0) {};
      \draw (b)--(2) --(a) (b)--(2') --(a);
      \draw decorate[Gamma] {(a)                --(3) node[midway,l] {$2$}
        --(b) node[midway,l] {$\frac52$}
        --(4) node[midway,l] {$\frac52$}
        --(a) node[midway,l] {$3$}};
      \node at (4,0) {\LargeLeftarrow};
    \end{scope}

      \begin{scope}[shift={(15,2)}]
      \node[p] (a) at (0,1) {};
      \node[p] (b) at (0,-1) {};
      \node[p] (2) at (1.5,.5) {};
      \node[p] (2') at (1.5,-.5) {};
      \draw (b)--(2) node[midway,l] {$\frac32$}
               --(a) node[midway,l] {$2$}
            (b) --(2') node[midway,l] {$\frac32$}
               --(a) node[midway,l] {$2$};
      \end{scope}

      \begin{scope}[shift={(10,-2)}]
      \node[p] (a) at (0,1) {};
      \node[p] (b) at (0,-1) {};
      \node[p] (3) at (1.5,0) {};
      \node[p] (4) at (3,0) {};
      \draw[dense](a)--(b);
      \draw (a)--(3) node[midway,l] {$2$}
               --(b) node[midway,l] {$\frac52$}
               --(4) node[midway,l] {$\frac52$}
               --(a) node[midway,l] {$3$};
      \end{scope}

    \end{tikzpicture}
  \]
\end{example}

\begin{example}[$K_{1,2,2}$]
  \label{ex:tut-K122}
  The following diagram shows sufficient jumbledness to count
  $K_{1,2,2}$. This example will be used in the next section on
  inheriting regularity.
  \[
    \begin{tikzpicture}
      \begin{scope}[scale=1.5]
      \node[p] (z) at (-90:1) {};
      \node[p] (x) at (130:1) {};
      \node[p] (x') at (170:1) {};
      \node[p] (y') at (50:1) {};
      \node[p] (y) at (10:1) {};
      \draw (x)--(y) node[pos=.6,l] {$\frac72$};
      \draw (y')--(x') node[pos=.6,l] {$\frac72$};
      \draw (x)--(y') node[midway,l] {$3$};
      \draw (x')--(y) node[midway,l] {$4$};
      \draw (z)--(x') node[midway,l] {$\frac32$};
      \draw (z)--(x) node[midway,l] {$2$};
      \draw (z)--(y') node[midway,l] {$\frac52$};
      \draw (z)--(y) node[midway,l] {$3$};
      \node at (1.5,0) {\LargeLeftarrow};
      \draw decorate [decoration={brace, amplitude=.3cm}] {(2,-1.8) -- (2,1.8)};
    \end{scope}

    \begin{scope}[shift={(5,1.5)}]
      \node[p] (z) at (-70:1) {};
      \node[p] (z') at (-110:1) {};
      \node[p] (x) at (130:1) {};
      \node[p] (x') at (170:1) {};
      \node[p] (y') at (50:1) {};
      \node[p] (y) at (10:1) {};
      \draw decorate[Gamma]{(x)--(y)--(x')--(y')--(x)};
      \draw (x)--(z)--(x')--(z')--(y)--(z)--(y')--(z')--(x);
      \node at (2,0) {\LargeLeftarrow};
    \end{scope}

    \begin{scope}[shift={(9,1.5)}]
      \node[p] (z) at (-70:1) {};
      \node[p] (z') at (-110:1) {};
      \node[p] (x) at (130:1) {};
      \node[p] (x') at (170:1) {};
      \node[p] (y') at (50:1) {};
      \node[p] (y) at (10:1) {};
      \draw (x)--(z)--(x')--(z')--(y)--(z)--(y')--(z')--(x);
    \end{scope}

    \begin{scope}[shift={(5,-1.5)}]
      \node[p] (x) at (130:1) {};
      \node[p] (x') at (170:1) {};
      \node[p] (y') at (50:1) {};
      \node[p] (y) at (10:1) {};
      \draw (x)--(y)--(x')--(y')--(x);
    \end{scope}
  \end{tikzpicture}
  \]
\end{example}

\section{Inheriting regularity} \label{sec:inherit}

Regularity is inherited on large subsets, in the sense
that if $(X, Y)_G$ satisfies $\DISC(q, 1, \epsilon)$, then for any $U
\subseteq X$ and $V \subseteq Y$, the induced pair $(U, V)_G$ satisfies
$\DISC(q, 1, \epsilon')$ with $\epsilon' =
\frac{\abs{X}\abs{Y}}{\abs{U}\abs{V}} \epsilon$. This is a trivial
consequence of the definition of discrepancy, and the change in
$\epsilon$ comes from rescaling the measures $dx$ and $dy$ after
restricting the uniform distribution to a subset. The loss in
$\epsilon$ is a constant factor as long as $\frac{\abs{U}}{\abs{X}}$
and $\frac{\abs{V}}{\abs{Y}}$ are bounded from below. So if $G$ is a
dense tripartite graph with vertex subsets $X, Y, Z$, with each pair
being dense and regular, then we expect that for most vertices
$z \in Z$, its neighborhoods $N_X(z)$ and $N_Y(z)$ are large, and hence
they induce regular pairs with only a constant factor loss in the
discrepancy parameter $\epsilon$.

The above argument does not hold in sparse pseudorandom graphs.
It is still true that if $(X, Y)_G$ satisfies $\DISC(q, p, \epsilon)$
then for any $U \subseteq X$ and $V \subseteq Y$ the induced pair $(U,
V)_G$ satisfies $\DISC(q, 1, \epsilon')$ with $\epsilon' =
\frac{\abs{X}\abs{Y}}{\abs{U}\abs{V}} \epsilon$. However, in the tripartite
setup from the previous paragraph, we expect most $N_X(z)$ to have
size on the order of $p\abs{X}$. So the naive approach shows that most
$z \in Z$ induce a bipartite graph satisfying $\DISC(q,p,\epsilon')$
where $\epsilon'$ is on the order of $\frac{\epsilon}{p^2}$. This is
undesirable, as we do not want $\epsilon$ to depend on $p$.

It turns out that for most $z \in Z$, the bipartite graph induced by
the neighborhoods satisfies $\DISC(q,p,\epsilon')$ for some $\epsilon'$
depending on $\epsilon$ but not $p$. In this section we prove this
fact using the counting lemma developed earlier in the paper. We
recall the statement from the introduction.

\vspace{2mm}\noindent
{\bf Proposition
  \ref{prop:inheritintro}} {\it
  For any $\alpha > 0$, $\xi > 0$ and $\epsilon' > 0$, there exists $c > 0$ and
  $\epsilon > 0$ of size at least polynomial in $\alpha, \xi, \epsilon'$ such that the following holds.

  Let $p \in (0,1]$ and $q_{XY}, q_{XZ}, q_{YZ} \in [\alpha p,p]$. Let
  $\Gamma$ be a tripartite graph with vertex subsets $X$, $Y$ and
  $Z$ and $G$ be a subgraph of $\Gamma$. Suppose that
  \begin{itemize}
  \item $(X,Y)_\Gamma$ is $(p, cp^4\sqrt{\abs{X}\abs{Y}})$-jumbled and
    $(X,Y)_G$ satisfies $\DISC(q_{XY}, p, \epsilon)$; and
  \item $(X,Z)_\Gamma$ is $(p, cp^2\sqrt{\abs{X}\abs{Z}})$-jumbled and
    $(X,Z)_G$ satisfies $\DISC(q_{XZ}, p, \epsilon)$; and
  \item $(Y,Z)_\Gamma$ is $(p, cp^3\sqrt{\abs{Y}\abs{Z}})$-jumbled and
    $(Y,Z)_G$ satisfies $\DISC(q_{YZ}, p, \epsilon)$.
  \end{itemize}
  Then at least $(1 - \xi)\abs{Z}$ vertices $z \in Z$ have the
  property that $\abs{N_X(z)} \geq (1 - \xi)q_{XZ}\abs{X}$,
  $\abs{N_Y(z)} \geq (1 - \xi) q_{YZ}\abs{Y}$, and $(N_X(z),
  N_Y(z))_G$ satisfies $\DISC(q_{XY}, p, \epsilon')$.
}
\vspace{2mm}

The idea of the proof is to first show that a bound on the $K_{2,2}$ count
implies DISC and then to use the $K_{1,2,2}$ count to bound the $K_{2,2}$
count between neighborhoods.

We also state a version where only one side gets smaller. While the previous proposition is sufficient for embedding cliques, this second version will be needed for embedding general graphs $H$.

\begin{proposition}
  \label{prop:inherit2}
  For any $\alpha > 0$, $\xi > 0$ and $\epsilon' > 0$, there exists $c
  > 0$ and $\epsilon > 0$ of size at least polynomial in $\alpha, \xi,
  \epsilon'$ such that the following holds.

  Let $p \in (0,1]$ and $q_{XY}, q_{XZ} \in [\alpha p,p]$. Let
  $\Gamma$ be a tripartite graph with vertex subsets $X$, $Y$ and
  $Z$ and $G$ be a subgraph of $\Gamma$. Suppose that
  \begin{itemize}
  \item $(X,Y)_\Gamma$ is $(p, cp^{5/2}\sqrt{\abs{X}\abs{Y}})$-jumbled and
    $(X,Y)_G$ satisfies $\DISC(q_{XY}, p, \epsilon)$; and
  \item $(X,Z)_\Gamma$ is $(p, cp^{3/2}\sqrt{\abs{X}\abs{Z}})$-jumbled and
    $(X,Z)_G$ satisfies $\DISC(q_{XZ}, p, \epsilon)$.
  \end{itemize}
  Then at least $(1 - \xi)\abs{Z}$ vertices $z \in Z$ have the
  property that $\abs{N_X(z)} \geq (1 - \xi)q_{XZ}\abs{X}$ and
  $(N_X(z), Y)_G$ satisfies $\DISC(q_{XY}, p, \epsilon')$.
\end{proposition}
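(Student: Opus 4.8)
The plan is to reduce inheritance of discrepancy to a one‑sided bound on a $C_4$‑count inside the neighbourhood pair $(N_X(z),Y)_G$, and to obtain that bound by feeding a suitable five‑vertex graph into the counting lemma of Section~\ref{sec:counting-G}.

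\emph{Step 1 (discrepancy from a $C_4$‑count).} First I would record the ``easy direction'' of the Chung--Graham--Wilson equivalence in the relative setting: if $A,B$ are vertex sets with $(A,B)_\Gamma$ sufficiently jumbled, $(A,B)_G$ has edge density $q_{XY}+O(\epsilon)$, and the homomorphism density of $C_4$ in $(A,B)_G$ is at most $q_{XY}^4+\delta$, then $(A,B)_G$ satisfies $\DISC(q_{XY},p,\epsilon')$ with $\epsilon'$ polynomial in $\delta/q_{XY}^4$, $c$ and $\epsilon$. This follows by bounding the cut norm by the $C_4$ norm and using the telescoping identity
\[
\norm{G-q_{XY}}_{C_4}^4 \;=\; \delta_4 \;-\; 4q_{XY}\delta_3 \;+\; 4q_{XY}^2\delta_2 \;+\; 2q_{XY}^2\delta_1^2 ,
\]
where $\delta_i$ is the deviation of the $P_{i+1}$‑density in $(A,B)_G$ (and of the $C_4$‑density for $i=4$) from its expected value $q_{XY}^i$; the forest deviations $\delta_1,\delta_2,\delta_3$ are controlled by the discrepancy of $(X,Y)_G$, the concentration of degrees forced by the jumbledness of $\Gamma$, and the relation $\delta_2\le\sqrt{q_{XY}^4+\delta_4}-q_{XY}^2$, so the whole expression is governed by $\delta$.

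\emph{Step 2 (a counting estimate).} Let $H^\flat$ be the graph on $\set{z,x_1,x_2,y_1,y_2}$ consisting of the $4$‑cycle $x_1y_1x_2y_2$ together with the two edges $zx_1$ and $zx_2$; thus $H^\flat\subseteq K_{1,2,2}$ (with the edges from $z$ to the two $y$‑vertices deleted), which is exactly why the hypothesis involves only $(X,Y)_\Gamma$ and $(X,Z)_\Gamma$. Assigning $z$ to $Z$, the $x_i$ to $X$ and the $y_j$ to $Y$, the edges $zx_i$ are of $(X,Z)$‑type and the four $x_iy_j$ of $(X,Y)$‑type, and a check of the jumbledness requirements in the spirit of Section~\ref{sec:tut-jumble} (double a $y$‑vertex, then handle the resulting $4$‑cycles and trees) shows that the hypotheses of the proposition suffice for Proposition~\ref{prop:G-counting} to give $G(H^\flat)=q_{XZ}^2q_{XY}^4\pm\theta p^6$.

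\emph{Step 3 (passing to almost all $z$).} For $z\in Z$ let $P(z)$ be the $H^\flat$‑count conditioned on $z$, so $\int_z P(z)\,dz=G(H^\flat)$ and $P(z)$ is the normalised number of $C_4$'s in $(N_X(z),Y)_G$. Using Lemma~\ref{lem:deg-est} and the discrepancy of $(X,Z)_G$, all but $\xi\abs Z/10$ vertices $z$ satisfy $(1-\xi)q_{XZ}\abs X\le\abs{N_X(z)}\le(1+\xi)q_{XZ}\abs X$, and for such $z$ applying $\DISC(q_{XY},p,\epsilon)$ on $(X,Y)_G$ to the indicator of $N_X(z)$ pins down the edge density of $(N_X(z),Y)_G$ to $q_{XY}+O(\epsilon/\alpha)$. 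The crucial point is that $P(z)$ is essentially minimal for most $z$: by the Cauchy--Schwarz inequality there is a deterministic lower bound $P(z)\ge R(z)$, where $\int_z R(z)\,dz$ is the homomorphism count of a seven‑vertex tree and hence, by Proposition~\ref{prop:tree}, equals $q_{XZ}^2q_{XY}^4\pm M(c+\epsilon)p^6$; therefore $\int_z\bigl(P(z)-R(z)\bigr)\,dz$ is non‑negative and at most $(\theta+M(c+\epsilon))p^6$, so Markov's inequality leaves all but $\xi\abs Z/10$ vertices $z$ with $P(z)-R(z)\le O_{\alpha,\xi}(\theta+c+\epsilon)p^6$. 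Translated to the normalisation of $(N_X(z),Y)$, this says the $C_4$‑density there exceeds its convex lower bound by at most a small $\delta$, which together with analogous (easier) Markov arguments on a few auxiliary tree counts controls $\delta_1,\delta_2,\delta_3$ on all but a further $\xi\abs Z/10$ vertices. Intersecting these finitely many exceptional sets and applying Step~1 on $(N_X(z),Y)$ — whose ambient jumbledness is inherited from $(X,Y)_\Gamma$, one power of $p$ being absorbed by $q_{XZ}\ge\alpha p$ — yields $\DISC(q_{XY},p,\epsilon')$ for all the remaining $z$, which is what is claimed.

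\emph{Main obstacle.} The delicate part is the bookkeeping of powers of $p$. The counting lemma controls only the \emph{signed} error $G(H^\flat)-q(H^\flat)$, to within $\theta p^6$, so Markov's inequality requires a deterministic lower bound on $P(z)$ that is accurate to the \emph{same} order $p^6$; this forces one to compare $P(z)$ against the Cauchy--Schwarz quantity $R(z)$ rather than against the nominal value $q_{XY}^4\abs{N_X(z)}^2$ (which is only accurate to $O(\epsilon)p^4$), and likewise to estimate the lower‑order deviations $\delta_1,\delta_2,\delta_3$ through second‑moment (tree‑counting) arguments instead of crudely through $\DISC$. Arranging every error term to carry the correct power of $p$, so that the final $c$ and $\epsilon$ depend only on $\alpha,\xi,\epsilon'$ and not on $p$, is the heart of the argument; the proof of Proposition~\ref{prop:inheritintro} is essentially the same, using $K_{1,2,2}$ in place of $H^\flat$.
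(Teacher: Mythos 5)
Your plan follows the same core strategy as the paper's proof: identify the right five‑vertex graph (the paper draws exactly your $H^\flat$ at the end of its proof), compute a global count of it via the two‑sided counting lemma, distribute that count over $z \in Z$, and then turn a one‑sided $C_4$‑bound on $(N_X(z),Y)_G$ into $\DISC$ for most $z$. The real difference is which non‑negative quantity you feed into Markov. The paper de‑means from the start: it sets $f(x,y) = G(x,y) - q_{XY}$ on $X\x Y$ (and keeps $G$ on $(X,Z)$), so that $\int_z f_z(K_{2,2})\,dz$ (with $f_z(x,y):=(G(x,y)-q_{XY})\,1_{N_X(z)}(x)$) equals $f(H^\flat)$, which expands into the six raw graph counts you mention and is, by the counting lemma, bounded by a small multiple of $p^6$, while $f_z(K_{2,2})\ge 0$ for every $z$ by Lemma~\ref{lem:K11-K12-K22}. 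One Markov step and Lemma~\ref{lem:C4-DISC-subgraph} then finish the proof. You instead work with the raw $C_4$‑count $P(z)$ and the Cauchy--Schwarz lower bound $R(z) = T(z)^2$ (the squared cherry count rooted at $N_X(z)$), so you need Markov on $P(z)-R(z)$ \emph{and} Chebyshev on $T(z)$ (via the seven‑vertex tree count of $\int_z T(z)^2\,dz$) \emph{and} lower‑order controls, and only then pass to $\DISC$ via a raw‑count analogue of Proposition~\ref{prop:C4-DISC}. This works --- your telescoping identity is correct, and the bookkeeping you flag as the ``main obstacle'' can indeed be done --- but it is several concentration arguments where the paper needs only one: de‑meaning makes all the lower‑order tree deviations drop out of the calculation automatically, which is precisely why the paper uses $f$ rather than $G$. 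One small inaccuracy: in Step~1 you invoke jumbledness of $(A,B)_\Gamma$ and ``concentration of degrees forced by jumbledness'' to pass from a $C_4$‑bound to $\DISC$, but Lemmas~\ref{lem:C4-DISC} and \ref{lem:C4-DISC-subgraph} and Proposition~\ref{prop:C4-DISC} are purely combinatorial statements with no jumbledness hypothesis; jumbledness enters only in the global counting steps, not in the $C_4\Rightarrow\DISC$ implication.
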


\subsection{$C_4$  implies DISC} \label{sec:C4-DISC}

From our counting lemma we already know that if $G$ is a subgraph of a
sufficiently jumbled graph with vertex subsets $X$ and $Y$ such that
$(X,Y)_G$ satisfies $\DISC(q,p,\epsilon)$, then the number
of $K_{2,2}$ in $G$ across $X$ and $Y$ is roughly
$q^4\abs{X}^2\abs{Y}^2$. In this section, we show that the converse is
true, that the $K_{2,2}$ count implies discrepancy, even without any
jumbledness hypotheses.

In what follows, for any function $f \colon X \x Y \to \RR$, we write
\[
f(K_{s,t}) = \int_{\substack{x_1,\dots, x_s \in X \\ y_1, \dots, y_t
    \in Y}} \prod_{i=1}^s \prod_{j=1}^t f(x_i,y_j) \ dx_1 \cdots dx_s
  dy_1 \cdots dy_t.
\]

The following lemma shows that a bound on the ``de-meaned'' $C_4$-count implies discrepancy.

\begin{lemma}
  \label{lem:C4-DISC}
  Let $G$ be a bipartite graph between vertex sets $X$ and $Y$. Let $0
  \leq q \leq p \leq 1$ and $\epsilon > 0$. Define $f \colon X \x Y
  \to \RR$ by $f(x,y) = G(x,y) - q$. If $ f(K_{2,2}) \leq \epsilon^4
  p^4$ then $(X,Y)_G$ satisfies $\DISC(q,p,\epsilon)$.
\end{lemma}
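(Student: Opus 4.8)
The plan is to bound the discrepancy functional $\int_{x,y} f(x,y)u(x)v(y)\,dxdy$ for arbitrary $u\colon X\to[0,1]$ and $v\colon Y\to[0,1]$ using only the hypothesis $f(K_{2,2})\le \epsilon^4 p^4$, where $f(x,y)=G(x,y)-q$. Crucially, $f$ has mean zero in neither variable individually, but the $K_{2,2}$-count of $f$ is exactly a fourth moment of the bilinear form, so two applications of Cauchy--Schwarz should pull the linear test functions out and reduce everything to $f(K_{2,2})$.

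\medskip

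\noindent\emph{Step 1: Reduce to $0/1$-valued test functions.} First I would note that it suffices to prove the bound when $u=1_{X'}$ and $v=1_{Y'}$ are indicator functions of subsets $X'\subseteq X$, $Y'\subseteq Y$; the general case follows since the functional $\int_{x,y}f(x,y)u(x)v(y)\,dxdy$ is linear (hence its supremum over $u,v\in[0,1]$ is attained at an extreme point, i.e.\ at $0/1$-valued functions). Actually one can skip this reduction and work directly with $u,v$, but indicators make the Cauchy--Schwarz steps more transparent.

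\medskip

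\noindent\emph{Step 2: Two applications of Cauchy--Schwarz.} Write $I = \int_{x\in X, y\in Y} f(x,y)u(x)v(y)\,dxdy$. Applying Cauchy--Schwarz in the $x$ variable,
\[
I^2 \;=\; \Big(\int_x u(x)\int_y f(x,y)v(y)\,dy\,dx\Big)^2 \;\le\; \Big(\int_x u(x)^2\,dx\Big)\Big(\int_x \big(\int_y f(x,y)v(y)\,dy\big)^2 dx\Big)\;\le\; \int_x\Big(\int_y f(x,y)v(y)\,dy\Big)^2 dx,
\]
using $0\le u\le 1$ so $\int_x u^2\le 1$. Expanding the square gives $\int_{x,y,y'} f(x,y)f(x,y')v(y)v(y')\,dxdydy'$. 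Now apply Cauchy--Schwarz again, this time pairing up the $(y,y')$ integration against the constant $1$: the quantity is $\int_{y,y'} v(y)v(y')\big(\int_x f(x,y)f(x,y')\,dx\big)dydy'$, which by Cauchy--Schwarz in $(y,y')$ is at most $\big(\int_{y,y'}v(y)^2 v(y')^2\,dydy'\big)^{1/2}\big(\int_{y,y'}(\int_x f(x,y)f(x,y')\,dx)^2\,dydy'\big)^{1/2}\le \big(\int_{y,y'}(\int_x f(x,y)f(x,y')\,dx)^2\,dydy'\big)^{1/2}$, again using $0\le v\le 1$. The inner expression, once the square is expanded, is precisely $f(K_{2,2}) = \int_{x,x',y,y'} f(x,y)f(x,y')f(x',y)f(x',y')\,dxdx'dydy'$.

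\medskip

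\noindent\emph{Step 3: Conclude.} Combining, $I^4 \le f(K_{2,2}) \le \epsilon^4 p^4$, hence $|I|\le \epsilon p$, which is exactly $\DISC(q,p,\epsilon)$. The main subtlety --- really the only thing to be careful about --- is that $f$ is signed and not mean-zero, so one cannot invoke a ``positivity of $C_4$-count'' type statement: $f(K_{2,2})$ here is the genuine fourth moment $\int(\int_x f(x,y)f(x,y')\,dx)^2\,dydy'\ge 0$, and the hypothesis bounds it from above. One should double-check that the $K_{2,2}$ in the lemma statement indeed refers to this expression with possibly-equal vertices $x,x'$ and $y,y'$ allowed (i.e.\ the integral over all of $X\times X\times Y\times Y$, not just distinct tuples), which matches the definition $f(K_{s,t})=\int \prod f(x_i,y_j)$ given just before the lemma. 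There is no hard part here; the proof is three lines of Cauchy--Schwarz once the fourth-moment interpretation is made explicit.
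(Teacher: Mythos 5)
Your proof is correct and is essentially identical to the paper's: both arguments apply the Cauchy--Schwarz inequality twice to the discrepancy functional, discarding the test functions via $u^2\le u\le 1$ and $v^2\le v\le 1$ along the way, and identify the resulting fourth moment $\int_{y,y'}\bigl(\int_x f(x,y)f(x,y')\,dx\bigr)^2\,dy\,dy'$ with $f(K_{2,2})$. The preliminary reduction to indicator functions is unnecessary (as you note), but harmless.
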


\begin{proof}
  Let $u \colon X \to [0,1]$ and $v \colon Y \to [0,1]$ be any
  functions. Applying the Cauchy-Schwarz inequality twice, we have
  {\allowdisplaybreaks
  \begin{align*}
    \paren{\int_{x \in X} \int_{y \in Y} f(x,y)u(x)v(y) \ dydx}^4
    &
    \leq
    \paren{\int_{x \in X} \paren{\int_{y \in Y} f(x,y)u(x)v(y) \ dy}^2
      \ dx}^2
    \\
    &=
    \paren{\int_{x \in X} u(x)^2 \paren{\int_{y \in Y} f(x,y)v(y) \ dy}^2
      \ dx}^2
    \\
    &\leq
    \paren{\int_{x \in X} \paren{\int_{y \in Y} f(x,y)v(y) \ dy}^2
      \ dx}^2
    \\
    &=
    \paren{\int_{x \in X} \int_{y, y' \in Y} f(x,y)f(x,y')v(y)v(y') \ dydy'
      dx}^2
    \\
    &\leq
    \int_{y, y' \in Y} \paren{\int_{x \in X} f(x,y)f(x,y')v(y)v(y') \
      dx}^2 \ dydy'
    \\
    &=
    \int_{y, y' \in Y} v(y)^2v(y')^2 \paren{\int_{x \in X}
      f(x,y)f(x,y') \
      dx}^2 \ dydy'
    \\
    &\leq
    \int_{y, y' \in Y} \paren{\int_{x \in X} f(x,y)f(x,y') \
      dx}^2 \ dydy'
    \\
    &=
    \int_{y, y' \in Y} \int_{x,x' \in X} f(x,y)f(x,y')f(x',y)f(x',y')
    \ dxdx'dydy'
    \\
    &= f(K_{2,2})
    \\
    &
    \leq \epsilon^4p^4.
  \end{align*}
  }\noindent
  Thus
  \[
  \abs{\int_{x \in X} \int_{y \in Y} (G(x,y) - q)u(x)v(y) \ dydx} \leq
  \epsilon p.
  \]
  Hence $(X,Y)_G$ satisfies $\DISC(q,p,\epsilon)$.
\end{proof}

\begin{lemma}
  \label{lem:C4-DISC-subgraph}
  Let $G$ be a bipartite graph between vertex sets $X$ and $Y$. Let $0
  \leq q \leq p \leq 1$ and $\epsilon > 0$. Let $U
  \subseteq X$ and $V \subseteq Y$. Let $\mu = \frac{\abs{U}}{\abs{X}}$
  and $\nu = \frac{\abs{V}}{\abs{Y}}$. Define $f \colon X \x Y \to \RR$ by
  $f(x,y) = (G(x,y) - q)1_U(x)1_V(y)$. If $
  f(K_{2,2}) \leq \epsilon^4 p^4 \mu^2\nu^2$, then $(U,V)_G$ satisfies
  $\DISC(q,p,\epsilon)$.
\end{lemma}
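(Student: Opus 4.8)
The plan is to deduce this from Lemma~\ref{lem:C4-DISC} applied to the bipartite graph that $G$ induces between $U$ and $V$. Write $G'$ for this induced graph, regarded as a bipartite graph on vertex sets $U$ and $V$, and set $f' \colon U \x V \to \RR$, $f'(x,y) = G'(x,y) - q$. Directly from Definition~\ref{def:disc-sparse}, the pair $(U,V)_G$ satisfies $\DISC(q,p,\epsilon)$ if and only if $G'$, equipped with the uniform probability measures on $U$ and on $V$, satisfies $\DISC(q,p,\epsilon)$: extending any $u \colon U \to [0,1]$ and $v \colon V \to [0,1]$ by zero to $X$ and $Y$ and rescaling is exactly the reinterpretation of one discrepancy condition as the other. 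So it suffices to check the hypothesis of Lemma~\ref{lem:C4-DISC} for $G'$, namely that $f'(K_{2,2}) \leq \epsilon^4 p^4$, where $f'(K_{2,2})$ denotes the $C_4$-count of $f'$ computed with all four variables ranging uniformly over $U$ or $V$ as appropriate.

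The one computation to make is the relation between this restricted count and the ambient one:
\[
f(K_{2,2}) = \mu^2 \nu^2 \, f'(K_{2,2}).
\]
This holds because $f(x,y) = f'(x,y)\,1_U(x)\,1_V(y)$ (extending $f'$ by $0$ off $U \x V$), so the integrand $f(x,y)f(x,y')f(x',y)f(x',y')$ vanishes unless $x, x' \in U$ and $y, y' \in V$, and on that domain it coincides with the corresponding integrand for $f'$; shrinking the averaging domain from $X^2 \x Y^2$ to $U^2 \x V^2$ contributes precisely the factor $(\abs{U}/\abs{X})^2 (\abs{V}/\abs{Y})^2 = \mu^2\nu^2$. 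Hence the hypothesis $f(K_{2,2}) \leq \epsilon^4 p^4 \mu^2\nu^2$ gives $f'(K_{2,2}) \leq \epsilon^4 p^4$, and Lemma~\ref{lem:C4-DISC} applied to $G'$ yields that $(U,V)_G$ satisfies $\DISC(q,p,\epsilon)$.

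There is essentially no obstacle here; the only point requiring care is the bookkeeping of the normalization factors $\mu$ and $\nu$ when passing between averages over $X, Y$ and averages over $U, V$, since the integral notation suppresses the cardinalities. Equivalently, one could bypass the reduction and rerun the iterated Cauchy--Schwarz estimate of Lemma~\ref{lem:C4-DISC} directly on $f$, now inserting the indicators $1_U$ and $1_V$ at the first two applications of Cauchy--Schwarz so that those steps contribute factors $\mu$ and $\nu$ in place of $1$; this produces $\abs{\int_{x \in X}\int_{y \in Y} f(x,y) u(x) v(y) \, dy \, dx} \leq \epsilon p \mu \nu$ for all $u \colon X \to [0,1]$, $v \colon Y \to [0,1]$ supported on $U$ and $V$, which upon dividing by $\mu\nu$ to pass to the average over $U \x V$ is exactly the desired bound $\epsilon p$.
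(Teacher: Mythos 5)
Your proof is correct and is essentially the paper's own proof, which simply invokes ``Lemma~\ref{lem:C4-DISC} after appropriate rescaling of the measures $dx$ and $dy$.'' You have merely spelled out that rescaling explicitly: the identity $f(K_{2,2}) = \mu^2\nu^2 f'(K_{2,2})$ is exactly the change of normalization the paper leaves implicit, and both your main route and the alternative direct Cauchy--Schwarz computation you sketch are valid.
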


\begin{proof}
This lemma is equivalent to Lemma~\ref{lem:C4-DISC} after
appropriate rescaling of the measures $dx$ and $dy$.
\end{proof}

The above lemmas are sufficient for proving inheritance of regularity,
so that the reader may now skip to the next subsection. The rest of
this subsection contains a proof that an upper bound on the actual
$C_4$ count implies discrepancy, a result of independent interest
which is discussed further in Section~\ref{sec:rel-quasi} on relative quasirandomness.

\begin{proposition}
  \label{prop:C4-DISC}
  Let $G$ be a bipartite graph between vertex sets $X$ and $Y$. Let
  $0 \leq q \leq 1$ and $\epsilon > 0$. Suppose $G(K_{1,1})
  \geq (1 - \epsilon)q$ and $G(K_{2,2}) \leq (1+\epsilon)^4 q^4$, then
  $(X,Y)_G$ satisfies $\DISC(q,q, 4 \epsilon^{1/36})$.
\end{proposition}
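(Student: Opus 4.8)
The plan is to apply Lemma~\ref{lem:C4-DISC} with $p = q$ to $f(x,y) = G(x,y) - q$; that lemma reduces the statement to the estimate $f(K_{2,2}) \le \paren{4\epsilon^{1/36}}^4 q^4$. Note that $f(K_{2,2}) = \int_{y,y'\in Y}\paren{\int_{x\in X}f(x,y)f(x,y')\,dx}^2 dy\,dy' \ge 0$, so only an upper bound is needed. Expanding the product $f = G - q$ and integrating term by term over the four vertices of $K_{2,2}$ gives
\[
f(K_{2,2}) = q^4 - 4q^3 G(K_{1,1}) + 2q^2\paren{C_X + C_Y + G(K_{1,1})^2} - 4q\,T + G(K_{2,2}),
\]
where $C_X = \int_{x\in X}\paren{\int_{y\in Y}G(x,y)\,dy}^2 dx$ and $C_Y = \int_{y\in Y}\paren{\int_{x\in X}G(x,y)\,dx}^2 dy$ are the two cherry densities and $T = \int_{x\in X,\,y\in Y} G(x,y)\paren{\int_{y'}G(x,y')\,dy'}\paren{\int_{x'}G(x',y)\,dx'}\,dx\,dy$ is the density of the three-edge path. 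When $G \equiv q$ the five terms are $q^4,\, -4q^4,\, 6q^4,\, -4q^4,\, q^4$, summing to $0$; so it suffices to show that each of $G(K_{1,1}), C_X, C_Y, T, G(K_{2,2})$ differs from its model value $q, q^2, q^2, q^3, q^4$ by a small power of $\epsilon$ times the appropriate power of $q$.

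Two applications of the Cauchy--Schwarz inequality (for the uniform measure) give $G(K_{1,1})^2 \le C_X \le \sqrt{G(K_{2,2})}$ and symmetrically $G(K_{1,1})^2 \le C_Y \le \sqrt{G(K_{2,2})}$, so the hypotheses $G(K_{1,1}) \ge (1-\epsilon)q$ and $G(K_{2,2}) \le (1+\epsilon)^4 q^4$ force
\[
(1-\epsilon)q \le G(K_{1,1}) \le (1+\epsilon)q, \qquad C_X,\, C_Y \le (1+\epsilon)^2 q^2.
\]
This immediately gives degree concentration: writing $\deg_Y(x) = \int_y G(x,y)\,dy$, we have $\int_x (\deg_Y(x)-q)^2\,dx = C_X - 2q\,G(K_{1,1}) + q^2 \le (4\epsilon+\epsilon^2)q^2 \le 5\epsilon q^2$, and likewise $\int_y(\deg_X(y)-q)^2\,dy \le 5\epsilon q^2$.

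The only delicate term is the path density $T$, for which I need a lower bound $T \ge \paren{1 - O(\epsilon^{1/6})}q^3$. Set $X_1 = \setcond{x\in X}{\abs{\deg_Y(x)-q} > \epsilon^{1/3}q}$ and let $Y_1$ be defined analogously. By Chebyshev's inequality and the degree concentration above, $\abs{X_1}, \abs{Y_1} \le 5\epsilon^{1/3}$, and---this is the crucial step---Cauchy--Schwarz gives $\int_{X_1}\deg_Y(x)\,dx \le \abs{X_1}^{1/2} C_X^{1/2} = O(\epsilon^{1/6}q)$, which recovers the factor $q$ that the naive bound $\int_{X_1}\deg_Y(x)\,dx \le \abs{X_1}$ would waste; the analogous estimate holds for $Y_1$. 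Hence $\int_{(X\setminus X_1)\times(Y\setminus Y_1)}G(x,y)\,dx\,dy \ge G(K_{1,1}) - O(\epsilon^{1/6}q) \ge \paren{1 - O(\epsilon^{1/6})}q$, and since $\deg_Y(x)\deg_X(y) \ge (1-\epsilon^{1/3})^2 q^2$ when $x\notin X_1$ and $y\notin Y_1$, restricting the integral defining $T$ to $(X\setminus X_1)\times(Y\setminus Y_1)$ yields $T \ge \paren{1 - O(\epsilon^{1/6})}q^3$.

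Substituting all the estimates into the displayed expansion, the leading terms cancel and we obtain $f(K_{2,2}) \le C\epsilon^{1/6}q^4$ for an absolute constant $C$. Lemma~\ref{lem:C4-DISC} then gives $\DISC(q,q,(C\epsilon^{1/6})^{1/4})$, and $(C\epsilon^{1/6})^{1/4} \le 4\epsilon^{1/36}$ once $\epsilon$ is below an absolute threshold; for $\epsilon$ above that threshold the claimed $\DISC(q,q,4\epsilon^{1/36})$ is weaker than the trivial bound $\DISC(q,q,3)$, which holds because $G(K_{1,1}) \le 2q$ forces $\abs{\int(G-q)uv} \le 3q$ for all $u,v$ valued in $[0,1]$. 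The main obstacle is the lower bound on $T$: in the sparse setting one cannot afford to lose factors of $q$ when discarding the vertices of atypical degree, and the Cauchy--Schwarz bound on $\int_{X_1}\deg_Y$ is precisely what prevents that loss.
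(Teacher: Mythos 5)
Your proof is correct and follows essentially the same route as the paper's: expand $f(K_{2,2})$ over the subgraphs of $C_4$, control $G(K_{1,1})$ and the cherry counts by Cauchy--Schwarz from the $K_{2,2}$ hypothesis, lower-bound the three-edge path count by discarding vertices of atypical degree, and conclude via Lemma~\ref{lem:C4-DISC}. The only difference is cosmetic: you bound the edge mass on the exceptional sets by Cauchy--Schwarz against $C_X$, whereas the paper's Lemma~\ref{lem:P3} instead defines $Y'$ relative to $X\setminus X'$; this yields a slightly better exponent ($\epsilon^{1/6}$ versus $\epsilon^{1/9}$) but is not a genuinely different argument.
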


The hypotheses in Proposition~\ref{prop:C4-DISC} actually imply
two-sided bounds on $G(K_{1,1})$, $G(K_{1,2})$, $G(K_{2,1})$, and
$G(K_{2,2})$, by the following lemma.

\begin{lemma}
  \label{lem:K11-K12-K22}
  Let $G$ be a bipartite graph between vertex sets $X$ and $Y$ and $f \colon X \x Y \to \RR$ be any function. Then
  $f(K_{1,1})^4 \leq f(K_{1,2})^2 \leq f(K_{2,2})$.
\end{lemma}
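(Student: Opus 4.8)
The plan is to reduce both inequalities to the Cauchy--Schwarz inequality in the form $(\EE \phi)^2 \le \EE(\phi^2)$, by first grouping the integrations in $f(K_{1,1})$, $f(K_{1,2})$, $f(K_{2,2})$ so that the relevant partial sums become visible. Since the measures $dx, dy$ are uniform probability measures, these integrals are genuine averages, so Cauchy--Schwarz applies with no stray normalizing factors, and all the regrouping is just Fubini.

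For the first inequality, I would introduce $g(x) := \int_{y \in Y} f(x,y) \ dy$. Then $f(K_{1,1}) = \int_{x \in X} g(x) \ dx$, and expanding the product of two copies of $g(x)$ and using Fubini shows $f(K_{1,2}) = \int_{x\in X} g(x)^2 \ dx$. Applying Cauchy--Schwarz to the average over $x$ gives $f(K_{1,1})^2 = \left(\int_{x} g(x) \ dx\right)^2 \le \int_{x} g(x)^2 \ dx = f(K_{1,2})$, and squaring this yields $f(K_{1,1})^4 \le f(K_{1,2})^2$.

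For the second inequality, I would instead introduce $h(y,y') := \int_{x \in X} f(x,y) f(x,y') \ dx$. Rewriting $g(x)^2 = \int_{y,y' \in Y} f(x,y) f(x,y') \ dy \ dy'$ and applying Fubini shows that $f(K_{1,2}) = \int_{y,y' \in Y} h(y,y') \ dy\ dy'$, while directly grouping the four factors in the definition of $f(K_{2,2})$ into the two $x$-integrals gives $f(K_{2,2}) = \int_{y,y' \in Y} h(y,y')^2 \ dy\ dy'$. A second application of Cauchy--Schwarz, now to the average over the pair $(y,y')$, gives $f(K_{1,2})^2 \le f(K_{2,2})$, completing the proof. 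There is no real obstacle here beyond the bookkeeping of indices; the only point requiring a moment's care is to keep the uniform probability normalization consistent when regrouping the integrations.
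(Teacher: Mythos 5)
Your proof is correct and follows essentially the same route as the paper: both inequalities come from the Cauchy--Schwarz inequality $(\EE \phi)^2 \le \EE(\phi^2)$ applied once to $g(x)=\int_y f(x,y)\,dy$ (giving $f(K_{1,1})^2 \le f(K_{1,2})$) and once to $h(y,y')=\int_x f(x,y)f(x,y')\,dx$ (giving $f(K_{1,2})^2 \le f(K_{2,2})$). The paper merely presents the two applications in the opposite order, starting from $f(K_{2,2})$, which is an immaterial difference.
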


\begin{proof}
  The result follows from two applications of the Cauchy-Schwarz
  inequality.
  {\allowdisplaybreaks
  \begin{align*}
    f(K_{2,2})
    &
    = \int_{y, y' \in Y} \int_{x,x' \in X} f(x,y)f(x,y')f(x',y)f(x',y')
    \ dxdx'dydy'
    \\
    &=
    \int_{y, y' \in Y} \paren{\int_{x \in X} f(x,y)f(x,y') \
      dx}^2 \ dydy'
    \\
    &\geq
    \paren{\int_{y, y' \in Y} \int_{x \in X} f(x,y)f(x,y') \
      dx \ dydy'}^2
    \\
    &= f(K_{1,2})^2
    \\
    &=\paren{\int_{x \in X} \paren{\int_{y \in Y} f(x,y) \ dy}^2
      \ dx}^2
    \\
    &\geq \paren{\int_{x \in X} \int_{y \in Y} f(x,y) \ dy
      \ dx}^4
    \\
    &= f(K_{1,1})^4.
  \end{align*}
  }
\end{proof}

\newcommand{\pathbipartite}[1]
{
  \begin{tikzpicture}[scale =.3, baseline=-4pt,font=\tiny, label distance = -3pt]
    \path[use as bounding box] (-1.2,-.8) rectangle (1.2,.8);
    \node[p] (x) at (-1,.5) {};
    \node[p] (x') at (-1,-.5) {};
    \node[p] (y) at (1,.5) {};
    \node[p] (y') at (1,-.5) {};
   \draw #1;
    \end{tikzpicture}
}
\newcommand{\Pxyxy}{\pathbipartite{(x)--(y)--(x')--(y')}}

A bound on $K_{1,2}$ is a second moment bound on the degree
distribution, so we can bound the number of vertices of low degree
using Chebyshev's inequality, as done in the next lemma. Recall the
notation $G(x,S) = \int_{y \in Y} G(x,y)1_S(y) \ dy$ for $S \subseteq
Y$ as the normalized degree.

\begin{lemma}
  \label{lem:DISC-deviate}
  Let $G$ be a bipartite graph between vertex sets $X$ and $Y$. Let
  $0 \leq q \leq 1$ and $\epsilon > 0$. Suppose $G(K_{1,1}) \geq (1 -
  \epsilon)q$ and $G(K_{1,2}) \leq (1+\epsilon)^2 q^2$. Let $X '
  =\setcond{x \in X}{G(x,Y) < (1 - 2\epsilon^{1/3})q}$. Then $\abs{X'}
  \leq 2\epsilon^{1/3}\abs{X}$.
\end{lemma}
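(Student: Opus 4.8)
This is a routine second-moment estimate on the degree function, so I will spell out the structure rather than the arithmetic. Write $D(x) := G(x,Y) = \int_{y \in Y} G(x,y)\,dy$ for the normalized degree of a vertex $x \in X$. Unwinding the definitions of $G(K_{1,1})$ and $G(K_{1,2})$, one sees that $G(K_{1,1}) = \int_{x \in X} D(x)\,dx$ and $G(K_{1,2}) = \int_{x \in X} D(x)^2\,dx$, so the two hypotheses are precisely the first- and second-moment bounds $\int_{x \in X} D(x)\,dx \geq (1-\epsilon)q$ and $\int_{x \in X} D(x)^2\,dx \leq (1+\epsilon)^2 q^2$.

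First I would dispose of the degenerate cases: if $\epsilon \geq 1$ then $2\epsilon^{1/3} \geq 1$ and the claim is vacuous, and if $q = 0$ then $X' = \emptyset$; so assume $0 < \epsilon < 1$ and $q > 0$. The key move is to take the second moment centered at $q$ rather than at the mean, which lets both hypotheses be used at once:
\[
\int_{x \in X} (D(x) - q)^2\,dx = G(K_{1,2}) - 2q\,G(K_{1,1}) + q^2 \leq (1+\epsilon)^2 q^2 - 2(1-\epsilon)q^2 + q^2 = (4\epsilon + \epsilon^2)q^2 \leq 5\epsilon q^2 .
\]
Now every $x \in X'$ satisfies $q - D(x) > 2\epsilon^{1/3} q > 0$, hence $(D(x) - q)^2 > 4\epsilon^{2/3} q^2$, so Markov's inequality applied to the nonnegative function $(D(x)-q)^2$ gives
\[
\frac{\abs{X'}}{\abs{X}} \leq \frac{\int_{x \in X} (D(x)-q)^2\,dx}{4\epsilon^{2/3} q^2} \leq \frac{5\epsilon q^2}{4\epsilon^{2/3} q^2} = \frac{5}{4}\,\epsilon^{1/3} \leq 2\epsilon^{1/3},
\]
which is the assertion.

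There is no genuine obstacle here; the only points requiring a little care are the choice to center the second moment at $q$ (so that $\int_{x \in X} (D(x)-q)^2\,dx = O(\epsilon q^2)$ drops out of the two hypotheses simultaneously) and the calibration of the deviation radius $2\epsilon^{1/3} q$ so that the $\epsilon^{1/3}$ loss from Markov matches the constant $2$ claimed. One could instead center at the mean and invoke Cantelli's inequality, but centering at $q$ is cleaner and yields the stated bound directly.
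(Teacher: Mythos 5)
Your proof is correct and follows exactly the paper's argument: center the second moment of the degree function at $q$, use the $K_{1,1}$ and $K_{1,2}$ hypotheses to bound $\int_{x\in X}(G(x,Y)-q)^2\,dx \leq 5\epsilon q^2$, and apply Markov/Chebyshev with deviation radius $2\epsilon^{1/3}q$ to get $\abs{X'}/\abs{X} \leq \tfrac54\epsilon^{1/3} \leq 2\epsilon^{1/3}$. No differences worth noting.
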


\begin{proof}
  We have
  \begin{align*}
    \frac{\abs{X'}}{\abs{X}} \paren{2\epsilon^{1/3} q}^2
    &\leq \int_{x
      \in X} \paren{G(x,Y) - q}^2 \ dx
    \\
    & = G(K_{1,2}) - 2qG(K_{1,1}) + q^2
    \\
    &
    \leq (1+\epsilon)^2 q^2 - 2(1-\epsilon) q^2 + q^2
    \\
    & \leq 5\epsilon q^2.
  \end{align*}
  Thus $\abs{X'} \leq \frac54 \epsilon^{1/3} \abs{X}$.
\end{proof}

We write
\[
G\paren{\Pxyxy} = \int_{\substack{x,x' \in X \\ y,y' \in Y}} G(x,y)G(x',y)G(x',y') \ dxdx'dydy'.
\]
The next lemma proves a lower bound on $G\paren{\Pxyxy}$ by discarding
vertices of low degree.

\begin{lemma}
  \label{lem:P3}
  Let $G$ be a bipartite graph between vertex sets $X$ and $Y$. Let
  $0 \leq q \leq 1$ and $\epsilon > 0$. Suppose $G(K_{1,1})
  \geq (1 - \epsilon)q$, $G(K_{1,2}) \leq (1+\epsilon)^2 q^2$ and
  $G(K_{2,1}) \leq (1+\epsilon)^2 q^2$. Then $G\paren{\Pxyxy} \geq
  (1 - 14\epsilon^{1/9})q^3$.
\end{lemma}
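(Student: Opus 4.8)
The plan is to mimic the proof of Lemma~\ref{lem:DISC-deviate} together with a degree-truncation argument. First I would record the consequences of the hypotheses via Lemma~\ref{lem:K11-K12-K22}: since $G(K_{1,1}) \geq (1-\epsilon)q$ and $G(K_{1,2}), G(K_{2,1}) \leq (1+\epsilon)^2q^2$, these force $G(K_{1,1})$ to be close to $q$ on both sides, and in particular Lemma~\ref{lem:DISC-deviate} applies (with $X$ and $Y$ each playing the role of the low-degree side). So the set $X' = \setcond{x \in X}{G(x,Y) < (1-2\epsilon^{1/3})q}$ has $\abs{X'} \leq 2\epsilon^{1/3}\abs{X}$, and symmetrically for $Y' = \setcond{y \in Y}{G(Y\text{-side degree}) < (1-2\epsilon^{1/3})q}$ inside $Y$.

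Next I would lower-bound $G\paren{\Pxyxy}$ by restricting the integral to the ``good'' vertices. Observe that
\[
G\paren{\Pxyxy} = \int_{x' \in X}\paren{\int_{y \in Y} G(x',y)\paren{\int_{x \in X} G(x,y)\ dx}\ dy}\paren{\int_{y' \in Y} G(x',y')\ dy'}\ dx',
\]
i.e.\ the path $x$-$y$-$x'$-$y'$ factors through the middle vertex $x'$ as (degree of $x'$ into $Y$) against (a weighted degree). The cleanest route is: restrict $x'$ to $X \setminus X'$ so that $G(x', Y) \geq (1-2\epsilon^{1/3})q$; restrict $y$ to $Y\setminus Y'$ so that the inner integral $\int_x G(x,y)\,dx \geq (1-2\epsilon^{1/3})q$; and then bound the loss incurred by throwing away $x \in X'$, $y \in Y'$, $y' \in Y'$ using the degree bounds $G(x,Y), G(x',Y) \le 1$ and $\abs{X'}/\abs{X}, \abs{Y'}/\abs{Y} \le 2\epsilon^{1/3}$. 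Each discarded piece contributes at most $O(\epsilon^{1/3})q^2$ or $O(\epsilon^{1/3})q$ to the appropriate sub-integral; after multiplying the three surviving near-$q$ factors one gets $G\paren{\Pxyxy} \geq (1 - O(\epsilon^{1/3}))q^3$, and since $\epsilon^{1/3} \le \epsilon^{1/9}$ for $\epsilon \le 1$ this is absorbed into the stated bound $(1-14\epsilon^{1/9})q^3$ after checking the constant. (The exponent $1/9$ rather than $1/3$ is surely just slack carried for convenience in the next lemma, so I would not fight to optimize the constant.)

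The main obstacle is bookkeeping the error terms correctly: the path has three edges but only one ``middle'' vertex through which everything factors, so one must be careful about which restrictions can be imposed independently. In particular, restricting $y$ to $Y \setminus Y'$ is only legitimate if $Y'$ is defined using the degree $\int_x G(x,y)\,dx$ — i.e.\ the $X$-degree of $y$ — which is the $Y$-side analogue of Lemma~\ref{lem:DISC-deviate} and requires the hypothesis $G(K_{2,1}) \le (1+\epsilon)^2q^2$ (this is exactly why that hypothesis, absent from Lemma~\ref{lem:DISC-deviate}, is needed here). Once the three truncations are set up consistently, the estimate is a routine expansion; I would write $G\paren{\Pxyxy} \ge \int_{x' \notin X'}\int_{y \notin Y'} G(x',y)\big(\int_{x}G(x,y)\,dx\big)\,dy\,dx' \cdot \min_{x'\notin X'} G(x',Y)$ after first peeling off the $y'$-integral, then substitute the lower bounds and the measure estimates, and collect terms.
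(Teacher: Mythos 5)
Your approach is correct and is a genuinely different, in fact cleaner, execution than the paper's. The paper truncates in a \emph{nested} way: it first removes the low-degree set $X'$, forms the graph $G'$, and only then defines $Y'$ as the vertices of $Y$ with low degree into $X\setminus X'$, applying Lemma~\ref{lem:DISC-deviate} a second time to $G'$ with the degraded parameter $4\epsilon^{1/3}$ in place of $\epsilon$ --- this second application is precisely what produces the exponent $1/9$. You instead define $X'$ and $Y'$ independently and symmetrically (using $G(K_{1,2})$ for one side and $G(K_{2,1})$ for the other, which is indeed why the latter hypothesis is present), restrict only the two middle vertices $x'\notin X'$, $y\notin Y'$ of the path, and leave the endpoints $x,y'$ free. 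This avoids the nesting entirely and gives the stronger bound $(1-O(\epsilon^{1/3}))q^3$, which trivially implies the stated $(1-14\epsilon^{1/9})q^3$; so the $1/9$ is an artifact of the paper's argument, not mere slack.

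One step of your justification needs to be repaired, though the fix is immediate. You propose to bound the loss from discarding $x'\in X'$ and $y\in Y'$ in the middle edge using ``$G\le 1$'' together with $\abs{X'}/\abs{X},\abs{Y'}/\abs{Y}\le 2\epsilon^{1/3}$. That only yields an \emph{additive} loss of order $\epsilon^{1/3}$, which is useless when $q\ll\epsilon^{1/3}$ (the resulting lower bound on $\int_{x'\notin X',\,y\notin Y'}G(x',y)$ could be negative, and the lemma makes no assumption relating $q$ to $\epsilon$). The correct observation is that $X'$ and $Y'$ consist of \emph{low-degree} vertices by definition, so
\[
\int_{x'\in X'}G(x',Y)\,dx'\le \frac{\abs{X'}}{\abs{X}}\,q\le 2\epsilon^{1/3}q,
\qquad
\int_{y\in Y'}G(X,y)\,dy\le \frac{\abs{Y'}}{\abs{Y}}\,q\le 2\epsilon^{1/3}q,
\]
whence $\int_{x'\notin X',\,y\notin Y'}G(x',y)\ge(1-\epsilon)q-4\epsilon^{1/3}q$, and the product of the three factors is at least $(1-9\epsilon^{1/3})q^3$. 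With that substitution your argument is complete.
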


\begin{proof}
  Let
  \[
  X' = \setcond{x \in X}{G(x,Y) < (1-2\epsilon^{1/3}) q}.
  \]
  Let $G'$ denote the subgraph of $G$ where we remove all edges with
  an endpoint in $X'$. Then $G'(K_{2,1}) \leq G(K_{2,1}) \leq
  (1+\epsilon)^2q^2$ and, by Lemma~\ref{lem:DISC-deviate}
  \[
  G'(K_{1,1}) \geq \frac{\abs{X\setminus X'}}{\abs{X}} (1 -
  2\epsilon^{1/3}) q \geq (1- 2\epsilon^{1/3})^2 q \geq (1-4\epsilon^{1/3})q.
  \]
  Let
  \[
  Y' = \setcond{y \in Y}{G(X\setminus X', y) < (1 - 4\epsilon^{1/9}) q}.
  \]
  So $\abs{Y'} \leq 4\epsilon^{1/9}$ by applying
  Lemma~\ref{lem:DISC-deviate} again. Restricting to paths
  with vertices in $X\setminus X', Y\setminus Y', X \setminus X', Y$,
  we find that
  \begin{align*}
  G\paren{\Pxyxy} &\geq \frac{\abs{Y\setminus
      Y'}}{\abs{Y}} \paren{\min_{y \in Y\setminus Y'}
      G(X\setminus X', y)}^2 \paren{\min_{x \in X\setminus X'} G(x,Y)}
    \\
    &\geq (1-4\epsilon^{1/9})^3(1-2\epsilon^{1/3})q^3
    \\
    & \geq (1 - 14\epsilon^{1/9})q^3.
  \end{align*}
\end{proof}

The above argument can be modified to show that a bound on $K_{1,2}$
implies one-sided counting for trees. We state the generalization and
omit the proof.

\begin{proposition}
  \label{prop:K12-tree}
  Let $H$ be a tree on vertices $\set{1, 2, \dots, m}$. For every
  $\theta > 0$ there exists $\epsilon > 0$ of size polynomial in $\theta$
  so that the following holds.

  Let $G$ be a weighted graph with vertex subsets $X_1, \dots, X_m$. For
  every edge $ab$ of $H$, assume there is some $q_{ab} \in [0,1]$ so that
  the bipartite graph $(X_a, X_b)_G$ satisfies $G(K_{1,1}) \geq
  (1-\epsilon)q_{ab}$, $G(K_{1,2}) \leq (1+\epsilon)^2 q_{ab}^2$ and
  $G(K_{2,1}) \leq (1+\epsilon)^2 q_{ab}^2$. Then $G(H) \geq (1 - \theta)q(H)$.
\end{proposition}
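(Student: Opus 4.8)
The plan is to prove Proposition~\ref{prop:K12-tree} by induction on the number of vertices $m=v(H)$, peeling off one leaf at each stage, in exactly the spirit of Lemmas~\ref{lem:DISC-deviate} and \ref{lem:P3}: at each step I would discard the small set of vertices whose degree is far below the expectation and then check that this costs almost nothing in the count.

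The base cases are trivial: for $m=1$ we have $G(H)=1=q(H)$, and for $m=2$ the conclusion $G(H)=G(K_{1,1})\ge(1-\epsilon)q(H)$ is part of the hypothesis. For the inductive step, take $m\ge 3$, pick a leaf $v$ of $H$ with unique neighbour $u$, and set $H'=H-v$; since $H$ is a tree with at least three vertices, $u$ still has a neighbour in $H'$. Integrating out the image of $v$ first, I would use the identity
\[
G(H)=\int_{\bx'} G\cond{H'}{\bx'}\,G(x_u,X_v)\ d\bx',\qquad G(x_u,X_v):=\int_{x_v\in X_v}G(x_u,x_v)\ dx_v,
\]
where $\bx'$ ranges uniformly over compatible maps $V(H')\to V(G)$.

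The heart of the argument is to discard the bad part of $X_u$. Let $X_u^{\mathrm{bad}}$ consist of those $x_u\in X_u$ with $G(x_u,X_v)<(1-2\epsilon^{1/3})q_{uv}$, or with $G(x_u,X_w)<(1-2\epsilon^{1/3})q_{uw}$ for some neighbour $w$ of $u$ in $H'$. Applying Lemma~\ref{lem:DISC-deviate} to each of the at most $m-1$ relevant pairs (both the $K_{1,2}$ and $K_{2,1}$ bounds are available, so the orientation does not matter), one gets $\abs{X_u^{\mathrm{bad}}}\le 2m\epsilon^{1/3}\abs{X_u}$. Let $G''$ be obtained from $G$ by deleting every edge incident to $X_u^{\mathrm{bad}}$. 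Then $G''\le G$; deleting edges only decreases $G''(K_{1,2})$ and $G''(K_{2,1})$ on every pair; $G''$ agrees with $G$ on every pair not meeting $X_u$; and on every pair $(X_u,X_w)_{G''}$ with $w$ a neighbour of $u$ in $H'$ one still has $G''(K_{1,1})\ge(1-2m\epsilon^{1/3})(1-2\epsilon^{1/3})q_{uw}$. Hence $G''$ satisfies the hypotheses of the proposition for the tree $H'$, with the same $q_{ab}$'s but with $\epsilon$ replaced by $\epsilon_1:=4m\epsilon^{1/3}$, so by the inductive hypothesis $G''(H')\ge(1-\theta_1)q(H')$ with $\theta_1$ depending only on $\epsilon_1$ and $m$ and tending to $0$ as $\epsilon\to 0$. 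Since $u$ has a neighbour in $H'$, whenever $G''\cond{H'}{\bx'}>0$ the vertex $x_u$ lies outside $X_u^{\mathrm{bad}}$, so $G(x_u,X_v)\ge(1-2\epsilon^{1/3})q_{uv}$; substituting $G\ge G''$ into the identity above gives
\[
G(H)\ \ge\ (1-2\epsilon^{1/3})\,q_{uv}\int_{\bx'}G''\cond{H'}{\bx'}\ d\bx'\ =\ (1-2\epsilon^{1/3})\,q_{uv}\,G''(H')\ \ge\ (1-2\epsilon^{1/3})(1-\theta_1)\,q(H),
\]
which exceeds $(1-\theta)q(H)$ provided $\epsilon$ is small enough in terms of $\theta$ and $m$.

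The one point requiring care — more a matter of bookkeeping than a genuine obstacle — is tracking how the discrepancy parameter degrades: one peeling step replaces $\epsilon$ by $4m\epsilon^{1/3}$, so running the recursion down to a single edge leaves an error that is a fixed, $H$-dependent power of $\epsilon$. Since $v(H)$ is fixed this still produces $\epsilon$ of size polynomial in $\theta$, as required, and one only has to unwind the recursion $\epsilon\mapsto\min\{(\theta/4)^3,\,(\epsilon^{*}(\theta/2,m-1)/(4m))^3\}$ to read off the exponent. Note that no jumbledness of an ambient graph $\Gamma$ enters anywhere; the only inputs are the $K_{1,1}$, $K_{1,2}$ and $K_{2,1}$ bounds, exactly as in Lemma~\ref{lem:DISC-deviate}.
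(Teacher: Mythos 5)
Your proof is correct and is essentially the argument the paper has in mind: the paper omits the proof of Proposition~\ref{prop:K12-tree}, stating only that it follows by modifying the degree-truncation argument of Lemmas~\ref{lem:DISC-deviate} and~\ref{lem:P3}, and your leaf-peeling induction with the bad set $X_u^{\mathrm{bad}}$ is precisely such a modification. The bookkeeping (the recursion $\epsilon \mapsto 4m\epsilon^{1/3}$ over at most $m$ peeling steps) is handled correctly and yields $\epsilon$ polynomial in $\theta$ for fixed $H$, as required.
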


\begin{proof}
  [Proof of Proposition~\ref{prop:C4-DISC}]
  Using Lemma~\ref{lem:K11-K12-K22}, we have $G(K_{1,2}) \leq
  (1+\epsilon)^2 q^2$,  $G(K_{2,1}) \leq (1+\epsilon)^2 q^2$ and
  $G(K_{1,1}) \leq (1+\epsilon)q$. Let $f(x,y) = G(x,y) - q$. Applying
  Lemma~\ref{lem:P3}, we have
  \begin{align*}
    f(K_{2,2})
    &= G(K_{2,2}) - 4qG\paren{\Pxyxy} + 2q^2 G(K_{1,1})^2 +
    4q^2G(K_{1,2}) - 4q^3G(K_{1,1}) + q^4
    \\
    & \leq (1+\epsilon)^4 q^4 - 4(1-14\epsilon^{1/9})q^4 +
    2(1+\epsilon)^2 q^4 + 4(1+\epsilon)^2 q^4 - 4(1-\epsilon) q^4 +
    q^4
    \\
    &\leq 100\epsilon^{1/9}q^4.
  \end{align*}
  Thus, by Lemma~\ref{lem:C4-DISC}, $(X,Y)_G$ satisfies
  $\DISC(q,q,4\epsilon^{1/36})$.
\end{proof}

\subsection{$K_{1,2,2}$ implies inheritance}

We now prove Propositions~\ref{prop:inheritintro} and \ref{prop:inherit2}
using Lemma~\ref{lem:C4-DISC-subgraph}.

\newcommand{\graphinherit}[1]
{
  \begin{tikzpicture}[scale =.45, baseline=-4pt,font=\tiny, label distance = -3pt]
    \path[use as bounding box] (-1.2,-1.2) rectangle (1.2,1);
    \node[p] (x) at (-90:1) {};
    \node[p] (y) at (130:1) {};
    \node[p] (y1) at (170:1) {};
    \node[p] (z) at (50:1) {};
    \node[p] (z1) at (10:1) {};
   \draw #1;
    \end{tikzpicture}
}
\newcommand{\graphinheritA}{\graphinherit{(x)--(y)--(z)--(x)--(y1)--(z1)--(x)
    (y1)--(z) (y)--(z1)}}
\newcommand{\graphinheritB}{\graphinherit{(x)--(y) (z)--(x)--(y1)--(z1)--(x)
    (y1)--(z) (y)--(z1)}}
\newcommand{\graphinheritC}{\graphinherit{(x)--(y)--(z)--(x)--(y1)--(z1)--(x)}}
\newcommand{\graphinheritD}{\graphinherit{(x)--(y) (z)--(x)--(y1)--(z1)--(x)
    (y1)--(z)}}
\newcommand{\graphinheritE}{\graphinherit{(x)--(y1)--(z1)--(x)
    (y)--(x)--(z)}}
\newcommand{\graphinheritF}{\graphinherit{(y)--(x)--(z)
    (y1)--(x)--(z1)}}

\begin{proof}
  [Proof of Proposition~\ref{prop:inheritintro}] First we show that only a
  small fraction of vertices in $Z$ have very few neighbors in $X$ and
  $Y$. Let $Z_1$ be the set of all vertices in $Z$ with fewer than $(1
  - \xi)q_{XZ}\abs{X}$ neighbors in $X$. Applying discrepancy to $(X,
  Z_1)$ yields $\xi q_{XZ} \abs{Z_1} \leq \epsilon p \abs{Z}$. If we assume
  that $\epsilon \leq \frac13 \alpha \xi^2$, we have $\abs{Z_1} \leq
  \frac{\epsilon p}{\xi q_{XZ}} \abs{Z} \leq
  \frac{\xi}{3}\abs{Z}$. Similarly let $Z_2$ be the set of all
  vertices in $Z$ with fewer than $(1 - \xi)q_{YZ} \abs{Y}$ neighbors
  in $Y$, so that $\abs{Z_2} \leq \frac{\xi}{3}\abs{Z}$ as well.

  Define $f \colon V(G) \x V(G) \to \RR$ to be a function which agrees
  with $G$ on pairs $(X,Z)$ and $(Y,Z)$, and agrees with $G - q_{XY}$ on
  $(X,Y)$. Let us assign each vertex of $K_{2,2,1}$ to one of $\set{X,
    Y, Z}$ as follows (two vertices are assigned to each of $X$ and $Y$).
  \begin{center}
    \begin{tikzpicture}[scale =1, baseline=-4pt,font=\small, label distance = -3pt]
      \node[p] (z) at (-90:1) {};
      \node[p] (x) at (130:1) {};
      \node[p] (x') at (170:1) {};
      \node[p] (y') at (50:1) {};
      \node[p] (y) at (10:1) {};
      \node at (150:1.5) {$X$};
      \node at (30:1.5) {$Y$};
      \node at (-90:1.5) {$Z$};
      \draw (x)--(y)--(x')--(y')--(x)--(z)--(x') (y)--(z)--(y');
    \end{tikzpicture}
  \end{center}
  The stated jumbledness hypotheses suffice for counting $K_{1,2,2}$
  and its subgraphs; we refer to the tutorial in
  Section~\ref{sec:tut-jumble} for an explanation.

  By expanding all the $(G(x,y) - q_{XY})$ factors and using our
  counting lemma, we get
  \begin{align*}
    f\paren{\graphinheritA}
    &= G\paren{\graphinheritA}
    - 4 q_{XY} G\paren{\graphinheritB}
    + 2 q_{XY}^2 G\paren{\graphinheritC}
    \\
    &\qquad
    + 4 q_{XY}^2 G\paren{\graphinheritD}
    - 4 q_{XY}^3 G\paren{\graphinheritE}
    + q_{XY}^4 G\paren{\graphinheritF}
    \\
    &\approxmod_{c,\epsilon}^{p^8}
     q\paren{\graphinheritA}
    - 4 q_{XY} q\paren{\graphinheritB}
    + 2 q_{XY}^2 q\paren{\graphinheritC}
    \\
    &\qquad
    + 4 q_{XY}^2 q\paren{\graphinheritD}
    - 4 q_{XY}^3 q\paren{\graphinheritE}
    + q_{XY}^4 q\paren{\graphinheritF}
    \\
    &
    =0.
  \end{align*}
  Therefore, by choosing $\epsilon$ and $c$ to be sufficiently
  small (but polynomial in $\xi, \alpha, \epsilon'$), we can guarantee that
  \[
    f\paren{\graphinheritA} \leq \frac{1}{3}\xi (1-\xi)^4 \epsilon'^4 \alpha^4 p^8.
  \]
  Let $K_{2,2}$ denote the subgraph of the above $K_{1,2,2}$ that gets
  mapped between $X$ and $Y$. For each $z \in Z$, let $f_z \colon X \x
  Y \to \RR$ be defined by $(G(x,y) - q_{XY})1_{N_X(z)}(x) 1_{N_Y(z)}
  (y)$. We have
  \[
  f\paren{\graphinheritA}
  = \int_{z \in Z} f_z(K_{2,2}) \ dz.
  \]
  By Lemma~\ref{lem:K11-K12-K22}, $f_z(K_{2,2}) \geq 0$ for all $z \in
  Z$. Let $Z_3$ be the set of vertices $z$ in $Z$ such that
  $f_z(K_{2,2}) > \epsilon'^4 (1-\xi)^4 \alpha^4 p^8$. Then
  $\abs{Z_3} \leq \frac{\xi}{3}\abs{Z}$.

  Let $Z' = Z \setminus (Z_1 \cup Z_2 \cup Z_3)$. So $\abs{Z'} \geq (1
  - \xi)\abs{Z}$. Furthermore, for any $z \in Z_1$,
  \[
  f_z(K_{2,2}) \leq \epsilon'^4 (1-\xi)^4 \alpha^4p^8 \leq
   \epsilon'^4 (1-\xi)^4 p^4 q_{XZ}^2q_{YZ}^2
   \leq \epsilon'^4 p^4 \paren{\frac{\abs{N_X(z)}}{\abs{X}}}^2\paren{\frac{\abs{N_Y(z)}}{\abs{Y}}}^2.
  \]
  It follows by Lemma~\ref{lem:C4-DISC-subgraph} that $(N_X(z),
  N_Y(z))_G$ satisfies $\DISC(q_{XY}, p, \epsilon')$.
\end{proof}

\begin{proof}[Proof of Proposition~\ref{prop:inherit2}]
  The proof is essentially the same as the proof of
  Proposition~\ref{prop:inheritintro} with the difference being that we
  now use the following graph. We omit the details.
    \begin{center}
    \begin{tikzpicture}[scale =1, baseline=-4pt,font=\small, label distance = -3pt]
      \node[p] (z) at (-90:1) {};
      \node[p] (x) at (130:1) {};
      \node[p] (x1) at (170:1) {};
      \node[p] (y) at (50:1) {};
      \node[p] (y1) at (10:1) {};
      \node at (150:1.5) {$X$};
      \node at (30:1.5) {$Y$};
      \node at (-90:1.5) {$Z$};
      \draw (z)--(x)--(y)(z)--(x1)--(y1)(z)
      (x1)--(y) (x)--(y1);
    \end{tikzpicture}
  \end{center}
\end{proof}

\section{One-sided counting} \label{sec:oneside}

We are now in a position to prove Theorem \ref{thm:onesidedintro},
which we now recall.

\vspace{2mm} \noindent
{\bf Theorem \ref{thm:onesidedintro}} {\it
For every fixed graph $H$ on vertex set $\{1,2, \dots, m\}$ and every
$\alpha, \theta > 0$, there exist constants $c > 0$ and $\epsilon > 0$ such that the following holds.

Let $\Gamma$ be a graph with vertex subsets $X_1, \dots, X_m$ and suppose that the bipartite graph $(X_i, X_j)_\Gamma$ is $(p, cp^{d_2(H) + 3} \sqrt{\abs{X_i}\abs{X_j}})$-jumbled for every $i < j$ with $ij \in E(H)$. Let $G$ be a subgraph of $\Gamma$, with the vertex $i$ of $H$ assigned to the vertex subset $X_i$ of $G$. For each edge $ij$ of $H$, assume that $(X_i, X_j)_G$ satisfies $\DISC(q_{ij}, p, \epsilon)$, where $\alpha p \leq q_{ij} \leq p$. Then $G(H) \geq (1 - \theta) q(H)$.}

\vspace{2mm}

The idea is to embed vertices of $H$ one at a time. At each step, the
set of potential targets for each unembedded vertex shrinks, but we can
choose our embedding so that it doesn't shrink too much and
discrepancy is inherited.

\begin{proof}
Suppose that $v_1, v_2, \dots, v_m$ is an ordering of the vertices of $H$ which yields the $2$-degeneracy $d_2(H)$ and that the vertex $v_i$ is to be embedded in $X_i$. Let $L(j) = \{v_1, v_2, \dots, v_j\}$. For $i > j$, let $N(i, j) = N(v_i) \cap L(j)$ be the set of neighbors $v_h$ of $v_i$ with $h \leq j$. Let $q(j) = \prod q_{ab}$, where the product is taken over all edges $v_a v_b$ of $H$ with $1 \leq a < b \leq j$ and $q(i, j) = \prod_{v_h \in N(i,j)} q_{hi}$. Note that $q(j) = q(j, j-1) q(j-1)$.

We need to define several constants. To begin, we let $\theta_m = \theta$ and $\epsilon_m = 1$. Given $\theta_j$ and $\epsilon_j$, we define $\xi_j = \frac{\theta_j}{6m^2}$ and $\theta_{j-1} = \frac{\theta_j}{2}$. We apply Propositions \ref{prop:inheritintro} and \ref{prop:inherit2} with $\alpha, \xi_j$ and $\epsilon_j$ to find constants $c_{j-1}$ and $\epsilon_{j-1}^*$ such that the conclusions of the two propositions hold. We let $\epsilon_{j-1} = \min(\epsilon_{j-1}^* , \frac{\alpha \theta_j^2}{72m})$, $c = \frac{1}{2} \alpha^{d_2(H)} c_0$ and $\epsilon = \epsilon_0$.

We will find many embeddings $f: V(H) \rightarrow V(G)$ by embedding the vertices of $H$ one by one in increasing order. We will prove by induction on $j$ that there are $(1 - \theta_j) q(j) |X_1| |X_2| \dots |X_j|$ choices for $f(v_1), f(v_2), \dots, f(v_j)$ such that the following conditions hold. Here, for each $i > j$, we let $T(i, j)$ be the set of vertices in $X_i$ which are adjacent to $f(v_h)$ for every $v_h \in N(i,j)$. That is, it is the set of possible vertices into which, having embedded $v_1, v_2, \dots, v_j$, we may embed $v_i$.
\begin{itemize}

\item
For $1 \leq a < b \leq j$, $(f(v_a), f(v_b))$ is an edge of $G$ if $(v_a, v_b)$ is an edge of $H$;

\item
$|T(i, j)| \geq (1 - \frac{\theta_j}{6}) q(i, j) |X_i|$ for every $i
> j$;

\item
For each $i_1, i_2 > j$ with $v_{i_1} v_{i_2}$ an edge of $H$, the graph $(T(i_1,j), T(i_2,j))_G$ satisfies the discrepancy condition $\DISC(q_{ab}, p, \epsilon_j)$.

\end{itemize}

The base case $j = 0$ clearly holds by letting $T(i, 0) = X_i$. We may therefore assume that there are $(1 - \theta_{j-1}) q(j-1) |X_1| |X_2| \dots |X_{j-1}|$ embeddings of $v_1, \dots, v_{j-1}$ satisfying the conditions above. Let us fix such an embedding $f$. Our aim is to find a set $W(j) \subseteq T(j,j-1)$ with $|W(j)| \geq (1 - \frac{\theta_j}{2}) q(j, j-1) |X_j|$ such that for every $w \in W(j)$ the following three conditions hold.

\begin{enumerate}

\item
For each $i > j$ with $v_i \in N(v_j)$, there are at least $(1 - \frac{\theta_j}{6}) q(i, j) |X_i|$ vertices in $T(i, j-1)$ which are adjacent to $w$;

\item
For each $i_1, i_2 > j$ with $v_{i_1} v_{i_2}$, $v_{i_1} v_{j}$ and $v_{i_2} v_{j}$ edges of $H$, the induced subgraph of $G$ between $N(w) \cap T(i_1,j)$ and $N(w) \cap T(i_2,j)$ satisfies the discrepancy condition $\DISC(q_{ab}, p, \epsilon_j)$;

\item
For each $i_1, i_2 > j$ with $v_{i_1} v_{i_2}$ and $v_{i_1} v_{j}$ edges of $H$ and $v_{i_2} v_{j}$ not an edge of $H$, the induced subgraph of $G$ between $N(w) \cap T(i_1,j)$ and $T(i_2,j-1)$ satisfies the discrepancy condition $\DISC(q_{ab}, p, \epsilon_j)$.
\end{enumerate}

Note that once we have found such a set, we may take $f(v_j) = w$ for any $w \in W(j)$. By using the induction hypothesis to count the number of embeddings of the first $j-1$ vertices, we see that there are at least
\[\left(1 - \frac{\theta_j}{2}\right) q(j, j-1) |X_j| (1 - \theta_{j-1}) q(j-1) |X_1| |X_2| \dots |X_{j-1}| \geq (1 - \theta_j) q(j) |X_1| |X_2| \dots |X_j|\]
ways of embedding $v_1, v_2, \dots, v_j$ satisfying the necessary conditions. Here we used that $q(j) = q(j, j-1) q(j-1)$ and $\theta_{j-1} = \frac{\theta_j}{2}$. The induction therefore follows by letting $T(i, j) = N(w) \cap T(i, j-1)$ for all $i > j$ with $v_i \in N(v_j)$ and $T(i, j) = T(i, j-1)$ otherwise.

It remains to show that there is a large subset $W(j)$ of $T(j, j-1)$ satisfying the required conditions. For each $i > j$, let $A_i(j)$ be the set of vertices in $T(j, j-1)$ for which $|N(w) \cap T(i, j-1)| \leq (1 - \frac{\theta_j}{12}) q_{ij} |T(i, j-1)|$. Then, since the graph between $T(i, j-1)$ and $T(j,j-1)$ satisfies $\DISC(q_{ji}, p, \epsilon_{j-1})$, we have that
$\epsilon_{j-1} p |T(j, j-1)| \geq \frac{\theta_j}{12} q_{ij} |A_i(j)|$. Hence, since $q_{ij} \geq \alpha p$,
\[|A_i(j)| \leq \frac{12 \epsilon_{j-1}}{\alpha \theta_j} |T(j, j-1)|.\]
Note that for any $w \in T(j,j-1) \setminus A_i(j)$,
\[|N(w) \cap T(i, j-1)| \geq \left(1 - \frac{\theta_j}{12}\right) q_{ij} |T(i, j-1)| \geq \left(1 - \frac{\theta_j}{6}\right) q(i,j) |X_i|.\]

For each $i_1, i_2 > j$ with $v_{i_1} v_{i_2}$, $v_{i_1} v_j$ and $v_{i_2} v_j$ edges of $H$, let $B_{i_1, i_2}(j)$ be the set of vertices $w$ in $T(j,j-1)$ for which the graph between $N(w) \cap T(i_1, j-1)$ and $N(w) \cap T(i_2, j-1)$ does not satisfy $\DISC(q_{i_1 i_2}, p, \epsilon_j)$. Note that
\begin{align*}
|T(i_1, j-1)| |T(i_2,j-1)| & \geq \left(1 - \frac{\theta_{j-1}}{6}\right) q(i_1, j-1) q(i_2, j-1) |X_{i_1}| |X_{i_2}|\\
& \geq \frac{\alpha^{2d_2(H) - 2}}{2} p^{2d_2(H) - 2} |X_{i_1}| |X_{i_2}|,
\end{align*}
where we get $2d_2(H) - 2$ because $j$ is a neighbor of both $i_1$ and $i_2$ with $j < i_1, i_2$. Similarly, $|T(i_1, j-1)| |T(j, j-1)|$ and $|T(i_2,j-1)| |T(j,j-1)|$ are at least $\frac{\alpha^{2d_2(H)}}{2} p^{2d_2(H)} |X_{j}|$ and $\frac{\alpha^{2d_2(H)}}{2} p^{2d_2(H)} |X_{i_2}|$, respectively.

Since
\begin{align*}
c p^{d_2(H) + 3} \sqrt{|X_{i_1}||X_{i_2}|} & \leq \frac{1}{2} \alpha^{d_2(H)} c_0 p^4 \sqrt{p^{2d_2(H) - 2} |X_{i_1}| |X_{i_2}|}\\
& \leq c_0 p^4 \sqrt{|T(i_1, j-1)||T(i_2, j-1)|},
\end{align*}
the induced subgraph of $\Gamma$ between $T(i_1,
j-1)$ and $T(i_2, j-1)$ is $(p, c_0 p^4 \sqrt{|T(i_1, j-1)||T(i_2,
  j-1)|})$-jumbled. Similarly, the induced subgraph of $\Gamma$
between the sets $T(j, j-1)$ and $T(i_1,j-1)$ is $(p, c_0 p^3 \sqrt{|T(j, j-1)||T(i_1,
  j-1)|})$-jumbled and the induced subgraph between $T(j, j-1)$ and $T(i_2,j-1)$ is $(p, c_0 p^3 \sqrt{|T(j, j-1)||T(i_2,
  j-1)|})$-jumbled. By our choice of $\epsilon_{j-1}$, we may therefore apply Proposition \ref{prop:inheritintro} to show that $|B_{i_1, i_2}(j)| \leq \xi_j |T(j, j -1)|$.

For each $i_1, i_2 > j$ with $v_{i_1} v_{i_2}$ and $v_{i_1} v_{j}$ edges of $H$ and $v_{i_2} v_{j}$ not an edge of $H$, let $C_{i_1, i_2}(j)$ be the set of vertices $w$ in $T(j, j-1)$ for which the graph between $N(w) \cap T(i_1, j-1)$ and $T(i_2, j-1)$ does not satisfy $\DISC(q_{i_1 i_2}, p, \epsilon_j)$. As with $B_{i_1,i_2}(j)$, we may apply Proposition \ref{prop:inherit2} to conclude that $|C_{i_1, i_2}(j)| \leq \xi_j |T(j, j -1)|$.

Counting over all possible bad events and using that $|T(j,j-1)| \geq (1 - \frac{\theta_{j-1}}{6}) q(j,j-1) |X_j|$, we see that the set $W(j)$ of good vertices has size at least $(1 - \sigma) q(j,j-1) |X_j|$, where
\[\sigma \leq \frac{\theta_{j-1}}{6} + \frac{12 m \epsilon_{j-1}}{\alpha \theta_j} + 2\binom{m}{2} \xi_j \leq \frac{\theta_j}{12} + \frac{\theta_j}{6} + \frac{\theta_j}{6} \leq \frac{\theta_{j}}{2},\]
as required. Here we used $\theta_j = \frac{\theta_{j-1}}{2}$, $\epsilon_{j-1} \leq \frac{\alpha \theta_j^2}{72 m}$ and $\xi_j = \frac{\theta_j}{6m^2}$. This completes the proof.
\end{proof}

Note that for the clique $K_t$, we have $d_2(K_t) + 3 = t + 1$. In this case, it is better to use the bound coming from
two-sided counting, which gives the exponent $t$.

Another case of interest is when the graph $H$ is triangle-free. Here it is sufficient to always apply the simpler inheritance theorem, Proposition \ref{prop:inherit2}, to maintain
discrepancy. Then, since
\[p^{d_2(H) + 2} \sqrt{|X_{i_1}||X_{i_2}|} = p^{\frac{5}{2}} \sqrt{p^{2d_2(H) - 1} |X_{i_1}| |X_{i_2}|},\]
we see that an exponent of $d_2(H) + 2$ is sufficient in this case. In particular, for $H = K_{s,t}$, we get an exponent of $d_2(K_{s,t}) + 2 = \frac{s-1}{2} + 2 = \frac{s+3}{2}$, as quoted in Table \ref{tab:k}.

It is also worth noting that a one-sided counting lemma for $\Gamma$ holds under the slightly weaker assumption that $\beta \leq c p^{d_2(H) + 1} n$. We omit the details since the proof is a simpler version of the previous one, without the necessity for tracking inheritance of discrepancy.

\begin{proposition} \label{prop:oneside-Gamma}
For every fixed graph $H$ on vertex set $\{1,2, \dots, m\}$ and every $\alpha, \theta > 0$ there exist constants $c > 0$ and $\epsilon > 0$ such that the following holds.

Let $\Gamma$ be a graph with vertex subsets $X_1, \dots, X_m$ where vertex $i$ of $H$ is assigned to the vertex subset $X_i$ of $\Gamma$ and suppose that the bipartite graph $(X_i, X_j)_\Gamma$ is $(p, cp^{d_2(H) + 1} \sqrt{\abs{X_i}\abs{X_j}})$-jumbled for every $i < j$ with $ij \in E(H)$. Then $\Gamma(H) \geq (1 - \theta) p(H)$.
\end{proposition}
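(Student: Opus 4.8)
The plan is to imitate the vertex-by-vertex embedding argument used for Theorem~\ref{thm:onesidedintro}, but in the much simpler situation in which every edge of $H$ is embedded directly into the jumbled graph $\Gamma$. Since we never embed into a sparse subgraph $G$, there is no discrepancy condition to maintain on shrinking neighbourhoods, so the inheritance results of Section~\ref{sec:inherit} play no role: it suffices to track the \emph{sizes} of the target sets, and these are controlled directly by jumbledness through the degree estimate Lemma~\ref{lem:deg-est}. The parameters $\alpha$ and $\epsilon$ in the statement are vacuous here (there is no density parameter $q$), so one may take $\epsilon=1$. Since the jumbledness hypothesis is symmetric and is assumed for every edge of $H$, we may relabel so that $1,2,\dots,m$ is an ordering of $V(H)$ attaining $d_2(H)$, with vertex $i$ to be embedded into $X_i$.

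Write $N_j(i)$ for the number of neighbours $h\le j$ of vertex $i$, and $e_j$ for the number of edges of $H$ both of whose endpoints lie in $\{1,\dots,j\}$. Given a compatible map $f$ on $\{1,\dots,j\}$, let $T(i,j)\subseteq X_i$ be the intersection of $X_i$ with the $\Gamma$-neighbourhoods of $f(h)$ over the already-embedded neighbours $h\le j$ of $i$ (so $T(i,0)=X_i$). Fix a decreasing sequence of constants $\theta=\theta_m>\theta_{m-1}>\dots>\theta_0$ with $\theta_{j-1}=\theta_j/2$, set $\xi_j=\theta_j/12$, and choose $c>0$ small enough in terms of $m$ and $\theta$. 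I would prove by induction on $j$ the statement $P_j$: there are at least $(1-\theta_j)\,p^{e_j}\abs{X_1}\cdots\abs{X_j}$ compatible maps $f$ on $\{1,\dots,j\}$ that are homomorphisms of $H[\{1,\dots,j\}]$ into $\Gamma$ and satisfy $\abs{T(i,j)}\ge(1-\theta_j/6)\,p^{N_j(i)}\abs{X_i}$ for every $i>j$. The base case $j=0$ is immediate.

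For the inductive step, fix an $f$ witnessing $P_{j-1}$. For each $i>j$ with $ij\in E(H)$, apply Lemma~\ref{lem:deg-est} to $(X_j,X_i)_\Gamma$ with test function $\mathbf 1_{T(i,j-1)}$ and parameter $\xi_j$: writing $A_i$ for the set of $w\in X_j$ with $\abs{N_\Gamma(w)\cap T(i,j-1)}<(1-\xi_j)p\abs{T(i,j-1)}$, and using the bound $\abs{T(i,j-1)}\ge\tfrac12 p^{N_{j-1}(i)}\abs{X_i}$ from $P_{j-1}$, one gets $\abs{A_i}/\abs{X_j}\le 2c^2p^{2d_2(H)-N_{j-1}(i)}/\xi_j^2$. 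The crucial observation is that the edge $ij$ (with $j<i$) forces $N_{j-1}(j)+N_{j-1}(i)\le 2d_2(H)$ by definition of $d_2$, hence $p^{2d_2(H)-N_{j-1}(i)}\le p^{N_{j-1}(j)}\le 2\abs{T(j,j-1)}/\abs{X_j}$; thus each $A_i$ occupies only a small \emph{constant} fraction of $T(j,j-1)$, independent of $p$. Setting $W(j)=T(j,j-1)\setminus\bigcup_i A_i$ and taking $c$ small gives $\abs{W(j)}\ge(1-\theta_j/6)\,p^{N_{j-1}(j)}\abs{X_j}$. Any $w\in W(j)$ may be taken as $f(j)$: the only new edges are those from $j$ to its earlier neighbours, which are present since $w\in T(j,j-1)$, and for $i>j$ one checks $\abs{T(i,j)}\ge(1-\xi_j)(1-\theta_{j-1}/6)p^{N_j(i)}\abs{X_i}\ge(1-\theta_j/6)p^{N_j(i)}\abs{X_i}$ (it equals $(1-\theta_{j-1}/6)p^{N_j(i)}\abs{X_i}$ when $ij\notin E(H)$). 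Multiplying the count from $P_{j-1}$ by $\abs{W(j)}$ and using $e_j=e_{j-1}+N_{j-1}(j)$ together with $(1-\theta_{j-1})(1-\theta_j/6)\ge 1-\theta_j$ yields $P_j$. Finally $P_m$ produces at least $(1-\theta)p^{e(H)}\abs{X_1}\cdots\abs{X_m}$ homomorphisms $H\to\Gamma$ respecting the assignment of vertices to parts, i.e.\ $\Gamma(H)\ge(1-\theta)p(H)$ since $p(H)=p^{e(H)}$.

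I do not anticipate a genuine obstacle; as indicated after the statement, this is a stripped-down version of the proof of Theorem~\ref{thm:onesidedintro}. The one point to get right is that the weaker exponent $d_2(H)+1$ (rather than $d_2(H)+3$) is still enough to absorb the factor $p^{-2}(\mathbb E v)^{-1}$ appearing in Lemma~\ref{lem:deg-est}: this is exactly the content of the defining inequality $N_{j-1}(j)+N_{j-1}(i)\le 2d_2(H)$ of the $2$-degeneracy, applied to the edge $ij$. Everything else is routine bookkeeping of the constants $\theta_j$, $\xi_j$, $c$, together with the observation that, in contrast to the sparse case, restrictions of $\Gamma$ to the sets $T(i,j)$ need no separate treatment, since Lemma~\ref{lem:deg-est} already operates with the indicator functions of these sets directly.
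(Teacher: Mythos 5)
Your proposal is correct and follows exactly the route the paper intends: it describes the (omitted) proof as a simpler version of the greedy embedding used for Theorem~\ref{thm:onesidedintro}, with no inheritance of discrepancy to track, which is precisely what you execute. Your use of Lemma~\ref{lem:deg-est} together with the defining inequality $N_{j-1}(j)+N_{j-1}(i)\le 2d_2(H)$ correctly accounts for why the weaker exponent $d_2(H)+1$ suffices, and the bookkeeping of the constants matches the scheme of the paper's proof of the sparse version.
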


\section{Counting cycles} \label{sec:cycles}

Using the tools of doubling and densification, we already know how to
count all cycles. For cycles of length $4$ or greater, $(p,
cp^2n)$-jumbledness suffices.

\begin{proposition}
  \label{prop:cycle-two-sided}
  Assume Setup~\ref{set:G} with $H = C_\ell$ and $k \geq 3$ if $\ell = 3$
  or $k \geq 2$ if $\ell \geq 4$. Then $G(C_\ell)
  \approxmod_{c,\epsilon}^{p(C_\ell)} q(C_\ell)$.
\end{proposition}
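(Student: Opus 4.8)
The plan is to split the argument by the length $\ell$. For $\ell=3$ we simply invoke the general counting lemma, Proposition~\ref{prop:G-counting}: the line graph of $K_3$ is $K_3$, so for every $H^\spa\subseteq K_3$ we have $\Delta(L(H^\spa))\le 2$ and $d(L(H^\spa))\le 2$, whence $\min\bigl\{\tfrac{\Delta(L(H^\spa))+4}{2},\tfrac{d(L(H^\spa))+6}{2}\bigr\}\le 3$, and the hypothesis $k\ge 3$ is exactly what is needed. For $\ell=4$ the statement is precisely Proposition~\ref{prop:C4}, whose proof already allows some of the four edges to be dense. So the only real content is the case $\ell\ge 5$, and there the strategy is to peel $C_\ell$ down to a $4$-cycle one vertex at a time by repeated densification, and then finish with Proposition~\ref{prop:C4}.

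For $\ell\ge5$, observe that any cycle of length $\ge 4$ is chordless, hence triangle-free, and densification never introduces a chord, so every graph arising in the reduction is triangle-free. Thus any vertex $b$ of the current cycle has degree exactly $2$, and its two neighbours $a,c$ satisfy $ac\notin E$; Corollary~\ref{cor:densification} therefore applies with $b$ as the middle vertex, deleting $b$ together with $ab,bc$ and inserting a dense edge $ac$, turning the current $m$-cycle (possibly already carrying some dense edges) into an $(m-1)$-cycle. Iterating this $\ell-4$ times produces a $4$-cycle, whose count is controlled by Proposition~\ref{prop:C4}; reading the implications of Corollary~\ref{cor:densification} back up the chain then yields $G(C_\ell)\approxmod_{c,\epsilon}^{p(C_\ell)}q(C_\ell)$. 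Since there are only $\ell-4$ reduction steps, the constants $c,\epsilon$ degrade by a bounded number of polynomial substitutions and remain polynomial in $\theta$.

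The point that needs care — and is essentially the only obstacle — is checking that the jumbledness exponent demanded never exceeds $2$. Corollary~\ref{cor:densification}, via Lemma~\ref{lem:densification}, requires $k\ge\tfrac{d(L(H^\spa))+2}{2}$ for the graph $H$ being reduced. At the first step $H^\spa$ is a subgraph of $C_\ell$, so $d(L(H^\spa))\le d(L(C_\ell))=2$ and the requirement is $k\ge 2$; this is where we improve on the bound $k\ge 3$ that Proposition~\ref{prop:G-counting} would give for $C_\ell$. After the first densification the current cycle carries at least one dense edge, so its sparse subgraph is a disjoint union of paths, and since each densification merely deletes two consecutive (sparse or mixed) edges and adds a dense one, the sparse part stays a disjoint union of paths; its line graph is then again a disjoint union of paths and has degeneracy $\le 1$, so every later step needs only $k\ge\tfrac32$. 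Hence $k\ge 2$ suffices throughout. (If $C_\ell$ already contains dense edges nothing changes, as both the triangle-freeness observation and the degeneracy bound only become easier.) Keeping track of the fact that no intermediate graph ever has a sparse part whose line graph has degeneracy $2$ — which is exactly the configuration that would force $k\ge 3$, and is why the triangle is singled out in the statement — is the main bookkeeping task; everything else is a direct appeal to Propositions~\ref{prop:C4} and~\ref{prop:G-counting} and Corollary~\ref{cor:densification}.
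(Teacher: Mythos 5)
Your proof is correct and follows essentially the same route as the paper: $\ell=4$ is Proposition~\ref{prop:C4}, $\ell=3$ reduces to the doubling machinery (your appeal to Proposition~\ref{prop:G-counting} and the paper's appeal to the doubling procedure are the same argument, both giving $k\ge 3$), and $\ell\ge 5$ is handled by iterated densification down to a $C_4$ with dense edges. Your bookkeeping of the jumbledness exponent — $d(L(C_\ell))=2$ at the first densification and degeneracy $\le 1$ for the sparse part thereafter, so $k\ge 2$ suffices throughout — is exactly why the threshold drops from $3$ to $2$ for $\ell\ge 4$.
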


\begin{proof}
  When $\ell = 4$, see Proposition~\ref{prop:C4}. When $\ell = 3$, see
  Section~\ref{sec:countingstrat} for the doubling procedure. For $\ell
  \geq 5$, we can perform densification to reduce the problem to
  counting $C_{\ell-1}$ with at least one dense edge, so we proceed by
  induction.
\end{proof}
\[
\begin{tikzpicture}[scale=.5]
  \begin{scope}
    \foreach \i in {0,1,2,3,4}{
      \node[p] (\i) at (360/5 *\i + 90:1) {};
    }
    \draw (0)--(1)--(2)--(3)--(4)--(0);
  \end{scope}

  \node at (3,0) {\LargeLeftarrow};

  \begin{scope}[shift={(6,0)}]
    \foreach \i in {1,2,3,4}{
      \node[p] (\i) at (360/5 *\i + 90:1) {};
    }
    \draw (1)--(2)--(3)--(4);
    \draw[dense] (1)--(4);
  \end{scope}
\end{tikzpicture}
\]

The goal of this section is to prove a one-sided counting lemma for
cycles that requires much weaker jumbledness.

\begin{proposition}
  \label{prop:cycle-one-sided}
  Assume Setup~\ref{set:G} with $H = C_\ell$, where $\ell \geq 5$, and all
  edges sparse. Let $k
  \geq 1 + \frac{1}{\ell-3}$ if $\ell$ is odd and $1 + \frac{1}{\ell-4}$ if $\ell$ is even. Then $G(C_\ell)
  \geq q(C_\ell) - \theta p(C_\ell)$ with $\theta \leq 100 (\epsilon^{1/(2\ell)} + \ell c^{2/3})$.
\end{proposition}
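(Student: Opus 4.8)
The plan is to prove Proposition~\ref{prop:cycle-one-sided} by the densification strategy sketched at the end of Section~\ref{sec:countingstrat}: transform a long path inside the cycle into a single dense edge, then apply the (dense, or one-sided) counting lemma to the small resulting graph, for which a typical vertex has a linear-sized neighborhood. Concretely, given $C_\ell$ with all edges sparse, I would single out a long induced path $P$ inside $C_\ell$ and iteratively remove its internal degree-$2$ vertices one at a time, each time replacing a subdivided edge $a$--$b$--$c$ by a single dense edge $a$--$c$ carrying the weighted graph $G(x_1,X_2,x_3)$ (suitably capped and rescaled, exactly as in Lemma~\ref{lem:densification}). Since at each such step I only need the two incident edges to be sparse and I never double, the jumbledness demanded by Lemma~\ref{lem:densification} is only $k \geq \frac{d(L(H^\spa))+2}{2}$, and because after the first densification many of the surviving edges become dense, the sparse subgraph $H^\spa$ shrinks rapidly; tracking $d(L(H^\spa))$ through the iterations is what produces the bound $k \geq 1 + \frac{1}{\ell-3}$ (odd) or $1 + \frac{1}{\ell-4}$ (even).

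The key steps, in order, are: (1) Reduce to the case $q_{ij} \geq \alpha p$ for some fixed $\alpha$ — actually this is not needed here since we only claim a lower bound and densification handles the general $q_{ij}\le p$ case directly, so I would instead just invoke Corollary~\ref{cor:densification} repeatedly. (2) Choose the path in $C_\ell$ to densify so that the two densifications balance: for $\ell$ odd, split the cycle so each densified path has $\approx (\ell-1)/2$ edges; for $\ell$ even, $\approx \ell/2$ and $\approx \ell/2$ (or whatever split makes $L(H^\spa)$ least degenerate at the worst intermediate stage). (3) Apply Lemma~\ref{lem:densification} repeatedly; after collapsing the two paths we are left with a triangle (as in the figure in Section~\ref{sec:countingstrat}) two of whose edges are dense and one sparse, or a multi-edge between two dense-connected vertices. (4) Count this final small configuration: the two dense edges meeting at the apex vertex $v$ mean that for a $(1-o(1))$ fraction of choices of $f(v)$ its neighborhoods in the other two parts have size $\Omega(p\,|X|)$, and the remaining sparse edge runs between these two linear-in-$|X|$ (after rescaling, constant-density) vertex sets, so the dense counting lemma (Proposition~\ref{prop:dense-counting}) together with Lemma~\ref{lem:G-squared-d} gives the count. (5) Accumulate the errors: each densification contributes an additive error of order $(c + \epsilon)\,p(H')$ by Lemma~\ref{lem:densification} and Lemma~\ref{lem:G-squared-d}, and there are $O(\ell)$ of them, yielding the stated $\theta \leq 100(\epsilon^{1/(2\ell)} + \ell c^{2/3})$ once one unravels the $\approxmod$ constants (the $\epsilon^{1/(2\ell)}$ reflecting the $\ell$-fold loss from repeatedly feeding DISC parameters forward, the $c^{2/3}$ from the co-degree exceptional-set bounds in Lemma~\ref{lem:two-densify-step-ss}).

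The main obstacle, and the step deserving the most care, is the bookkeeping in step (2) and its interaction with Lemma~\ref{lem:densification}'s hypothesis $k \geq \frac{d(L(H^\spa))+2}{2}$. The subtlety is that $d(L(H^\spa))$ is largest at the \emph{first} densification step (when all $\ell$ edges are still sparse and $L(C_\ell) = C_\ell$ has degeneracy $2$, so one needs $k\geq 2$) — but that is precisely the bound we want to avoid. The resolution, and the real content of the proposition, is that we should \emph{not} densify one edge at a time starting from the full cycle; rather we should peel off the two end-paths symmetrically so that at every stage the sparse part is a path of bounded degeneracy line-graph, and crucially use the \emph{relaxed} jumbledness allowed by the Remark after Lemma~\ref{lem:G-squared} (the exponent on one side of the jumbledness can drop from $\frac{3}{2}$ to $1$) together with the non-balanced versions of the $\Gamma$-counting lemmas from Section~\ref{sec:Gamma-partial}. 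Carefully choosing which internal vertex to densify at each step, and verifying that the worst required exponent over all $\ell-2$ steps is exactly $1 + \frac{1}{\ell-3}$ (resp.\ $1 + \frac{1}{\ell-4}$), is the crux; everything else is a routine assembly of the lemmas already proved.
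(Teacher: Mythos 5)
There is a genuine gap, and it sits exactly where you flagged the "crux." Your plan is to reach the final triangle by \emph{iterating the single-vertex densification} of Lemma~\ref{lem:densification} along the two arcs of the cycle, and to rescue the jumbledness bookkeeping via the Remark after Lemma~\ref{lem:G-squared} and the non-balanced $\Gamma$-counting lemmas. This cannot work. Every single-vertex collapse of two \emph{sparse} edges $ab$, $bc$ into a dense edge $ac$ goes through Lemma~\ref{lem:G-squared} (or Lemma~\ref{lem:G-square-bound}), which needs jumbledness exponent $\tfrac32$ on at least one of the two edges even after the relaxation to $1$ on the other side; and the very first collapse in each arc necessarily involves two sparse edges. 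So the iterated scheme bottoms out at $k \ge \tfrac32$, which equals the claimed bound only at $\ell = 5$ and is strictly worse than $1+\frac{1}{\ell-3}$ (resp.\ $1+\frac{1}{\ell-4}$) for every larger $\ell$. No choice of which internal vertex to densify first, and no appeal to the non-balanced $\Gamma$-lemmas of Section~\ref{sec:Gamma-partial}, changes this: the $\tfrac32$ is intrinsic to collapsing a path of length two, because the intermediate neighborhood has measure only about $p$ and discrepancy must be applied against a function of height about $p$.

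What the proposition actually requires is the \emph{subdivision densification} lemma (Lemma~\ref{lem:subdiv-densify}), which collapses a whole sparse path of $m$ edges in one shot under the much weaker hypothesis $k \ge 1 + \frac{1}{2m-2}$; applying it to the two arcs of lengths $\lceil \ell/2\rceil - 1$ and $\lfloor \ell/2\rfloor$ and finishing with Lemma~\ref{lem:triangle-single-sparse-edge} gives the stated exponents. That lemma is not an iteration of Lemma~\ref{lem:densification}: it is proved by passing to one-sided discrepancy $\DISC_{\ge}$, deleting small exceptional sets so that $\Gamma$ becomes $(p,\xi,\eta)$-bounded between consecutive parts (Lemma~\ref{lem:make-bounded}), and then tracking how the successive neighborhoods of a fixed endpoint expand by a factor of roughly $p^2\gamma^{-2}$ at each step until they saturate (Lemmas~\ref{lem:Gamma-bounded-step}--\ref{lemma:path-bounded-densify}); the exponent $1+\frac{1}{2m-2}$ is precisely what makes $p(p^2\gamma^{-2})^{m-1} \gg 1$. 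This mechanism — and the attendant loss $\epsilon \mapsto 36\epsilon^{1/(2m)}$ from the dyadic composition of $\DISC_\ge$ in Lemma~\ref{lem:G-path-super-disc}, which is where the $\epsilon^{1/(2\ell)}$ in $\theta$ really comes from, not from linearly accumulating $O(\ell)$ errors of size $(c+\epsilon)p(H')$ — is the content of the proposition, and your proposal does not contain it. Your outer architecture (split the cycle into two arcs, leave one sparse edge, end with a two-dense-one-sparse triangle needing no jumbledness on the sparse edge) does match the paper; it is the engine for collapsing a long path cheaply that is missing.
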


The strategy is via subdivision densification, as outlined in
Section~\ref{sec:countingstrat}.

\subsection{Subdivision densification}\label{sec:subdivision}

In Section~\ref{sec:densification} we showed how to reduce a counting
problem by transforming a singly subdivided edge of $H$ into a dense
edge. In this section, we show how to transform a multiply subdivided
edge of $H$ into a dense edge, using much weaker hypotheses on
jumbledness, at least for one-sided counting. The
idea is that a long subdivision allows more room for mixing, and thus
requires less jumbledness at each step.

\begin{center}
  \begin{tikzpicture}[scale=.6]
    \begin{scope}
      \node[p] (a) at (-2,0) {};
      \node[p] (b1) at (-1.2,.6) {};
      \node[p] (b2) at (0,.8) {};
      \node[p] (b3) at (1.2,.6) {};
      \node[p] (c) at (2,0) {};
      \draw (a)--(b1)--(b2)--(b3)--(c);
      \draw (a) -- +(-.3,-1) (a) -- +(0,-1)  (a) -- +(.3,-1) ;
      \draw (c) -- +(-.3,-1) (c) -- +(0,-1) (c) -- +(.3,-1) ;
      \draw (0,-1.5) ellipse (3cm and 1cm);
    \end{scope}

    \node at (5,-.5) {\LargeLeftarrow};

    \begin{scope}[shift={(10,0)}]
      \node[p] (a) at (-2,0) {};
      \node[p] (c) at (2,0) {};
      \draw[dense] (a)--(c);
      \draw (a) -- +(-.3,-1) (a) -- +(0,-1)  (a) -- +(.3,-1) ;
      \draw (c) -- +(-.3,-1) (c) -- +(0,-1) (c) -- +(.3,-1) ;
      \draw (0,-1.5) ellipse (3cm and 1cm);
    \end{scope}
  \end{tikzpicture}
\end{center}

We introduce a weaker variant of discrepancy for one-sided counting.

\begin{definition}
  Let $G$ be a graph with vertex subsets $X$ and $Y$. We say that $(X,
  Y)_G$ satisfies $\DISC_{\geq} (q, p, \epsilon)$ if
  \begin{equation} \label{eq:super-disc}
    \int_{\substack{x \in X \\ y \in Y}} (G(x,y) - q)u(x)v(y) \ dxdy
    \geq -\epsilon p
  \end{equation}
  for all functions $u \colon X \to [0,1]$ and all $v \colon Y \to [0,1]$.
\end{definition}

In a graph $H$, we say that $a_0a_1a_2\cdots a_m$ is a \emph{subdivided
  edge} if the neighborhood of $a_i$ in $H$ is $\set{a_{i-1},
  a_{i+1}}$ for $1 \leq i \leq m-1$. Say that it is sparse if every
edge $a_ia_{i+1}$, $0 \leq i \leq m -1$, is sparse.

For a graph $\Gamma$ or $G$ with vertex subsets $X_0, X_1, \dots,
X_m$, $x_0 \in X_0$, $x_m \in X_m$ and $X'_i \subseteq X_i$, we write
\begin{align*}
  G(x_0, X'_1, X'_2, \dots, X'_m) &= \int_{\substack{x_1 \in X_1
      \\\cdots \\x_m \in X_m}} G(x_0, x_1) 1_{X'_1}(x_1) \cdots G(x_{m-1}, x_{m})
  1_{X'_m}(x_m) \ dx_1dx_2 \cdots dx_m,
  \\
  G(x_0, X'_1, X'_2, \dots, x_m) &= \int_{\substack{x_1 \in X_1
      \\\cdots \\x_{m-1} \in X_{m-1}}} G(x_0, x_1) 1_{X'_1}(x_1)
  \cdots  G(x_{m-1}, x_{m}) \ dx_1dx_2
  \cdots dx_{m-1}.
\end{align*}
These quantities can be interpreted probabilistically. The first
expression is the probability that a randomly chosen sequence of
vertices with one endpoint fixed is a path in $G$ with the vertices
landing in the chosen subsets. For the second expression, both
endpoints are fixed.

\begin{lemma}[Subdivision densification]
  \label{lem:subdiv-densify}
  Assume Setup~\ref{set:G}. Let $\ell \geq 2$ and let $a_0a_1 \cdots
  a_\ell$ be a sparse subdivided edge and assume that $a_0a_\ell$ is not an
  edge of $H$. Assume $k \geq 1 + \frac{1}{2\ell -
    2}$. Replace the induced bipartite graph $(X_{a_0}, X_{a_\ell})$
  by the weighted bipartite graph given by
  \[
  G(x_0, x_\ell) = \frac{1}{4p^\ell} \min\set{G(x_0, X_1, X_2, \dots,
    X_{\ell -1}, x_\ell), 4p^\ell }
  \]
  for $(x_0, x_\ell) \in X_0 \x X_\ell$.  Let $H'$ be $H$ with the
  path $a_0a_1\cdots a_\ell $ deleted and the edge $a_0a_\ell$
  added. Let $p_{a_0a_\ell} = 1$ and $q_{a_0a_\ell} =
  \frac{1}{4p^\ell} q_{a_0a_1}q_{a_1a_2}\cdots
  q_{a_{\ell-1}a_{\ell}}$. Then $G(H) \geq 4p^\ell G(H')$ and $(X_0,
  X_\ell)_G$ satisfies $\DISC_\geq(q_{a_0a_\ell}, 1,
  18(\epsilon^{1/(2\ell)} + \ell c^{2/3}))$.
\end{lemma}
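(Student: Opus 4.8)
The first conclusion, $G(H) \ge 4p^\ell G(H')$, is immediate. Integrating out the internal vertices $a_1,\dots,a_{\ell-1}$ of the subdivided edge turns the product of the $\ell$ path-edges of $H$ into the uncapped path density $G_0(x_0,x_\ell) := G(x_0,X_1,\dots,X_{\ell-1},x_\ell)$, so that $G(H)$ equals the integral of $\prod_{ij\in E(H')\setminus\{a_0a_\ell\}}G(x_i,x_j)$ against $G_0(x_0,x_\ell)$, while $4p^\ell G(H')$ is the same integral against $4p^\ell G(x_0,x_\ell)=\min\{G_0(x_0,x_\ell),4p^\ell\}\le G_0(x_0,x_\ell)$; all remaining factors are nonnegative. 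For the discrepancy conclusion, put $q^\ast=q_{a_0a_1}q_{a_1a_2}\cdots q_{a_{\ell-1}a_\ell}$, so that $q_{a_0a_\ell}=q^\ast/4p^\ell$. Since $\min\{t,4p^\ell\}=t-\max\{t-4p^\ell,0\}$, for arbitrary $u\colon X_0\to[0,1]$ and $v\colon X_\ell\to[0,1]$ we have
\[
4p^\ell\int_{x_0\in X_0,\,x_\ell\in X_\ell}(G(x_0,x_\ell)-q_{a_0a_\ell})u(x_0)v(x_\ell)\,dx_0dx_\ell=\int_{x_0,x_\ell}(G_0-q^\ast)uv-\int_{x_0,x_\ell}\max\{G_0-4p^\ell,0\}uv.
\]
Hence it suffices to prove (i) a one-sided path count $\int_{x_0,x_\ell}(G_0-q^\ast)uv\ge-\delta p^\ell$ and (ii) a capping bound $\int_{x_0,x_\ell}\max\{G_0-4p^\ell,0\}uv\le\delta'p^\ell$, where $\delta,\delta'=O(\epsilon^{1/(2\ell)}+\ell c^{2/3})$ are independent of $p$; dividing by $4p^\ell$ then gives \eqref{eq:super-disc}, since the factor $p^\ell$ cancels.

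Part (i) is a one-sided count of the path $a_0a_1\cdots a_\ell$ in $G$ with weights $u,v$ at its two ends, which I would carry out by telescoping over the $\ell$ path-edges exactly as in the proof of Lemma~\ref{lem:G-squared}. Writing $\Psi_j$ for the density of the tail $x_{j+1}\cdots x_\ell$ against $v$, the $j$-th term of the telescope integrates $(G(x_j,x_{j+1})-q_{a_ja_{j+1}})$ against the functions $1$ and $\Psi_j$; applying $\DISC(q_{a_ja_{j+1}},p,\epsilon)$ directly costs $\epsilon p$, which is far too large for small $j$. Instead, as in Lemma~\ref{lem:G-squared}, one restricts $x_{j+1}$ to the set where $\Psi_j\le 2p^{\ell-j-1}$ (picking up only $2\epsilon p^{\ell-j}$ there) and bounds the contribution of the exceptional set by dominating $\Psi_j$ by the corresponding path density in $\Gamma$ and appealing to the exceptional-set estimates of part (ii).

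Part (ii), together with the exceptional sets needed in (i), is the heart of the argument and the only place where the weak hypothesis $k\ge 1+\tfrac{1}{2\ell-2}$ is used. What is required is the path analogue of Lemma~\ref{lem:two-densify-step-ss}: the set $E'=\setcond{(x_0,x_\ell)}{\Gamma(x_0,X_1,\dots,X_{\ell-1},x_\ell)>4p^\ell}$ has relative size $O(\ell c^{2/3})$, and moreover $\int_{E'}\Gamma(x_0,X_1,\dots,X_{\ell-1},x_\ell)\,dx_0dx_\ell\le O(\ell c^{2/3})p^\ell$, with analogous statements for the shorter tail paths appearing in (i). I would prove these by induction on the path length $m\le\ell$, showing that $\Gamma(x_0,X_1,\dots,X_m)\le(1+\eta)^mp^m$ for every $x_0$ outside a set of relative size $O(m\gamma^2/(\eta^2p^2))$, applying Lemma~\ref{lem:deg-est} once per step with the test function equal to the accumulated tail density (which lies in $[0,1]$ once the vertices excluded at the previous level are removed). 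Because $\gamma=cp^k$ and $2k-2=\tfrac1{\ell-1}>0$, each step loses only $\gamma^2/p^2=c^2p^{1/(\ell-1)}\le c^2$; choosing $\eta\asymp c^{2/3}$ to balance the multiplicative loss $(1+\eta)^\ell\le1+O(\ell\eta)$ against the total excluded mass $O(\ell c^2/\eta^2)$ gives the $O(\ell c^{2/3})$ error. The term $\epsilon^{1/(2\ell)}$ comes from the Cauchy-Schwarz and convexity losses incurred while chaining the $\ell$ discrepancy applications along the path in part (i), in the spirit of Lemmas~\ref{lem:C4-DISC} and \ref{lem:G-squared}. Assembling (i) and (ii) gives the claimed $\DISC_{\geq}$ bound.

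The main obstacle is precisely this exceptional-set control at the weak exponent $k=1+\tfrac1{2\ell-2}$: the naive estimate, which imitates the single-subdivision densification (Lemma~\ref{lem:densification}), would demand $k\ge\tfrac32$, so one must genuinely exploit that a long subdivision offers $\ell$ nearly independent opportunities for mixing, each needing only a vanishing amount of pseudorandomness, and convert this into the quantitative induction above. The delicate point is to run that induction so that both the accumulated density $(1+\eta)^mp^m$ and the accumulated exceptional mass remain under control after all $\ell$ steps, which is exactly what forces the value of $k$.
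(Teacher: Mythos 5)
Your first conclusion and your high-level plan (uncapped path density satisfies discrepancy, plus the capping loss is small) match the paper, and you correctly locate the crux in the exceptional-set control at $k=1+\frac{1}{2\ell-2}$. But the argument you give for that crux does not close the gap. Your induction establishes only a row-sum bound: for most $x_0$, $\Gamma(x_0,X_1,\dots,X_m)\le(1+\eta)^m p^m$ after trimming bad vertices at each level. This does not yield either of the two things your decomposition actually requires. First, in your part (ii) you must bound $\int_{E'}\Gamma(x_0,X_1,\dots,X_{\ell-1},x_\ell)\,dx_0dx_\ell$ with $E'$ defined over the \emph{full} vertex sets; the paths routed through the trimmed vertices are invisible to your induction, and their contribution cannot be bounded at this exponent (a trimmed set of relative size $\Theta(\gamma^2/p^2)$ in a middle layer, each vertex of which may have forward path density as large as $O(1)$, contributes up to $\Theta(p\cdot\gamma^2/p^2)=\Theta(c^2p^{2k-1})$ to the path mass, which dwarfs $p^{\ell}$ once $\ell\ge 2$). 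The same problem kills the exceptional-set terms in your telescoping for part (i): the term $\int\Gamma(x_{j-1},x_j)\Psi_j(x_j)1_{B_j}(x_j)$ must be $O(p^{\ell-j+1})$, but you only know $|B_j|$ is small, not that the $\Gamma$-path mass through $B_j$ is small. Second, even inside the trimmed sets, a bound on $\int_{x_m}\Gamma(x_0,\dots,x_m)\,dx_m$ says nothing about how that mass is distributed over endpoints $x_m$; Markov only bounds the \emph{size} of $\setcond{x_m}{\Gamma(x_0,\dots,x_m)>4p^m}$, while the capping loss requires that the path mass \emph{on} that set be $O(\xi p^m)$ — all of the mass could a priori sit on a single endpoint.

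The paper's proof supplies exactly the ingredient you are missing, and also restructures the argument to avoid ever needing an upper bound over the full sets. The endpoint-concentration bound is Lemma~\ref{lem:Gamma-bounded-step}: for a fixed $x_0$ one applies jumbledness on $(X_i,X_{i+1})$ with test function $u(y)=\Gamma^{(i)}(x_0,y)\eta_i^{-1}$ (the \emph{capped} path count accumulated so far, which is in $[0,1]$ by construction) and $v=1_{Z'_{x_0}}$, and combines the resulting inequality with the trivial lower bound $\Gamma(x_0,Y,Z'_{x_0})\ge\frac{\eta'}{2}\frac{|Z'_{x_0}|}{|Z|}$ to get a quadratic inequality in $\sqrt{|Z'_{x_0}|/|Z|}$; iterating with the geometrically growing caps $\eta_{i+1}=\max\{4\gamma^2p^{-1}\xi^{-1}\eta_i,4p^{i+1}\}$ is precisely where $p(p^2\gamma^{-2})^{\ell-1}\gg1$, i.e.\ $k\ge1+\frac{1}{2\ell-2}$, enters (Lemma~\ref{lem:Gamma-bounded-path}). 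Because the target is only $\DISC_\geq$, the paper then works entirely inside the trimmed sets and transfers back via the one-sided inequalities $G'(x_0,x_m)\ge(1-(m-1)\delta)\wt G'(x_0,x_m)$ and Lemma~\ref{lem:disc-pass-up}, never upper-bounding anything over the full sets. Finally, for the uncapped discrepancy your telescoping is replaceable (and in the paper is replaced) by the jumbledness-free dyadic composition of Lemmas~\ref{lem:G-2-super-disc} and~\ref{lem:G-path-super-disc}, which is where the $\epsilon^{1/(2\ell)}$ genuinely comes from; this matters because it moves the entire burden of the jumbledness hypothesis onto the capping step. As written, your proposal asserts the key estimates but the method offered would not prove them.
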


\begin{remark}If there is at least one dense edge in the subdivision,
  then using arguments similar to the ones in
  Section~\ref{sec:densification}, modified for one-sided counting, we
  can show that $k \geq 1$ suffices for subdivision densification.
\end{remark}

The idea of the proof is very similar to densification in
Section~\ref{sec:densification}. The claim $G(H) \geq 4p^\ell G(H')$
follows easily from the new edge weights. It remains to show that
$(X_0, X_\ell)_G$ satisfies $\DISC_\geq$. So
Lemma~\ref{lem:subdiv-densify} follows from the next result.

\begin{lemma}
  \label{lem:path-densify}
  Let $m \geq 2$,
  $c, \epsilon, p \in (0,1]$, and $q_1, q_2, \dots, q_m \in [0,p]$. Let $\Gamma$
  be any weighted graph with vertex subsets $X_0, X_1, \dots, X_m$
  and let $G$ be a subgraph of $\Gamma$. Suppose that, for each $i = 1,
  \dots, m$, $(X_{i-1}, X_i)_\Gamma$ is $(p, cp^{1 +
    \frac{1}{2m-2}}\sqrt{\abs{X_{i-1}}\abs{X_i}})$-jumbled and
  $(X_{i-1},X_i)_G$ satisfies $\DISC_\geq (q_i, p, \epsilon)$. Then
  the weighted graph $G'$ on $(X_0, X_m)$ defined by
  \[
  G'(x_0, x_m) = \min\set{G(x_0, X_1, X_2, \dots, X_{m-1}, x_m),
    4p^m}
  \]
  satisfies $\DISC_\geq(q_1q_2 \cdots q_m, p^m, 72(\epsilon^{1/(2m)} + mc^{2/3}))$.
\end{lemma}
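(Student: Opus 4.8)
The plan is to follow the same three-part strategy as the single-edge densification (Lemma~\ref{lem:densification}), but iterated along the path: first show that the ``uncapped'' weighted graph $G''(x_0,x_m) := G(x_0,X_1,\dots,X_{m-1},x_m)$ satisfies $\DISC_\geq(q_1\cdots q_m, p^m, \epsilon')$ for a suitable $\epsilon'$, and then show that capping at $4p^m$ changes the relevant integral by at most $O((\epsilon^{1/(2m)}+mc^{2/3}))p^m$. For the first part, I would telescope along the path exactly as in the proof of Lemma~\ref{lem:G-squared-d}: writing $f_i(x,y) = G(x,y)-q_i$, the difference $G(x_0,X_1,\dots,X_{m-1},x_m) - q_1\cdots q_m$ expands as a sum of $m$ terms, the $i$-th of which integrates a single $(G(x_{i-1},x_i)-q_i)$ factor against a product of $G$'s on the other edges. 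Against test functions $u(x_0)$ and $v(x_m)$, the $i$-th term becomes $\int (G(x_{i-1},x_i)-q_i)\,\tilde u(x_{i-1})\tilde v(x_i)$ where $\tilde u(x_{i-1})$ is the ``left weight'' $\int \prod_{j<i-1}G \cdot u\cdot 1$ and $\tilde v(x_i)$ is the ``right weight'' $\int \prod_{j>i}G\cdot v$. Applying $\DISC_\geq(q_i,p,\epsilon)$ directly would cost only $\epsilon p$, not $\epsilon p^m$, so — exactly as in Lemma~\ref{lem:G-squared} — the key is to truncate the left and right weights to the set where they are $O(p^{i-1})$ and $O(p^{m-i})$ respectively, which is almost all of $X_{i-1}$ and $X_i$ by the jumbledness of $\Gamma$ (Lemma~\ref{lem:deg-est}) together with the path-restriction estimate Lemma~\ref{lem:small-restrict}.

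The main obstacle, and where the asymmetric exponent $1+\frac{1}{2m-2}$ and the curious $\epsilon^{1/(2m)}$ come from, is that to apply $\DISC_\geq$ on the $i$-th edge after rescaling we need the left weight $\tilde u(x_{i-1})$ to be bounded by a constant times $p^{i-1}$ on most of $X_{i-1}$, and similarly $\tilde v(x_i) \lesssim p^{m-i}$ on most of $X_i$; but the exceptional sets where a partial path-count exceeds twice its expected value have measure controlled by a power of $c$, and these errors, once rescaled by $p^{-(m-1)}$ to match the target error $\epsilon' p^m$, blow up by $p^{-(m-1)}$. This is why the jumbledness exponent on each of the $m$ edges must be $\ge 1+\frac{1}{2m-2}$ rather than $1$: summing the $m$ pair-jumbledness bounds $cp^{1+1/(2m-2)}$ over the path and squaring gives a total of order $cp^{2+2/(2m-2)} \cdot (\text{stuff}) = cp^{2+1/(m-1)}$, which after dividing by $p^{(m-1)}\cdot p^{(m-1)}/p^m = p^{m-2}$... — more precisely, Lemma~\ref{lem:deg-est} gives exceptional-set density $\gamma^2/(\xi^2 p^2 \mathbb{E}v)$, and with $\gamma = cp^{1+1/(2m-2)}\sqrt{|X_{i-1}||X_i|}$ and $\mathbb{E}v$ on the order of $p^{m-i}$ one must chase that each exceptional contribution, after being inflated by the rescaling needed to fit the $p^m$-scale error, remains $O(c^{2/3}p^m)$ — this forces the splitting of the budget between $\epsilon$-type and $c$-type errors and produces the $c^{2/3}$ and the $\epsilon^{1/(2m)}$ (the latter coming from how many nested truncations and Cauchy–Schwarz steps accumulate along a path of length $m$, each costing a square root).

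For the capping step, I would argue as in Lemma~\ref{lem:G-square-bound}: the difference $G''(x_0,x_m) - G'(x_0,x_m)$ is supported on the set $E' = \{(x_0,x_m): \Gamma(x_0,X_1,\dots,X_{m-1},x_m) > 4p^m\}$, and one bounds $\int_{E'} G'' \le \int_{E'} \Gamma(x_0,X_1,\dots,X_{m-1},x_m)$ by Lemma~\ref{lem:small-restrict} applied to the path graph with the indicator of $E'$, plus a bound on $|E'|$ obtained by iterating Lemma~\ref{lem:deg-est}/Lemma~\ref{lem:two-densify-step-ss} along the path (the exceptional set where some initial segment of the path-count is already too large has small density at each stage, and one unions over the $O(m)$ stages, contributing the factor $m$). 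Since $G'' \ge G'$ pointwise, this one-sided control is exactly what $\DISC_\geq$ requires, and no upper bound on the capped count is needed. Combining the telescoping estimate for $G''$ with the capping estimate and collecting constants yields $\DISC_\geq(q_1\cdots q_m, p^m, 72(\epsilon^{1/(2m)}+mc^{2/3}))$, after choosing the truncation thresholds $\xi$ in Lemma~\ref{lem:deg-est} to be an appropriate small power of $\epsilon$ to balance the $\epsilon$-error against the $\xi^{-2}$-loss in the exceptional-set sizes.
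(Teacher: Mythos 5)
Your overall architecture (uncapped count satisfies $\DISC_\geq$, then capping is harmless) matches the paper's, but your execution of the first and hardest step goes down a road that does not work at this level of jumbledness. You propose to telescope along the path and then apply $\DISC_\geq$ on the $i$-th edge after truncating the left and right partial path-counts to their typical scales $p^{i-1}$ and $p^{m-i}$, controlling the exceptional sets by jumbledness. But with $\gamma = cp^{1+\frac{1}{2m-2}}\sqrt{\abs{X_{i-1}}\abs{X_i}}$ and $\EE v$ of order $p^{m-i}$, the exceptional-set bound from Lemma~\ref{lem:deg-est} is of order $c^2 p^{\frac{1}{m-1}}/p^{m-i}$, which is vacuous for long paths; likewise, bounding the contribution of paths emanating from a small bad set via Lemma~\ref{lem:small-restrict} needs jumbledness exponent at least $\tfrac32$ on each edge, which exceeds $1+\frac{1}{2m-2}$ as soon as $m\geq 3$. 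This is precisely why the paper's step 1 (Lemmas~\ref{lem:G-2-super-disc} and \ref{lem:G-path-super-disc}) uses \emph{no jumbledness at all}: one-sidedness lets you test $\DISC_\geq$ against the indicator of the bad set $Y'=\{y: \int (G(x,y)-q_1)u(x)\,dx \leq -\sqrt{\epsilon}p_1\}$ to conclude $\abs{Y'}\leq\sqrt{\epsilon}\abs{Y}$, then restrict to good $y$ and multiply lower bounds; iterating this pairwise along the path in $\log_2 m +1$ dyadic rounds is what produces $36\epsilon^{1/(2m)}$. No rescaling by $p^{-(m-1)}$ ever occurs, so the weak jumbledness is never invoked in step 1.

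The capping step also needs more than you give it. The set $E'$ of pairs with inflated path count cannot be controlled by a naive union of Lemma~\ref{lem:two-densify-step-ss}-type estimates over the $m$ stages: the whole point of the exponent $1+\frac{1}{2m-2}$ is the successive-neighborhood expansion argument of Lemma~\ref{lem:Gamma-bounded-step}, where from a fixed $x_0$ the fraction of $X_i$ reached expands by a factor of roughly $p^2\gamma^{-2}$ per step with the carefully chosen thresholds $\eta_{i+1}=\max\{4\gamma^2p^{-1}\xi^{-1}\eta_i,\,4p^{i+1}\}$, until the capped count saturates at $(1-4m\xi)p^m$. Moreover this only works after first deleting a small set of vertices from each $X_i$ to enforce degree-boundedness (Lemma~\ref{lem:make-bounded}) and then transferring the conclusion back to the original graph (Lemma~\ref{lem:disc-pass-up}); the freedom to delete vertices is available only because the conclusion is one-sided, and your proposal omits this step entirely. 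As written, neither your step 1 nor your capping argument closes.
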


Here are the steps for the proof of Lemma~\ref{lem:path-densify}.
\begin{enumerate}
\item Show that the graph on $X_0 \x X_m$ with weights $G(x_0, X_1,
  X_2, \dots, X_{m-1}, x_m)$ satisfies $\DISC_\geq$.
\item Under the assumption that every vertex $X_i$ has roughly
  the same number of neighbors in $X_{i+1}$ for every $i$, show that
  capping of the edge weights has negligible effect on discrepancy.
\item Show that we can delete a small subset from each vertex subset
  $X_i$ so that the assumption in step 2 is satisfied.
\end{enumerate}
Step 2 is the most difficult. Since we are only proving lower bound
discrepancy, it is okay to delete vertices in step 3. This is also the
reason why this proof, without significant modification, cannot prove
two-sided discrepancy (which may require stronger hypotheses), as we
may have deleted too many edges in the process. Also, unlike the
densification in Section~\ref{sec:densification}, we do not have to
worry about the effect of the edge weight capping on the overall
$H$-count, as we are content with a lower bound.

The next two lemmas form step 1 of the program.

\begin{lemma}
  \label{lem:G-2-super-disc}
  Let $G$ be a weighted graph with vertex subsets $X, Y, Z$. Let $p_1,
  p_2,\epsilon \in (0,1]$ and $q_1 \in [0,p_1]$, $q_2 \in [0,p_2]$.
  If $(X, Y)_G$ satisfies $\DISC_{\geq}(q_1, p_1, \epsilon)$ and $(Y, Z)_G$
  satisfies $\DISC_{\geq}(q_2, p_2, \epsilon)$, then the induced
  weighted bipartite graph $G'$ on $(X,Z)$ whose weight is given by
  \[
  G'(x,z) = G(x,Y,z)
  \]
  satisfies $\DISC_{\geq}(q_1q_2, p_1p_2, 6\sqrt{\epsilon})$.
\end{lemma}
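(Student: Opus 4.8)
The plan is to unwind the definition of the product graph $G'$ via Fubini, reducing the desired $\DISC_{\geq}$ inequality to a statement about two ``weighted degree'' functions on $Y$, and then to extract the bound from the two hypotheses by a single truncation manoeuvre that replaces the naive telescoping used in the two-sided Lemma~\ref{lem:G-squared-d}.

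Concretely, fix test functions $u\colon X\to[0,1]$ and $w\colon Z\to[0,1]$ and set $\bar u=\int_x u(x)\,dx$, $\bar w=\int_z w(z)\,dz$. Writing $\tilde A(y)=\int_x G(x,y)u(x)\,dx$ and $\tilde B(y)=\int_z G(y,z)w(z)\,dz$, Fubini gives
\[
\int_{x,z}(G'(x,z)-q_1q_2)u(x)w(z)\,dx\,dz=\int_{y}\tilde A(y)\tilde B(y)\,dy-q_1q_2\,\bar u\,\bar w .
\]
Here $\tilde A,\tilde B\colon Y\to[0,1]$, and in fact $\tilde A\le\bar u\le1$. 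Rewriting the two hypotheses after integrating out $X$ against $u$ and $Z$ against $w$, they become: $\int_y\tilde A(y)v(y)\,dy\ge q_1\bar u\,\bar v-\epsilon p_1$ and $\int_y\tilde B(y)v(y)\,dy\ge q_2\bar w\,\bar v-\epsilon p_2$ for every $v\colon Y\to[0,1]$, where $\bar v=\int_y v$. So it suffices to prove $\int_y\tilde A\tilde B\ge q_1q_2\bar u\bar w-2\epsilon p_1p_2$; this yields the stated bound, since $2\epsilon\le6\sqrt\epsilon$ for $\epsilon\in(0,1]$.

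For this last inequality I would truncate $\tilde A$ at the level $q_1\bar u$: put $\phi(y)=\min\{\tilde A(y),q_1\bar u\}$, so that $0\le\phi\le q_1\bar u\le p_1$ and hence $\phi/p_1$ is a legitimate $[0,1]$-valued test function on $Y$. Since $\tilde A\ge\phi\ge0$ and $\tilde B\ge0$ pointwise, $\int_y\tilde A\tilde B\ge\int_y\phi\tilde B$; applying the $\tilde B$-hypothesis with test function $\phi/p_1$ and clearing denominators gives $\int_y\phi\tilde B\ge q_2\bar w\,\bar\phi-\epsilon p_1p_2$, where $\bar\phi=\int_y\phi$. To finish I need $\bar\phi\ge q_1\bar u-\epsilon p_1$: writing $\phi=q_1\bar u-(q_1\bar u-\tilde A)^+$ and applying the $\tilde A$-hypothesis to the indicator of $\{\tilde A<q_1\bar u\}$ yields $\int_y(q_1\bar u-\tilde A)^+\,dy\le\epsilon p_1$, which is exactly what is wanted. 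Combining the two estimates and using $q_2\bar w\le p_2$ gives $\int_y\tilde A\tilde B\ge q_2\bar w(q_1\bar u-\epsilon p_1)-\epsilon p_1p_2\ge q_1q_2\bar u\bar w-2\epsilon p_1p_2$, as required.

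The one point that needs care — and the reason one cannot simply imitate the two-sided argument — is the truncation. The telescoping $G(x,y)G(y,z)-q_1q_2=(G(x,y)-q_1)G(y,z)+q_1(G(y,z)-q_2)$ disposes of the second summand at once, but for the first one is left bounding $\int_y A^-(y)\tilde B(y)\,dy$ with $A=\tilde A-q_1\bar u$, and since $\DISC_{\geq}$ carries no upper bound on the density of $G$ between $Y$ and $Z$ one cannot do better than $\tilde B\le1$ there, losing the factor $p_2$. Truncating $\tilde A$ before it is ever split into positive and negative parts removes the need to pair a positive part against a small quantity, and all the remaining steps are routine rearrangements together with the elementary inequalities $q_i\le p_i$ and $\bar u,\bar w\le1$.
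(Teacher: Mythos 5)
Your proof is correct, and it takes a genuinely different route from the paper's. The paper argues via exceptional sets: it introduces
$Y'=\{y:\int_x(G(x,y)-q_1)u(x)\,dx\le-\sqrt{\epsilon}p_1\}$ and the analogous $Y''$, uses $\DISC_{\geq}$ against the indicators $1_{Y'},1_{Y''}$ to show $|Y'|,|Y''|\le\sqrt{\epsilon}|Y|$, and then on $Y\setminus(Y'\cup Y'')$ multiplies the two pointwise lower bounds $q_1\EE u-\sqrt{\epsilon}p_1$ and $q_2\EE w-\sqrt{\epsilon}p_2$; the $\sqrt{\epsilon}$ threshold is forced because one must simultaneously control the measure of the bad set \emph{and} retain a pointwise bound on the good set. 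Your truncation $\phi=\min\{\tilde A,q_1\bar u\}$ avoids this altogether: you never need a pointwise lower bound on $\tilde A$ over a good set, only the integrated bound $\bar\phi\ge q_1\bar u-\epsilon p_1$ (obtained from a single application of the $\tilde A$-hypothesis to the indicator of $\{\tilde A<q_1\bar u\}$), paired with one application of the $\tilde B$-hypothesis to $\phi/p_1$. This buys a sharper error of $2\epsilon p_1p_2$ in place of $6\sqrt{\epsilon}p_1p_2$, which is strictly stronger for $\epsilon<1/9$ and in particular removes the square root. If one carried your bound through the doubling iteration in Lemma~\ref{lem:G-path-super-disc} it would replace the error $36\epsilon^{1/(2m)}$ by something of order $m\epsilon$, though since the paper only tracks polynomial dependence this does not change any downstream statement. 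One small stylistic remark: your argument is intrinsically one-sided (you discard $(\tilde A-q_1\bar u)^+$ without needing to control it), which is precisely why it works here but, as you correctly observe, cannot be adapted to the two-sided Lemma~\ref{lem:G-squared-d} where jumbledness of $\Gamma$ is needed to tame the upper tail.
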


Note that no jumbledness hypothesis is needed for the lemma.

\begin{proof}
  Let $u \colon X \to [0,1]$ and $w \colon Z \to [0,1]$ be
  arbitrary functions. Let
  \[
  Y' = \setcond{y \in Y}{\int_{x \in X} (G(x,y) - q_1)u(x) \ dx \leq
    -\sqrt{\epsilon} p_1 }.
  \]
  Then applying \eqref{eq:super-disc} to $u$ and $1_{Y'}$ yields
  $\abs{Y'} \leq \sqrt{\epsilon} \abs{Y}$. Similarly, let
  \[
  Y'' = \setcond{y \in Y}{\int_{z \in Z} (G(y,z) - q_2)w(z) \ dx \leq
    -\sqrt{\epsilon} p_2 }.
  \]
  Then $\abs{Y''} \leq \sqrt{\epsilon} \abs{Y}$ as well. So
  \begin{align*}
    \int_{\substack{x \in X \\ z \in Z}} G'(x,z) u(x)w(z) \ dxdz
    &
    = \int_{\substack{x \in X \\ y \in Y \\ z \in Z}}
    u(x)G(x,y)G(y,z)w(z) \ dxdydz
    \\
    &\geq \int_{y \in Y\setminus(Y' \cup Y'')} \paren{\int_{x \in X}
      G(x,y)u(x)  \ dx}\paren{\int_{z \in Z}
      G(y,z)w(z) \ dz} \ dy
    \\
    &\geq \int_{y \in Y \setminus (Y' \cup Y'')} (q_1 \EE u -
    \sqrt{\epsilon} p_1) (q_2 \EE w -
    \sqrt{\epsilon} p_2) \ dy
    \\
    &\geq (1 - 2\sqrt{\epsilon}) (q_1 \EE u -
    \sqrt{\epsilon} p_1) (q_2 \EE w -
    \sqrt{\epsilon} p_2)
    \\
    &\geq q_1q_2\EE u \EE w - 6\sqrt{\epsilon}p_1p_2.
  \end{align*}
\end{proof}

The above proof can be extended to prove a one-sided counting lemma
for trees without any jumbledness hypotheses. We omit the details.

\begin{proposition}
  \label{prop:tree-oneside}
  Let $H$ be a tree on vertices $\set{1,2,\dots, m}$. For every
  $\theta > 0$, there exists $\epsilon > 0$ of size at least
  polynomial in $\theta$ such that the following holds.

  Let $G$ be a
  weighted graph with vertex subsets $X_1, \dots, X_m$. For each edge
  $ab$ of $H$, suppose that $(X_a, X_b)_G$ satisfies $\DISC_\geq(q_{ab}, p_{ab},
  \epsilon)$ for some $0 \leq q_{ab} \leq p_{ab} \leq 1$. Then $G(H)
  \geq q(H) - \theta p(H)$.
\end{proposition}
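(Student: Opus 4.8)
The plan is to prove, by induction on the number of vertices $m$ of the tree $H$, the following weighted strengthening: whenever $w_a \colon X_a \to [0,1]$ are weight functions for $a \in V(H)$ and $(X_a, X_b)_G$ satisfies $\DISC_\geq(q_{ab}, p_{ab}, \epsilon)$ for every edge $ab$ of $H$, one has
\[
\int_{\bx} G\cond{H}{\bx} \prod_{a \in V(H)} w_a(x_a) \ d\bx \;\geq\; q(H) \prod_{a \in V(H)} \paren{\int_{x_a \in X_a} w_a(x_a) \ dx_a} \;-\; 2m\sqrt{\epsilon}\, p(H),
\]
where $\bx$ ranges uniformly over compatible maps $V(H) \to V(G)$. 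Specializing to $w_a \equiv 1$ and choosing $\epsilon \leq (\theta/(2m))^2$, which is polynomial in $\theta$, then gives Proposition~\ref{prop:tree-oneside}. This is the natural one-sided, jumbledness-free analogue of the weighted tree-counting statement of Lemma~\ref{lem:tree-weighted}, and the argument follows the pattern of the proof of Lemma~\ref{lem:G-2-super-disc}: instead of controlling exceptional sets with a jumbledness hypothesis, we exploit the nonnegativity of $G$ to simply discard them.

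The base case $m = 1$ is immediate, since then $G\cond{H}{\bx} = q(H) = p(H) = 1$. For the inductive step, fix a leaf $v$ of $H$ with unique neighbour $a$, and put $H' = H - v$, a tree on $m-1$ vertices with $q(H) = q_{av} q(H')$ and $p(H) = p_{av} p(H')$. Writing $\phi(y_a) := \int_{x_v \in X_v} G(y_a, x_v) w_v(x_v) \ dx_v$, the left-hand side above equals $\int_{\by} G\cond{H'}{\by} \paren{\prod_{b \ne v} w_b(y_b)} \phi(y_a) \ d\by$, with $\by$ ranging over compatible maps $V(H') \to V(G)$. Applying $\DISC_\geq(q_{av}, p_{av}, \epsilon)$ to the functions $1_{X_a'}$ and $w_v$, where $X_a' := \setcond{y_a \in X_a}{\phi(y_a) < q_{av} \int w_v - \sqrt{\epsilon}\, p_{av}}$, shows $\abs{X_a'}/\abs{X_a} < \sqrt{\epsilon}$; that is, $\phi$ is not much smaller than $q_{av} \int w_v$ on almost all of $X_a$.

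If $q_{av} \int w_v < \sqrt{\epsilon}\, p_{av}$, then the claimed main term is at most $\sqrt{\epsilon}\, p(H)$ and the trivial bound $\int_{\bx} G\cond{H}{\bx} \prod_a w_a(x_a) \ d\bx \geq 0$ suffices. Otherwise, set $w_a'' := w_a \cdot 1_{X_a \setminus X_a'} \in [0,1]$; since $G$ and all weights are nonnegative and $\phi(y_a) \geq q_{av} \int w_v - \sqrt{\epsilon}\, p_{av} \geq 0$ for $y_a \in X_a \setminus X_a'$, we may restrict the $y_a$-integration to $X_a \setminus X_a'$ and factor out this lower bound to obtain
\[
\int_{\bx} G\cond{H}{\bx} \prod_{a} w_a \;\geq\; \paren{q_{av} \int w_v - \sqrt{\epsilon}\, p_{av}} \int_{\by} G\cond{H'}{\by} \paren{\prod_{b \ne v, a} w_b(y_b)} w_a''(y_a) \ d\by .
\]
Now apply the induction hypothesis to $H'$ with the weights $w_b$ ($b \ne v, a$) and $w_a''$, using $\int w_a'' \geq \int w_a - \sqrt{\epsilon}$, and bound each resulting cross term crudely via $q_{av} \leq p_{av}$, $q(H') \leq p(H')$, and $\int w \leq 1$; a short computation shows the accumulated error increases by exactly $2\sqrt{\epsilon}\, p(H)$ over that for $H'$, giving the claimed bound. (As in the previous case, the sub-case in which the inductive hypothesis for $H'$ returns a negative main term is again covered by the trivial bound $\geq 0$.)

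The only delicate point is this final bookkeeping: one must keep every auxiliary weight function in $[0,1]$ and verify that each quantity that could in principle be negative — namely $q_{av} \int w_v - \sqrt{\epsilon}\, p_{av}$ and the main term produced by the inductive hypothesis for $H'$ — is handled by the trivial nonnegativity bound on the $H$-count. Everything else reduces to the same routine, Cauchy--Schwarz-free estimates already used in the proof of Lemma~\ref{lem:G-2-super-disc}, reflecting the fact that for one-sided counting the nonnegativity of $G$ replaces any need for a jumbledness hypothesis on $\Gamma$.
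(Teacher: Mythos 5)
Your proof is correct and follows exactly the route the paper intends: the paper states that Proposition~\ref{prop:tree-oneside} follows by extending the argument of Lemma~\ref{lem:G-2-super-disc} and omits the details, and your leaf-peeling induction with the weighted strengthening, the exceptional set $X_a'$ of measure at most $\sqrt{\epsilon}$, and the trivial nonnegativity bound in the degenerate sub-cases is precisely that extension, with the error bookkeeping ($2m\sqrt{\epsilon}\,p(H)$ total, $2\sqrt{\epsilon}\,p(H)$ per leaf) checking out.
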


By Lemma~\ref{lem:G-2-super-disc} and induction, we obtain the
following lemma about counting paths in $G$.

\begin{lemma}
  \label{lem:G-path-super-disc}
  Let $G$ be a weighted graph with vertex subsets $X_0, X_1, \dots,
  X_m$. Let $0 < \epsilon < 1$. Suppose that for each $i = 1, 2, \dots,
  m$, $(X_{i-1}, X_i)_G$ satisfies $\DISC_{\geq} (q_i, p_i,
  \epsilon)$ for some numbers $0 \leq q_i \leq p_i \leq 1$. Then the
  induced weighted bipartite graph $G'$ on $X_0 \x X_m$ whose edge weights are given by
  \[
  G'(x_0,x_m) = G(x_0, X_1, X_2, \dots, X_{m-1}, x_m)
  \]
  satisfies $\DISC_{\geq}(q_1q_2\cdots q_m, p_1p_2\cdots p_m, 36 \epsilon^{1/(2m)})$.
\end{lemma}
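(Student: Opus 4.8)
The plan is to induct on $m$, using Lemma~\ref{lem:G-2-super-disc} as the single-step building block. The base case $m = 1$ is trivial: $G' = G$ on $X_0 \x X_1$, and $\DISC_{\geq}(q_1, p_1, \epsilon)$ holds by hypothesis, with $36\epsilon^{1/2} \geq \epsilon$ as required. For the inductive step, one splits the path $X_0 X_1 \cdots X_m$ at an intermediate vertex subset. There are two natural ways to do this, and choosing the split point carefully is what makes the final exponent $36\epsilon^{1/(2m)}$ come out cleanly.

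First I would try the simplest split: apply the inductive hypothesis to the path $X_0 X_1 \cdots X_{m-1}$ to obtain a weighted graph $G''$ on $X_0 \x X_{m-1}$ satisfying $\DISC_{\geq}(q_1 \cdots q_{m-1}, p_1 \cdots p_{m-1}, 36\epsilon^{1/(2(m-1))})$, and then apply Lemma~\ref{lem:G-2-super-disc} to the two-step path $X_0 \xrightarrow{G''} X_{m-1} \xrightarrow{G} X_m$. That lemma turns a pair of $\DISC_{\geq}(\cdot, \cdot, \delta)$ hypotheses into $\DISC_{\geq}(\cdot, \cdot, 6\sqrt{\delta})$ for the composed graph, where here $\delta = \max\{36\epsilon^{1/(2(m-1))}, \epsilon\} = 36\epsilon^{1/(2(m-1))}$ (using $\epsilon \le 1$). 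This yields error parameter $6\sqrt{36\epsilon^{1/(2(m-1))}} = 36\epsilon^{1/(4(m-1))}$, which unfortunately is \emph{weaker} than the target $36\epsilon^{1/(2m)}$ since $4(m-1) \geq 2m$ only for $m \geq 2$ — wait, in fact $4(m-1) \ge 2m \iff 2m \ge 4 \iff m \ge 2$, so this actually works and gives a \emph{stronger} bound than needed for $m \ge 2$; one then just weakens $36\epsilon^{1/(4(m-1))} \le 36\epsilon^{1/(2m)}$. So the naive induction already suffices, and I would simply verify the exponent bookkeeping at each step.

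\textbf{Exponent bookkeeping.} The one delicate point is to track how the error parameter evolves and confirm it never exceeds $36\epsilon^{1/(2m)}$. Writing $\delta_j$ for the parameter obtained after composing $j$ edges, we have $\delta_1 = \epsilon$ and $\delta_{j} = 6\sqrt{\max\{\delta_{j-1}, \epsilon\}} = 6\sqrt{\delta_{j-1}}$ (since $\delta_{j-1} \ge \epsilon$ by induction). An easy check shows $\delta_j = 6^{2(1 - 2^{-(j-1)})}\,\epsilon^{2^{-(j-1)}}$, which one bounds crudely by $36\,\epsilon^{2^{-(j-1)}}$; and since $2^{-(m-1)} \le 1/(2(m-1)) \le$ well, actually one needs $2^{-(m-1)} \ge 1/(2m)$ for the exponent on $\epsilon$ to make $\delta_m \le 36\epsilon^{1/(2m)}$ go the \emph{right} way — here care is needed because a smaller exponent on a number $< 1$ gives a \emph{larger} value. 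Since $\delta_m = 36\epsilon^{2^{-(m-1)}}$ and we want $\delta_m \le 36\epsilon^{1/(2m)}$, we need $\epsilon^{2^{-(m-1)}} \le \epsilon^{1/(2m)}$, i.e. $2^{-(m-1)} \ge 1/(2m)$, i.e. $2m \ge 2^{m-1}$, which \emph{fails} for large $m$. This signals that the naive halving-at-the-end induction does not give the claimed polynomial-in-$\epsilon$ rate with exponent $1/(2m)$.

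\textbf{The fix, and the main obstacle.} The hard part is exactly this: iterating Lemma~\ref{lem:G-2-super-disc} naively square-roots the error at every composition, producing a doubly-exponential loss $\epsilon^{2^{-m}}$ rather than the polynomial loss $\epsilon^{1/(2m)}$. To avoid this, instead of composing one edge at a time I would compose \emph{balanced halves}: split $X_0 \cdots X_m$ at the middle index $\lfloor m/2 \rfloor$, recursively obtain $\DISC_{\geq}$ for each half with parameter at most $36\epsilon^{1/(2\lceil m/2\rceil)} \le 36\epsilon^{1/m}$, and apply Lemma~\ref{lem:G-2-super-disc} once to get parameter $6\sqrt{36\epsilon^{1/m}} = 36\epsilon^{1/(2m)}$, exactly as claimed. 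The recursion depth is now $O(\log m)$ but crucially the exponent on $\epsilon$ only halves \emph{twice} relative to the two halves (once in the recursion, once in the final Cauchy--Schwarz), giving the clean $1/(2m)$. I expect verifying that the induction on the halved path lengths closes — i.e. that $6\sqrt{36\epsilon^{1/(2\lceil m/2 \rceil)}} \le 36 \epsilon^{1/(2m)}$, which needs $1/(2\lceil m/2\rceil) \ge 1/m$, true since $\lceil m/2\rceil \le m/2 \cdot (1 + 1/m) $ roughly but more precisely $2\lceil m/2 \rceil \le m+1$ — to require one small extra slack factor, easily absorbed by relaxing the constant $36$ slightly or the exponent to $1/(2m)$ exactly as stated after a short computation. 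That constant-chasing is the only remaining routine work.
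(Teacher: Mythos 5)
Your final "fix" is exactly the paper's strategy: the paper also combines the $m$ edges in $\lceil \log_2 m\rceil$ balanced rounds (pairing $(X_0,X_2)$, $(X_2,X_4)$, \dots, then $(X_0,X_4)$, $(X_4,X_8)$, \dots, and so on), applying Lemma~\ref{lem:G-2-super-disc} once per round so that the constant $36$ is preserved while the exponent halves; it then observes $36\epsilon^{2^{-\lceil\log_2 m\rceil}} \leq 36\epsilon^{1/(2m)}$ since $\lceil \log_2 m\rceil \le \log_2 m + 1$. Your "exponent bookkeeping" paragraph correctly diagnoses why the sequential one-edge-at-a-time induction fails (doubly-exponential loss), which is the right insight.

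However, there is a real, if small, gap in the way you close the balanced recursion. Your stated inductive hypothesis is that a path of length $\ell$ yields error parameter $36\epsilon^{1/(2\ell)}$. With $m$ odd, the two halves have at most $\lceil m/2\rceil$ edges, so the recursive call gives $36\epsilon^{1/(2\lceil m/2\rceil)}$, and Lemma~\ref{lem:G-2-super-disc} then produces
\[
6\sqrt{36\,\epsilon^{1/(2\lceil m/2\rceil)}} \;=\; 36\,\epsilon^{1/(4\lceil m/2\rceil)} \;=\; 36\,\epsilon^{1/(2(m+1))},
\]
and $36\epsilon^{1/(2(m+1))} > 36\epsilon^{1/(2m)}$ for all $\epsilon < 1$. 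This is not fixable by "relaxing the constant $36$ slightly," because the ratio $\epsilon^{1/(2(m+1))}/\epsilon^{1/(2m)} = \epsilon^{-1/(2m(m+1))}$ is unbounded as $\epsilon \to 0$; no constant factor absorbs it. The correct remedy is to carry a stronger hypothesis through the induction, namely error parameter $36\epsilon^{2^{-\lceil\log_2 m\rceil}}$ (or, equivalently, to do the round-by-round pairing as the paper does so the exponent is visibly $2^{-k}$ after round $k$), and only at the very end weaken to $36\epsilon^{1/(2m)}$ using $2^{-\lceil\log_2 m\rceil} \ge 1/(2m)$. That one change closes the induction cleanly; the rest of your argument is correct.
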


\begin{proof}
  Applying Lemma~\ref{lem:G-2-super-disc}, we see that the auxiliary weighted
  graphs on $(X_0, X_2), (X_2, X_4), \dots$ satisfy
  $\DISC_{\geq}(q_1q_2, p_1p_2, 36\epsilon^{1/2})$, etc. Applying
  Lemma~\ref{lem:G-2-super-disc} again, we find that the auxiliary weighted
  graph on $(X_0, X_4), (X_4, X_8)$ satisfy
  $\DISC_{\geq}(q_1q_2q_3q_4, p_1p_2p_3p_4, 36\epsilon^{1/4})$,
  etc. Continuing, we find that $(X_0, X_m)_{G'}$ satisfies
  $\DISC_{\geq}(q_1q_2\cdots q_m, p_1p_2\cdots p_m,\epsilon')$ with
  $\epsilon' = 36\epsilon^{2^{-(\log_2 m + 1)}} = 36\epsilon^{1/(2m)}$.
\end{proof}

For step 2 of the proof, we need to assume some degree-regularity
between the parts. We note that the order of $X$ and $Y$ is important in the
following definition.

\begin{definition}
  Let $\Gamma$ be a weighted graph with vertex subsets $X$ and $Y$. We say that
  $(X, Y)_\Gamma$ is \emph{$(p, \xi, \eta)$-bounded} if $\abs{\Gamma(x, Y) -
    p} \leq \xi p$ for all $x \in X$ and $\Gamma(x,y) \leq \eta$ for
  all $x \in X$ and $y \in Y$.
\end{definition}

Here is the idea of the proof. Fix a vertex $x_0 \in X_0$, and
consider its successive neighborhoods in $X_1, X_2, \dots$. Let us
keep track of the number of paths from $x_0$ to each
endpoint. We expect the number of paths to be somewhat evenly
distributed among vertices in the successive neighborhoods and,
therefore, we do not expect many vertices in $X_i$ to have
disproportionately many paths to $x_0$. In particular,
capping the weights of $\Gamma(x_0, X_1, \dots, X_{m-1}, x_m)$ has a
negligible effect.

Here is a back-of-the-envelope calculation. Suppose every pair $(X_i,
X_{i+1})_\Gamma$ is $(p,
\gamma\sqrt{\abs{X_i}\abs{X_{i+1}}})$-jumbled. First we remove a small
fraction of vertices from each vertex subset $X_i$ so in the remaining
graph $\Gamma$ is bounded, i.e., every vertex has roughly the expected
number of neighbors in the next vertex subset. Let $S \subseteq X_i$,
and let $N(S)$ be its neighborhood in $X_{i+1}$. Then the number of
edges $e(S, N(S))$ between $S$ and $N(S)$ is roughly
$p\abs{S}\abs{X_{i+1}}$ by the degree assumptions on $X_i$. On the
other hand, by jumbledness, $e(S, N_{i+1}(S)) \leq
\gamma\sqrt{\abs{X_i}\abs{X_{i+1}}\abs{S}\abs{N(S)}} + p
\abs{S}\abs{N(S)}$. When $S$ is small, the first term dominates, and
by comparing the two estimates we get that
$\frac{\abs{N(S)}}{\abs{X_{i+1}}}$ is at least roughly $p^2\gamma^{-2}
\frac{\abs{S}}{\abs{X_i}}$. Now fix a vertex $x_0 \in X_0$. It has
about $p\abs{X_1}$ neighbors in $X_1$. At each step, the fraction of
$X_i$ occupied by the successive neighborhood of $x_0$ expands by a
factor of about $p^2\gamma^{-2}$, until the successive neighborhood
saturates some $X_i$. Note that for $\gamma = cp^{1+ \frac{1}{2m-2}}$,
we have $p(p^2\gamma^{-2})^{m-1} \gg 1$, so the successive
neighborhood of $x_0$ in $X_m$ is essentially all of $X_m$. So we can
expect the resulting weighted graph to be dense.

We will use induction. We show that from a fixed $x_0 \in X_0$,
if we can bound the number of paths to each vertex in $X_i$, then we
can do so for $X_{i+1}$ as well.

The next result is the key technical lemma. It is an induction step for the
lemma that follows. One should think of $X, Y$ and $Z$ as $X_0, X_i$ and $X_{i+1}$, respectively.

\begin{lemma}
  \label{lem:Gamma-bounded-step}
  Let $p_1, p_2, \xi_1, \xi_2, \xi_3 \in (0,1]$, and $\eta_1, \gamma_2 > 0$. Let $\Gamma$ be a
  weighted graph with vertex subsets $X, Y, Z$. Assume that
  $(X,Y)_\Gamma$ is $(p_1, \xi_1, \eta_1)$-bounded and $(Y,Z)_\Gamma$
  is $(p_2, \xi_2, 1)$-bounded and $(p_2,
  \gamma_2\sqrt{\abs{Y}\abs{Z}})$-jumbled. Let $\eta' =
  \max\set{4\gamma_2^2p_2^{-1}\xi_3^{-1}\eta_1, 4p_1p_2}$ and $\xi' =
  \xi_1 + 2\xi_2 + 2\xi_3$. Then the weighted graph
  $\Gamma'$ on $(X, Z)$ given by
  \[
  \Gamma'(x,z) = \min\set{\Gamma(x,Y,z), \eta'}
  \]
  is $(p_1p_2, \xi', \eta')$-bounded.
\end{lemma}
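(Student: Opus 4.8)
The plan is to fix an arbitrary vertex $x\in X$ and control $\Gamma'(x,Z)=\int_{z\in Z}\min\{\Gamma(x,Y,z),\eta'\}\,dz$; the bound $\Gamma'(x,z)\le\eta'$ for all $x,z$ is immediate from the definition of $\Gamma'$, so the only content is to show $\abs{\Gamma'(x,Z)-p_1p_2}\le\xi'p_1p_2$. Throughout, write $d=\Gamma(x,Y)$; since $(X,Y)_\Gamma$ is $(p_1,\xi_1,\eta_1)$-bounded we have $\Gamma(x,y)\le\eta_1$ for all $y$ and $(1-\xi_1)p_1\le d\le(1+\xi_1)p_1$, so in particular $d\le 2p_1$.

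I would first estimate the uncapped integral. By Fubini, $\int_{z\in Z}\Gamma(x,Y,z)\,dz=\int_{y\in Y}\Gamma(x,y)\Gamma(y,Z)\,dy$, and the $(p_2,\xi_2,1)$-boundedness of $(Y,Z)_\Gamma$ gives $(1-\xi_2)p_2\le\Gamma(y,Z)\le(1+\xi_2)p_2$ for every $y$; hence this integral lies in $[(1-\xi_1)(1-\xi_2)p_1p_2,\,(1+\xi_1)(1+\xi_2)p_1p_2]$, which (using $\xi_i\le1$ to bound the cross term) is within $(\xi_1+2\xi_2)p_1p_2$ of $p_1p_2$. Since $\Gamma'(x,Z)\le\int_{z\in Z}\Gamma(x,Y,z)\,dz$ always, this already gives the upper bound $\Gamma'(x,Z)\le(1+\xi')p_1p_2$; for the lower bound it remains to show that the capping loss $\int_{Z'}(\Gamma(x,Y,z)-\eta')\,dz$ is at most $2\xi_3p_1p_2$, where $Z'=\setcond{z\in Z}{\Gamma(x,Y,z)>\eta'}$.

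The heart of the proof is a single application of the $(p_2,\gamma_2\sqrt{\abs{Y}\abs{Z}})$-jumbledness of $(Y,Z)_\Gamma$ with the test functions $u(y)=\Gamma(x,y)/\eta_1\in[0,1]$ (legitimate because $\Gamma(x,y)\le\eta_1$) and $v=1_{Z'}$. Since $\int_{y\in Y}u(y)\,dy=d/\eta_1$ and $\int_{z\in Z}v(z)\,dz=\abs{Z'}/\abs{Z}$, after multiplying through by $\eta_1$ this reads
\[
\Bigl|\int_{Z'}\Gamma(x,Y,z)\,dz-p_2 d\tfrac{\abs{Z'}}{\abs{Z}}\Bigr|\le\gamma_2\sqrt{\eta_1 d}\sqrt{\tfrac{\abs{Z'}}{\abs{Z}}}.
\]
I would use this twice. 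Combining it with the trivial lower bound $\int_{Z'}\Gamma(x,Y,z)\,dz\ge\eta'\abs{Z'}/\abs{Z}$ and the inequality $p_2 d\le2p_1p_2\le\eta'/2$ (valid since $\eta'\ge4p_1p_2$) rearranges to $\tfrac{\eta'}{2}\tfrac{\abs{Z'}}{\abs{Z}}\le\gamma_2\sqrt{\eta_1 d}\sqrt{\abs{Z'}/\abs{Z}}$, hence $\abs{Z'}/\abs{Z}\le 4\gamma_2^2\eta_1 d/(\eta')^2$. Feeding this back into the displayed inequality (again using $p_2 d\le\eta'/2$) yields $\int_{Z'}\Gamma(x,Y,z)\,dz\le 4\gamma_2^2\eta_1 d/\eta'$, and since $\eta'\ge4\gamma_2^2p_2^{-1}\xi_3^{-1}\eta_1$ this is at most $\xi_3p_2 d\le2\xi_3p_1p_2$. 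A fortiori the capping loss is at most $2\xi_3p_1p_2$, so $\Gamma'(x,Z)\ge(1-\xi_1-2\xi_2)p_1p_2-2\xi_3p_1p_2=(1-\xi')p_1p_2$, which finishes the argument.

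The only subtle point — the ``hard part'', to the extent there is one — is this double use of the one jumbledness estimate: the first use bounds the measure $\abs{Z'}$ (a degree/Markov-type step that could equivalently be routed through Lemma~\ref{lem:deg-est}), and the second bounds the weighted mass of $\Gamma(x,Y,\cdot)$ over $Z'$, with the two bounds coupled through the quadratic in $\sqrt{\abs{Z'}/\abs{Z}}$. The definition $\eta'=\max\{4\gamma_2^2p_2^{-1}\xi_3^{-1}\eta_1,\,4p_1p_2\}$ is calibrated precisely for this: the $4p_1p_2$ branch ensures $p_2 d\le\eta'/2$, so the rearrangements lose only constant factors, while the $4\gamma_2^2p_2^{-1}\xi_3^{-1}\eta_1$ branch converts the leftover factor $\gamma_2^2\eta_1/\eta'$ into the target error parameter $\xi_3$. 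Everything else is routine bookkeeping with $\xi_1,\xi_2,\xi_3\in(0,1]$.
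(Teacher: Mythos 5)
Your proposal is correct and follows essentially the same route as the paper's proof: the trivial cap bound, the Fubini/boundedness estimate for the uncapped mass, and then the single jumbledness application with $u(y)=\Gamma(x,y)\eta_1^{-1}$, $v=1_{Z'}$, used once to bound $\abs{Z'}/\abs{Z}$ via the threshold $\eta'$ and a second time to bound $\Gamma(x,Y,Z')$ itself. The constants you track match the paper's up to the same harmless slack, so nothing further is needed.
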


\begin{proof}
  We have $\Gamma'(x,z) \leq \eta'$ for all $x \in X, z \in Z$. Also,
  by the boundedness assumptions, we have $\Gamma'(x,Z) \leq
  \Gamma(x,Y,Z) \leq (1+\xi_1)(1+\xi_2)p_1p_2 \leq (1+\xi')p_1p_2$. It
  only remains to prove that $\Gamma'(x,Z) \geq (1-\xi')p_1p_2$ for
  all $x \in X$.

  Fix any $x \in X$. Let
  \[
  Z'_x = \setcond{z \in Z}{\Gamma(x,Y,z) > \eta'}.
  \]
  Note that $\Gamma'(x,Z) \geq \Gamma(x,Y,Z) - \Gamma(x,Y,Z'_x)$, so
  we would like to find an upper bound for $\Gamma(x,Y,Z'_x)$.

  Apply the jumbledness criterion~\eqref{eq:jumbled} to $(Y,Z)_\Gamma$ with the functions $u(y) =
  \Gamma(x,y) \eta_1^{-1}$ and $v(z) = 1_{Z'_x}$. Note that $0 \leq u
  \leq 1$ due to boundedness. We have
  \[
  \int_{\substack{y \in Y \\ z \in Z}}
  \Gamma(x,y) \eta_1^{-1}  (\Gamma(y,z) - p_2) 1_{Z'_x}(z) \ dydz
  \leq \gamma_2\sqrt{\Gamma(x,Y)\eta_1^{-1} \frac{\abs{Z'_x}}{\abs{Z}}}
  \leq \gamma_2\sqrt{(1+\xi_1)p_1\eta_1^{-1} \frac{\abs{Z'_x}}{\abs{Z}}}.
  \]
  The integral equals $\eta_1^{-1}\paren{\Gamma(x,Y,Z'_x) -
    p_2\Gamma(x,Y) \frac{\abs{Z'_x}}{\abs{Z}}}$, so we have
  \begin{equation}
    \label{eq:Gamma-step-jumb}
    \Gamma(x,Y,Z'_x) -
    p_2\Gamma(x,Y) \frac{\abs{Z'_x}}{\abs{Z}}
    \leq \gamma_2\sqrt{(1+\xi_1)p_1\eta_1 \frac{\abs{Z'_x}}{\abs{Z}}}.
  \end{equation}
  On the other hand, we have
  \begin{equation} \label{eq:Gamma-step-YZ-lower}
    \Gamma(x,Y,Z'_x) -
    p_2\Gamma(x,Y) \frac{\abs{Z'_x}}{\abs{Z}}
    \geq \eta'\frac{\abs{Z'_x}}{\abs{Z}} - (1+\xi_1)p_1p_2
    \frac{\abs{Z'_x}}{\abs{Z}}
    \geq \frac{\eta'}{2} \frac{\abs{Z'_x}}{\abs{Z}}.
  \end{equation}
  Combining \eqref{eq:Gamma-step-YZ-lower} with
  \eqref{eq:Gamma-step-jumb}, we get
  \begin{equation} \label{eq:Gamma-step-Z} \frac{\abs{Z'_x}}{\abs{ Z}}
    \leq \frac{4\gamma_2^2 (1+\xi_1)p_1\eta_1}{\eta'^2}.
  \end{equation}
  Substituting \eqref{eq:Gamma-step-Z} back into \eqref{eq:Gamma-step-jumb}, we have
  \begin{align*}
    \Gamma(x,Y,Z'_x)
    &\leq (1+\xi_1)p_1p_2 \frac{\abs{Z'_x}}{\abs{Z}}
    + \gamma_2\sqrt{(1+\xi_1)p_1\eta_1 \frac{\abs{Z'_x}}{\abs{Z}}}
    \\
    & \leq \frac{4\gamma_2^2 (1+\xi_1)^2p_1^2p_2\eta_1}{\eta'^2}
    + \frac{2 \gamma_2^2 (1+\xi_1)p_1\eta_1}{\eta'}
    \\
    & \leq \frac{4\gamma_2^2 (1+\xi_1)^2p_1^2p_2\eta_1}{(4\gamma_2^2p_2^{-1}\xi_3^{-1}\eta_1)(4p_1p_2)}
    + \frac{2 \gamma_2^2
      (1+\xi_1)p_1\eta_1}{4\gamma_2^2p_2^{-1}\xi_3^{-1}\eta_1} \\
    & = \frac{1}{4}(1+\xi_1)^2\xi_3p_1p_2 + \frac{1}{2}(1+\xi_1)\xi_3p_1p_2
    \\
    &\leq 2\xi_3 p_1p_2.
  \end{align*}
  Therefore,
  \[
  \Gamma'(x,Z) \geq \Gamma(x,Y,Z) - \Gamma(x,Y,Z'_x)
  \geq (1-\xi_1)(1-\xi_2)p_1p_2 - 2\xi_3 p_1p_2
  \geq (1 - \xi')p_1p_2.
  \]
  This completes the proof that $\Gamma'$ is $(p_1p_2, \xi', \eta')$-bounded.
\end{proof}

By repeated applications of Lemma~\ref{lem:Gamma-bounded-step}, we
obtain the following lemma for embedding paths in $\Gamma$.

\begin{lemma}
  \label{lem:Gamma-bounded-path}
  Let $0 < 4c^2 < \xi < \frac{1}{4m}$ and $0 < p \leq 1$.
  Let $\Gamma$ be a graph with vertex subsets $X_0, X_1, \dots,
  X_m$. Suppose that, for each $i = 1, \dots, m$, $(X_{i-1},
  X_i)_\Gamma$ is $(p, \xi, 1)$-bounded and $(p,
  cp^{1+\frac{1}{2m-2}}\sqrt{\abs{X_{i-1}}\abs{X_i}})$-jumbled. Then
  the weighted bipartite graph $\Gamma'$ on $(X_0, X_m)$ defined by
  \[
  \Gamma'(x_0, x_m) = \min\set{\Gamma(x_0, X_1, X_2, \dots, X_{m-1},
    x_m), 4p^m}
  \]
  is $(p^m, 4m\xi, 4p^m)$-bounded.
\end{lemma}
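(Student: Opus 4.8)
The plan is to obtain $\Gamma'$ by building up the path one vertex subset at a time, applying the one‑step densification Lemma~\ref{lem:Gamma-bounded-step} a total of $m-1$ times. Set $\Gamma^{(1)}$ to be the induced bipartite graph $(X_0,X_1)_\Gamma$, which is $(p,\xi,1)$‑bounded by hypothesis. Having produced a weighted graph $\Gamma^{(i)}$ on $X_0\x X_i$ that is $(p^i,\xi^{(i)},\eta^{(i)})$‑bounded, I would apply Lemma~\ref{lem:Gamma-bounded-step} with $(X,Y,Z)=(X_0,X_i,X_{i+1})$, taking the $(X,Y)$‑part to be $\Gamma^{(i)}$ and the $(Y,Z)$‑part to be $\Gamma$, and with parameters $p_1=p^i$, $p_2=p$, $\eta_1=\eta^{(i)}$, $\gamma_2=cp^{1+\frac1{2m-2}}$, and the free parameter set to $\xi_3=\xi$. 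This yields
\[
\Gamma^{(i+1)}(x_0,x_{i+1}):=\min\Bigl\{\int_{x_i\in X_i}\Gamma^{(i)}(x_0,x_i)\,\Gamma(x_i,x_{i+1})\ dx_i,\ \eta^{(i+1)}\Bigr\},
\]
and the lemma certifies that $\Gamma^{(i+1)}$ is $(p^{i+1},\xi^{(i+1)},\eta^{(i+1)})$‑bounded, with $\xi^{(i+1)}=\xi^{(i)}+2\xi+2\xi=\xi^{(i)}+4\xi$ and $\eta^{(i+1)}=\max\{\mu\,\eta^{(i)},\ 4p^{i+1}\}$, where $\mu:=4\gamma_2^2p^{-1}\xi^{-1}=4c^2\xi^{-1}p^{1+\frac1{m-1}}$. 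Each invocation is legitimate since $p_1,p_2\in(0,1]$ and $\xi^{(i)}=(4i-3)\xi\le(4m-3)\xi<1$.

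Next I would unwind the recursion. Because $4c^2<\xi$, we have $\mu<p^{1+\frac1{m-1}}\le p$, and unrolling gives $\eta^{(i)}=\max\bigl(\mu^{i-1},\ \max_{2\le j\le i}4\mu^{i-j}p^j\bigr)$. Since $\mu\le p$, every term $4\mu^{i-j}p^j$ is at most $4p^i$, with equality at $j=i$; and $\mu^{i-1}=(4c^2\xi^{-1})^{i-1}p^{m(i-1)/(m-1)}$, which at $i=m$ equals $(4c^2\xi^{-1})^{m-1}p^m<p^m$. Hence $\eta^{(m)}=4p^m$, exactly the cap in the statement. Meanwhile the $\xi$‑parameters telescope to $\xi^{(m)}=\xi+4(m-1)\xi=(4m-3)\xi\le 4m\xi$.

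It remains to compare $\Gamma^{(m)}$ with the object $\Gamma'$ named in the statement, which caps only at the last stage: $\Gamma'(x_0,x_m)=\min\{\Gamma(x_0,X_1,\dots,X_{m-1},x_m),4p^m\}$. A one‑line induction (capping can only decrease values) shows $\Gamma^{(i)}(x_0,x_i)\le \Gamma(x_0,X_1,\dots,X_{i-1},x_i)$; combined with $\eta^{(m)}=4p^m$ this gives $\Gamma^{(m)}(x_0,x_m)\le\Gamma'(x_0,x_m)$ pointwise, whence $\Gamma'(x_0,X_m)\ge\Gamma^{(m)}(x_0,X_m)\ge(1-(4m-3)\xi)p^m\ge(1-4m\xi)p^m$. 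The remaining two boundedness requirements are free: $\Gamma'(x_0,x_m)\le 4p^m$ by definition, and integrating the hypothesis $\Gamma(x,X_{i+1})\le(1+\xi)p$ down the path gives $\Gamma'(x_0,X_m)\le\Gamma(x_0,X_1,\dots,X_m)\le(1+\xi)^m p^m\le(1+4m\xi)p^m$, using $\xi<\frac1{4m}$. Combining, $\Gamma'$ is $(p^m,4m\xi,4p^m)$‑bounded.

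There is no genuinely deep step here; the lemma is essentially a clean iteration of Lemma~\ref{lem:Gamma-bounded-step}. The care needed is bookkeeping: keeping the directional roles of the boundedness and jumbledness hypotheses straight when feeding them into Lemma~\ref{lem:Gamma-bounded-step} (the accumulated graph on $(X_0,X_i)$ needs only boundedness, the fresh pair $(X_i,X_{i+1})$ needs both boundedness and jumbledness), and checking that the single choice $\xi_3=\xi$ simultaneously keeps $\xi^{(m)}$ below $4m\xi$ and drives $\eta^{(m)}$ down to exactly $4p^m$ — which is precisely where the hypothesis $4c^2<\xi$ (equivalently $\mu<p$) is used.
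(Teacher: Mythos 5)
Your proposal is correct and follows essentially the same route as the paper: the paper likewise defines intermediate capped graphs $\Gamma^{(i)}$ on $(X_0,X_i)$ with thresholds $\eta_i=\max\{(4c^2\xi^{-1})^{i-1}p^{(i-1)(1+\frac{1}{m-1})},4p^i\}$ satisfying the same recursion $\eta_{i+1}=\max\{4\gamma^2p^{-1}\xi^{-1}\eta_i,4p^{i+1}\}$, applies Lemma~\ref{lem:Gamma-bounded-step} inductively with the same parameter choices, and concludes via $\Gamma^{(m)}\le\Gamma'$ and $\eta_m=4p^m$. The only difference is cosmetic bookkeeping (you carry $(4i-3)\xi$ where the paper rounds up to $4i\xi$, and you unroll the $\eta$-recursion rather than verifying a closed form).
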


\begin{proof}
  Since $\Gamma'(x_0, X_m) \leq \Gamma(x_0, X_1, X_2, \dots, X_m) \leq
  (1+\xi)^mp^m \leq e^{m\xi}p^m \leq (1 + 4m\xi)p^m$ for all $x_0 \in X_0$, it remains to show
  that $\Gamma'(x_0, X_m) \geq (1-4m\xi)p^m$ for all $x_0 \in X_0$.

  For every $i = 1, \dots, m$, define a weighted graph $\Gamma^{(i)}$
  on vertex sets $X_0, X_i, X_{i+1}$ (with $\Gamma^{(m)}$ only defined
  on $X_0$ and $X_m$) as follows. Set $(X_i,
  X_{i+1})_{\Gamma^{(i)}} = (X_i, X_{i+1})_\Gamma$ for each $1 \leq i
  \leq m-1$. Set
  $(X_0, X_1)_{\Gamma^{(1)}} = (X_0, X_1)_\Gamma$ and
  \[
  \Gamma^{(i+1)}(x_0, x_{i+1}) = \min\set{\Gamma^{(i)}(x_0, X_i,
    x_{i+1}), \eta_{i+1}}
  \]
  for each $1 \leq i
  \leq m-1$, where
  \[
  \eta_i = \max\set{(4c^2\xi^{-1})^{i-1} p^{(i-1)\paren{1 +
        \frac{1}{m-1}}}, 4p^i}
  \]
  for every $i$. So $\Gamma^{(i)}(x_0, x_i) \leq \Gamma(x_0, X_1,
  \dots, X_{i-1}, x_i)$ for every $i$ and every $x_0 \in X_0, x_i \in
  X_i$. Let $\gamma = cp^{1+\frac{1}{2m-2}}$. Note that
  $\eta_{i+1} = \max\set{4\gamma^2p^{-1}\xi^{-1}\eta_i, 4p^{i+1}}$ for
  every $i$. So it follows by Lemma~\ref{lem:Gamma-bounded-step} and
  induction that
  $(X_0, X_i)_{\Gamma^{(i)}}$ is $(p^i, 4i\xi, \eta_i)$-bounded for every
  $i$. Since $\eta_m = 4p^m$, $\Gamma'(x_0, X_m) \geq
  \Gamma^{(m)}(x_0, X_m) \geq (1 - 4m\xi)p^m$, as desired.
\end{proof}

To complete step 2 of the proof, we show that the boundedness
assumptions imply that the edge weight capping has negligible effect
on discrepancy.

\begin{lemma}
  \label{lemma:path-bounded-densify}
  Let $0 < 4c^2 < \xi$ and $0 < p \leq 1$.
  Let $\Gamma$ be a graph with vertex subsets $X_0, X_1, \dots, X_m$
  and let $G$ be a subgraph of $\Gamma$. Suppose that, for each $i = 1,
  \dots, m$, $(X_{i-1}, X_i)_\Gamma$ is $(p, \xi, 1)$-bounded and $(p, cp^{1 +
    \frac{1}{2m-2}}\sqrt{\abs{X_{i-1}}\abs{X_i}})$-jumbled and $(X_{i-1},X_i)_G$ satisfies
  $\DISC_\geq (q_i, p_i, \epsilon)$. Then the weighted graph $G'$ on
  $(X_0, X_m)$ defined by
  \[
  G'(x_0, x_m) = \min\set{G(x_0, X_1, X_2, \dots, X_{m-1}, x_m),
    4p^m}
  \]
  satisfies $\DISC_\geq(q_1q_2 \cdots q_m, p^m, 36\epsilon^{1/(2m)} + 8m\xi)$.
\end{lemma}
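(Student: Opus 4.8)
The plan is to combine the lower-bound discrepancy estimate for the \emph{uncapped} path-count (Lemma~\ref{lem:G-path-super-disc}) with the boundedness control on $\Gamma$ provided by Lemma~\ref{lem:Gamma-bounded-path}; the content is that truncating $G(x_0,X_1,\dots,X_{m-1},x_m)$ at level $4p^m$ costs only an additional error of order $m\xi$. Throughout, write $G''(x_0,x_m) = G(x_0,X_1,\dots,X_{m-1},x_m)$ and $\Gamma''(x_0,x_m) = \Gamma(x_0,X_1,\dots,X_{m-1},x_m)$ for the uncapped path-counts, and $\Gamma'(x_0,x_m) = \min\set{\Gamma''(x_0,x_m),4p^m}$ for the capped $\Gamma$-count, so that $G'$ is the capped $G$-count. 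We may assume $36\epsilon^{1/(2m)}+8m\xi < 1$, since otherwise the conclusion is vacuous: $G'\geq 0$ and $q_1\cdots q_m\leq p^m$ already give $\DISC_\geq(q_1\cdots q_m,\,p^m,\,1)$. In particular we then have $\xi < \tfrac{1}{8m}$ and $\epsilon < 1$, so that the hypothesis $\xi < \tfrac{1}{4m}$ needed to invoke Lemma~\ref{lem:Gamma-bounded-path} is met.

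Next I would assemble the two ingredients. By Lemma~\ref{lem:G-path-super-disc} (applied with all the $p_i$ equal to $p$), the weighted graph $G''$ on $(X_0,X_m)$ satisfies $\DISC_\geq(q_1\cdots q_m,\,p^m,\,36\epsilon^{1/(2m)})$. By Lemma~\ref{lem:Gamma-bounded-path}, the weighted graph $\Gamma'$ is $(p^m,4m\xi,4p^m)$-bounded; in particular $\Gamma'(x_0,X_m)\geq(1-4m\xi)p^m$ for every $x_0\in X_0$, hence $\int_{x_0,x_m}\Gamma'(x_0,x_m)\geq(1-4m\xi)p^m$. On the other hand, iterating the degree bound from the definition of $(p,\xi,1)$-boundedness along the path gives $\Gamma''(x_0,X_m) = \Gamma(x_0,X_1,\dots,X_m)\leq(1+\xi)^mp^m\leq(1+2m\xi)p^m$, using $m\xi<\tfrac18$; therefore $\int_{x_0,x_m}\Gamma''(x_0,x_m)\leq(1+2m\xi)p^m$. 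Subtracting,
\[
\int_{x_0,x_m}\paren{\Gamma''(x_0,x_m) - \Gamma'(x_0,x_m)}\,dx_0\,dx_m \leq 6m\xi\,p^m.
\]

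The last step is a pointwise comparison of truncation losses. Since $0\leq G\leq\Gamma$ we have $G''\leq\Gamma''$ pointwise, and as $t\mapsto\max\set{t-4p^m,0}$ is nondecreasing,
\[
G'' - G' = \max\set{G''-4p^m,\,0} \leq \max\set{\Gamma''-4p^m,\,0} = \Gamma'' - \Gamma',
\]
so $0\leq G''-G'\leq\Gamma''-\Gamma'$ and thus $\int_{x_0,x_m}(G''-G')\leq 6m\xi\,p^m$. Finally, for any $u\colon X_0\to[0,1]$ and $w\colon X_m\to[0,1]$, combining $G'\leq G''$, the $\DISC_\geq$ bound for $G''$, and $0\leq G''-G'$ with $u,w\leq 1$,
\begin{multline*}
\int_{x_0,x_m}\paren{G'(x_0,x_m) - q_1\cdots q_m}u(x_0)w(x_m)\,dx_0\,dx_m \\
\geq \int_{x_0,x_m}\paren{G''(x_0,x_m) - q_1\cdots q_m}u(x_0)w(x_m)\,dx_0\,dx_m - \int_{x_0,x_m}\paren{G''-G'}\,dx_0\,dx_m \\
\geq -36\epsilon^{1/(2m)}p^m - 6m\xi\,p^m \geq -\paren{36\epsilon^{1/(2m)}+8m\xi}p^m,
\end{multline*}
which is exactly the assertion that $(X_0,X_m)_{G'}$ satisfies $\DISC_\geq(q_1q_2\cdots q_m,\,p^m,\,36\epsilon^{1/(2m)}+8m\xi)$.

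I do not expect a genuine obstacle here: all the hard work already lives in Lemmas~\ref{lem:Gamma-bounded-step}, \ref{lem:Gamma-bounded-path} and \ref{lem:G-path-super-disc}, and this lemma is the ``gluing'' step. The one point that matters is the observation that the excess of the $G$-path-count over the cap $4p^m$ is dominated pointwise by the excess of the ambient $\Gamma$-path-count over the same cap; this is what lets the (weak) jumbledness of $\Gamma$, available only through the one-sided boundedness of the \emph{capped} object, control the truncation loss without any upper-bound discrepancy hypothesis on $G$. The remaining risk is purely one of bookkeeping the accumulated constants to stay below the stated $36\epsilon^{1/(2m)}+8m\xi$.
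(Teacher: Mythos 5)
Your proof is correct and follows essentially the same route as the paper's: you reduce $\DISC_\geq$ for the capped $G$-path count to $\DISC_\geq$ for the uncapped $G$-path count (Lemma~\ref{lem:G-path-super-disc}) plus a bound on the truncation loss, and you control that loss by observing $G''-G'\leq\Gamma''-\Gamma'$ pointwise and then integrating using the boundedness of $\Gamma$ and of the capped $\Gamma'$ (Lemma~\ref{lem:Gamma-bounded-path}). The only differences are cosmetic: you phrase the vacuous case via $36\epsilon^{1/(2m)}+8m\xi<1$ rather than $\xi<\tfrac1{4m}$, and your tighter use of $(1+\xi)^m\leq 1+2m\xi$ gives the truncation loss as $6m\xi\,p^m$ rather than the paper's $8m\xi\,p^m$, which is still within the claimed bound.
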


\begin{proof}
  We may assume that $\xi < \frac{1}{4m}$ since otherwise the claim is
  trivial as every graph satisfies $\DISC_\geq(q,p,\epsilon)$ when
  $\epsilon \geq 1$.
  Let $\Gamma'$ be constructed as in
  Lemma~\ref{lem:Gamma-bounded-path}. To simplify notation, let us
  write
  \begin{align*}
    G(x_0, x_m) &= G(x_0, X_1, \cdots, X_{m-1}, x_m) \\
    \text{and } \quad \Gamma(x_0, x_m) &= \Gamma(x_0, X_1, \cdots, X_{m-1}, x_m)
  \end{align*}
  for $x_0 \in X_0$, $x_m \in X_m$.
  We have
  \begin{align*}
  G(x_0, x_m) - G'(x_0, x_m)
  &= \max\set{0, G(x_0, x_m) - 4p^m}\\
  &\leq \max\set{0, \Gamma(x_0, x_m) - 4p^m}
  = \Gamma(x_0, x_m) - \Gamma'(x_0, x_m).
  \end{align*}
  Let $q = q_1q_2\cdots q_m$.
  For any functions $u \colon X \to [0,1]$ and $v \colon Y \to
  [0,1]$, we have
  \begin{align*}
    \int_{\substack{x_0 \in X_0 \\ x_m \in X_m}} (G'(x_0,x_m) -
    q)u(x_0)v(x_m) & \ dx_0dx_m
    \geq \int_{\substack{x_0 \in X_0 \\ x_m \in X_m}} (G(x_0,x_m) -
    q)u(x_0)v(x_m) \ dx_0dx_m \\ &- \int_{\substack{x_0 \in X_0 \\ x_m \in
        X_m}} (\Gamma(x_0,x_m) - \Gamma'(x_0, x_m))u(x_0)v(x_m) \ dx_0dx_m.
  \end{align*}
  The first term is at least $-36\epsilon^{1/(2m)}p^m$ by
  Lemma~\ref{lem:G-path-super-disc}. For the second term, we use the
  boundedness of $\Gamma$ and $\Gamma'$ to get
  \begin{align*}
    \int_{\substack{x_0 \in X_0 \\ x_m \in
        X_m}} (\Gamma(x_0,x_m) - \Gamma'(x_0, x_m))u(x_0)v(x_m) \
    dx_0dx_m
    &\leq \int_{\substack{x_0 \in X_0 \\ x_m \in X_m}}
    (\Gamma(x_0,x_m) - \Gamma'(x_0, x_m) \ dx_0dx_m
    \\
    &\leq (1+\xi)^mp^m
    - (1-4\xi m)p^m
    \\
    &\leq 8\xi m p^m.
  \end{align*}
  It follows that $G'$ satisfies $\DISC_\geq (q,p^m,36\epsilon^{1/(2m)} + 8m\xi)$.
\end{proof}

This completes step 2 of the program. Finally, we need to show that we
have a large subgraph of $\Gamma$ satisfying boundedness, so that we can
apply Lemma~\ref{lemma:path-bounded-densify} and then transfer the
results back to the original graph.

\begin{lemma}
  \label{lem:make-bounded}
  Let $0 < \delta, \gamma, \xi, p < 1$ satisfy
  $2\gamma^2 \leq \delta \xi^2p^2$.
  Let $\Gamma$ be a graph with vertex subsets $X_0, X_1, \dots, X_m$
  and suppose that, for each $i = 1,
  \dots, m$, $(X_{i-1}, X_i)_\Gamma$ is $(p,
  (1-\delta)\gamma\sqrt{\abs{X_{i-1}}\abs{X_i}})$-jumbled. Then we can find
  $\wt X_i \subseteq X_i$ with $\abs{\wt X_i} \geq (1-\delta)\abs{X_i}$
  for every $i$ such that, for every $0 \leq i \leq m-1$, the induced bipartite
  graph $(\wt X_i, \wt X_{i+1})_\Gamma$ is $(p, \xi, 1)$-bounded and
  $(p, \gamma\sqrt{\abs{\wt X_{i}}\abs{\wt X_{i+1}}})$-jumbled.
\end{lemma}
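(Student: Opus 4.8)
The plan is to construct the sets $\widetilde X_i$ one at a time, moving backwards along the chain from $\widetilde X_m$ to $\widetilde X_0$, at each step discarding the vertices whose forward degree into the already-chosen set is incorrect. The key observation that makes this work is that, since $\Gamma$ is an unweighted graph, $\Gamma(x,y)\le 1$ automatically, so the ``$\eta=1$'' part of $(p,\xi,1)$-boundedness is free and only the degree condition $\abs{\Gamma(x,\widetilde X_{i+1})-p}\le \xi p$ has to be engineered. Note also that the hypothesis ``$(X_{i-1},X_i)_\Gamma$ is $(p,(1-\delta)\gamma\sqrt{\abs{X_{i-1}}\abs{X_i}})$-jumbled for $1\le i\le m$'' is the same as ``$(X_i,X_{i+1})_\Gamma$ is $(p,(1-\delta)\gamma\sqrt{\abs{X_i}\abs{X_{i+1}}})$-jumbled for $0\le i\le m-1$''.

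Concretely: put $\widetilde X_m:=X_m$, and having defined $\widetilde X_{i+1}$, set
\[
B_i:=\{x\in X_i:\abs{\Gamma(x,\widetilde X_{i+1})-p}>\xi p\},\qquad \widetilde X_i:=X_i\setminus B_i.
\]
I would prove by downward induction on $i$ that $\abs{\widetilde X_i}\ge(1-\delta)\abs{X_i}$. The base case $i=m$ is trivial. For $i\le m-1$, apply Lemma~\ref{lem:deg-est} to the pair $(X_i,X_{i+1})_\Gamma$ with test function $v=1_{\widetilde X_{i+1}}$ and deviation parameter $\xi$. Since $\int_{y}\Gamma(x,y)1_{\widetilde X_{i+1}}(y)\,dy=(\EE v)\,\Gamma(x,\widetilde X_{i+1})$ and $\EE v=\abs{\widetilde X_{i+1}}/\abs{X_{i+1}}\ge 1-\delta$ by the inductive hypothesis, the set $B_i$ is contained in the union of the two exceptional sets of the lemma, so, using the $(1-\delta)\gamma$ jumbledness of the pair,
\[
\frac{\abs{B_i}}{\abs{X_i}}\le \frac{2(1-\delta)^2\gamma^2}{\xi^2p^2\,\EE v}\le \frac{2(1-\delta)\gamma^2}{\xi^2 p^2}\le (1-\delta)\delta\le\delta,
\]
where the penultimate inequality is the hypothesis $2\gamma^2\le\delta\xi^2p^2$. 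Hence $\abs{\widetilde X_i}\ge(1-\delta)\abs{X_i}$, and by construction every $x\in\widetilde X_i$ satisfies $\abs{\Gamma(x,\widetilde X_{i+1})-p}\le\xi p$, i.e.\ $(\widetilde X_i,\widetilde X_{i+1})_\Gamma$ is $(p,\xi,1)$-bounded.

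It remains to verify that $(\widetilde X_i,\widetilde X_{i+1})_\Gamma$ is $(p,\gamma\sqrt{\abs{\widetilde X_i}\abs{\widetilde X_{i+1}}})$-jumbled. Given $u\colon\widetilde X_i\to[0,1]$ and $v\colon\widetilde X_{i+1}\to[0,1]$, extend them by zero to $X_i$ and $X_{i+1}$, apply \eqref{eq:jumbled} for $(X_i,X_{i+1})_\Gamma$, and rescale the uniform measures back to $\widetilde X_i$ and $\widetilde X_{i+1}$; this introduces a factor $\sqrt{\abs{X_i}\abs{X_{i+1}}/(\abs{\widetilde X_i}\abs{\widetilde X_{i+1}})}\le(1-\delta)^{-1}$, which exactly cancels the $(1-\delta)$ in the hypothesis and leaves the bound $\gamma\sqrt{\int_{\widetilde X_i}u}\sqrt{\int_{\widetilde X_{i+1}}v}$ demanded by Definition~\ref{def:jumbled}.

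The only real care needed is with the interdependence of the sets: shrinking $X_{i+1}$ to $\widetilde X_{i+1}$ changes the forward degrees of vertices of $X_i$, so $B_i$ must be defined relative to the final set $\widetilde X_{i+1}$ rather than $X_{i+1}$; this is exactly why the construction runs backwards and why Lemma~\ref{lem:deg-est} is applied to the indicator $1_{\widetilde X_{i+1}}$. The slack built into the hypotheses --- the $(1-\delta)$ factor in the jumbledness parameter and the factor $2$ in $2\gamma^2\le\delta\xi^2p^2$ --- is precisely what absorbs the loss $\EE v\ge 1-\delta$ in the degree estimate and the measure-rescaling loss in the jumbledness inheritance, so I do not anticipate any genuine obstacle beyond this bookkeeping.
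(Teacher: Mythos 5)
Your proposal is correct and follows essentially the same argument as the paper: build the sets backwards from $\wt X_m$, discard the vertices of bad degree into the already-chosen set using Lemma~\ref{lem:deg-est}, and note that the jumbledness of the induced pairs follows from the lower bound on $\abs{\wt X_i}$. Your write-up is somewhat more careful than the paper's (tracking $\EE v \geq 1-\delta$ and the measure rescaling explicitly), but there is no difference in method.
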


\begin{proof}
  The jumbledness condition follows directly from the size of $\abs{X_i}$, so it
  suffices to make the bipartite graphs bounded. Let $\wt X_m =
  X_m$. For each $i = m-1, m-2, \dots, 0$, in this order, set $\wt
  X_i$ to be the vertices in $X_i$ with $(1 \pm \xi)p\abs{\wt
    X_{i+1}}$ neighbors in $X_{i+1}$. So
  $(\wt X_i, \wt X_{i+1})$ is $(p, \xi, 1)$-bounded. Lemma~\ref{lem:deg-est} gives us $\abs{X_i \setminus \wt X_i} \leq
  \frac{2\gamma^2}{\xi^2 p^2}\abs{X_i} \leq \delta \abs{X_i}$.
\end{proof}

\begin{lemma}
  \label{lem:disc-pass-up}
  Let $0 \leq q \leq p \leq 1$ and $\epsilon, \delta, \delta' > 0$.
  Let $G$ be a weighted bipartite graph with vertex sets $X$ and $Y$. Let
  $\wt X \subseteq X$ and $\wt Y \subseteq Y$ satisfy $\abs{\wt X} \geq (1
  -\delta)\abs{X}$ and $\abs{\wt Y} \geq (1- \delta)\abs{Y}$. Let $\wt
  G$ be a weighted bipartite graph on $(\wt X, \wt Y)$ such that
  $G(x,y) \geq (1 - \delta')\wt G (x,y)$ for all $x \in \wt X, y \in
  \wt Y$. If $(\wt
  X, \wt Y)_{\wt G}$ satisfies $\DISC_\geq (q, p, \epsilon)$, then $(X,Y)_G$
  satisfies $\DISC_\geq(q,p,\epsilon + 2\delta + \delta')$.
\end{lemma}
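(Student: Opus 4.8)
The plan is to verify the $\DISC_\ge$ inequality for $(X,Y)_G$ directly, by splitting the integral over $X\times Y$ into the portion lying over $\wt X\times\wt Y$ and the portion outside it. So fix arbitrary $u\colon X\to[0,1]$ and $v\colon Y\to[0,1]$; the goal is to show
\[
\int_{\substack{x\in X\\ y\in Y}}(G(x,y)-q)u(x)v(y)\,dxdy \ \ge\ -(\epsilon+2\delta+\delta')p.
\]

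First I would dispose of the exceptional region. The complement of $\wt X\times\wt Y$ inside $X\times Y$ has normalized measure at most $\abs{X\setminus\wt X}/\abs{X}+\abs{Y\setminus\wt Y}/\abs{Y}\le 2\delta$, and on it the integrand is bounded below by $-q\ge -p$, since $G\ge 0$, $u,v\le 1$ and $q\le p$. Hence the exceptional region contributes at least $-2\delta p$, and it remains to bound $\int_{x\in\wt X,\,y\in\wt Y}(G(x,y)-q)u(x)v(y)\,dxdy$ from below, with the integral still taken against the uniform measure on $X\times Y$.

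Next I would replace $G$ by its pointwise lower bound $(1-\delta')\wt G$ on $\wt X\times\wt Y$ and peel off the error: since $(1-\delta')\wt G-q=(\wt G-q)-\delta'\wt G$,
\[
\int_{\substack{x\in\wt X\\ y\in\wt Y}}(G-q)uv\,dxdy \ \ge\ \int_{\substack{x\in\wt X\\ y\in\wt Y}}(\wt G-q)uv\,dxdy \ -\ \delta'\!\int_{\substack{x\in\wt X\\ y\in\wt Y}}\wt G\,uv\,dxdy.
\]
The last term is at least $-\delta'p$, using $\wt G\le 1$ (and, in the capped applications such as Lemma~\ref{lem:path-densify}, the sharper bound $\wt G\le 4p^m$ makes this genuinely of order $\delta'p$). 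For the first term I would rescale the measure: writing $\rho=\abs{\wt X}\,\abs{\wt Y}/(\abs{X}\,\abs{Y})\in(0,1]$, this integral equals $\rho$ times the corresponding integral against the uniform measure on $\wt X\times\wt Y$, and that integral is at least $-\epsilon p$ by the $\DISC_\ge(q,p,\epsilon)$ hypothesis on $(\wt X,\wt Y)_{\wt G}$ applied to the restrictions $u|_{\wt X}$ and $v|_{\wt Y}$. Since a real number that is $\ge-\epsilon p$ remains $\ge-\epsilon p$ after multiplication by $\rho\in(0,1]$ (it either stays nonnegative or moves toward $0$), the first term is $\ge-\epsilon p$ as well. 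Summing the three contributions gives the claimed bound.

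There is no genuine obstacle here; the only points requiring care are the change of measure when restricting to $\wt X$ and $\wt Y$ — in particular checking that the rescaling factor $\rho\le 1$ works in our favour rather than against us — and that the exceptional region costs $2\delta$ rather than $\delta$. Everything else is immediate from the definition of $\DISC_\ge$ and the pointwise inequalities $G\ge 0$, $q\le p$ and $G\ge(1-\delta')\wt G$ on $\wt X \times \wt Y$.
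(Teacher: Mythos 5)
Your overall strategy is close to the paper's (restrict to $\wt X\times\wt Y$, apply $\DISC_\geq$ there, account separately for the boundary and for the factor $1-\delta'$), but one step is wrong as written: the claim that $\delta'\int_{\wt X\times\wt Y}\wt G\,uv\,dxdy\leq\delta' p$ ``using $\wt G\le 1$''. The bound $\wt G\le 1$ only gives $\delta'\int\wt G\,uv\le\delta'$, and in general nothing better is available: take $\wt G\equiv 1$ and $u\equiv v\equiv 1$, which is consistent with $\DISC_\geq(q,p,\epsilon)$ for any $q\le p$ (the hypothesis is only a \emph{lower} bound on $\int(\wt G-q)uv$, so it gives no upper bound on $\int\wt G\,uv$). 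With $p$ small, your three contributions then sum to $-\epsilon p-2\delta p-\delta'$, which is strictly weaker than the claimed $-(\epsilon+2\delta+\delta')p$. The parenthetical about the capped applications does not rescue the lemma as stated, since here $\wt G$ is an arbitrary $[0,1]$-valued weighted graph.

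The fix is to regroup the other way: write $(1-\delta')\wt G-q=(1-\delta')(\wt G-q)-\delta' q$ rather than $(\wt G-q)-\delta'\wt G$. Then the error term is $-\delta' q\int uv\ge-\delta' q\ge-\delta' p$, and the main term is $(1-\delta')$ times a quantity that is $\ge-\epsilon p$ (by your correct rescaling observation), hence itself $\ge-\epsilon p$. This is in effect what the paper does: it multiplies the entire $\DISC_\geq$ inequality for $\wt G$ by $(1-\delta')$, so the $\delta'$ loss lands on the main term $q\sum u\sum v\le q\abs{X}\abs{Y}$ and is therefore automatically of order $\delta' p$. With that one-line correction your argument is complete; your treatment of the exceptional region (bounding the integrand below by $-q$ on a set of normalized measure at most $2\delta$) is a valid, slightly different accounting from the paper's, which instead discards that region using $G\ge 0$ and recovers the $2\delta$ by replacing $\sum_{\wt X}u$ with $\sum_X u-\delta\abs{X}$ and likewise for $v$.
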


\begin{proof}
  For this proof we use sums instead of integrals since the integrals
  corresponding to $(X,Y)_G$ and $(\wt X, \wt Y)_G$ have different
  normalizations and can be somewhat confusing. Let $u \colon X \to
  [0,1]$ and $v \colon Y \to [0,1]$. We have
  \begin{align*}
    \sum_{x \in X}\sum_{y \in Y} G(x,y)u(x)v(y)
    &\geq (1 - \delta')\sum_{x \in \wt X} \sum_{y \in \wt Y}\wt G(x,y)u(x)v(y) \\
    &\geq q (1-\delta') \paren{\sum_{x \in \wt X} u(x)}\paren{\sum_{y \in \wt Y} v(y)} - \epsilon
    p \abs{\wt X} \abs{\wt Y}
    \\
    &\geq q (1 - \delta') \paren{\sum_{x \in X}
      u(x) - \delta\abs{X}}\paren{\sum_{y \in Y} v(y) - \delta\abs{Y}} - \epsilon
    p \abs{X} \abs{Y}
    \\
    & \geq q u(X) v(Y) - (\epsilon + 2\delta + \delta') p\abs{X}\abs{Y}.
  \end{align*}
\end{proof}

\begin{proof}[Proof of Lemma~\ref{lem:path-densify}]
  We apply Lemma~\ref{lem:make-bounded} to find large subsets of
  vertices for which the induced subgraph of $\Gamma$ is bounded and then apply
  Lemma~\ref{lemma:path-bounded-densify} to show that $G$ restricted to this subgraph satisfies $\DISC_\geq$.
  Finally, we use Lemma~\ref{lem:disc-pass-up} to pass the result back
  to the original graph.

  Here are the details. Let $\xi = 8c^{2/3}$ and $\delta = \frac{1}{4}c^{2/3}$,
  so that the hypotheses of Lemma~\ref{lem:make-bounded} are satisfied
  with $\gamma = \frac{c}{1-\delta}p^{1 + \frac{1}{2m-2}}$. Therefore, we
  can find $\wt X_i \subseteq X_i$ with $\abs{\wt X_i} \geq (1-
  \delta){X_i}$ for each $i$ so that $(\wt X_i, \wt X_{i+1})_{\Gamma}$
  is $(p, \xi, 1)$-bounded and $(p, \frac{c}{1-\delta}
  p^{1+\frac{1}{2m-2}}\sqrt{\abs{\wt X_i}{\abs{\wt
        X_{i+1}}}})$-jumbled for every $0 \leq i \leq m-1$. Let $\wt
  G$ denote the graph $G$ restricted to $\wt X_0, \dots, \wt
  X_m$. Note that the normalizations of $G$ and $\wt G$ are
  different. For instance, for any $S \subseteq \wt X_1$ and any $x_0
  \in \wt X_0$ and $x_2 \in \wt X_2$, we write
  \[
  G(x_0, S, x_2) = \frac{1}{\abs{X_1}} \sum_{x_1 \in S} G(x_0, x_1)G(x_1,x_2)
  \]
  while
  \[
  \wt G(x_0, S, x_2) = \frac{1}{\abs{\wt X_1}} \sum_{x_1 \in S} G(x_0, x_1)G(x_1,x_2).
  \]
  So $(\wt X_{i-1}, \wt X_i)_{\wt G}$ satisfies $\DISC_{\geq}(q_i, p,
  \epsilon')$ with $\epsilon' \leq \frac{\epsilon}{(1 - \delta)^2}
  \leq 2\epsilon$.
  Let $\wt G'$ denote the weighted bipartite graph on $(\wt X_0, \wt
  X_m)$ given by
  \[
  \wt G'(x_0, x_m) = \min\set{\wt G(x_0, \wt X_1, \dots, \wt X_{m-1},
    x_m), 4p^m}.
  \]
  Since $4(\frac{c}{1-\delta})^2 \leq 8c^2 \leq \xi$, we can apply
  Lemma~\ref{lemma:path-bounded-densify} to $\wt G$ to find
  that $(\wt X_0, \wt X_m)_{\wt G'}$ satisfies $\DISC_\geq (q_1\cdots
  q_m, p^m, 72\epsilon^{1/(2m)} + 8m\xi)$. To pass the result back to $G'$, we
  note that
  \begin{align*}
    G'(x_0, x_m)
    &= \min\set{G(x_0, X_1, \dots, X_{m-1}, x_m), 4p^m}
    \\
    &\geq \min\set{G(x_0, \wt X_1, \dots, \wt X_{m-1}, x_m), 4p^m}
    \\
    &= \min\set{\frac{\abs{\wt X_1} \cdots \abs{\wt
          X_{m-1}}}{\abs{X_1} \cdots \abs{X_{m-1}}}\wt G(x_0, \wt X_1,
      \dots, \wt X_{m-1}, x_m), 4p^m}
    \\
    &\geq (1 - \delta)^{m-1} \wt G'(x_0, x_m)
    \\
    &\geq (1- (m-1)\delta) \wt G'(x_0, x_m).
  \end{align*}
  It follows by Lemma~\ref{lem:disc-pass-up} that $(X_0, X_m)_{G'}$
  satisfies $\DISC_\geq(q_1\cdots q_m, p^m, \epsilon')$ with $\epsilon'
  \leq 72\epsilon^{1/(2m)} + 8m\xi + 2\delta + (m-1)\delta \leq
  72(\epsilon^{1/(2m)}  + mc^{2/3})$.
\end{proof}

\subsection{One-sided cycle counting}

If we can perform densification to reduce $H$ to a triangle with two
dense edges, then we have a counting lemma for $H$, as shown by the
following lemma. Note that we do not even need
any jumbledness assumptions on the remaining sparse edge.

\begin{center}
  \begin{tikzpicture}[scale=.5]
    \node[p] (a) at (-1,0) {};
    \node[p] (b) at (1,0) {};
    \node[p] (c) at (0,1.6) {};
    \draw[dense] (a)--(c)--(b);
    \draw (a)--(b);
  \end{tikzpicture}
\end{center}

\begin{lemma}
  \label{lem:triangle-single-sparse-edge}
  Let $K_3$ denote the triangle with vertex set $\set{1,2,3}$. Let $G$
  be a weighted graph with vertex subsets $X_1, X_2, X_3$ such that,
  for all $i \neq j$,
  $(X_i, X_j)_G$ satisfies $\DISC_\geq (q_{ij}, p_{ij}, \epsilon)$,
  where $p_{13} = p_{23} = 1$, $0 \leq p_{12} \leq 1$, and $0 \leq
  q_{ij} \leq p_{ij}$. Then $G(K_3) \geq q_{12}q_{13}q_{23} -
  3\epsilon p_{12}$.
\end{lemma}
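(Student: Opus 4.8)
The plan is to prove Lemma~\ref{lem:triangle-single-sparse-edge} directly by a telescoping argument, following the same pattern used for the dense counting lemma (Proposition~\ref{prop:dense-counting}), but being careful to apply the one-sided discrepancy condition $\DISC_\geq$ in the correct direction at each step. The point is that since $G$ takes values in $[0,1]$, all three factors $G(x_1,x_2)$, $G(x_1,x_3)$, $G(x_2,x_3)$ are nonnegative, so a telescoping decomposition lets us peel off one edge at a time while keeping the remaining product as a legitimate test function.

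First I would write, for $x_i$ varying uniformly over $X_i$,
\begin{align*}
  G(K_3) - q_{12}q_{13}q_{23}
  &= \int (G(x_1,x_2) - q_{12}) G(x_1,x_3)G(x_2,x_3) \ dx_1dx_2dx_3 \\
  &\quad + q_{12}\int (G(x_1,x_3) - q_{13}) G(x_2,x_3) \ dx_1dx_2dx_3 \\
  &\quad + q_{12}q_{13}\int (G(x_2,x_3) - q_{23}) \ dx_1dx_2dx_3.
\end{align*}
For the first term, fix $x_3$ and integrate over $x_1,x_2$: the function $u(x_1) = G(x_1,x_3)$ on $X_1$ and the function $v(x_2) = G(x_2,x_3)$ on $X_2$ both take values in $[0,1]$, so $\DISC_\geq(q_{12}, p_{12}, \epsilon)$ on $(X_1,X_2)_G$ gives $\int (G(x_1,x_2)-q_{12})u(x_1)v(x_2)\,dx_1dx_2 \geq -\epsilon p_{12}$; integrating this over $x_3$ preserves the bound, so the first term is at least $-\epsilon p_{12}$. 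For the second term, fix $x_2$; the function $w(x_1) \equiv 1$ on $X_1$ and $v(x_3) = G(x_2,x_3)$ on $X_3$ are in $[0,1]$, so $\DISC_\geq(q_{13}, 1, \epsilon)$ on $(X_1,X_3)_G$ gives a lower bound of $-\epsilon$; multiplying by $q_{12} \leq p_{12}$ and integrating over $x_2$, this term is at least $-q_{12}\epsilon \geq -\epsilon p_{12}$. Wait — I should double-check: $q_{12} \leq p_{12}$, so $q_{12}\epsilon \leq \epsilon p_{12}$, giving a lower bound of $-\epsilon p_{12}$. Similarly the third term uses $\DISC_\geq(q_{23},1,\epsilon)$ on $(X_2,X_3)_G$ with both test functions identically $1$, yielding a lower bound of $-q_{12}q_{13}\epsilon \geq -\epsilon p_{12}$ since $q_{12},q_{13} \leq 1$ and $q_{12} \leq p_{12}$.

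Summing the three lower bounds gives $G(K_3) - q_{12}q_{13}q_{23} \geq -3\epsilon p_{12}$, which is exactly the claim. The only subtlety — and the one place I would be careful to state explicitly — is the direction of the $\DISC_\geq$ inequality in each application: $\DISC_\geq$ only controls the integral from below, so the telescoping must be arranged so that every error term enters with a sign making a lower bound suffice, which is automatic here because each remaining product-of-$G$-factors is a nonnegative test function bounded by $1$ and each coefficient $q_{\bullet\bullet}$ is nonnegative. I do not expect any real obstacle; the lemma is essentially the one-sided analogue of the elementary telescoping computation in \eqref{eq:telescope3}, and no jumbledness or densification machinery is needed.
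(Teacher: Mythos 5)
Your proof is correct and follows essentially the same route as the paper's: the identical telescoping decomposition into three terms, each bounded below by the one-sided discrepancy condition with the remaining $G$-factors (or constants) as $[0,1]$-valued test functions. The paper's version is merely terser, stating the bounds $-\epsilon p_{12}$ and $-\epsilon q_{12}$ for the three integrals without spelling out the choice of test functions; your explicit verification of the sign direction and the use of $q_{12}\le p_{12}$, $q_{13}\le 1$ matches what the paper leaves implicit.
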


\begin{proof}
We have
  \begin{multline*}
  G(K_3) - q_{12}q_{13}q_{23}
  = \int_{x_1,x_2,x_3} (G(x_1,x_2) - q_{12}) G(x_1,x_3) G(x_2,x_3) \
  dx_1dx_2dx_3 \\
  + q_{12} \int_{x_1,x_2,x_3} (G(x_1,x_3)-q_{13}) G(x_2,x_3) \
  dx_1dx_2dx_3
  + q_{12} q_{13}\int_{x_1,x_2,x_3} (G(x_2,x_3) - q_{13}) \
  dx_1dx_2dx_3.
  \end{multline*}
  The first integral can be bounded below by $-\epsilon p_{12}$ and the
  latter two integrals by $-\epsilon q_{12}$. This gives the desired bound.
\end{proof}

The one-sided counting lemma can be proved by performing
subdivision densification as shown below.
\begin{center}
  \begin{tikzpicture}
    \path[use as bounding box] (-2,-1) rectangle (6,1);
    \begin{scope}
      \foreach \i in {0,1,2,3,4,5,6}{
        \node[p] (\i) at (360/7 *\i + 90:1) {};
        }
      \draw (0)--(1)--(2)--(3)--(4)--(5)--(6)--(0);
    \end{scope}

    \node at (2,0) {\LargeLeftarrow};

    \begin{scope}[shift={(4,0)}]
      \foreach \i in {0,3,4}{
        \node[p] (\i) at (360/7 *\i + 90:1) {};
        }
      \draw[dense] (3)--(0)--(4);
      \draw (3)--(4);
    \end{scope}
  \end{tikzpicture}
\end{center}

\begin{proof}[Proof of Proposition~\ref{prop:cycle-one-sided}]
Let the vertices of $C_\ell$ be $\set{1, 2, \dots, \ell}$ in that
order. Apply subdivision densification
(Lemma~\ref{lem:subdiv-densify}) to the subdivided edge
$(1,2,\dots,\ceil{\ell/2})$, as well as to the subdivided edge
$(\ceil{\ell/2}, \ceil{\ell/2}+1, \dots, \ell)$. Conclude with Lemma~\ref{lem:triangle-single-sparse-edge}.
\end{proof}

\section{Applications} \label{sec:applications}

It is now relatively straightforward to prove our sparse pseudorandom analogues of Tur\'an's theorem, Ramsey's theorem and the graph removal lemma. All of the proofs have essentially the same flavour. We begin by applying the sparse regularity lemma for jumbled graphs, Theorem \ref{thm:sparsereg}. We then apply the dense version of the theorem we are considering to the reduced graph to find a copy of our graph $H$. The counting lemma then implies that our original sparse graph must also contain many copies of $H$.

In order to apply the counting lemma, we will always need to clean up our regular partition, removing all edges which are not contained in a dense regular pair. The following lemma is sufficient for our purposes.

\begin{lemma} \label{lem:cleaning}
For every $\e, \alpha > 0$ and positive integer $m$, there exists $c > 0$ and a positive integer $M$ such that if $\Gamma$ is a $(p, cpn)$-jumbled graph on $n$ vertices then any subgraph $G$ of $\Gamma$ is such that there is a subgraph $G'$ of $G$ with $e(G') \geq e(G) - 4\alpha e(\Gamma)$ and an equitable partition of the vertex set into $k$ pieces $V_1, V_2, \dots, V_k$ with $m \leq k \leq M$  such that the following conditions hold.
\begin{enumerate}
\item
There are no edges of $G'$ within $V_i$ for any $1\leq i \leq k$.

\item
Every non-empty subgraph $(V_i, V_j)_{G'}$ has $d_{G'}(V_i, V_j) = q_{ij} \geq \alpha p$ and satisfies $\DISC(q_{ij}, p, \epsilon)$.
\end{enumerate}
\end{lemma}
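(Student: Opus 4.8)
The plan is to run the standard ``clean-up'' argument adapted to the jumbled setting, deriving everything from the sparse regularity lemma (Theorem~\ref{thm:sparsereg}) and the jumbledness hypothesis. First I would choose the parameters: given $\epsilon, \alpha, m$, feed a suitable $\epsilon_0 = \epsilon_0(\epsilon)$ and $m$ into Theorem~\ref{thm:sparsereg} to obtain $\eta_0$ and $M$, and then set $c$ small enough (in terms of $\eta_0, \alpha, M, m$) so that a $(p, cpn)$-jumbled graph is in particular $(p, \eta_0 p n)$-jumbled; this gives the equitable partition $V_1, \dots, V_k$ with $m \le k \le M$ such that all but at most $\epsilon_0 k^2$ pairs $(V_i, V_j)$ satisfy $\DISC(q_{ij}, p, \epsilon_0)$ for some $q_{ij}$. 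I would also first pass to an equitable partition where we delete at most one vertex per part if needed so that all parts have size $\lfloor n/k \rfloor$; since $k \le M$ this changes edge counts by at most $O(n) = o(p n^2)$ provided $p$ is not too small, which can be absorbed, or one simply works with the equitable partition directly and notes $\big||V_i|-|V_j|\big|\le 1$.

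The next step is to define $G'$ by deleting from $G$ every edge that lies in one of three ``bad'' categories: (i) edges inside a part $V_i$; (ii) edges in a pair $(V_i, V_j)$ that is not $\DISC(q_{ij}, p, \epsilon_0)$-regular; (iii) edges in a regular pair of low density, i.e. with $q_{ij} < \alpha p$. I then bound the number of deleted edges. For (i): the number of pairs of vertices inside parts is $k \binom{|V_i|}{2} \le k \cdot \frac{(n/k)^2}{2} = \frac{n^2}{2k} \le \frac{n^2}{2m}$; applying the jumbledness of $\Gamma$ (equivalently, the definition with $X = Y = V_i$, summed over $i$), the number of $\Gamma$-edges, hence $G$-edges, inside parts is at most $p \cdot \frac{n^2}{2m} + O(cpn\cdot n) \le \alpha e(\Gamma)$ once $m$ is large relative to $1/\alpha$ and using $e(\Gamma) \ge (p - c p)\binom n 2 \asymp p n^2/2$ (which itself follows from jumbledness with $X=Y=V(\Gamma)$). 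For (ii): there are at most $\epsilon_0 k^2$ irregular pairs, each spanning at most $|V_i||V_j| \le (n/k)^2$ pairs of vertices, so at most $\epsilon_0 n^2$ pairs total, and jumbledness bounds the $\Gamma$-edges among them by $p \epsilon_0 n^2 + O(cpn^2) \le \alpha e(\Gamma)$ for $\epsilon_0$ small. For (iii): a low-density regular pair $(V_i,V_j)$ has at most $q_{ij}|V_i||V_j| + \epsilon_0 p |V_i||V_j| \le (\alpha + \epsilon_0) p (n/k)^2$ edges of $G$ (using $\DISC$ with $u = v = 1$), and summing over at most $k^2$ pairs gives at most $(\alpha+\epsilon_0) p n^2 \le 2\alpha e(\Gamma)$ after adjusting constants. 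Adding the three bounds gives $e(G) - e(G') \le 4\alpha e(\Gamma)$.

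Finally I would verify the two stated conditions. Condition (1) holds by construction, since all edges inside parts were removed in category (i). For condition (2), any non-empty pair $(V_i, V_j)_{G'}$ survived, so it was $\DISC(q_{ij}, p, \epsilon_0)$-regular with $q_{ij} \ge \alpha p$ in $G$; I must check the density and regularity statements transfer to $G'$. Since $G'$ on this pair coincides with $G$ on this pair (we only ever delete \emph{all} edges of a bad pair, never a strict subset, and this pair is not bad), we have $d_{G'}(V_i,V_j) = d_G(V_i,V_j) = q_{ij} \ge \alpha p$ and $(V_i,V_j)_{G'}$ satisfies $\DISC(q_{ij}, p, \epsilon_0)$; choosing $\epsilon_0 \le \epsilon$ at the outset gives $\DISC(q_{ij}, p, \epsilon)$ as required. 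The main technical obstacle is bookkeeping the constant $4$: the three error terms must each be controlled by (roughly) $\alpha e(\Gamma)$, which forces choosing $m$ large compared to $1/\alpha$, $\epsilon_0$ small compared to $\alpha$, and $c$ small compared to everything, and it requires the lower bound $e(\Gamma) \gtrsim p n^2$ to convert absolute edge counts of order $pn^2$ into multiples of $e(\Gamma)$ — all of which is routine but must be arranged in the correct order of quantifiers.
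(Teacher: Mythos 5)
Your proposal is correct and follows essentially the same route as the paper: apply the sparse regularity lemma with suitably shrunken parameters, delete edges inside parts, in irregular pairs, and in low-density pairs, and bound each category by a constant multiple of $\alpha e(\Gamma)$ using jumbledness (both for the upper bounds on edge counts and for the lower bound $e(\Gamma)\gtrsim p\binom{n}{2}$). The only detail to pin down, which you correctly flag as a quantifier issue, is that since $m$ is given you must feed $m_0=\max(m,\Theta(\alpha^{-1}))$ into the regularity lemma so that the within-part contribution $O(pn^2/k)$ is controlled; the paper does exactly this with $m_0=\max(32\alpha^{-1},m)$.
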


\begin{proof}
Let $m_0 = \max(32 \alpha^{-1}, m)$ and $\theta =
\frac{\alpha}{32}$. An application of Theorem \ref{thm:sparsereg}, the
sparse regularity lemma for jumbled graphs, using $\min\set{\theta,
  \epsilon}$ as the parameter $\epsilon$ in the regularity lemma, tells us that there exists an $\eta > 0$ and a positive integer $M$ such that if $\Gamma$ is $(p, \eta p n)$-jumbled then there is an equitable partition of the vertices of $G$ into $k$ pieces with $m_0 \leq k \leq M$ such that all but $\theta k^2$ pairs of vertex subsets $(V_i, V_j)_G$ satisfy $\DISC(q_{ij}, p, \epsilon)$. Let $c = \min(\eta, \frac{1}{8 M^2})$.

Since $\Gamma$ is $(p, \beta)$-jumbled with $\beta \leq c p n$, $c \leq \frac{1}{8 M^2}$ and $n \leq 2 M |V_i|$ for all $i$, the number of edges between $V_i$ and $V_j$ satisfies
\[|e(V_i, V_j) - p|V_i||V_j|| \leq c p n^2 \leq \frac{1}{2} p |V_i||V_j|\]
and thus lies between $\frac{1}{2} p|V_i||V_j|$ and $\frac{3}{2}  p |V_i||V_j|$. Note that this also holds for $i = j$, allowing for the fact that we will count all edges twice.

Therefore, if we remove all edges contained entirely within any $V_i$, we remove at most
$2 p k \left(\frac{2 n}{k}\right)^2 = \frac{8 p n^2}{k}  \leq \frac{\alpha}{4} p n^2$
edges. Here we used that $|V_i| \leq \lceil \frac{n}{k}\rceil \leq \frac{2n}{k}$ for all $i$. If we remove all edges contained within pairs which do not satisfy the discrepancy condition, the number of edges we are removing is at most $2 p \theta k^2 \left(\frac{2n}{k}\right)^2 = 8 p \theta n^2 = \frac{\alpha}{4}  p n^2$. Finally, if we remove all edges contained within pairs whose density is smaller than $\alpha p$, we remove at most $\alpha p \binom{n}{2} \leq \frac{\alpha}{2} p n^2$ edges. Overall, we have removed at most $\alpha p n^2 \leq 4 \alpha e(\Gamma)$ edges. We are left with a graph $G'$ with $e(G') \geq e(G)  - 4 \alpha e(\Gamma)$ edges, as required.
\end{proof}

\subsection{Erd\H{o}s-Stone-Simonovits theorem} \label{sec:app-ESS}

We are now ready to prove the Erd\H{o}s-Stone-Simonovits theorem in
jumbled graphs. We first recall the statement.
Recall that a graph $\Gamma$ is \emph{$(H, \e)$-Tur\'an} if any
subgraph of $\Gamma$ with at least $\paren{1 - \frac{1}{\chi(H) - 1} +
  \e} e(\Gamma)$ edges contains a copy of $H$.

\vspace{2mm} \noindent
{\bf Theorem \ref{TuranIntro}} {\it For every graph $H$ and every $\e > 0$, there exists $c > 0$ such that if $\beta \leq c p^{d_2(H) + 3}n$ then any $(p, \beta)$-jumbled graph on $n$ vertices is $(H, \e)$-Tur\'an.}

\begin{proof}
Suppose that $H$ has vertex set $\{1,2, \dots, m\}$, $\Gamma$ is a $(p, \beta)$-jumbled graph on $n$ vertices, where $\beta \leq c p^{d_2(H) + 3}n$, and $G$ is a subgraph of $\Gamma$ containing at least $\left(1 - \frac{1}{\chi(H) - 1} + \epsilon\right) e(\Gamma)$ edges.

We will need to apply the one-sided counting lemma, Lemma \ref{thm:onesidedintro}, with $\alpha = \frac{\epsilon}{8}$ and $\theta$. We get constants $c_0$ and $\epsilon_0 > 0$ such that if $\Gamma$ is $(p, c_0 p^{d_2(H) + 3} \sqrt{\abs{X_i}\abs{X_j}})$-jumbled and $G$ satisfies $\DISC(q_{ij}, p, \epsilon_0)$, where $\alpha p \leq q_{ij} \leq p$, between sets $X_i$ and $X_j$ for every $1 \leq i < j \leq m$ with $ij \in E(H)$, then $G(H) \geq (1 - \theta) q(H)$.

Apply Lemma \ref{lem:cleaning} with $\alpha = \epsilon/8$ and $\epsilon_0$. This yields constants $c_1$ and $M$ such that if $\Gamma$ is $(p, c_1 p n)$-jumbled then there is a subgraph $G'$ of $G$ with
\[e(G') \geq \left(1 - \frac{1}{\chi(H) - 1} + \epsilon - 4 \alpha\right) e(\Gamma) \geq \left(1 - \frac{1}{\chi(H) - 1} + \frac{\epsilon}{2}\right) e(\Gamma),\]
where we used that $\alpha = \frac{\epsilon}{8}$. Moreover, there is an equitable partition of the vertex set into $k \leq M$ pieces $V_1, \dots, V_k$ such that every non-empty subgraph $(V_i, V_j)_{G'}$ has $d(V_i, V_j) = q_{ij} \geq \alpha p$ and satisfies $\DISC(q_{ij}, p, \epsilon_0)$.

We now consider the reduced graph $R$, considering each piece $V_i$ of the partition as a vertex $v_i$ and placing an edge between $v_i$ and $v_j$ if and only if the graph between $V_i$ and $V_j$ is non-empty. Since $\Gamma$ is $(p, cpn)$-jumbled and $n \leq 2 M |V_i|$, the number of edges between any two pieces differs from $p|V_i||V_j|$ by at most $c p n^2 \leq \frac{\epsilon}{20} p |V_i||V_j|$ provided that $c \leq \frac{\epsilon}{80M^2}$. Note, moreover, that $|V_i| \leq \lceil \frac{n}{k} \rceil \leq (1 + \frac{\epsilon}{20}) \frac{n}{k}$ provided that $n \geq \frac{20 M}{\epsilon}$. Therefore, the number of edges in the reduced graph $R$ is at least
\[e(R) \geq \frac{e(G')}{(1 + \frac{\epsilon}{20}) p \lceil \frac{n}{k} \rceil^2} \geq \frac{(1 - \frac{1}{\chi(H) - 1} + \frac{\epsilon}{2}) e(\Gamma)}{(1 + \frac{\epsilon}{20})^3 p (\frac{n}{k})^2} \geq \left(1 - \frac{1}{\chi(H) - 1} + \frac{\epsilon}{4}\right) \binom{k}{2},\]
where the final step follows from $e(\Gamma) \geq (1 - \frac{\epsilon}{20}) p \binom{n}{2}$.

Applying the Erd\H{o}s-Stone-Simonovits theorem to the reduced graph implies that it contains a copy of $H$. But if this is the case then we have a collection of vertex subsets $X_1, \dots, X_m$ such that, for every edge $ij \in E(H)$, the induced subgraph $(X_i, X_j)_{G'}$ has $d(X_i, X_j) = q_{ij} \geq \alpha p$ and satisfies $\DISC(q_{ij}, p, \epsilon_0)$. By the counting lemma, provided $c \leq \frac{c_0}{2M}$, we have $G(H) \geq G'(H) \geq (1 - \theta) (\alpha p)^{e(H)} (2M)^{-v(H)}$. Therefore, for $c = \min(\frac{c_0}{2M}, c_1, \frac{\epsilon}{80M^2})$, we see that $G$ contains a copy of $H$.
\end{proof}

The proof of the stability theorem, Theorem \ref{StabIntro}, is
similar to the proof of Theorem \ref{TuranIntro}, so we confine
ourselves to a sketch. Suppose that $\Gamma$ is a $(p, \beta)$-jumbled
graph on $n$ vertices, where $\beta \leq c p^{d_2(H) + 3}n$, and $G$ is
a subgraph of $\Gamma$ containing $\left(1 - \frac{1}{\chi(H) - 1} -
  \delta\right) e(\Gamma)$ edges. An application of Lemma
\ref{lem:cleaning} as in the proof above allows us to show that there
is a subgraph $G'$ of $G$ formed by removing at most $\frac{\delta}{4}
p n^2$ edges and a regular partition of $G'$ into $k$ pieces such that
the reduced graph has at least $\left(1 - \frac{1}{\chi(H) - 1} - 2
  \delta\right) \binom{k}{2}$ edges. This graph can contain no copies
of $H$ - otherwise the original graph would have many copies of $H$ as
in the last paragraph above. From the dense version of the stability
theorem~\cite{Si68} it follows that if $\delta$ is sufficiently small then we may
make $R$ into a $(\chi(H) - 1)$-partite graph by removing at most
$\frac{\epsilon}{16} k^2$ edges. We imitate this removal process in
the graph $G'$. That is, if we remove edges between $v_i$ and $v_j$ in
$R$ then we remove all of the edges between $V_i$ and $V_j$ in
$G'$. Since the number of edges between $V_i$ and $V_j$ is at most $2
p |V_i||V_j|$, we will remove at most
\[\frac{\epsilon}{16} k^2 2 p \left\lceil \frac{n}{k} \right\rceil^2 \leq \frac{\epsilon}{2} p n^2\]
edges in total from $G'$. Since we have already removed all edges
which are contained within any $V_i$ the resulting graph is clearly
$(\chi(H) - 1)$-partite. Moreover, the total number of edges removed
is at most $\frac{\delta}{4} p n^2 + \frac{\epsilon}{2} p n^2 \leq
\epsilon p n^2$, as required.

\subsection{Ramsey's theorem} \label{sec:app-ramsey}

In order to prove that the Ramsey property also holds in sparse jumbled graphs, we need the following lemma which says that we may remove a small proportion of edges from any sufficiently large clique and still maintain the Ramsey property.

\begin{lemma} \label{RobustRamsey}
For any graph $H$ and any positive integer $r \geq 2$, there exist $a, \eta > 0$ such that if $n$ is sufficiently large and $G$ is any subgraph of $K_n$ of density at least $1 - \eta$, any $r$-coloring of the edges of $G$ will contain at least $a n^{v(H)}$ monochromatic copies of $H$.
\end{lemma}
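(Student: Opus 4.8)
The plan is to derive this "robust Ramsey" statement from the ordinary (infinite-Ramsey-free) finite Ramsey theorem combined with a supersaturation/removal-type argument, exactly the sort of reduction used throughout Section~\ref{sec:applications}. First I would fix $H$ and $r$, and let $N = R_r(H)$ be the $r$-color Ramsey number of $H$, so that every $r$-coloring of $E(K_N)$ contains a monochromatic copy of $H$. Set $t = v(H)$. The idea is that a subgraph $G \subseteq K_n$ of density at least $1-\eta$ contains almost all $t$-subsets — in fact almost all $N$-subsets — as cliques, and on each such clique any $r$-coloring yields a monochromatic $H$.

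Here is the argument in more detail. Choose $\eta$ small enough (depending only on $N$, hence only on $H$ and $r$) that a subgraph $G$ of $K_n$ with $e(G) \ge (1-\eta)\binom n2$ has at least $\frac12 \binom nN$ of its $N$-subsets spanning a complete graph $K_N$ in $G$: this is a routine counting estimate, since the number of $N$-subsets containing at least one non-edge of $G$ is at most $(\eta \binom n2)\binom{n-2}{N-2} \le \eta N^2 \binom nN$, so $\eta < \tfrac1{2N^2}$ suffices. Now fix any $r$-coloring of $E(G)$. Restricting the coloring to each of these $\ge \frac12\binom nN$ cliques $K_N$, the definition of $N = R_r(H)$ produces at least one monochromatic copy of $H$ inside each such clique. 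Thus the number of (clique, monochromatic-$H$) incidences is at least $\frac12\binom nN$. Since each labeled copy of $H$ in $G$ lies in at most $\binom{n-t}{N-t} \le n^{N-t}$ such $N$-subsets, the number of monochromatic copies of $H$ in the coloring is at least
\[
\frac{\tfrac12 \binom nN}{n^{N-t}} \ge a\, n^{t}
\]
for a suitable constant $a = a(H,r) > 0$ and all sufficiently large $n$. This is exactly the claimed bound with $v(H) = t$.

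There is essentially no hard step here: the statement is a soft consequence of the finite Ramsey theorem plus double counting, and the role of the density hypothesis $1-\eta$ is only to guarantee a positive fraction of $N$-cliques survive. The one point requiring mild care is the quantifier order — $\eta$ and $a$ must depend only on $H$ and $r$ (not on $n$), which the argument above respects since $N = R_r(H)$ depends only on $H$ and $r$. If one instead wanted to avoid invoking the Ramsey number directly, an alternative is to apply the dense removal/counting machinery to the reduced graph as in the proof of Theorem~\ref{TuranIntro}, but the clean route is simply the clique-counting argument just described. The reason the lemma is phrased this way is that in the subsequent proof of Theorem~\ref{RamseyIntro} it will be fed the cleaned-up reduced graph $R$ (which is dense, being obtained from $K_k$ by deleting few edges), and Lemma~\ref{RobustRamsey} then supplies many monochromatic copies of $H$ in $R$, one of which is promoted via the counting lemma to a monochromatic copy of $H$ in the original sparse jumbled graph.
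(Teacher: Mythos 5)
Your proof is correct and follows essentially the same route as the paper's: both arguments combine the finite Ramsey theorem with a standard averaging/supersaturation count, using the density hypothesis only to ensure that the missing edges destroy a negligible fraction of the relevant structures. The paper simply packages the two steps differently (first quoting positive Ramsey multiplicity for $K_n$, then subtracting the copies of $H$ that meet a deleted edge), whereas you fold them into a single double count over surviving $N$-cliques with $N = R_r(H)$; the content is identical.
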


\begin{proof}
Suppose first that the edges of $K_n$ have been $r$-colored. Ramsey's theorem together with a standard averaging argument tells us that for $n$ sufficiently large there exists $a_0$ such that there are at least $a_0 n^{v(H)}$ monochromatic copies of $H$. Since $G$ is formed from $K_n$ by removing at most $\eta n^2$ edges, this deletion process will force us to delete at most $\eta n^{v(H)}$ copies of $H$. Therefore, provided that $\eta \leq \frac{a_0}{2}$, the result follows with $a = \frac{a_0}{2}$.
\end{proof}

We also need a slight variant of the sparse regularity lemma, Theorem \ref{thm:sparsereg}, which allows us to take a regular partition which works for more than one graph.

\begin{lemma} \label{lem:colorreg}
For every $\epsilon > 0$ and integers $\ell, m_0 \geq 1$, there exist $\eta > 0$ and a positive integer $M$ such that if $\Gamma$ is a $(p, \eta p n)$-jumbled graph on $n$ vertices and $G_1, G_2, \dots G_\ell$ is a collection of weighted subgraphs of $\Gamma$ then there is an equitable partition into $m_0 \leq k \leq M$ pieces such that for each $G_i$, $1 \leq i \leq \ell$, all but at most $\e k^2$ pairs of vertex subsets $(V_a, V_b)_{G_i}$ satisfy $\DISC(q_{ab}^{(i)}, p, \epsilon)$ for some $q_{ab}^{(i)}$.
\end{lemma}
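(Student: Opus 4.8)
The plan is to mimic, with a single bookkeeping change, the energy‑increment argument that underlies Theorem~\ref{thm:sparsereg} (equivalently, Kohayakawa's proof in \cite{K97}), tracking all $\ell$ graphs at once so that the partition produced is $\DISC$‑regular for every $G_i$ simultaneously. For an equitable partition $\mathcal P\colon V(\Gamma)=V_1\cup\dots\cup V_k$ let $d_{G_i}(V_a,V_b)$ denote the density of $G_i$ between $V_a$ and $V_b$, and define the normalized index of $G_i$ and the combined index by
\[
\operatorname{ind}_i(\mathcal P)=\frac1{p^2}\sum_{a,b=1}^k\frac{|V_a||V_b|}{n^2}\,d_{G_i}(V_a,V_b)^2,\qquad \operatorname{ind}(\mathcal P)=\sum_{i=1}^\ell\operatorname{ind}_i(\mathcal P).
\]
First I would record two routine facts: (i) since $G_i\le\Gamma$ and $\Gamma$ is $(p,\eta pn)$‑jumbled, for any equitable $\mathcal P$ with at most $M$ parts (and $n$ large) one has $d_{G_i}(V_a,V_b)\le d_\Gamma(V_a,V_b)\le p(1+2\eta M)$, so $\operatorname{ind}_i(\mathcal P)\le(1+2\eta M)^2$ and hence $\operatorname{ind}(\mathcal P)\le 2\ell$ as soon as $\eta M\le\frac14$; and (ii) $\operatorname{ind}_i$, and therefore $\operatorname{ind}$, never decreases when $\mathcal P$ is refined, by Jensen's inequality.

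Then I would run the standard iteration. Start from an arbitrary equitable partition into $m_0$ parts. Given the current equitable partition $\mathcal P$ into $k$ parts, declare it good and stop if, for every $i$, all but at most $\epsilon k^2$ pairs $(V_a,V_b)$ satisfy $\DISC(d_{G_i}(V_a,V_b),p,\epsilon)$; a good partition gives the conclusion of the lemma with $q^{(i)}_{ab}=d_{G_i}(V_a,V_b)$. Otherwise choose $i$ with more than $\epsilon k^2$ bad pairs. For a bad pair $(V_a,V_b)$, a witnessing pair of functions for the failure of $\DISC$ may be taken $\{0,1\}$‑valued, since the defining integral is linear in each argument; thus there are $U\subseteq V_a$, $W\subseteq V_b$ exhibiting a deviation exceeding $\epsilon p|V_a||V_b|$, and subdividing $V_a$ along $U$ and $V_b$ along $W$ increases the contribution of this pair to $\operatorname{ind}_i$ by more than $(|V_a||V_b|/n^2)\epsilon^2$ by the defect form of the Cauchy--Schwarz inequality. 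Taking the common refinement over all bad pairs and summing, using $|V_a||V_b|/n^2\ge(1-o(1))/k^2$ by equitability, yields a refinement $\mathcal P'$ with $\operatorname{ind}_i(\mathcal P')\ge\operatorname{ind}_i(\mathcal P)+\epsilon^3/2$ and $\operatorname{ind}_j(\mathcal P')\ge\operatorname{ind}_j(\mathcal P)$ for all other $j$. Re‑equalizing $\mathcal P'$ into an equitable $\mathcal P''$ that refines $\mathcal P'$ outside an arbitrarily small fraction of vertices (the usual ``garbage cell'' manoeuvre) perturbs each $\operatorname{ind}_j$ by at most $\epsilon^3/(4\ell)$, so $\operatorname{ind}(\mathcal P'')\ge\operatorname{ind}(\mathcal P)+\epsilon^3/4$. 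Since $\operatorname{ind}\le 2\ell$ throughout, the loop runs at most $8\ell/\epsilon^3$ times, and since one step multiplies the number of parts by a bounded factor, it halts at an equitable partition into at most $M=M(\epsilon,\ell,m_0)$ parts. Only then do I fix $\eta=\eta(\epsilon,\ell,m_0)$ (at most $1/(4M)$, and small enough for the $(1-o(1))$'s) and $n$ large, to legitimise the jumbledness estimates used above.

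The main obstacle is not any single deep step but the interleaving itself, together with the standard circularity in its resolution: $\eta$ must be chosen before $M$ is known, so one first extracts $M=M(\epsilon,\ell,m_0)$ from the energy argument using only the a priori bound $\operatorname{ind}\le 2\ell$ (valid whenever $\eta$ times the number of parts is at most $\frac14$), and then closes the loop by setting $\eta\le 1/(4M)$, exactly as in the single‑graph case. It is worth noting why the tempting shortcut fails: applying Theorem~\ref{thm:sparsereg} separately to each $G_i$ and taking a common refinement does not work, because a sub‑pair of a $\DISC(q,p,\epsilon)$‑regular pair inherits regularity only as $\DISC(q,p,\mu^{-2}\epsilon)$ where $\mu$ is the ratio of the sub‑cell size to the original cell size, and $\mu^{-1}$ here is a tower function of $1/\epsilon$, which cannot be absorbed by any polynomial shrinking of $\epsilon$; this is precisely what forces the joint energy‑increment route. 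The remaining ingredients — the $\{0,1\}$‑reduction of the $\DISC$ witnesses, the defect Cauchy--Schwarz increment, and the equalizing step — are routine and identical to their single‑graph counterparts, so I would state them with pointers to \cite{K97} rather than reprove them.
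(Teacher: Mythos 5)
Your argument is correct and is exactly the intended proof: the paper states Lemma~\ref{lem:colorreg} without proof as a routine variant of Theorem~\ref{thm:sparsereg}, and the standard route is precisely your simultaneous energy increment with the combined index $\sum_i \operatorname{ind}_i$, bounded above via the jumbledness of $\Gamma$ and increased by $\Omega(\epsilon^3)$ whenever some single $G_i$ has too many non-$\DISC$ pairs, with the $\eta$-versus-$M$ circularity resolved exactly as you describe. The only nit is arithmetic: with $\eta M\le\frac14$ your bound is $\operatorname{ind}\le(3/2)^2\ell=2.25\,\ell$ rather than $2\ell$, which only changes the iteration count by a harmless constant.
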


There is also an appropriate analogue of Lemma \ref{lem:cleaning} to go with this regularity lemma.

\begin{lemma} \label{lem:colorcleaning}
For every $\e, \alpha > 0$ and positive integer $m$, there exist $c > 0$ and a positive integer $M$ such that if $\Gamma$ is a $(p, cpn)$-jumbled graph on $n$ vertices then any collection of subgraphs $G_1, G_2, \dots, G_\ell$ of $\Gamma$ will be such that there are subgraphs $G'_i$ of $G_i$ with $e(G'_i) \geq e(G_i) - 4\alpha e(\Gamma)$ and an equitable partition of the vertex set into $k$ pieces $V_1, V_2, \dots, V_k$ with $m \leq k \leq M$  such that the following conditions hold.
\begin{enumerate}
\item
There are no edges of $G'_i$ within $V_a$ for any $1 \leq i \leq \ell$ and any $1\leq a \leq k$.

\item
Every subgraph $(V_a, V_b)_{G'_i}$ containing any edges from $G'_i$ has $d_{G'_i}(V_a, V_b) = q_{ab}^{(i)} \geq \alpha p$ and satisfies $\DISC(q_{ab}^{(i)}, p, \epsilon)$.
\end{enumerate}
\end{lemma}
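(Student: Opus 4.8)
The plan is to prove Lemma~\ref{lem:colorcleaning} by adapting the proof of Lemma~\ref{lem:cleaning} in the obvious way, replacing the single regularity lemma input (Theorem~\ref{thm:sparsereg}) with its multicolor refinement (Lemma~\ref{lem:colorreg}), and then performing the same three deletion steps simultaneously across all $\ell$ subgraphs. The only genuinely new point is a bookkeeping adjustment: because we must clean each of $G_1,\dots,G_\ell$ against the \emph{same} partition, we should choose the parameter $\ell$ to be the number of colors and keep track of the fact that the density estimates coming from jumbledness of $\Gamma$ do not depend on which $G_i$ we are looking at.

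First I would fix $m_0 = \max(32\ell\alpha^{-1}, m)$ and $\theta = \frac{\alpha}{32\ell}$, and apply Lemma~\ref{lem:colorreg} with parameter $\min\{\theta,\epsilon\}$ in place of $\epsilon$, with the given value of $\ell$, and with $m_0$ as above. This yields $\eta > 0$ and $M$ such that for any $(p,\eta p n)$-jumbled $\Gamma$ on $n$ vertices, any $G_1,\dots,G_\ell \subseteq \Gamma$ admit a common equitable partition $V_1,\dots,V_k$ with $m_0 \le k \le M$ such that for each $i$ all but at most $\theta k^2$ pairs $(V_a,V_b)_{G_i}$ satisfy $\DISC(q^{(i)}_{ab},p,\epsilon)$. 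Set $c = \min(\eta, \frac{1}{8M^2})$. Since $\Gamma$ is $(p,\beta)$-jumbled with $\beta \le cpn \le \frac{1}{8M^2}pn$ and $n \le 2M|V_a|$, for every pair $a,b$ (including $a=b$) the edge count $e_\Gamma(V_a,V_b)$ lies between $\frac12 p|V_a||V_b|$ and $\frac32 p|V_a||V_b|$; this bound is a statement about $\Gamma$ alone and hence applies uniformly to all the $G_i$, which are subgraphs of $\Gamma$.

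Next, for each fixed $i$, I would delete from $G_i$ exactly the edges that obstruct the two desired conclusions: all edges lying inside some $V_a$; all edges inside a pair $(V_a,V_b)$ which is not $\epsilon$-regular for $G_i$; and all edges inside a pair whose $G_i$-density is below $\alpha p$. Using $|V_a| \le \lceil n/k\rceil \le 2n/k$ and the $\Gamma$-edge bound above, the first batch removes at most $2pk(2n/k)^2 = 8pn^2/k \le \frac{\alpha}{4\ell}pn^2 \le \frac{\alpha}{4}pn^2$ edges (since $k \ge m_0 \ge 32\ell\alpha^{-1}$); the second removes at most $2p\theta k^2(2n/k)^2 = 8p\theta n^2 = \frac{\alpha}{4\ell}pn^2 \le \frac{\alpha}{4}pn^2$ edges; the third removes at most $\alpha p\binom n2 \le \frac{\alpha}{2}pn^2$ edges. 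So in total we remove at most $\alpha p n^2 \le 4\alpha e(\Gamma)$ edges from $G_i$ (using $e(\Gamma) \ge (1-c)p\binom n2 \ge \frac14 pn^2$), giving $G'_i$ with $e(G'_i) \ge e(G_i) - 4\alpha e(\Gamma)$. Doing this for every $i = 1,\dots,\ell$ against the common partition yields all the stated conclusions: (1) holds since we deleted all edges inside parts for every $i$; (2) holds by construction of the deletions for each $i$, with $q^{(i)}_{ab} = d_{G'_i}(V_a,V_b) \ge \alpha p$ and $\DISC(q^{(i)}_{ab},p,\epsilon)$ following from the inherited regularity on the surviving pairs.

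There is no real obstacle here; the proof is a routine bookkeeping variant of Lemma~\ref{lem:cleaning}. The one place requiring mild care is the choice of $\theta$ and $m_0$: the loss in the deletion bounds is now summed over $\ell$ colors, so one must absorb a factor of $\ell$ into these constants (equivalently, note that $\ell$ is an absolute constant depending only on $H$ and $r$ in the applications, so the dependence of $c$ and $M$ on it is harmless). Everything else—the $\Gamma$-density estimate, the size bounds $|V_a| \le 2n/k$, and the inheritance of $\DISC$ on surviving pairs—transfers verbatim from the single-graph case, because all of these are properties of $\Gamma$ or of individual subgraphs $G_i$ and do not interact across colors.
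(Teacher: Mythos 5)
Your proposal is correct and is essentially the proof the paper intends: the paper itself gives no separate argument for this lemma, deferring to the obvious adaptation of Lemma~\ref{lem:cleaning} with Lemma~\ref{lem:colorreg} in place of Theorem~\ref{thm:sparsereg}, which is exactly what you carry out. The only remark is that your extra factor of $\ell$ in $\theta$ and $m_0$ is harmless but unnecessary, since the conclusion $e(G'_i) \geq e(G_i) - 4\alpha e(\Gamma)$ is required for each color separately and the deletions for distinct $i$ are not summed inside the lemma (that summation happens later, in the proof of Theorem~\ref{RamseyIntro}).
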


The proof of the sparse analogue of Ramsey's theorem now follows along the lines of the proof of Theorem \ref{TuranIntro} above.

\vspace{2mm}\noindent
{\bf Theorem \ref{RamseyIntro}}
{\it For every graph $H$ and every positive integer $r \geq 2$, there exists $c > 0$ such that if $\beta \leq c p^{d_2(H) + 3}n$ then any $(p, \beta)$-jumbled graph on $n$ vertices is $(H, r)$-Ramsey.}

\begin{proof}
Suppose that $H$ has vertex set $\{1,2, \dots, m\}$, $\Gamma$ is a $(p, \beta)$-jumbled graph on $n$ vertices, where $\beta \leq c p^{d_2(H) + 3}n$, and $G_1, G_2, \dots, G_r$ are subgraphs of $\Gamma$ where $G_i$ is the subgraph whose edges have been colored in color $i$.

Let $a, \eta$ be the constants given by Lemma \ref{RobustRamsey}. That is, for $n \geq n_0$, any subgraph of $K_n$ of density at least $1-\eta$ is such that any $r$-coloring of its edges contains at least $a n^{v(H)}$ monochromatic copies of $H$. We will need to apply the one-sided counting lemma, Theorem~\ref{thm:onesidedintro}, with $\alpha = \frac{\eta}{8r}$ and $\theta$. We get constants $c_0$ and $\epsilon_0 > 0$ such that if $\Gamma$ is $(p, c_0 p^{d_2(H) + 3} \sqrt{\abs{X_i}\abs{X_j}})$-jumbled and $G$ satisfies $\DISC(q_{ij}, p, \epsilon_0)$, where $\alpha p \leq q_{ij} \leq p$, between sets $X_i$ and $X_j$ for every $1 \leq i < j \leq m$ with $ij \in E(H)$, then $G(H) \geq (1 - \theta) q(H)$.

We apply Lemma \ref{lem:colorcleaning} to the collection $G_i$ with $\alpha = \frac{\eta}{8r}$,  $\epsilon_0$ and $m = n_0$. This yields $c_1 > 0$ and a positive integer $M$ such that if $\Gamma$ is $(p, c_1pn)$-jumbled then there is a collection of graphs $G'_i$ such that $e(G'_i) \geq e(G_i) - 4\alpha e(\Gamma)$ and every subgraph $(V_a, V_b)_{G'_i}$ containing any edges from $G'_i$ has $d_{G'_i}(V_a, V_b) = q_{ab}^{(i)} \geq \alpha p$ and satisfies $\DISC(q_{ab}^{(i)}, p, \epsilon)$. Adding over all $r$ graphs, we will have removed at most $4r \alpha e(\Gamma) = \frac{\eta}{2} e(\Gamma)$ edges. Let $G'$ be the union of the $G'_i$. This graph has density at least $1 - \frac{\eta}{2}$ in $\Gamma$.

We now consider the colored reduced (multi)graph $R$, considering each piece $V_a$ of the partition as a vertex $v_a$ and placing an edge of color $i$ between $v_a$ and $v_b$ if the graph between $V_a$ and $V_b$ contains an edge of color $i$. Since $\Gamma$ is $(p, cpn)$-jumbled and $n \leq 2 M |V_i|$, the number of edges between any two pieces differs from $p|V_i||V_j|$ by at most $c p n^2 \leq \frac{\eta}{20} p |V_i||V_j|$ provided that $c \leq \frac{\eta}{80M^2}$. Note, moreover, that $|V_i| \leq \lceil \frac{n}{k} \rceil \leq (1 + \frac{\eta}{20}) \frac{n}{k}$ provided that $n \geq \frac{20 M}{\eta}$. Therefore, the number of edges in the reduced graph $R$ is at least
\[e(R) \geq \frac{e(G')}{(1 + \frac{\eta}{20}) p \lceil \frac{n}{k} \rceil^2} \geq \frac{(1 - \frac{\eta}{2}) e(\Gamma)}{(1 + \frac{\eta}{20})^3 p (\frac{n}{k})^2} \geq \left(1 - \eta\right) \binom{k}{2},\]
where the final step follows from $e(\Gamma) \geq (1 - \frac{\eta}{20}) p \binom{n}{2}$.

We now apply Lemma \ref{RobustRamsey} to the reduced graph. Since $k \geq m =n_0$, there exists a monochromatic copy of $H$ in the reduced graph, in color $i$, say. But if this is the case then we have a collection of vertex subsets $X_1, \dots, X_m$ such that, for every edge $ab \in E(H)$, the induced subgraph $(X_a, X_b)_{G'_i}$ has $d_{G'_i}(X_a, X_b) = q^{(i)}_{ab} \geq \alpha p$ and satisfies $\DISC(q^{(i)}_{ab}, p, \epsilon_0)$. By the counting lemma, provided $c \leq \frac{c_0}{2M}$, we have $G(H) \geq G'_i(H) \geq (1 - \theta) (\alpha p)^{e(H)} (2M)^{-v(H)}$. Therefore, for $c = \min(\frac{c_0}{2M}, c_1, n_0^{-1}, \frac{\eta}{80M^2})$, we see that $G$ contains a copy of $H$.
\end{proof}

\subsection{Graph removal lemma} \label{sec:app-removal}

We prove that the graph removal lemma also holds in sparse jumbled graphs. The proof is much the same as the proof for Tur\'an's theorem, though we include it for completeness.

\vspace{2mm}\noindent
{\bf Theorem \ref{RemovalIntro}}
{\it For every graph $H$ and every $\e > 0$, there exist $\delta > 0$ and $c > 0$ such that if $\beta \leq c p^{d_2(H) + 3}n$ then any $(p, \beta)$-jumbled graph $\Gamma$ on $n$ vertices has the following property. Any subgraph of $\Gamma$ containing at most $\delta p^{e(H)} n^{v(H)}$ copies of  $H$ may be made $H$-free by removing at most $\e p n^2$ edges.}

\begin{proof}
Suppose that $H$ has vertex set $\{1,2, \dots, m\}$, $\Gamma$ is a $(p, \beta)$-jumbled graph on $n$ vertices, where $\beta \leq c p^{d_2(H) + 3}n$, and $G$ is a subgraph of $\Gamma$ containing at most $\delta p^{e(H)} n^{v(H)}$ copies of $H$.

We will need to apply the one-sided counting lemma, Lemma \ref{thm:onesidedintro}, with $\alpha = \frac{\epsilon}{16}$ and $\theta = \frac{1}{2}$. We get constants $c_0$ and $\epsilon_0 > 0$ such that if $\Gamma$ is $(p, c_0 p^{d_2(H) + 3} \sqrt{\abs{X_i}\abs{X_j}})$-jumbled and $G$ satisfies $\DISC(q_{ij}, p, \epsilon_0)$, where $\alpha p \leq q_{ij} \leq p$, between sets $X_i$ and $X_j$ for every $1 \leq i < j \leq m$ with $ij \in E(H)$ then $G(H) \geq \frac{1}{2} q(H)$.

Apply Lemma \ref{lem:cleaning} with $\alpha = \epsilon/16$ and $\epsilon_0$. This yields constants $c_1$ and $M$ such that if $\Gamma$ is $(p, c_1 p n)$-jumbled then there is a subgraph $G'$ of $G$ with
\[e(G') \geq e(G) - 4 \alpha e(\Gamma) \geq e(G) - \frac{\epsilon}{4} e(\Gamma) \geq e(G) - \epsilon p n^2,\]
where we used that $\alpha = \frac{\epsilon}{16}$. Moreover, there is an equitable partition into $k \leq M$ pieces $V_1, \dots, V_k$ such that every non-empty subgraph $(V_i, V_j)_{G'}$ has $d(V_i, V_j) = q_{ij} \geq \alpha p$ and satisfies $\DISC(q_{ij}, p, \epsilon_0)$.

Suppose now that there is a copy of $H$ left in $G'$. If this is the case then we have a collection of vertex subsets $X_1, \dots, X_m$ such that, for every edge $ij \in E(H)$, the induced subgraph $(X_i, X_j)_{G'}$ has $d_{G'}(X_i, X_j) = q_{ij} \geq \alpha p$ and satisfies $\DISC(q_{ij}, p, \epsilon_0)$. By the counting lemma, provided $c \leq \frac{c_0}{2M}$, we have $G(H) \geq G'(H) \geq \frac{1}{2} (\alpha p)^{e(H)} (2M)^{-v(H)}$. Therefore, for $c = \min(\frac{c_0}{2M}, c_1)$ and $\delta = \frac{1}{2} \alpha^{e(H)} (2M)^{-v(H)}$, we see that $G$ contains at least $\delta p^{e(H)} n^{v(H)}$ copies of $H$, contradicting our assumption about $G$.
\end{proof}

\subsection{Removal lemma for groups} \label{sec:app-group-removal}

We recall the following removal lemma for groups. Its proof is a
straightforward adaption of the proof of the dense version given by
Kr\'al, Serra and Vena~\cite{KSV09}.

For the rest of this section, let $k_3=3$, $k_4=2$, $k_{m}=1+\frac{1}{m-3}$ if $m \geq 5$ is odd, and $k_m=1+\frac{1}{m-4}$ if $m \geq 6$ is even.

\vspace{2mm}\noindent
{\bf Theorem \ref{thm:removal-groups}}
{\it   For each $\epsilon>0$
  and positive integer $m$, there are $c,\delta>0$ such that the
  following holds. Suppose $B_1,\ldots,B_m$ are subsets of a group $G$ of order $n$
  such that each $B_i$ is $(p,\beta)$-jumbled with $\beta \leq
  cp^{k_m}n$. If subsets $A_i \subseteq B_i$ for $i=1,\ldots,m$ are
  such that there are at most $\delta |B_1|\cdots|B_m|/n$ solutions to
  the equation $x_1x_2 \cdots x_m=1$ with $x_i \in A_i$ for all $i$,
  then it is possible to remove at most $\epsilon |B_i|$ elements from
  each set $A_i$ so as to obtain sets $A_i'$ for which there are no
  solutions to $x_1x_2 \cdots x_m=1$ with $x_i \in A'_i$ for all $i$.
}
\vspace{2mm}

We saw above that the one-sided counting lemma gives the graph removal
lemma. For cycles, the removal lemma follows from
Proposition~\ref{prop:cycle-one-sided}. The version we need is
stated below.

\begin{proposition}
  \label{prop:removal-cycles}
  For every $m \geq 3$ and $\epsilon > 0$, there exist $\delta > 0$ and $c > 0$
  so that any graph $\Gamma$ with vertex subsets $X_1, \dots, X_m$,
  each of size $n$,
  satisfying $(X_i, X_{i+1})_\Gamma$ being $(p, \beta)$-jumbled with
  $\beta \leq c p^{1 + k_m}n$ for each $i = 1, \dots,  m$ (index taken
  mod $m$) has the following
  property. Any subgraph of $\Gamma$ containing at most $\delta p^m
  n^m$ copies of $C_m$ may be made $C_m$-free by removing at most $\e
  p n^2$ edges, where we only consider embeddings of $C_m$ into
  $\Gamma$ where the $i$-th vertex of $C_m$ embeds into $X_i$.
\end{proposition}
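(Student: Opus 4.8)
The plan is to run the argument of Section~\ref{sec:applications} — regularize, clean up, and invoke a counting lemma to contradict the hypothesis of few copies — but in the $m$-partite setting and with a cycle counting lemma playing the role of the general counting lemma. The whole point of using a cycle counting lemma is that its jumbledness requirement is governed by the small exponent $k_m$ (with the conventions $k_3=3$, $k_4=2$ from the start of Section~\ref{sec:app-group-removal}) rather than by $d_2(C_m)+3=4$; for $m\ge 5$ the relevant statement is the one-sided Proposition~\ref{prop:cycle-one-sided}, while for $m=3,4$ the two-sided Proposition~\ref{prop:cycle-two-sided} already suffices. In contrast to the proofs of Theorems~\ref{TuranIntro} and~\ref{RamseyIntro}, no ``dense theorem on the reduced graph'' is needed here: a surviving copy of $C_m$ supplies the bad configuration directly.

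First I would fix the constants. Put $\alpha=\epsilon/(8m)$. By the relevant cycle counting lemma there exist $c_0,\epsilon_0>0$, depending on $\epsilon$ and $m$, such that whenever $Y_1,\dots,Y_m$ are vertex subsets with each cyclically consecutive pair $(Y_i,Y_{i+1})_\Gamma$ being $(p,c_0p^{k_m}\sqrt{|Y_i||Y_{i+1}|})$-jumbled and $(Y_i,Y_{i+1})_G$ satisfying $\DISC(q_i,p,\epsilon_0)$ with $\alpha p\le q_i\le p$, one has $G(C_m)\ge\tfrac12 q(C_m)\ge\tfrac12(\alpha p)^m$; here we use that the error term $\theta$ in Propositions~\ref{prop:cycle-one-sided} and~\ref{prop:cycle-two-sided} can be driven below $\tfrac12\alpha^m$ by shrinking $c_0$ and $\epsilon_0$. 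Next, since $\beta\le cp^{1+k_m}n\le cpn$, I would apply an $m$-partite analogue of Lemma~\ref{lem:cleaning} — obtained by applying the sparse regularity lemma (Theorem~\ref{thm:sparsereg}) in its bipartite form to each $(X_i,X_{i+1})_G$ and passing to a common refinement, so that the partition of each $X_i$ is simultaneously regular for its two incident bipartite graphs, then discarding irregular pairs and pairs of density below $\alpha p$ — with parameters $\alpha$ and $\epsilon_0$. This produces, provided $c$ is at most some $c_1=c_1(\epsilon,m)$, an integer $M$, equitable partitions $X_i=V_i^{(1)}\cup\dots\cup V_i^{(k)}$ with $k\le M$, and a subgraph $G'\subseteq G$ with $e(G')\ge e(G)-4\alpha\sum_{i=1}^{m}e\bigl((X_i,X_{i+1})_\Gamma\bigr)\ge e(G)-\epsilon pn^2$ (using $e((X_i,X_{i+1})_\Gamma)\le 2pn^2$ by jumbledness and $\alpha=\epsilon/(8m)$), such that every cyclically consecutive pair $(V_i^{(a)},V_{i+1}^{(b)})_{G'}$ is either empty or has density $q\ge\alpha p$ and satisfies $\DISC(q,p,\epsilon_0)$.

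It then suffices to show that $G'$ contains no copy of $C_m$ with vertex $i$ landing in $X_i$, for then deleting the at most $\epsilon pn^2$ edges of $G\setminus G'$ turns $G$ into the $C_m$-free graph $G'$. Suppose such a copy existed. Its vertices lie in pieces $V_1^{(a_1)},\dots,V_m^{(a_m)}$, and each cyclically consecutive pair $(V_i^{(a_i)},V_{i+1}^{(a_{i+1})})_{G'}$ is nonempty, hence has density $q_i\ge\alpha p$ and satisfies $\DISC(q_i,p,\epsilon_0)$. By equitability and $k\le M$ we have $|V_i^{(a_i)}|\ge n/(2M)$, so the jumbledness of $(X_i,X_{i+1})_\Gamma$ restricts to give $\beta\le cp^{1+k_m}n=2Mcp\cdot p^{k_m}\cdot\tfrac{n}{2M}\le c_0p^{k_m}\cdot\tfrac{n}{2M}\le c_0p^{k_m}\sqrt{|V_i^{(a_i)}||V_{i+1}^{(a_{i+1})}|}$ as soon as $c\le c_0/(2M)$; that is, $(V_i^{(a_i)},V_{i+1}^{(a_{i+1})})_\Gamma$ is $(p,c_0p^{k_m}\sqrt{|V_i^{(a_i)}||V_{i+1}^{(a_{i+1})}|})$-jumbled. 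Thus Setup~\ref{set:G} holds with $H=C_m$ and all edges sparse, and the cycle counting lemma gives at least $\tfrac12(\alpha p)^m|V_1^{(a_1)}|\cdots|V_m^{(a_m)}|\ge\tfrac12(\alpha/(2M))^mp^mn^m$ labeled embeddings of $C_m$ sending vertex $i$ into $X_i$. Taking $c=\min\{c_1,c_0/(2M)\}$ and $\delta=\tfrac14(\alpha/(2M))^m$, this strictly exceeds $\delta p^mn^m$ and so contradicts the assumption that $G$ (hence $G'\subseteq G$) has at most $\delta p^mn^m$ such copies, completing the proof.

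The difficulty here is organizational rather than conceptual. One must set up a genuinely $m$-partite regularity/cleaning lemma carried by a single consistent family of partitions, and verify that the sequence of constant choices $\alpha\to\epsilon_0\to M\to(c,\delta)$ is not circular — it is not, since $M$ is produced by the regularity lemma from $\epsilon_0,\alpha,m$ alone, and only then is $c$ chosen small in terms of $M,c_0,c_1$. The one genuinely delicate step is the check that the jumbledness hypothesis of the cycle counting lemma survives restriction to the pieces $V_i^{(a_i)}$: restriction leaves the jumbledness coefficient $\beta$ unchanged while the normalization in the counting lemma's hypothesis shrinks from $n$ to $\sqrt{|V_i^{(a_i)}||V_{i+1}^{(a_{i+1})}|}\ge n/(2M)$, and the resulting bounded loss of a factor $\le 2M$ is absorbed into $c$, helped by the extra power of $p$ present in $\beta\le cp^{1+k_m}n$ relative to the $p^{k_m}$ the counting lemma consumes (together with $p\le 1$).
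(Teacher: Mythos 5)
Your proposal is correct and is essentially the argument the paper intends: the paper states Proposition~\ref{prop:removal-cycles} without proof, remarking only that it ``follows from Proposition~\ref{prop:cycle-one-sided}'', and your regularize--clean--count scheme, with the jumbledness exponent $k_m$ matched to the cycle counting lemmas (two-sided Proposition~\ref{prop:cycle-two-sided} for $m=3,4$, one-sided Proposition~\ref{prop:cycle-one-sided} for $m\ge 5$) and the factor $2M$ from restricting to pieces absorbed into $c$, is the same route as the proof of Theorem~\ref{RemovalIntro} in Section~\ref{sec:app-removal}. The one step to tighten is the simultaneous partition: a common refinement of separately obtained regular partitions of each $X_i$ may have very small or unequal pieces, on which $\DISC$ does not restrict with controlled loss, so it is cleaner to regularize all $m$ bipartite graphs against a single equitable partition at once, exactly as in the paper's Lemmas~\ref{lem:colorreg} and~\ref{lem:colorcleaning}.
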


\begin{proof}
  [Proof of Theorem \ref{thm:removal-groups}] Let
  $\Gamma$ denote the graph with vertex set $G \x \set{1, \dots, m}$,
  the second coordinate taken modulo $m$, and with vertex $(g, i)$
  colored $i$. Form an edge from $(y,i)$ to $(z,i+1)$ in $\Gamma$ if
  and only if $z = yx_i$ for some $x_i \in B_i$, and let $G_0$ be a
  subgraph of $\Gamma$ consisting of those edges with $x_i \in
  A_i$. Observe that colored $m$-cycles in the graph $G_0$ correspond exactly
  to $(m+1)$-tuples $(y, x_1, x_2, \dots, x_m)$ with $y \in G$ and
  $x_i \in A_i$ for each $i$ satisfying $x_1x_2\dots x_m = 1$. The
  hypothesis implies that there are at most $\delta \abs{B_1}\cdots
  \abs{B_m} \leq \delta 2^mp^m n^m$ colored $m$-cycles in the graph
  $G_0$, where we assumed that $c < \frac12$ so that $\frac{1}{2} pn
  \leq \abs{B_i} \leq \frac{3}{2} pn$ by jumbledness. Then by the
  cycle removal lemma (Proposition~\ref{prop:removal-cycles})
  we can choose $c$ and $\delta$ so that $G_0$ can be made $C_m$-free
  by removing at most $\frac{\epsilon}{2m} pn^2$ edges.

  In $A_i$, remove the element $x_i$ if at least
  $\frac{n}{m}$ edges of the form $(y, i)(yx_i,i+1)$ have been
  removed. Since we removed at most $\frac{\epsilon}{2m} pn^2$ edges,
  we remove at most $\frac{\epsilon}{2} pn \leq
  \epsilon\abs{B_i}$ elements from each $A_i$. Let $A'_i$ denote the
  remaining elements of $A_i$. For any solution to $x_1 x_2
  \cdots x_m =1$ for $x_i \in A_i$, consider the $n$ edge-disjoint
  $m$-cycles $(g,1)(gx_1,2)(gx_1x_2,3)\cdots (gx_1\cdots x_m, m)$ in
  the graph $G_0$ for
  $g \in G$. We must have removed at least one edge from each of the
  $n$ cycles, and so we must have removed at least $\frac{n}{m}$ edges
  of the form $(y,i)(yx_i,i+1)$ for some $i$, which implies that $x_i
  \notin A'_i$. It follows that there is no solution to $x_1x_2\cdots
  x_m = 1$ with $x_i \in A_i$ for all $i$.
\end{proof}

In \cite{KSV09}, the authors also proved removal lemmas for systems of
equations which are \emph{graph representable}. For instance, the
system
\begin{align*}
  x_1x_2x_4^{-1}x_3^{-1} &= 1 \\
  x_1x_2x_5^{-1} &= 1
\end{align*}
can be represented by the graph below, in the sense that solutions to the
above system correspond to embeddings of this graph into some larger
graph with vertex set $G \x \set{1,\dots, 4}$, similar to how
solutions to $x_1 x_2\cdots x_n = 1$ correspond to cycles in the proof
of Theorem~\ref{thm:removal-groups}. We refer to the paper
\cite{KSV09} for the precise statements. These results
can be adapted to the sparse setting in a manner similar to
Proposition~\ref{prop:removal-cycles}.
\begin{center}
  \begin{tikzpicture}[decoration={markings,mark=at position .5 with
      {\arrow{latex}}}, d/.style={postaction={decorate}}]
    \node[p] (a) at (-2,0) {};
    \node[p] (b) at (2,0) {};
    \node[p] (c) at (0,1) {};
    \node[p] (d) at (0,-1) {};
    \draw[d] (a) -- node[above]{$x_1$} (c);
    \draw[d] (c) -- node[above]{$x_2$} (b);
    \draw[d] (a) -- node[below]{$x_3$} (d);
    \draw[d] (d) -- node[below]{$x_4$} (b);
    \draw[d] (a) -- node[above]{$x_5$} (b);
  \end{tikzpicture}
\end{center}

\section{Concluding remarks} \label{sec:conclude}

We conclude with discussions on the sharpness of our results,  a sparse extension of quasirandom graphs, induced extensions of the various counting and extremal results, other sparse regularity lemmas, algorithmic applications and sparse Ramsey and Tur\'an-type multiplicity results.

\subsection{Sharpness of results}

We have already noted in the introduction that for every $H$ there are
$(p, \beta)$-jumbled graphs $\Gamma$ on $n$ vertices, with $\beta =
O(p^{d(H) + 2)/4} n)$, such that $\Gamma$ does not contain a copy of
$H$.
On the other hand, the results of Section \ref{sec:oneside} tell us
that we can always find copies of $H$ in $\Gamma$ provided that $\beta
\leq c p^{d_2(H) + 1}$ and in $G$ provided that $\beta \leq c
p^{d_2(H) + 3}$. So, since $d_2(H)$ and $d(H)$ differ by at most a constant factor, our results are sharp up to a
multiplicative constant in the exponent for all $H$. However, we
believe that our results are likely to be sharp up to an additive
constant for the exponent of $p$ in the jumbledness parameter, with some caveats.

An old conjecture of Erd\H{o}s \cite{E67} asserts that if $H$ is a $d$-degenerate bipartite graph then there exists $C > 0$ such that every graph $G$ on $n$ vertices with at least $C n^{2 - \frac{1}{d}}$ edges contains a copy of $H$. This conjecture is known to hold for some bipartite graphs such as $K_{t,t}$ but remains open  in general. The best result to date, due to Alon, Krivelevich and Sudakov \cite{AKS03}, states that if $G$ has $C n^{2 - \frac{1}{4d}}$ edges then it contains a copy of $H$.

If Erd\H{o}s' conjecture is true then this would mean that copies of bipartite $H$ begin to appear already when the density is around $n^{-1/d(H)}$, without any need for a jumbledness condition. If $d_2(H) = d(H) - \frac{1}{2}$ then, even for optimally jumbled graphs, our results only apply down to densities of about $n^{-1/(2d(H) - 1)}$.

However, we considered embeddings of $H$ into $\Gamma$ such that each vertex $\{1,2, \dots, m\}$ of $H$ is to be embedded into a separate vertex subset $X_i$. We believe that in this setting our results are indeed sharp up to an additive constant, even in the case $H$ is bipartite. Without this caveat of embedding each vertex of $H$ into a separate vertex subset in $\Gamma$, we still believe that our results should be sharp for many classes of graphs. In particular, we believe the conjecture \cite{FLS12, KS06, SSV05} that there is a $(p, cp^{t-1} n)$-jumbled graph which does not contain a copy of $K_t$.

One thing which we have left undecided is whether the jumbledness condition for appearance of copies of $H$ in regular subgraphs $G$ of $\Gamma$ should be the same as that for the appearance of copies of $H$ in $\Gamma$ alone. For this question, it is natural to consider the case of triangles where we know that there are $(p, c p^2 n)$-jumbled graphs on $n$ vertices which do not contain any triangles. That is, we know the embedding result for $\Gamma$ is best possible. The result of Sudakov, Szab\'o and Vu \cite{SSV05} mentioned in the introduction also gives us a sharp result for the $(K_3, \epsilon)$-Tur\'an property. In the next subsection, we will obtain a similar sharp bound for the $(K_3,2)$-Ramsey property.

While these Tur\'an and Ramsey-type results are suggestive, we believe that the jumbledness condition for counting in $G$ should be stronger than that for counting in $\Gamma$. The fact that the results mentioned above are sharp is because there are alternative proofs of Tur\'an's theorem for cliques and Ramsey's theorem for the triangle which only need counting results in $\Gamma$ rather than within some regular subgraph $G$. Such a workaround seems unlikely to work for the triangle removal lemma. Kohayakawa, R\"odl, Schacht and Skokan \cite{KRSS10} conjecture that the jumbledness condition in the sparse triangle removal lemma, Theorem \ref{KRSSS}, can be improved from $\beta=o(p^3 n)$ to $\beta=o(p^2n)$. We conjecture that the contrary holds.

\subsection{Relative quasirandomness} \label{sec:rel-quasi}

The study of quasirandom graphs began in the pioneering work of Thomason \cite{T85,T87} and  Chung, Graham, and Wilson \cite{CGW89}.  As briefly discussed in Section \ref{sec:pseudos},
they showed that a large number of interesting graph properties satisfied by random graphs are all equivalent.
Perhaps the most surprising aspect of this work is that if the number of cycles of length 4 in a graph is as one would expect in a binomial random graph of the same density, then this is enough to imply that the edges are very well-spread and the number of copies of {\it any} fixed graph is as one would expect in a binomial random graph of the same  density.

There has been a considerable amount of research aimed at extending quasirandomness to sparse graphs, see \cite{CG02,CG08,KR03,KRS04}. However, the key property of counting small subgraphs was missing from previous results in this area. The following theorem  extends the fundamental results in this area to the setting of subgraphs of (possibly sparse) pseudorandom graphs. The case $p=1$ and $\Gamma$ is the complete graph corresponds to the original setting. We prove that the natural analogues of the original quasirandom properties in this more general setting are all equivalent.  Of particular note is the inclusion of the count of small subgraphs, a key property missing from previous results in this area. The proof of some of the implications extend easily from the dense case. However, to imply the notable counting properties, we use the counting lemma, Theorem \ref{thm:sparse-counting}, which acts as a transference principle from the sparse setting to the dense setting.

Such quasirandomness of a structure within a sparse but pseudorandom structure is known as {\it relative quasirandomness}. This concept has been instrumental in the development of the hypergraph regularity and counting lemma \cite{G07, NRS06, RS04, T06}. In the $3$-uniform case, for example, one repeatedly has to deal with $3$-uniform hypergraphs which are subsets of the triangles of a very pseudorandom graph.

To keep the theorem statement simple, we first describe some notation. The co-degree $d_G(v,v')$ of two vertices $v,v'$ in a graph $G$ is the number of vertices which are adjacent to both $v$ and $v'$.
For a graph $H$, we let $s(H)=  \min\set{\frac{\Delta(L(H)) + 4}{2}, \frac{d(L(H)) + 6}{2}}$. For a graph $H$ and another graph $G$, let $N_H(G)$ denote the number of labeled copies of $H$ (as a subgraph) in $G$.

\begin{theorem}\label{thm:relquas} Let $k \geq 2$ be a positive integer.
For $n \geq 1$, let $\Gamma=\Gamma_n$ be a $(p,\beta)$-jumbled graph on $n$ vertices with $p=p(\Gamma)$ and $\beta=\beta(\Gamma)=o(p^k n)$, and $G=G_n$ be a spanning subgraph of $\Gamma_n$.
The following are equivalent.
\begin{itemize}
\item[$P_1$:] For all vertex subsets $S$ and $T$, $$\left|e_{G}(S,T)-q|S||T|\right|=o(pn^2).$$
\item[$P_2$:] For all vertex subsets $S$, $$\left|e_{G}(S)-q\frac{|S|^2}{2}\right|=o(pn^2).$$
\item[$P_3$:] For all vertex subsets $S$ with $|S|=\lfloor \frac{n}{2} \rfloor$,
  $$\left|e_{G}(S)-q\frac{n^2}{8}\right|=o(pn^2).$$
\item[$P_4$:] For each graph $H$ with $k \geq s(H)$, $$N_H(G)=q^{e(H)}n^{v(H)}+o(p^{e(H)}n^{v(H)}).$$
\item[$P_5$:] $e(G) \geq q\frac{n^2}{2}+o(pn^2)$ and $$N_{C_4}(G) \leq q^{4}n^{4}+o(p^{4}n^{4}).$$
\item[$P_6$:] $e(G) \geq (1+o(1))q\frac{n^2}{2}$, $\lambda_1=(1+o(1))qn$, and $\lambda_2=o(pn)$, where $\lambda_i$ is the $i$th largest eigenvalue, in absolute value, of the adjacency matrix of $G$. \\
\item[$P_7$:] $$\sum_{v,v' \in V(G)} |d_{G}(v,v')-q^2n|=o(p^2n^3).$$
\end{itemize}
\end{theorem}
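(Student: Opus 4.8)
The plan is to establish the equivalences in a cycle of implications, following the classical template of Chung--Graham--Wilson \cite{CGW89} but inserting the sparse counting lemma (Theorem~\ref{thm:sparse-counting}) at the one place where a genuinely new idea is needed, namely to deduce the subgraph-counting property $P_4$. The easy implications are the purely combinatorial ones, which transfer verbatim from the dense case after rescaling by $p$: $P_1 \Rightarrow P_2$ (take $S=T$ and subtract the diagonal, which contributes $O(n) = o(pn^2)$ since $p \geq \beta/n = \omega(1/n)$ is forced as soon as the jumbledness is nontrivial), $P_2 \Rightarrow P_3$ (specialize $|S| = \lfloor n/2 \rfloor$), and $P_2 \Rightarrow P_1$ via the identity $e_G(S,T) = e_G(S \cup T) + e_G(S \cap T) - e_G(S \setminus T) - e_G(T \setminus S)$ together with bilinearity. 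For $P_3 \Rightarrow P_1$ one uses the standard trick of expressing $e_G(S,T)$ for arbitrary $S,T$ as a fixed linear combination of quantities $e_G(U)$ with $|U| = \lfloor n/2 \rfloor$, obtained by averaging over which half a vertex is assigned to; the error terms remain $o(pn^2)$.

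\textbf{The counting implications.} The heart of the theorem is $P_1 \Rightarrow P_4$. First I would observe that $P_1$, after normalization, says precisely that the weighted graph $G$ (divided through by nothing --- $G$ is already $\{0,1\}$-valued) with the constant $q$ satisfies $\DISC(q, p, \epsilon)$ on the whole vertex set, with $\epsilon = o(1)$; indeed the equivalence of the set-version and the $[0,1]$-function version of discrepancy is noted right after Definition~\ref{def:dense-uniform}, and carries over to the sparse setting with the $p$-scaling. To apply Theorem~\ref{thm:sparse-counting} one needs the host graph $\Gamma$ to be sufficiently jumbled: this is exactly the hypothesis $\beta = o(p^k n)$ with $k \geq s(H)$, after splitting $V(\Gamma)$ into $v(H)$ vertex classes $X_1,\dots,X_{v(H)}$ (to count subgraphs rather than homomorphisms into a fixed part-structure one takes all the $X_i$ equal to $V(\Gamma)$, which is harmless, or one passes to the blow-up $\Gamma \times \{1,\dots,v(H)\}$ as in Section~\ref{sec:notation}; a standard inclusion-exclusion then recovers $N_H(G)$ from the partite counts). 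Theorem~\ref{thm:sparse-counting} gives $|G(H) - q^{e(H)}| \leq \theta p^{e(H)}$ with $\theta \to 0$ as $\epsilon \to 0$, i.e. $N_H(G) = q^{e(H)} n^{v(H)} + o(p^{e(H)} n^{v(H)})$, which is $P_4$. Then $P_4 \Rightarrow P_5$ is trivial (specialize $H = K_2$ for the edge count and $H = C_4$ for the cycle count; note $s(C_4) = 2 \leq k$, and the $C_4$ bound comes out as an equality, in particular $\leq$). The implication $P_5 \Rightarrow P_1$ is the Chung--Graham--Wilson ``$C_4$ implies discrepancy'' argument, and this is already available to us in sparse form: Proposition~\ref{prop:C4-DISC} states exactly that $G(K_{1,1}) \geq (1-\epsilon)q$ together with $G(K_{2,2}) \leq (1+\epsilon)^4 q^4$ implies $\DISC(q,q,4\epsilon^{1/36})$, hence (a fortiori, since $q \leq p$) the $P_1$ estimate; one only has to translate $P_5$'s hypotheses $e(G) \geq q n^2/2 + o(pn^2)$ and $N_{C_4}(G) \leq q^4 n^4 + o(p^4 n^4)$ into the normalized $G(K_{1,1}), G(K_{2,2})$ language (the edge bound gives the lower bound on $G(K_{1,1})$; note the upper bound $G(K_{1,1}) \leq p$ is automatic), apply the proposition with $\epsilon = o(1)$, and absorb the discrepancy in the density parameter. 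This closes the loop $P_1 \Rightarrow P_4 \Rightarrow P_5 \Rightarrow P_1$, and together with the first paragraph gives $P_1 \Leftrightarrow P_2 \Leftrightarrow P_3 \Leftrightarrow P_4 \Leftrightarrow P_5$.

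\textbf{The eigenvalue and co-degree properties.} It remains to fold in $P_6$ and $P_7$. For $P_7$: the co-degree sum $\sum_{v,v'} d_G(v,v')$ equals $\sum_{v,v'} |N_G(v) \cap N_G(v')|$, which is $N_{K_{2,2}}(G)$ up to lower-order (degenerate) terms plus $\sum_v d_G(v)^2 = O(p^2 n^3)$-scale contributions from the diagonal $v = v'$; meanwhile $\sum_{v,v'} q^2 n = q^2 n^3$. Writing $d_G(v,v') - q^2 n$ and expanding the absolute value via Cauchy--Schwarz, $\left(\sum |d_G(v,v') - q^2 n|\right)^2 \leq n^2 \sum (d_G(v,v') - q^2 n)^2 = n^2 \big(N_{K_{2,2}}(G) - 2q^2 n \cdot N_{K_{1,1,\text{codeg}}} + q^4 n^4 \cdot(\dots)\big)$, and under $P_4$ (applied to $K_{2,2}$ and to the relevant trees/paths, all of which have $s \leq k$ since $s(K_{2,2}) = 2$) each term matches the random prediction, so the right side is $o(p^4 n^6)$, giving $P_7$; conversely $P_7 \Rightarrow P_1$ by the same second-moment computation read backwards (a bound on the co-degree deviation forces $N_{C_4}(G) \leq q^4 n^4 + o(p^4 n^4)$, and combined with the edge count --- which $P_7$ also controls via $\sum_v d_G(v,v) = \sum_v d_G(v)$, or one shows $P_7$ separately implies the edge estimate --- we land in $P_5$). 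Actually it is cleaner to route $P_7$ through $P_5$: show $P_4 \Rightarrow P_7$ and $P_7 \Rightarrow P_5$. For $P_6$: $P_6 \Rightarrow P_5$ is the standard spectral argument --- $N_{C_4}(G) = \mathrm{tr}(A_G^4) = \sum \lambda_i^4 = \lambda_1^4 + \sum_{i \geq 2}\lambda_i^4 \leq \lambda_1^4 + \lambda_2^2 \sum_{i\geq 2}\lambda_i^2 = \lambda_1^4 + \lambda_2^2(2e(G)) \leq q^4 n^4(1+o(1)) + o(p^2 n^2)\cdot O(p n^2) = q^4 n^4 + o(p^4 n^4)$, using $p^3 n^2 = o(p^4 n^4)$ since $pn^2 \to \infty$ --- and the edge bound $e(G) = \frac{1}{2}\mathrm{tr}(A_G^2) \geq \frac{1}{2}\lambda_1^2 = (1+o(1))q^2n^2/2$ needs a touch of care but follows from $P_6$. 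The reverse, $P_1 \Rightarrow P_6$, is where one is tempted to worry, but it is again classical: $\lambda_1 \geq \frac{\mathbf{1}^T A_G \mathbf{1}}{n} = \frac{2e(G)}{n} = (1+o(1))qn$ by $P_1$, while an upper bound $\lambda_1 \leq (1+o(1))qn$ and $|\lambda_2| = o(pn)$ come from bounding $\lambda_2^4 \leq \sum_{i\geq 2}\lambda_i^4 \leq \mathrm{tr}(A_G^4) - \lambda_1^4 = N_{C_4}(G) - \lambda_1^4$, which by the already-established $P_1 \Rightarrow P_4$ (for $C_4$) and the lower bound on $\lambda_1$ is $o(p^4 n^4)$, hence $|\lambda_2| = o(pn)$; and then $\lambda_1^2 = \mathrm{tr}(A_G^2) - \sum_{i\geq 2}\lambda_i^2 \geq 2e(G) - n\lambda_2^2 = (1+o(1))q^2n^2$ forces $\lambda_1 = (1+o(1))qn$ from above as well (pairing with the lower bound). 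The expected main obstacle is purely bookkeeping: making sure that in every implication the error terms genuinely land in $o(p^{e(H)} n^{v(H)})$ rather than merely $o(n^{v(H)})$ --- this is automatic in the dense case but in the sparse case one must repeatedly use that $pn^2 \to \infty$ (equivalently that $\beta = o(pn)$ forces $p \gg 1/n$, actually $p \gg n^{-2/3}$ or so from the jumbledness being meaningful for the smallest relevant subgraph) to discard diagonal and lower-order contributions, and that $q$ and $p$ are comparable up to the implicit constants, so that $o(p^a n^b)$ and $o(q^a n^b)$ are interchangeable. No new ideas beyond Theorem~\ref{thm:sparse-counting} and Proposition~\ref{prop:C4-DISC} are required; the theorem is essentially a packaging of those two results in the Chung--Graham--Wilson frame.
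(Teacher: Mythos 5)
Your skeleton agrees with the paper's: the chain $P_1\Leftrightarrow P_2\Leftrightarrow P_3$, the arc $P_1\Rightarrow P_4$ via Theorem~\ref{thm:sparse-counting}, the specialization $P_4\Rightarrow P_5$, and the return $P_5\Rightarrow P_1$ via Proposition~\ref{prop:C4-DISC} are exactly the paper's route, and your bookkeeping remarks ($q\asymp p$, $pn^2\to\infty$, degenerate walks) are the right ones. However, you orient the arcs involving $P_6$ and $P_7$ in the opposite direction from the paper, and in both cases the reversed direction hides a genuine loss of a power of $p$ that the paper's orientation is specifically designed to avoid.

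First, your $P_6\Rightarrow P_5$ does not work as written. The trace bound gives $\sum_{i\geq 2}\lambda_i^4\leq\lambda_2^2\sum_{i\geq2}\lambda_i^2\leq\lambda_2^2\cdot 2e(G)=o(p^2n^2)\cdot O(pn^2)=o(p^3n^4)$, and $o(p^3n^4)$ is \emph{not} $o(p^4n^4)$ when $p\to 0$; your justification "$p^3n^2=o(p^4n^4)$ since $pn^2\to\infty$" compares the wrong quantities (the ratio is $1/p$, not $1/(pn^2)$). This loss is intrinsic: $P_6$ only caps each nontrivial eigenvalue at $o(pn)$ while there may be $\Theta(1/p)$ eigenvalues near that cap, so $P_6$ cannot reach the $C_4$ bound of $P_5$ by traces alone. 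The paper instead proves $P_6\Rightarrow P_1$ directly by the expander mixing argument ($e_G(S,T)=\sum_i\lambda_i\langle 1_S,u_i\rangle\langle 1_T,u_i\rangle$, with the top term pinned down by the edge and $\lambda_1$ conditions and the rest bounded by $|\lambda_2|\cdot n=o(pn^2)$), and then recovers $P_5$ from $P_1$ via the counting lemma. You should adopt that orientation; your $P_1\Rightarrow P_6$ direction is fine.

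Second, in $P_7\Rightarrow P_5$ the step "a bound on the co-degree deviation forces $N_{C_4}(G)\leq q^4n^4+o(p^4n^4)$" is an $L^1\to L^2$ upgrade that fails without extra input: writing $d^2=(q^2n)^2+2q^2n(d-q^2n)+(d-q^2n)^2$ with $d=d_G(v,v')$, the last term is only bounded by $\max_{v,v'}d_G(v,v')\cdot o(p^2n^3)$, which is $o(p^2n^4)$ if codegrees can be as large as $n$. The paper repairs this by splitting the pairs into three classes and using the jumbledness of $\Gamma$ (this is where $k\geq 2$ enters) to show $\sum_{d_\Gamma(v,v')>2p^2n}d_\Gamma(v,v')^2=o(p^4n^4)$; since $d_G\leq d_\Gamma$, the remaining pairs have $d_G(v,v')\leq 2p^2n$ and the $L^1$ hypothesis then controls their contribution. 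You need to add this tail control. (The edge lower bound of $P_5$ from $P_7$ also requires a separate argument, as you note; your suggestion via the diagonal terms $d_G(v,v)$ does not work since they contribute only $O(pn^2)=o(p^2n^3)$ to the $P_7$ sum and are therefore unconstrained by it.)
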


We briefly describe how to prove the equivalences between the various
properties in Theorem~\ref{thm:relquas}, with a flow chart shown
below.
\begin{center}
  \begin{tikzpicture}
    [scale=1.5, every node/.style={minimum width = 2.5em}, every path/.style={double}, >=implies, double distance = .2em]
    \node (1) at (0,0) {$P_1$};
    \node (2) at (-1,0) {$P_2$};
    \node (3) at (-2,0) {$P_3$};
    \node (4) at (1,.5) {$P_4$};
    \node (5) at (2,0) {$P_5$};
    \node (6) at (1,-.5) {$P_6$};
    \node (7) at (3,0) {$P_7$};
    \draw[<->] (1) -- (2);
    \draw[<->] (2) -- (3);
    \draw[->] (1)--(4);
    \draw[->] (4)--(5);
    \draw[->] (5)--(1);
    \draw[->] (5)--(6);
    \draw[->] (6)--(1);
    \draw[<->] (5)--(7);
  \end{tikzpicture}
\end{center}
The equivalence between the discrepancy properties  $P_1$,
$P_2$, $P_3$ is fairly straightforward and similar to the dense case. Theorem \ref{thm:sparse-counting} shows that $P_1$ implies $P_4$. As $P_5$ is a special case of $P_4$, we have that $P_4$ implies $P_5$. Proposition \ref{prop:C4-DISC} shows that $P_5$ implies $P_1$. The fact $P_5$ implies  $P_6$ follows easily from the identity that the trace of the fourth power of the adjacency matrix of $G$ is both the number of closed walks in $G$ of length
$4$, and the sum of the fourth powers of the eigenvalues of the adjacency matrix of $G$. The fact that $P_6$ implies $P_1$ is the standard proof of the expander mixing lemma. The fact $P_5$ implies $P_7$ follows easily from the identity  \begin{equation}\label{eq:C4codeg}N_{C_4}(G)=4\sum_{v,v' \in V(G)}{d_{G}(v,v') \choose 2},\end{equation}
where the sum is over all ${n \choose 2}$ pairs of distinct vertices, as well as the identity
$$\sum_{v,v'}d_G(v,v')=\sum_{v} {d_G(v) \choose 2},$$
and two applications of the Cauchy-Schwarz inequality. Finally, we have $P_7$ implies $P_5$ for the following reason. From (\ref{eq:C4codeg}), we have $P_5$ is equivalent to  \begin{equation}\label{p7p5}\sum_{v,v'} d_G(v,v')^2 = \frac{1}{2}q^4n^4+o(p^4n^4).\end{equation}
To verify (\ref{p7p5}), we split up the sum on the left into three sums. The first sum is over pairs $v,v'$ with $|d_G(v,v')-q^2n|=o(p^2n)$, the second sum is over pairs $v,v'$ with $d_G(v,v')>2p^2 n$, and the third sum is over the remaining pairs $v,v'$. From $P_7$, almost all pairs $v,v'$ of vertices satisfy $|d_{G}(v,v')-q^2 n|=o(p^2n)$, and so the first sum is $\frac{1}{2}q^4n^4+o(p^4n^4)$. The second sum satisfies
$$\sum_{v,v':d_G(v,v')>2p^2n}d_G(v,v')^2 \leq \sum_{v,v':d_{\Gamma}(v,v')>2p^2n}d_{\Gamma}(v,v')^2=o(p^4n^4),$$
where the first inequality follows from $G$ is a subgraph of $\Gamma$, and the second inequality follows from pseudorandomness in $\Gamma$. Finally, as $P_7$ implies there are $o(n^2)$ pairs $v,v'$ not satisfying $|d_G(v,v')-q^2n|=o(p^2n)$, and the terms in the third sum are at most $2p^2 n$, the third sum is $o(p^4n^4)$. This completes the proof sketch of the equivalences between the various properties in Theorem \ref{thm:relquas}.

\subsection{Induced extensions of counting lemmas and extremal
  results}

With not much extra effort, we can establish induced versions of the various counting lemmas and extremal results for graphs. We assume that we are in Setup \ref{set:G} with the additional condition that, in Setup \ref{set:Gamma}, the graph $\Gamma$ satisfies the jumbledness condition for all pairs $ab$ of vertices and not just the sparse edges of $H$. Define a {\it strongly induced copy} of $H$ in $G$ to be a copy of $H$ in $G$ such that the nonedges of the copy of $H$ are nonedges of $\Gamma$. Since $G$ is a subgraph of $\Gamma$, a strongly induced copy of $H$ is an induced copy of $H$. Define
  \[
  G^*(H) := \int_{x_1 \in X_1, \dots, x_m \in X_m} \prod_{(i,j) \in E(H)} G(x_i,x_j)\prod_{(i,j) \not \in E(H)} (1-\Gamma(x_i,x_j)) \ dx_1 \cdots dx_m
  \]
  and
  \[
  q^*(H) := \prod_{(i,j) \in E(H)} q_{ij}\prod_{(i,j) \not \in E(H)}(1-p_{ij})
  \]
Note that $G^*(H)$ is the probability that a random compatible map forms a strongly induced copy of $H$, and $q^*(H)$ is the idealized version. Also note that  if $\Gamma$ is $(p,\beta)$-jumbled, then its complement $\bar \Gamma$ is $(1-p,\beta)$-jumbled. Hence, for $p$ small, we expect that most copies of $H$  guaranteed by Theorem \ref{thm:onesidedintro} are strongly induced. This is formalized in the following theorem, which is an induced analogue of the one-sided counting lemma, Theorem \ref{thm:onesidedintro}.

\begin{theorem}\label{induced-oneside}
For every fixed graph $H$ on vertex set $\{1,2, \dots, m\}$ and every $\theta > 0$, there exist constants $c > 0$ and $\epsilon > 0$ such that the following holds.

Let $\Gamma$ be a graph with vertex sets $X_1, \dots, X_m$ and suppose that $p \leq \frac{1}{m}$ and the bipartite graph $(X_i, X_j)_\Gamma$ is $(p, cp^{d(H) + \frac{5}{2}} \sqrt{\abs{X_i}\abs{X_j}})$-jumbled for every $i < j$. Let $G$ be a subgraph of $\Gamma$, with the vertex $i$ of $H$ assigned to the vertex set $X_i$ of $G$. For each edge $ij$ in $H$, assume that $(X_i, X_j)_G$ satisfies $\DISC(q_{ij}, p, \epsilon)$. Then
 \[ G^*(H) \geq (1 - \theta) q^*(H). \]
\end{theorem}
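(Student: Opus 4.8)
The plan is to deduce Theorem~\ref{induced-oneside} from the one-sided counting lemma, Theorem~\ref{thm:onesidedintro}, by an inclusion-exclusion over the non-edges of $H$. Write $\bar H$ for the complement of $H$ on vertex set $\{1,\dots,m\}$. For a subset $F \subseteq E(\bar H)$ of non-edges, let $H_F$ denote the graph obtained from $H$ by adding the edges in $F$. Expanding the product $\prod_{(i,j)\notin E(H)}(1-\Gamma(x_i,x_j))$ gives
\[
G^*(H) = \sum_{F \subseteq E(\bar H)} (-1)^{|F|} \int_{x_1\in X_1,\dots,x_m\in X_m} \prod_{(i,j)\in E(H)} G(x_i,x_j) \prod_{(i,j)\in F} \Gamma(x_i,x_j)\ dx_1\cdots dx_m,
\]
and similarly $q^*(H) = \sum_{F}(-1)^{|F|} q(H) \prod_{(i,j)\in F} p$. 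The $F=\emptyset$ term is exactly $G(H)$ versus $q(H)$, which is controlled by Theorem~\ref{thm:onesidedintro}. So it suffices to show that for each nonempty $F$, the mixed integral — where $G$-edges must land in $G$ and the added $F$-edges must land in $\Gamma$ — is well-approximated by $q(H) p^{|F|}$, with error $o(p^{e(H)+|F|})$, and then to sum the (at most $2^{\binom m2}$) such estimates.

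The key step is therefore a two-sided counting estimate for these mixed graphs $H_F$ with some edges embedding into $G$ (the edges of $H$) and some into $\Gamma$ (the edges of $F$). This is precisely what the machinery of Section~\ref{sec:counting-Gamma} and Section~\ref{sec:counting-G} is built for: treating the $F$-edges as ``jumbled edges'' in the sense of Setup~\ref{set:Gamma}/Table~\ref{tab:H-edges}. Concretely, I would first apply the $\Gamma$-side reduction (Lemma~\ref{lem:Gamma-partial}, or rather its one-sided analogue in the spirit of Proposition~\ref{prop:oneside-Gamma}) to strip off the $F$-edges one at a time, reducing $g(H_F)$ to $p^{|F|}$ times a count of $H$ into $G$, up to a multiplicative $(1+o(1))$ factor; this requires $\Gamma$ to be sufficiently jumbled on \emph{all} pairs, which is exactly the extra hypothesis in the statement. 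Then I would apply Theorem~\ref{thm:onesidedintro} to the remaining $H$-count in $G$ — but note that for the \emph{upper} bound I need two-sided control, which is why we invoke the two-sided statement Theorem~\ref{thm:sparse-counting}/Proposition~\ref{prop:G-counting} where the jumbledness exponent $k$ is governed by $d(L(H^{\spa}))$ or $\Delta(L(H^{\spa}))$. Tracking the worst-case jumbledness exponent over all $F$ and over both the $\Gamma$-stripping and the $G$-counting steps yields the clean exponent $d(H)+\tfrac52$ quoted in the theorem (using $d(L(H))\le \Delta(H)+d(H)-2$ and crude bounds on the line-graph degeneracies of the $H_F$). The factor $p \le 1/m$ is used to guarantee $q^*(H) = q(H)\prod(1-p)^{\text{non-edges}} \ge \tfrac12 q(H)$ is not destroyed, i.e.\ that the signed error terms for $F\ne\emptyset$, which are a factor $p$ (or more) smaller than the main term, can be absorbed: summing over all nonempty $F$ contributes at most $O(2^{\binom m2} p)\, q(H) = o(q^*(H))$ once $p$ is small.

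The main obstacle is bookkeeping the jumbledness requirements uniformly across all $2^{|E(\bar H)|}$ mixed graphs $H_F$ and verifying that $d(H)+\tfrac52$ dominates every exponent that arises: adding edges to $H$ can only increase degrees, so one must check that $H_F$ never forces a requirement worse than the stated bound. I would handle this exactly as in the tutorial of Section~\ref{sec:tut-jumble}: for the $F$-edges being stripped into $\Gamma$, the requirement is at most half the maximum degree sum in $H_F$, which is at most $\Delta(H_F)\le m-1$; but one gets the sharper $d(H)+\tfrac52$ by observing that we only ever remove $F$-edges (not $H$-edges) in the $\Gamma$-stripping phase, so the relevant parameter is $d(L(H_F^{\spa}, F))$, and one bounds this using that each $F$-edge is adjacent to at most $d(H_F)+\deg_{H}(\cdot)$ earlier edges. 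Once this exponent check is done, the proof is the inclusion-exclusion sum together with the triangle inequality: $|G^*(H) - q^*(H)| \le \sum_F |g(H_F) - q(H)p^{|F|}| \le (1+2^{\binom m2}) \max_F o(p^{e(H)+|F|}) = o(p^{e(H)})$, and since $q^*(H)\ge \tfrac12 q(H) \ge \tfrac12\alpha^{e(H)} p^{e(H)}$ (after noting we may assume $q_{ij}\ge \alpha p$ for the relevant $\alpha$), this gives $G^*(H)\ge (1-\theta)q^*(H)$ for suitable $c,\epsilon$ polynomial in $\theta$. I expect no genuinely new ideas are needed beyond what is already developed; the work is in the careful exponent audit.
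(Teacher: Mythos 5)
Your inclusion--exclusion over the non-edges of $H$ is a genuinely different route from the paper's, but the ``exponent audit'' you defer to the end does not close, and this is a real gap rather than bookkeeping. The problem is the two-sided control you need for the mixed terms $g(H_F)$ with $F \neq \emptyset$: your triangle-inequality step requires $\abs{g(H_F) - p^{\abs{F}}G(H)} = o(p^{e(H)+\abs{F}})$ for every $F$. (Your observation that $G(H)$ itself factors out of the alternating sum, so that only the one-sided Theorem~\ref{thm:onesidedintro} is needed for that part, is correct and is the right way to avoid two-sided counting of $H$ in $G$ --- but it does not rescue the $\Gamma$-stripping errors.) To strip an edge $ab \in F$ into $\Gamma$ via Lemma~\ref{lem:Gamma-cauchy}/Lemma~\ref{lem:Gamma-partial}, the Cauchy--Schwarz step forces $2k \geq \deg_{H_F}(a) + \deg_{H_F}(b)$, and since $ab$ is a non-edge of $H$ one of whose endpoints may have degree $\Delta(H)$ in $H$, this requirement scales with $\Delta(H)$, not with $d(H)$. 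Concretely, let $H$ be the star with center $v$ and leaves $w_1,\dots,w_t$ together with one further vertex $u$ adjacent only to $w_1$. Then $H$ is a tree, $d(H)=1$, and the theorem asks only for $(p, cp^{7/2}\sqrt{\abs{X_i}\abs{X_j}})$-jumbledness; yet $uv \notin E(H)$, and the single term $F=\{uv\}$ already demands $k \geq \tfrac{1}{2}(\deg_{H_F}(u)+\deg_{H_F}(v)) = \tfrac{t+3}{2} > \tfrac72$ for $t \geq 5$. This is not an artifact of the particular lemma you invoke: the paper's own lower-bound construction (attach one vertex of $\Gamma$ to everything) shows that a two-sided estimate for a count in which the edge $uv$ lands in $\Gamma$ genuinely fails once $k < \tfrac12\Delta(H_F)$. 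So the inclusion--exclusion argument proves a correct statement only under a jumbledness exponent of order $\max_{uv\notin E(H)}\tfrac12(\deg_H(u)+\deg_H(v))$, which can be far larger than $d(H)+\tfrac52$; your heuristic that each $F$-edge meets ``at most $d(H_F)+\deg_H(\cdot)$ earlier edges'' already contains a $\deg_H(\cdot)$ that can equal $\Delta(H)$.

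The paper obtains the stated exponent by a different mechanism: it reruns the greedy vertex-by-vertex embedding from the proof of Theorem~\ref{thm:onesidedintro}, adding a fourth condition on the choice of $w=f(v_j)$, namely that $\abs{N_{\bar\Gamma}(w)\cap T(i,j-1)} \geq (1-p-\sqrt{c})\abs{T(i,j-1)}$ for every later non-neighbor $i$ of $j$, and then sets $T(i,j)=N_{\bar\Gamma}(w)\cap T(i,j-1)$ for such $i$. The exceptional vertices violating this are few by Lemma~\ref{lem:deg-est} (applied to the complement, which is $(1-p,\beta)$-jumbled), the target sets for non-neighbors shrink only by constant factors in total since $p\leq \frac1m$, and discrepancy degrades only by a constant. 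This keeps the whole argument one-sided and never requires counting a configuration containing a $\Gamma$-embedded non-edge at a high-degree vertex, which is precisely what your route cannot avoid. (A separate, minor point shared by both arguments: the statement as quoted omits the hypothesis $\alpha p \leq q_{ij} \leq p$ from Theorem~\ref{thm:onesidedintro}, which you correctly note must be reinstated.)
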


We next discuss how the proof of Theorem \ref{induced-oneside} is a minor modification of the proof of Theorem \ref{thm:onesidedintro}.  As in the proof of Theorem \ref{thm:onesidedintro}, after $j-1$ steps in the embedding, we have picked $f(v_1),\ldots,f(v_{j-1})$ and have subsets $T(i,j-1) \subset X_i$ for $j \leq i \leq m$ which consist of the possible vertices for $f(v_i)$ given the choice of the first $j-1$ embedded vertices. We are left with the task of picking a good set $W(j) \subset T(j,j-1)$ of possible vertices $w=f(v_j)$ to continue the embedding with the desired properties. We will guarantee, in addition to the three properties stated there (which may be maintained since $d_2(H) + 3 \leq d(H) + \frac{5}{2}$), that

\vspace{0.2cm}
4. $|N_{\bar \Gamma}(w) \cap T(i,j-1)| \geq (1-p-\sqrt{c})|T(i,j-1)|$
for each $i>j$ which is not adjacent to $j$.
\vspace{0.2cm}

As for each such $i$, if $w$ is chosen for $f(v_j)$, $T(i,j)=N_{\bar
  \Gamma}(w) \cap T(i,j-1)$, this will guarantee that $|T(i,j)| \geq
(1-p-\sqrt{c})|T(i,j-1)|$. As for each such $i$, the set $T(i,j)$ is
only slightly smaller than $T(i,j-1)$, this will affect the
discrepancy between each pair of sets by at most a factor
$(1-p-\sqrt{c})^2$. This additional fourth property makes the set
$W(j)$ only slightly smaller. Indeed, to guarantee this property, we
need that for each of the nonneighbors $i>j$ of $j$, the vertices $w$
with fewer than $(1-p-\sqrt{c})|T(i,j-1)|$ nonneighbors in $T(i,j-1)$
in graph $\bar \Gamma$ are not in $W(j)$, and there are at most
$\frac{\beta^2}{c |T(i,j-1)|}$ such vertices for each $i$ by
Lemma~\ref{lem:deg-est}. As there are at most $m$ choices of $i$, and
$|T(i,j-1)| \geq \left(1-\frac{\theta_{j-1}}{6}\right)q(i,j-1)|X_i|$,
we get that satisfying this additional fourth property requires that
the number of additional vertices deleted to obtain $W(j)$ is at most
\[
m\frac{\beta^2}{c|T(i,j-1)|} \leq \frac{mcp^{2d(H)+5}}{
  \left(1-\frac{\theta_{j-1}}{6}\right)q(i,j-1)}|X_j| \leq \frac{mcp^{2d(H)+5}}{
  \left(1-\frac{\theta_{j-1}}{6}\right)^2q(i,j-1) q(j, j-1)}|T(j, j-1)|,
\]
which is neglible since both $q(i, j-1)$ and $q(j, j-1)$ are at most $p^{d(H)}$. We therefore see that, after changing the various
parameters in the proof of Theorem \ref{thm:onesidedintro} slightly,
the simple modification of the proof sketched above completes the
proof of Theorem \ref{induced-oneside}. We remark that the assumption
$p \leq \frac{1}{m}$ can be replaced by $p$ is bounded away from $1$,
which is needed as we must guarantee that the nonedges of the induced
copy of $H$ must be nonedges of $\Gamma$. We also note that the exponent of $p$ in the jumbledness assumption in Theorem \ref{induced-oneside} is $d(H)+\frac{5}{2}$ and not $d_2(H)+3$ for the following reason. In addition to using the inheritance of regularity to get that edges of $H$ map to edges of $G$ in the strongly induced copies of $H$ we are counting, we also use jumbledness of $\bar \Gamma$ to get that the nonedges of $H$ map to nonedges of $\Gamma$.

In the greedy proof sketched above, to conclude that the good set $W(j) \subset T(j,j-1)$ of possible vertices $w=f(v_j)$ is large, we use the jumbledness of $\bar \Gamma$ and that, for the vertices $i>j$ not adjacent to $j$, the product $|T(j,j-1)||T(i,j-1)|$ is large. The sizes of these sets are related to the number of neighbors of $j$ less than $j$ and the number of neighbors of $i$ less $j$, respectively.

The induced graph removal lemma was proved by Alon, Fischer,
Krivelevich, and Szegedy \cite{AFKS00}. It states that for each graph
$H$ and $\epsilon>0$ there is $\delta>0$ such that every graph on $n$
vertices with at most $\delta n^{v(H)}$ induced copies of $H$ can be
made induced $H$-free by adding or deleting at most $\epsilon n^2$
edges. This clearly extends the original graph removal lemma. To prove
the induced graph removal lemma, they developed the strong regularity
lemma, whose proof involves iterating Szemer\'edi's regularity lemma
many times. A new proof of the induced graph removal lemma which gives
an improved quantitative estimate was recently obtained in \cite{CF12}.

The first application of Theorem \ref{induced-oneside} we discuss is an induced extension of the sparse graph removal, Theorem \ref{RemovalIntro}. It does not imply the induced graph removal lemma above.

\begin{theorem} \label{inducedremoval}
For every graph $H$ and every $\e > 0$, there exist $\delta > 0$ and $c > 0$ such that if $\beta \leq c p^{d(H) + \frac{5}{2}}n$ then any $(p, \beta)$-jumbled graph $\Gamma$ on $n$ vertices with $p \leq \frac{1}{v(H)}$ has the following property. Any subgraph of $\Gamma$ containing at most $\delta p^{e(H)} n^{v(H)}$ (strongly) induced copies of $H$ may be made $H$-free by removing at most $\e p n^2$ edges.
\end{theorem}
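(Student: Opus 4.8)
The plan is to transcribe the proof of the sparse removal lemma (Theorem \ref{RemovalIntro}), replacing the one-sided counting lemma (Theorem \ref{thm:onesidedintro}) by its induced analogue (Theorem \ref{induced-oneside}). Suppose $H$ has vertex set $\{1,\dots,m\}$, fix $\e>0$, let $\Gamma$ be $(p,\beta)$-jumbled on $n$ vertices with $\beta\le cp^{d(H)+5/2}n$ and $p\le 1/m$, and let $G\subseteq\Gamma$ have at most $\delta p^{e(H)}n^{v(H)}$ strongly induced copies of $H$. First I would invoke Theorem \ref{induced-oneside} with $\theta=\tfrac{1}{2}$ to obtain $c_0,\e_0>0$ such that whenever vertex classes $X_1,\dots,X_m$ satisfy that $(X_i,X_j)_\Gamma$ is $(p,c_0p^{d(H)+5/2}\sqrt{|X_i||X_j|})$-jumbled for all $i<j$, and $(X_i,X_j)_G$ satisfies $\DISC(q_{ij},p,\e_0)$ for every edge $ij\in E(H)$, then $G^*(H)\ge\tfrac{1}{2}q^*(H)$.

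Next I would apply the cleaning lemma (Lemma \ref{lem:cleaning}) with $\alpha=\e/16$ and discrepancy parameter $\e_0$, obtaining $c_1>0$ and an integer $M$. Since $\beta\le cp^{d(H)+5/2}n\le c_1pn$ once $c\le c_1$, this produces a subgraph $G'\subseteq G$ with $e(G')\ge e(G)-4\alpha e(\Gamma)\ge e(G)-\e pn^2$, together with an equitable partition $V_1,\dots,V_k$, $k\le M$, having no edges of $G'$ inside any part and with every nonempty pair $(V_i,V_j)_{G'}$ of density $q_{ij}\ge\alpha p$ satisfying $\DISC(q_{ij},p,\e_0)$. The extra standing hypothesis of the induced setup --- that $\Gamma$ is jumbled between \emph{all} pairs of classes, not merely those corresponding to edges of $H$ --- is precisely what will later let us certify the non-edges of $H$: restricting a globally $(p,\beta)$-jumbled graph to $V_i\times V_j$ leaves it $(p,\beta)$-jumbled, and $\beta\le c_0p^{d(H)+5/2}\sqrt{|V_i||V_j|}$ holds once $c\le c_0/(2M)$ and $n$ is large enough that $|V_i|\ge n/(2M)$.

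The argument then closes by contradiction. If $G'$ still contains a copy of $H$, then exactly as in the proof of Theorem \ref{RemovalIntro} it supplies classes $X_1,\dots,X_m$ (among the $V_i$) with $(X_a,X_b)_{G'}$ a dense regular pair ($q_{ab}\ge\alpha p$ and $\DISC(q_{ab},p,\e_0)$) for every edge $ab$ of $H$, while by the previous paragraph $(X_a,X_b)_\Gamma$ is suitably jumbled for every pair $a,b$. Theorem \ref{induced-oneside} then gives $G^*(H)\ge\tfrac{1}{2}q^*(H)$, and since $p\le 1/m$,
\[
q^*(H)=\prod_{ab\in E(H)}q_{ab}\prod_{ab\notin E(H)}(1-p)\ge (\alpha p)^{e(H)}\bigl(1-\tfrac{1}{m}\bigr)^{\binom{m}{2}}\ge c_H(\alpha p)^{e(H)}
\]
for a constant $c_H=c_H(H)>0$. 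Hence $G$ has at least $G^*(H)\,|X_1|\cdots|X_m|\ge\tfrac{1}{2}c_H(\alpha p)^{e(H)}(n/(2M))^{v(H)}$ strongly induced copies of $H$, which exceeds $\delta p^{e(H)}n^{v(H)}$ once we set $\delta:=\tfrac{1}{4}c_H\alpha^{e(H)}(2M)^{-v(H)}$ --- contradicting the hypothesis on $G$. So $G'$ is $H$-free, and it differs from $G$ by at most $\e pn^2$ deleted edges; taking $c=\min\{c_0/(2M),c_1,\tfrac{1}{4}\}$ completes the proof.

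The only genuinely delicate point is the standard one inherited from the non-induced proof: in the step ``a copy of $H$ in $G'$ supplies classes $X_1,\dots,X_m$'', a priori two vertices of $H$ --- necessarily forming a non-edge, since edges inside a part are excluded --- might be forced into the same class. This is dispatched as usual, either by locating $H$ in a blow-up of the reduced graph (using that $v(H)$ is fixed while $k$ may be taken large), or by applying Theorem \ref{induced-oneside} to the multipartite graph obtained by duplicating the repeated class, whose pair remains jumbled by the global jumbledness of $\Gamma$, so that the collapsed non-edge is realised with probability close to $1$. All the substantive new input --- above all, the use of the jumbledness of $\bar\Gamma$ to push the non-edges of $H$ onto non-edges of $\Gamma$ --- has already been absorbed into Theorem \ref{induced-oneside}, so from this vantage point the removal lemma is essentially a bookkeeping exercise; the main obstacle was cleared in establishing that counting lemma.
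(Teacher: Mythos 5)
Your proposal is correct and follows exactly the route the paper indicates: transcribe the proof of Theorem~\ref{RemovalIntro} verbatim, substituting the induced one-sided counting lemma (Theorem~\ref{induced-oneside}) for Theorem~\ref{thm:onesidedintro} at the single point where a lower bound on the number of copies is extracted from the cleaned partition. The details you supply --- the cleaning step, the use of global jumbledness of $\Gamma$ to cover the non-edges of $H$, the lower bound $q^*(H)\ge (\alpha p)^{e(H)}(1-1/m)^{\binom{m}{2}}$ using $p\le 1/v(H)$, and the final choice of $\delta$ --- are all as intended.
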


The proof of Theorem \ref{inducedremoval} is the same as the proof of Theorem \ref{RemovalIntro}, except we replace the one-sided counting lemma, Theorem \ref{thm:onesidedintro}, with its induced variant, Theorem \ref{induced-oneside}. Note that unlike the standard induced graph removal lemma, here it suffices only to delete edges. Furthermore, all copies of $H$, not just induced copies, are removed by the deletion of few edges.

The induced Ramsey number $r_{\textrm{ind}}(H;r)$ is the smallest natural
number $N$ for which there is a graph $G$ on $N$ vertices such that in every $r$-coloring of the edges of
$G$ there is an induced monochromatic copy of $H$. The existence of these numbers was independently
proven in the early 1970s by Deuber \cite{D75}, Erd\H{o}s, Hajnal
and Posa \cite{EHP75}, and R\"odl \cite{R73}. The bounds that these
original proofs give on $r_{\textrm{ind}}(H,r)$ are enormous. However,
Trotter conjectured that the induced Ramsey number of bounded degree
graphs is at most polynomial in the number of vertices. That is, for
each $\Delta$ there is $c(\Delta)$ such that $r_{\textrm{ind}}(H;2)
\leq n^{c(\Delta)}$. This was proved by \L uczak and R\"odl
\cite{LR96}, who gave an enormous upper bound on $c(\Delta)$, namely, a
tower of twos of height $O(\Delta^2)$. More recently, Fox and Sudakov
\cite{FS08} proved an upper bound on $c(\Delta)$ which is $O(\Delta
\log \Delta)$. These results giving a polynomial bound on the induced
Ramsey number of graphs of bounded degree do not appear to extend to more than two colors.

A graph $G$ is \emph{induced Ramsey $(\Delta,n,r)$-universal} if, for every $r$-edge-coloring of $G$, there is a color for which there is a monochromatic induced copy in that color of every graph on $n$ vertices with maximum degree $\Delta$. Clearly, if $G$ is induced Ramsey $(\Delta,n,r)$-universal, then $r_{\textrm{ind}}(H;r) \leq |G|$ for every graph $H$ on $n$ vertices with maximum degree $\Delta$.

\begin{theorem}\label{inducedramseyuniversal}
For each $\Delta$ and $r$ there is $C=C(\Delta,r)$ such that for every $n$ there is an induced Ramsey $(\Delta,n,r)$-universal graph on at most $Cn^{2\Delta+8}$ vertices.
\end{theorem}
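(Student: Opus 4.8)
The plan is to take the universal graph to be a suitably jumbled graph $\Gamma$, and to show that in \emph{any} $r$-colouring of its edges one colour class already contains an induced (in $\Gamma$) copy of every $n$-vertex graph of maximum degree at most $\Delta$; the embedding itself will be carried out by the induced one-sided counting lemma, Theorem~\ref{induced-oneside}. Fix $\Delta,r$, let $K = R_r(K_{\Delta+1})$ be the $r$-colour Ramsey number of $K_{\Delta+1}$ (a constant depending only on $\Delta,r$), and choose $p = n^{-1}$ and $N = Cn^{2\Delta+8}$. Let $\Gamma$ be a $(p,\beta)$-jumbled graph on $N$ vertices with $\beta = O(\sqrt{pN})$ — such graphs exist, e.g.\ a random graph or an explicit pseudorandom graph as in \cite{KS06} — and set $G = \Gamma$. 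Since $d(H) \le \Delta(H) \le \Delta$ for every graph $H$ we consider, the jumbledness exponent $d(H) + \frac52$ required by Theorem~\ref{induced-oneside} is at most $\Delta + \frac52$, and a routine computation shows that for $N$ of the stated size the bipartite graph of $\Gamma$ between any two vertex sets of size $\Omega(N)/n$ is $(p, cp^{d(H)+5/2}\sqrt{\,\cdot\,})$-jumbled, as will be needed below; the generous exponent $2\Delta+8$ leaves room for the polynomial losses incurred in the colour-structure step.

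Next I would extract a monochromatic pseudorandom block system. Fix an $r$-colouring of $E(G)$ with classes $G_1,\dots,G_r$, partition $V(G)$ into $K$ parts $Y_1,\dots,Y_K$ of size $N/K$, and for each pair $s<t$ let $\chi(s,t)$ be a colour occurring on at least a $\frac{1}{2r}$ fraction of the edges between $Y_s$ and $Y_t$ (one exists since $e_G(Y_s,Y_t) = (1+o(1))p|Y_s||Y_t|$ by jumbledness). By the choice of $K$ there are indices $s_1<\dots<s_{\Delta+1}$ and a colour $c$ with $\chi(s_a,s_b)=c$ for all $a<b$; write $W_a = Y_{s_a}$, so that $(W_a,W_b)_{G_c}$ has density $q_{ab}\ge \alpha p$ with $\alpha = \frac{1}{2r}$. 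It then remains to pass from density to discrepancy: I would produce sub-blocks $W_a' \subseteq W_a$ with $|W_a'| = \Omega(N)$ (at worst $N$ divided by a small fixed power of $n$) such that $(W_a',W_b')_{G_c}$ satisfies $\DISC(q_{ab}',p,\epsilon_0)$ with $q_{ab}' \ge \frac{\alpha}{2}p$, where $\epsilon_0$ is the parameter supplied by Theorem~\ref{induced-oneside}. Crucially this must be done \emph{without Szemer\'edi-type regularity} — whose tower-type bounds would force $|W_a'| \le N/\mathrm{tower}(n)$ and break the polynomial budget — and instead by a direct argument that uses the jumbledness of $\Gamma$ together with the inheritance results of Section~\ref{sec:inherit} in place of dense quasirandomness.

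Granting the block system, the embedding is straightforward. Given any $H$ on $n$ vertices with $\Delta(H)\le\Delta$, fix a proper colouring $V(H) = I_1 \cup \dots \cup I_{\Delta+1}$ (possible since $\chi(H)\le\Delta+1$), split each $W_a'$ into $|I_a|$ equal parts $\{X_v : v\in I_a\}$, and assign vertex $v\in I_a$ to $X_v$; each $|X_v| \ge |W_a'|/n = \Omega(N)/n$. Since $I_a$ is independent in $H$, every edge of $H$ runs between distinct blocks, so $(X_u,X_v)_{G_c}$ inherits $\DISC(q_{ab}',p,O(\epsilon_0))$ by the trivial restriction bound for discrepancy; shrinking $\epsilon_0$ by a constant absorbs this loss. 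As $\alpha p \le q_{ab}' \le p$ and $\Gamma$ is jumbled between the $X_v$ with the right exponent, Theorem~\ref{induced-oneside} with $\theta = \frac12$ gives $G_c^*(H) \ge \frac12 q^*(H) > 0$, so $G_c$ contains a copy of $H$ that is induced in $G=\Gamma$ and entirely in colour $c$. Since $c$ and the blocks $W_a'$ depend only on the colouring and not on $H$, this holds simultaneously for all admissible $H$, so $G$ is induced Ramsey $(\Delta,n,r)$-universal. \textbf{The main obstacle} is precisely the middle step: exhibiting $\Omega(N)$-sized blocks on which the dominant colour class $G_c$ satisfies a usable discrepancy condition, via a regularity-free, jumbledness-driven argument (in the spirit of induced-universal constructions for dense graphs) rather than by the sparse regularity lemma; once this is in place the rest is bookkeeping of parameters.
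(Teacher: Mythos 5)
Your overall architecture is the same as the paper's: take $\Gamma=G$ to be a $(p,\beta)$-jumbled graph with $p=n^{-1}$, $N=Cn^{2\Delta+8}$ and $\beta=O(\sqrt{pN})$, extract $\Delta+1$ parts that are pairwise dense and quasirandom in a common colour, and finish with Theorem~\ref{induced-oneside}. But the proposal has two genuine gaps.

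First, the step you declare to be ``the main obstacle'' --- producing $\Omega(N)$-sized blocks on which the majority colour satisfies $\DISC$ --- is exactly where the paper applies the multicolour sparse regularity lemma (Lemmas~\ref{lem:colorreg} and~\ref{lem:colorcleaning}), and your reason for ruling that tool out rests on a misreading of its bounds. The number of parts $M$ is tower-type in the regularity parameter $\epsilon^{-1}$, and $\epsilon$ is chosen as a function of $\Delta$ and $r$ alone (it is the $\epsilon_0$ demanded by the counting lemma); it does not depend on $n$. Hence each part has size $N/M(\Delta,r)=\Omega_{\Delta,r}(N)$, the exponent $2\Delta+8$ is untouched, and the tower is absorbed into the constant $C(\Delta,r)$. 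One then finds the $\Delta+1$ pairwise monochromatic, discrepant parts by Tur\'an's theorem plus Ramsey's theorem in the reduced multigraph, in the spirit of Chv\'atal--R\"odl--Szemer\'edi--Trotter. (The paper also observes that the sparse Duke--Lefmann--R\"odl lemma, Proposition~\ref{prop:cylinderreg}, suffices here and improves $C(\Delta,r)$ from tower-type to an exponential tower of constant height.) So no regularity-free argument is required, and without supplying one your proof is incomplete at its central step.

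Second, your subdivision of each $W_a'$ into $|I_a|\le n$ blocks $X_v$, with the claim that $(X_u,X_v)_{G_c}$ inherits $\DISC(q'_{ab},p,O(\epsilon_0))$ ``by the trivial restriction bound,'' is wrong: restriction to subsets of relative sizes $\mu,\nu$ degrades the discrepancy parameter by the factor $(\mu\nu)^{-1}$, which here is as large as $n^2$ and cannot be absorbed by shrinking $\epsilon_0$ by a constant. The paper avoids this entirely by keeping the $\Delta+1$ parts whole, assigning every vertex of a colour class of $H$ to the same part, running the one-sided induced counting lemma with repeated parts, and then checking that almost all of the resulting compatible maps are injective (at each embedding step fewer than $n$ vertices are already used while the set $W(j)$ of available images has size much larger than $n$). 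You would need to adopt this device, or some substitute for it, for the final embedding step to go through.
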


The exponent of $n$ in the above result is best possible up to a multiplicative factor. This is because even for the much weaker condition that $G$ contains an induced copy of all graphs on $n$ vertices with maximum degree $\Delta$, the number of vertices of $G$ has to be $\Omega(n^{\Delta/2})$ (see, e.g., \cite{B09}).

We have the following immediate corollary of Theorem \ref{inducedramseyuniversal}, improving the bound for induced Ramsey numbers of bounded degree graphs. It is also the first polynomial upper bound which works for more than two colors.

\begin{corollary}
For each $\Delta$ and $r$ there is $C=C(\Delta,r)$ such that $r_{\textrm{ind}}(H;r) \leq Cn^{2\Delta+8}$ for every $n$-vertex graph $H$ of maximum degree $\Delta$.
\end{corollary}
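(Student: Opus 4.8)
The plan is to build the induced Ramsey $(\Delta,n,r)$-universal graph $G$ as a suitably pseudorandom host and to invoke the induced analogue of the one-sided counting lemma, Theorem \ref{induced-oneside}, together with the colour-cleaning regularity machinery of Section \ref{sec:applications}. Concretely, I would take $G$ to be an explicit $(n', d, \lambda)$-graph on $N = Cn^{2\Delta+8}$ vertices with $p = d/N$ and $\lambda = O(\sqrt{pN})$ — for instance a Cayley graph of the type described in Section \ref{sec:pseudos} — chosen so that $p$ is a suitable negative power of $n$. The target exponent $2\Delta+8$ is dictated by the jumbledness requirement of Theorem \ref{induced-oneside}: since every $n$-vertex graph $H$ of maximum degree $\Delta$ satisfies $d(H) \le \Delta$, that theorem needs $\beta \le c p^{d(H) + 5/2} N \le c p^{\Delta + 5/2} N$, and the best optimal jumbledness we can hope for is $\beta = \Theta(\sqrt{pN})$, which forces $p^{\Delta+2} N = \Omega(p^{\Delta+5/2}N/p^{1/2})$, i.e. $p \ge N^{-1/(2\Delta+3)}$ roughly; combined with $v(H) = n$ and the constraint $p \le 1/n$ from Theorem \ref{induced-oneside}, choosing $p \asymp n^{-1}$ and $N \asymp n^{2\Delta+8}$ makes all the inequalities consistent.

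The argument then proceeds exactly as in the proof of Theorem \ref{RamseyIntro}, but using the induced counting lemma and noting that here the "colours" are the $r$ colour classes $G_1, \dots, G_r$ of an arbitrary $r$-edge-colouring of $G$. First I would apply Lemma \ref{lem:colorcleaning} (the multicolour cleaning lemma) to $G_1, \dots, G_r$ with parameters chosen from $\epsilon_0$ coming from Theorem \ref{induced-oneside} and a density threshold $\alpha$ of order $1/r$; this yields subgraphs $G_i'$, a common equitable partition $V_1, \dots, V_k$ with $m_0 \le k \le M$, and the property that every nonempty pair $(V_a,V_b)_{G_i'}$ has density $\ge \alpha p$ and satisfies $\DISC(\cdot, p, \epsilon_0)$. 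Passing to the $r$-edge-coloured reduced graph $R$ on $k$ vertices (where $(v_a,v_b)$ receives colour $i$ if $(V_a,V_b)_{G_i'}$ is nonempty, and essentially all pairs are covered since only an $\epsilon$-fraction of edge mass was cleaned away), $R$ is a complete graph on $k \ge m_0$ vertices with an $r$-colouring of a $(1-\eta)$-density subgraph. Since $m_0$ can be taken large, the dense induced Ramsey theorem (or rather its robust averaging version, analogous to Lemma \ref{RobustRamsey}, applied with the fixed finite family of all maximum-degree-$\Delta$ graphs on $n$ vertices) gives a colour $i$ and a monochromatic \emph{induced} copy of $H$ inside $R$ — but we must be careful that the nonedges of that copy in $R$ correspond to pairs $(V_a,V_b)$ that are empty in $G_i'$, which we can arrange by also cleaning within colour classes so that a "nonedge of $R$ in colour $i$" means no $G_i'$-edges there. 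Then the vertex subsets $X_1, \dots, X_n$ witnessing this induced copy satisfy, for each edge $ij$ of $H$, $\DISC(q_{ij},p,\epsilon_0)$ with $\alpha p \le q_{ij} \le p$, and for each nonedge of $H$ the corresponding pair has no $G_i'$-edges; since $\Gamma = G$ is jumbled on every pair (Setup as in the induced counting subsection), Theorem \ref{induced-oneside} gives $G_i^*(H) > 0$, hence a monochromatic induced copy of $H$ in $G$ in colour $i$.

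I expect the main obstacle to be bookkeeping the induced structure correctly through the regularity/cleaning step: the dense induced Ramsey theorem guarantees an induced copy of $H$ in the reduced graph $R$, but to lift this to a strongly induced copy in $G$ we need the nonedges of $H$ to map to pairs $(X_i,X_j)$ that are nonedges of $\Gamma = G$, not merely of $G_i'$. Since $G$ itself is the pseudorandom host and not complete, a nonedge of $R$ in colour $i$ only tells us $(V_a,V_b)$ has no $G_i'$-edges, not that it is a nonedge of $G$; however $G_i^*(H)$ in Theorem \ref{induced-oneside} is defined with the factor $\prod_{(i,j)\notin E(H)}(1 - \Gamma(x_i,x_j))$, and the theorem's conclusion $G_i^*(H) \ge (1-\theta)q^*(H) > 0$ already accounts for this — it produces copies that are strongly induced with respect to $\Gamma = G$ automatically, provided only that for the \emph{edges} of $H$ we have the discrepancy hypothesis, which the cleaning lemma supplies. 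So the resolution is simply to apply Theorem \ref{induced-oneside} with $\Gamma = G$, and the dense induced Ramsey theorem is needed only to locate which pairs should carry the edges of $H$ in which colour. The remaining routine points — choosing $C(\Delta,r)$, verifying $p \le 1/n$, checking $\beta \le cp^{d(H)+5/2}N$ holds for the explicit pseudorandom graph, and confirming the sharpness remark via the $\Omega(n^{\Delta/2})$ lower bound — follow the pattern of Sections \ref{sec:pseudos} and \ref{sec:applications} with no new ideas.
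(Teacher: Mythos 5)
Your proposal contains a fatal flaw in the step where you apply the ``dense induced Ramsey theorem'' to the reduced graph $R$ to find ``vertex subsets $X_1, \dots, X_n$ witnessing this induced copy.'' That step requires the reduced graph to have at least $n$ vertices, i.e.\ the regularity partition to have $k \geq n$ pieces, which in turn requires taking $m_0 \geq n$ in the sparse regularity lemma. But the regularity lemma's parameters $\eta$ and $M$ depend on $m_0$, so the jumbledness required of $\Gamma$ (hence the constant $c$ in the $\beta \leq cp^{d(H)+5/2}N$ hypothesis, hence $C$) would then depend on $n$, destroying the claim that $C = C(\Delta,r)$ is independent of $n$. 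Worse, the bound on the number of parts $M$ is tower-type in $m_0$, so forcing $m_0 = n$ makes $N$ super-polynomial in $n$, not $O(n^{2\Delta+8})$.

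The key idea you are missing, and which the paper uses, is that you do not need one part of the partition per vertex of $H$. Since $H$ has maximum degree $\Delta$, it is $(\Delta+1)$-chromatic, so you can properly colour $V(H)$ with $\Delta + 1$ colours and assign all vertices of $H$ of a given colour class to the \emph{same} part. Thus the reduced graph only needs to contain a monochromatic clique on $\Delta + 1$ vertices (obtained via Tur\'an's and Ramsey's theorems in the reduced graph, whose sizes depend only on $\Delta$ and $r$), not an induced copy of $H$ itself. This keeps $m_0$, and hence all regularity constants, independent of $n$. The price is that since several vertices of $H$ map to the same part, one must check that the embedding in the induced counting lemma is still essentially one-to-one and that the factor one loses in discrepancy from the non-edge constraints (of which there may be $\Theta(n)$ per part) stays bounded; the paper addresses both points in its proof sketch, noting that with $p \asymp 1/n$ the accumulated discrepancy loss $(1 - O(p))^{-O(n)} = O(1)$ is a constant, and that since $|W(j)| \gg n$ at every step one can always avoid the $<n$ previously used vertices. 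Your approach, because it uses distinct parts for distinct vertices, sidesteps the distinctness issue, but this is exactly the simplification that makes it fail.
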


We next sketch the proof of Theorem \ref{inducedramseyuniversal}. The proof builds on ideas used in the proof of Chvatal, R\"odl, Szemer\'edi, and Trotter \cite{CRST83} that Ramsey numbers of bounded degree graphs grow linearly in the number of vertices. We claim that any graph $G$ on $N=Cn^{2\Delta+8}$ vertices which is $(p,\beta)$-jumbled with $p=\frac{1}{n}$ and $\beta=O(\sqrt{pN})$ is
the desired induced Ramsey $(\Delta,n,r)$-universal graph. Such a
graph exists as almost surely $G(N,p)$ has this jumbledness
property. Note that $\beta = cp^{d(H)+\frac{5}{2}} N$ with $c =
O(p^2)$. We consider an $r$-coloring of the edges of $G$ and apply
the multicolor sparse regularity lemma so that each color satisfies a
discrepancy condition between almost all pairs of parts. Using
Tur\'an's theorem and Ramsey's theorem in the reduced graph, we find
$\Delta+1$ parts $X_1,\ldots,X_{\Delta+1}$, each pair of which has
density at least $\frac{p}{2r}$ in the same color, say red, and
satisfies a discrepancy condition. Let $H$ be a graph on $n$ vertices
with maximum degree $\Delta$, so $H$ has chromatic number at most
$\Delta+1$. Assign each vertex $a$ of $H$ to some part so that the
vertices assigned to each part form an independent set in $H$.  We
then use the induced counting lemma, Theorem~\ref{induced-oneside}, to
get an induced monochromatic red copy of $H$. We make a couple of
observations which are vital for this proof to work, and one must look
closely into the proof of the induced counting lemma to verify these
claims. First, we can choose the constants in the regularity lemma and
the counting lemma so that they only depend on the maximum degree $\Delta$ and the
number of colors $r$ and not on the number $n$ of vertices. Indeed, in
addition to the at most $2\Delta$ times that we apply inheritance of
regularity, the discrepancy-parameter increases by a factor of at most
$(1 - p- \sqrt{c})^{-2n} = (1-O(p))^{-2n} = (1 - O(\frac{1}{n}))^{-2n}
= O(1)$ due to the restrictions imposed by the non-edges of $H$. So we
lose a total of at most a constant factor in the discrepancy, which
does not affect the outcome.  Second, as we assigned some vertices of
$H$ to the same part, they may get embedded to the same
vertex. However, one easily checks that almost all the embeddings of
$H$ in the proof of the induced counting lemma are one-to-one, and
hence there is a monochromatic induced copy of $H$. Indeed, as there
are less than $n$ vertices which are previously embedded at each step
of the proof of the induced counting lemma, and $W(j) \gg n$, then
there is always a vertex $w \in W(j)$ to pick for $f(v_j)$ to continue
the embedding. This completes the proof sketch.

In the proof sketched above, the use of the sparse regularity lemma forces an enormous upper bound on $C(\Delta,r)$, of tower-type. However, all we needed was $\Delta+1$ parts such that the graph between each pair of parts has density at least $\frac{p}{2r}$ in  the same color and satisfies a discrepancy condition. To guarantee this, one does not need the full strength of the regularity lemma, and the sparse version of the Duke-Lefmann-R\"odl weak regularity lemma discussed in Subsection \ref{othersparsereg} is sufficient. This gives a better bound on $C(\Delta,r)$, which is an exponential-tower of constant height.

The last application we mention is an induced extension of the sparse Erd\H{o}s-Stone-Simonovits theorem, Theorem \ref{TuranIntro}. We say that a graph $\Gamma$ is {\it induced $(H,\epsilon)$-Tur\'an} if any subgraph of $\Gamma$ with at least $(1-\frac{1}{\chi(H)-1}+\epsilon)e(\Gamma)$ edges contains a strongly  induced copy of $H$.

\begin{theorem} \label{inducedturan}
For every graph $H$ and every $\e > 0$, there exists $c > 0$ such that if $\beta \leq c p^{d(H) + \frac{5}{2}}n$ then any $(p, \beta)$-jumbled graph on $n$ vertices with $p \leq \frac{1}{v(H)}$ is induced $(H, \e)$-Tur\'an.
\end{theorem}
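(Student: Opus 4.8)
The plan is to transcribe the proof of Theorem~\ref{TuranIntro}, using the induced one-sided counting lemma, Theorem~\ref{induced-oneside}, in place of Theorem~\ref{thm:onesidedintro}. Write $m=v(H)$ and take $V(H)=\set{1,\dots,m}$. Let $\Gamma$ be a $(p,\beta)$-jumbled graph on $n$ vertices with $\beta\le cp^{d(H)+5/2}n$ and $p\le1/m$, and let $G$ be a subgraph of $\Gamma$ with at least $\paren{1-\tfrac1{\chi(H)-1}+\epsilon}e(\Gamma)$ edges; we must find a strongly induced copy of $H$ in $G$.

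First I would apply Theorem~\ref{induced-oneside} with $\theta=\tfrac12$ to get constants $c_0,\epsilon_0>0$ so that, whenever $(X_i,X_j)_\Gamma$ is $(p,c_0p^{d(H)+5/2}\sqrt{\abs{X_i}\abs{X_j}})$-jumbled for all $i<j$ and $(X_i,X_j)_G$ satisfies $\DISC(q_{ij},p,\epsilon_0)$ for each edge $ij$ of $H$, one has $G^*(H)\ge\tfrac12\,q^*(H)$. Then I would apply the cleaning lemma, Lemma~\ref{lem:cleaning}, with $\alpha=\epsilon/8$, parameter $\epsilon_0$, and the lower bound on the number of parts chosen large enough for the dense Erd\H{o}s--Stone--Simonovits theorem. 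Since $d(H)+\tfrac52\ge d_2(H)+3$ (because $d_2(H)\le d(H)-\tfrac12$) and $p\le1$, the bound $\beta\le cp^{d(H)+5/2}n$ in particular makes $\Gamma$ $(p,c_1pn)$-jumbled for $c$ small, so the cleaning lemma supplies $c_1>0$, a positive integer $M$, a subgraph $G'\subseteq G$ with $e(G')\ge\paren{1-\tfrac1{\chi(H)-1}+\tfrac\epsilon2}e(\Gamma)$, and an equitable partition $V_1,\dots,V_k$ with $k\le M$, no $G'$-edges inside any part, and every non-empty pair $(V_i,V_j)_{G'}$ having density $q_{ij}\ge\alpha p$ and satisfying $\DISC(q_{ij},p,\epsilon_0)$.

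Passing to the reduced graph $R$ on $\set{v_1,\dots,v_k}$, with $v_iv_j\in E(R)$ iff $(V_i,V_j)_{G'}$ is non-empty, the usual counting of edges (using $n\le2M\abs{V_i}$ and the jumbledness of $\Gamma$) gives $e(R)\ge\paren{1-\tfrac1{\chi(H)-1}+\tfrac\epsilon4}\binom{k}{2}$, exactly as in the proof of Theorem~\ref{TuranIntro}. The dense Erd\H{o}s--Stone--Simonovits theorem then produces a copy of $H$ in $R$; relabel the distinct parts it uses as $X_1,\dots,X_m$ with vertex $i$ of $H$ assigned to $X_i$. For each edge $ij$ of $H$, $(X_i,X_j)_{G'}$ satisfies $\DISC(q_{ij},p,\epsilon_0)$ with $q_{ij}\ge\alpha p$, while for \emph{every} pair $i<j$ the global jumbledness of $\Gamma$ together with $\abs{X_i}\ge n/(2M)$ shows $(X_i,X_j)_\Gamma$ is $(p,c_0p^{d(H)+5/2}\sqrt{\abs{X_i}\abs{X_j}})$-jumbled once $c\le c_0/(2M)$. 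Applying Theorem~\ref{induced-oneside} (valid since $p\le1/m$) with $G'$ in the role of $G$ yields
\[
G^*(H)\ \ge\ (G')^*(H)\ \ge\ \tfrac12\,q^*(H)\ \ge\ \tfrac12(\alpha p)^{e(H)}(1-p)^{\binom{m}{2}-e(H)}(2M)^{-m}\ >\ 0,
\]
so $G'$, and hence $G$, contains a strongly induced copy of $H$. Taking $c=\min\set{c_0/(2M),\,c_1,\,\epsilon/(80M^2)}$ finishes the proof.

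The only step that differs in substance from Theorem~\ref{TuranIntro} is the passage through the induced counting lemma: in $G^*(H)$ the non-edges of $H$ are weighted by $1-\Gamma$, so one needs the pseudorandomness of $\bar\Gamma$ as well, and this is precisely why the exponent is raised to $d(H)+\tfrac52$ and why $p\le1/v(H)$ is imposed (ensuring $q^*(H)>0$ and that non-edges of $H$ may be routed to non-edges of $\Gamma$). I expect this to be the main --- indeed essentially only --- point requiring care; the cleaning lemma constrains $G'$ only on pairs joined by edges of $H$, and the jumbledness needed on the non-edge pairs is inherited for free from the hypothesis on $\Gamma$, so the rest is a routine copy of the proof of Theorem~\ref{TuranIntro}.
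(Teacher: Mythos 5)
Your proof is correct and takes essentially the same route as the paper: the paper establishes Theorem \ref{inducedturan} by repeating the proof of Theorem \ref{TuranIntro} with the one-sided counting lemma (Theorem \ref{thm:onesidedintro}) replaced by its induced variant (Theorem \ref{induced-oneside}), which is precisely what you do. The points you flag as needing care --- that Theorem \ref{induced-oneside} requires jumbledness on \emph{all} pairs $(X_i,X_j)_\Gamma$ including non-edges of $H$ (inherited from the global hypothesis since $|X_i|\geq n/(2M)$), and that $p\leq 1/v(H)$ guarantees $q^*(H)>0$ --- are exactly the right ones.
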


The proof of Theorem \ref{inducedturan} is the same as the proof of Theorem \ref{TuranIntro}, except we replace the one-sided counting lemma, Theorem \ref{thm:onesidedintro}, with its induced variant, Theorem \ref{induced-oneside}.

\subsection{Other sparse regularity lemmas}\label{othersparsereg}

The sparse regularity lemma, in the form due to Scott \cite{Sc11},
states that for every $\epsilon > 0$ and positive integer $m$, there
exists a positive integer $M$ such that every graph $G$ has an
equitable partition into $k$ pieces $V_1, V_2, \dots, V_k$ with $m
\leq k \leq M$ such that all but $\epsilon k^2$ pairs $(V_i, V_j)_G$
satisfy $\DISC(p_{ij}, p_{ij}, \epsilon)$ for some $p_{ij}$. The
additional condition of jumbledness which we imposed in our regularity
lemma, Theorem \ref{thm:sparsereg}, was there so as to force all of
the $p_{ij}$ to be $p$. If this were not the case, it could easily be
that all of the edges of the graph bunch up within a particular bad
pair, so the result would tell us nothing.

In our results, we made repeated use of sparse regularity. While
convenient, this does have its limitations. In particular, the bounds
which the regularity lemma gives on the number of pieces $M$ in the
regular partition is (and is necessarily \cite{CF12, Gow97}) of
tower-type in $\epsilon$. This means that the constants $c^{-1}$ which
this method produces for Theorems \ref{RemovalIntro},
\ref{TuranIntro}, \ref{RamseyIntro}, and \ref{StabIntro} are also of tower-type.

In the dense setting, there are other sparse regularity lemmas which prove sufficient for many of our applications. One such example is the cylinder regularity lemma of Duke, Lefmann and R\"odl \cite{DLR95}. This lemma says that for a $k$-partite graph, between sets $V_1, V_2, \dots, V_k$, there is an $\epsilon$-regular partition of the cylinder $V_1 \times \cdots \times V_k$ into a relatively small number of cylinders $K = W_1 \times \cdots \times W_k$, with $W_i \subseteq V_i$ for $1 \leq i \leq k$. The definition of an $\epsilon$-regular partition here is that all but an $\epsilon$-fraction of the $k$-tuples $(v_1, \dots, v_k) \in V_1 \times \cdots \times V_k$ are in $\epsilon$-regular cylinders, where a cylinder $W_1\times \cdots \times W_k$ is $\epsilon$-regular if all $\binom{k}{2}$ pairs $(W_i,W_j)$, $1\leq i < j \leq k$, are $\epsilon$-regular in the usual sense.

For sparse graphs, a similar theorem may be proven by appropriately adapting the proof of Duke, Lefmann and R\"odl using the ideas of Scott. Consider a $k$-partite graph, between sets  $V_1, V_2, \dots, V_k$. We will say that a cylinder $K = W_1 \times \cdots \times W_k$, with $W_i \subseteq V_i$ for $1 \leq i \leq k$, satisfies $\DISC(q_K, p_K, \epsilon)$ with $q_K = (q_{ij})_{1 \leq i < j \leq k}$ and $p_K = (p_{ij})_{1 \leq i < j \leq k}$ if all $\binom{k}{2}$ pairs $(W_i,W_j)$, $1\leq i < j \leq k$, satisfy $\DISC(q_{ij}, p_{ij}, \epsilon)$. The sparse version of the cylinder regularity lemma is now as follows.

\begin{proposition} \label{prop:cylinderreg}
For every $\epsilon > 0$ and positive integer $k$, there exists $\gamma > 0$ such that if G = (V,E) is a $k$-partite graph with $k$-partition $V = V_1 \cup \dots \cup V_k$ then there exists an $\epsilon$-regular partition $\mathcal{K}$ of $V_1\times \cdots \times V_k$ into at most $\gamma^{-1}$ cylinders such that, for each $K \in \mathcal{K}$ with $K = W_1 \times \dots \times W_k$ and $1 \leq i \leq k$,  $|W_i| \geq \gamma|V_i|$.
\end{proposition}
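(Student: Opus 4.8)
The plan is to adapt the iterative cylinder-refinement argument of Duke, Lefmann and R\"odl \cite{DLR95} to the discrepancy setting, using the device of Scott \cite{Sc11} to run the energy increment argument with the condition $\DISC$ in place of ordinary $\epsilon$-regularity.

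First I would set up the refinement. Start from the trivial partition $\mathcal{K}_0 = \{V_1 \times \cdots \times V_k\}$. Given a partition $\mathcal{K}_t$ of the cylinder into sub-cylinders, call a cylinder $K = W_1 \times \cdots \times W_k$ \emph{irregular} if some pair $(W_i, W_j)_G$ with $i < j$ fails to satisfy $\DISC(d_G(W_i, W_j), p_{ij}, \epsilon)$, and call $\mathcal{K}_t$ itself $\epsilon$-regular if the $k$-tuples lying in irregular cylinders form at most an $\epsilon$-fraction of $V_1 \times \cdots \times V_k$. If $\mathcal{K}_t$ is $\epsilon$-regular we stop; otherwise, inside each irregular $K$ we pick a witnessing pair $i<j$ and witnessing subsets $W_i' \subseteq W_i$, $W_j' \subseteq W_j$ with $|W_i'| \geq \epsilon |W_i|$ and $|W_j'| \geq \epsilon |W_j|$ supplied by the failure of $\DISC$, and replace $K$ by the (at most) four sub-cylinders obtained by splitting the $i$-th coordinate along $W_i'$ and the $j$-th coordinate along $W_j'$, leaving the other coordinates untouched. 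A routine manipulation (as in \cite{DLR95, Sc11}) lets us additionally assume $\epsilon|W_i| \leq |W_i'| \leq (1-\epsilon)|W_i|$ and likewise for $W_j'$, after shrinking $\epsilon$ by a fixed factor; this is what keeps all coordinate sets from collapsing.

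The engine of the argument is the mean-square density index $\mathrm{ind}(\mathcal{K}) = \sum_{K = W_1 \times \cdots \times W_k \in \mathcal{K}} \mu(K) \sum_{i<j} d_G(W_i, W_j)^2$, where $\mu(K) = \prod_{\ell} |W_\ell|/|V_\ell|$, which lies in $[0, \binom{k}{2}]$. Convexity (Cauchy--Schwarz) shows $\mathrm{ind}$ never decreases under refinement, and the defect furnished by a failure of $\DISC$, together with the fact that while we are still refining at least an $\epsilon$-fraction of tuples lies in irregular cylinders, forces $\mathrm{ind}(\mathcal{K}_{t+1}) - \mathrm{ind}(\mathcal{K}_t) \geq c(\epsilon, k)$ for some $c(\epsilon, k) > 0$; this is the step where one imports Scott's computation of the regularity defect for the discrepancy condition. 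Consequently the process terminates after $T \leq \binom{k}{2}/c(\epsilon, k)$ steps. Since each step multiplies the number of cylinders by at most $4$, the output has at most $4^T$ cylinders; and since along any refinement lineage a coordinate set is cut at most once per step, always keeping at least an $\epsilon$-fraction, every $W_i$ satisfies $|W_i| \geq \epsilon^T |V_i|$. Taking $\gamma = \min(4^{-T}, \epsilon^T)$ yields the proposition.

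The main obstacle is the sparse twist in the increment estimate: unlike in the dense case, the density of the witnessing pair inside an irregular cylinder can be far smaller than $1$, so to extract a lower bound $c(\epsilon, k)$ on the per-step gain that does not degrade as $G$ becomes sparse — and hence a value of $\gamma$ depending only on $\epsilon$ and $k$ — one must, following Scott, weigh the index (equivalently, the defect) by the appropriate normalisation and check that the $\DISC$ witnesses still deliver a gain of the right order. Verifying this, and that the witnesses can simultaneously be chosen neither too large nor too small, is the only genuinely delicate point; the rest is a transcription of \cite{DLR95} with ``$\epsilon$-regular pair'' reinterpreted as ``pair satisfying $\DISC(\cdot,\cdot,\epsilon)$''.
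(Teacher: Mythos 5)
The paper itself offers no proof of this proposition beyond the one-sentence indication that it follows by adapting Duke--Lefmann--R\"odl via Scott's ideas, and your architecture (local cylinder refinement driven by a mean-square-density increment) is exactly the intended one. You also correctly flag that the boundedness of the index is the place where Scott's device is needed: without it the normalised index $\sum_K \mu(K)\sum_{i<j} d_G(W_i,W_j)^2/p_{ij}^2$ need not be bounded by $\binom{k}{2}$, since individual pairs can be far denser than the ambient density. Deferring that computation to Scott is acceptable.

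There is, however, a genuine gap at the step you dismiss as routine: the claim that a failure of $\DISC(q,p,\epsilon)$ supplies witness sets with $|W_i'| \geq \epsilon|W_i|$. That is a dense-regime fact. In the sparse normalisation, a failure only gives
\[
\Bigl|\tfrac{1}{|W_i||W_j|}\textstyle\sum_{x\in W_i',\,y\in W_j'}(G(x,y)-q)\Bigr| > \epsilon p,
\]
and since the summand is bounded by $1$ this forces only $|W_i'||W_j'| > \epsilon p\,|W_i||W_j|$, i.e.\ $|W_i'|$ can be as small as roughly $\epsilon p|W_i|$. The energy increment is unharmed (indeed it is larger for smaller witness sets), but the DLR refinement cuts each coordinate set along its own witness set, so a single such step already produces a sub-cylinder whose coordinate is a $p$-dependent fraction of its parent, and iterating destroys the conclusion that $\gamma$ depends only on $\epsilon$ and $k$. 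This is precisely the second point at which the sparse adaptation is non-trivial: one must either arrange balanced witness sets, or allow the sub-cylinders with a disproportionately small coordinate to be absorbed into the exceptional ($\epsilon$-fraction) set --- noting that at each step their total measure is controlled by the cutting ratio, so over $T$ steps one can afford a threshold of order $\epsilon/T$ --- and then account for them when stating which cylinders satisfy the size bound. As written, your argument establishes the increment and the bound on the number of cylinders, but not the lower bound $|W_i|\geq\gamma|V_i|$ with $\gamma=\gamma(\epsilon,k)$.
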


The constant $\gamma$ is at most exponential in a power of $k \epsilon^{-1}$. Moreover, this theorem is sufficient for our applications to Tur\'an's theorem and Ramsey's theorem. This results in constants $c^{-1}$ which are at most double exponential in the parameters $|H|$, $\epsilon$ and $r$ for Theorems \ref{TuranIntro} and \ref{RamseyIntro}.

For the graph removal lemma, we may also make some improvement, but it is of a less dramatic nature. As in the dense case \cite{F11}, it shows that in Theorem \ref{RemovalIntro} we may take $\delta^{-1}$ and $c^{-1}$ to be a tower of twos of height logarithmic in $\epsilon^{-1}$. The proof essentially transfers to the sparse case using the sparse counting lemma, Theorem \ref{thm:onesidedintro}.

\subsection{Algorithmic applications}

The algorithmic versions of Szemer\'edi's regularity lemma and its variants have applications to fundamental algorithmic problems such as max-cut, max $k$-sat, and property testing (see \cite{ASS09} and its references).  The result of Alon and Naor \cite{AN06} approximating the cut-norm of a graph via Grothendieck's inequality  allows one to obtain algorithmic versions of Szemeredi's regularity lemma \cite{ACHKRS10}, the Frieze-Kannan weak regularity lemma \cite{CCF09}, and the Duke-Lefmann-R\"odl weak regularity lemma. Many of these algorithmic applications can be transferred to the sparse setting using algorithmic versions of the sparse regularity lemmas, allowing one to  substantially improve the error approximation in this setting. Our new counting lemmas allows for further sparse extensions. We describe one such extension below.

Suppose we are given a graph $H$ on $h$ vertices, and we want to compute the number of copies of $H$ in a graph $G$ on $n$ vertices. The brute force approach considers all possible $h$-tuples of vertices and computes the desired number in time $O(n^h)$. The Duke-Lefmann-R\"odl regularity lemma was originally introduced in order to obtain a much faster algorithm, which runs in polynomial time with an absolute constant exponent, at the expense of some error. More precisely, for each $\epsilon>0$, they found an algorithm which, given a graph on $n$ vertices, runs in polynomial time and approximates the number of copies of $H$ as a subgraph to within $\epsilon n^h$. The running time is of the form $C(h,\epsilon)n^{c}$, where $c$ is an absolute constant and $C(h,\epsilon)$ is exponential in a power of $h\epsilon^{-1}$. We have the following extension of this result to the sparse setting. The proof transfers from the dense setting using  the algorithmic version of the sparse Duke-Lefmann-R\"odl regularity lemma, Proposition \ref{prop:cylinderreg}, and the sparse counting lemma, Theorem \ref{thm:sparse-counting}. For a graph $H$, we let $s(H)=  \min\set{\frac{\Delta(L(H)) + 4}{2}, \frac{d(L(H)) + 6}{2}}$.

\begin{proposition}
Let $H$ be a graph on $h$ vertices with $s(H) \leq k$ and $\epsilon>0$. There is an absolute constant $c$ and another constant $C=C(\epsilon,h)$ depending only exponentially on $h\epsilon^{-1}$ such that the following holds. Given a graph $G$ on $n$ vertices which is known to be a spanning subgraph of a $(p,\beta)$-pseudorandom graph with $\beta \leq C^{-1}p^k n$, the number of copies of $H$ in $G$ can be computed up to an error $\epsilon p^{e(H)}n^{v(H)}$ in running time $Cn^c$.
\end{proposition}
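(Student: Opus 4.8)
\emph{The plan} is to run the dense Duke--Lefmann--R\"odl counting algorithm in the sparse setting, replacing the cylinder regularity lemma by its sparse algorithmic version (Proposition~\ref{prop:cylinderreg}), the dense counting lemma by the sparse counting lemma (Theorem~\ref{thm:sparse-counting}), and controlling the irregular part with the $\Gamma$-counting estimate (Proposition~\ref{prop:Gamma-counting}), which needs no discrepancy hypothesis. Write $h=v(H)$ and let $\Gamma$ be the jumbled host graph, so $\Gamma$ is $(p,\beta_0)$-jumbled with $\beta_0\le C^{-1}p^kn$. I would first reduce to estimating the number of homomorphisms $H\to G$: the number $N_H(G)$ of labelled copies differs from it only by the non-injective homomorphisms, and each of those, on deleting one of two coincident vertices, extends a homomorphism of a subgraph $H-v\subseteq H$ into $\Gamma$; Proposition~\ref{prop:Gamma-counting} bounds these by $O(p^{e(H)-\Delta(H)}n^{h-1})$, which is $o(p^{e(H)}n^h)$ because the hypothesis $\beta_0\le C^{-1}p^kn$, together with the fact that a $(p,\beta)$-jumbled graph with $p\le\tfrac12$ has $\beta=\Omega(\sqrt{pn})$, forces $n\gg p^{-(2k-1)}\ge p^{-(\Delta(H)+2)}$ (the case $p>\tfrac12$ being essentially dense).

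\emph{The algorithm.} Fix $\theta=\epsilon/3$ and let $c^{\ast},\epsilon_0$ be the constants produced by Theorem~\ref{thm:sparse-counting} for $H$ and $\theta$; set $\epsilon'=\min\{\epsilon_0,\ \epsilon\,2^{-e(H)}/3\}$, let $\gamma=\gamma(\epsilon',h)$ be the constant of Proposition~\ref{prop:cylinderreg}, and put $C=\gamma^{-1}\max\{1,(c^{\ast})^{-1}\}$ (enlarged if necessary to absorb the reduction above). I would form the $h$-partite blow-up with classes $V_1,\dots,V_h$, each a copy of $V(G)$, joined pairwise by a copy of $G$, so that partite homomorphisms of $H$ into it are exactly the homomorphisms $H\to G$; run the algorithmic sparse cylinder regularity lemma with parameter $\epsilon'$ on this blow-up, obtaining at most $\gamma^{-1}$ cylinders $K=W_1^K\times\cdots\times W_h^K$ with $|W_i^K|\ge\gamma n$ and all but an $\epsilon'$-fraction of the $h$-tuples lying in $\epsilon_0$-regular cylinders; compute the densities $q_{ij}^K=d_G(W_i^K,W_j^K)$ for $ij\in E(H)$ in each cylinder; and output
\[
\widehat N=\sum_{K\ \text{regular}}\Bigl(\prod_{ij\in E(H)}q_{ij}^K\Bigr)\prod_{i=1}^{h}\bigl|W_i^K\bigr|.
\]

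\emph{Correctness and running time.} Because $\Gamma$ is globally $(p,C^{-1}p^kn)$-jumbled and $|W_i^K|\ge\gamma n$, each pair $(W_i^K,W_j^K)_\Gamma$ is $(p,(C\gamma)^{-1}p^k\sqrt{|W_i^K||W_j^K|})$-jumbled; since $(C\gamma)^{-1}\le\min\{c^{\ast},1\}$ and $k\ge s(H)$, both Theorem~\ref{thm:sparse-counting} and Proposition~\ref{prop:Gamma-counting} apply inside every cylinder. In an $\epsilon_0$-regular cylinder $(W_i^K,W_j^K)_G$ satisfies $\DISC(q_{ij}^K,p,\epsilon_0)$, so Theorem~\ref{thm:sparse-counting} gives that the number of partite homomorphisms of $H$ into $K$ equals $\bigl(\prod q_{ij}^K\bigr)\prod|W_i^K|$ up to $\theta\,p^{e(H)}\prod|W_i^K|$; summing over regular cylinders yields error at most $\theta\,p^{e(H)}n^h$. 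For any cylinder, Proposition~\ref{prop:Gamma-counting} bounds the partite homomorphisms of $H$ into $\Gamma$ inside $K$ by $2^{e(H)}p^{e(H)}\prod|W_i^K|$, so the irregular cylinders account for at most $2^{e(H)}p^{e(H)}\epsilon'n^h\le\tfrac{\epsilon}{3}p^{e(H)}n^h$ homomorphisms. Together with the injectivity correction this gives $\bigl|\widehat N-N_H(G)\bigr|\le\epsilon\,p^{e(H)}n^{v(H)}$ for $n$ large. The running time is dominated by the regularity-lemma call --- the density computations cost only $O(\gamma^{-1}h^2n^2)$ --- so it is $Cn^c$ with $c$ an absolute constant; and $C=C(\epsilon,h)$ is exponential in a power of $h\epsilon^{-1}$, since $\gamma^{-1}$ and $\epsilon'^{-1}$ are of that order while $c^{\ast},\epsilon_0$ are polynomial in $\epsilon$.

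\emph{The hard part} will be to keep all the local parameters independent of $p$ and $n$: one has to verify that restricting the jumbledness of $\Gamma$ to a cylinder of linear size still meets the hypotheses of Theorem~\ref{thm:sparse-counting} and Proposition~\ref{prop:Gamma-counting} with constants not depending on $p$ or $n$ --- which is precisely the role of the lower bound $|W_i^K|\ge\gamma n$ in Proposition~\ref{prop:cylinderreg} --- and that the irregular cylinders are truly negligible, for which the discrepancy-free $\Gamma$-counting bound is the right tool. The remaining pieces (the reduction from labelled copies to homomorphisms, and the arithmetic of the three error terms) are routine.
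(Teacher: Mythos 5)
Your proposal is correct and takes essentially the same approach as the paper, which only sketches the argument as a transfer of the dense Duke--Lefmann--R\"odl algorithm via Proposition~\ref{prop:cylinderreg} and Theorem~\ref{thm:sparse-counting}. The details you supply beyond that sketch --- restricting the global jumbledness to linear-sized cylinder parts, bounding the irregular cylinders with Proposition~\ref{prop:Gamma-counting}, and correcting for non-injective homomorphisms --- are exactly the right ones and check out.
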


\subsection{Multiplicity results}

There are many problems and results in graph Ramsey theory and extremal graph theory on the multiplicity of subgraphs. These results can be naturally extended to sparse pseudorandom graphs using the tools developed in this paper. Indeed, by applying the sparse regularity lemma and the new counting lemmas, we get extensions of these results to sparse graphs. In this subsection, we discuss a few of these results.

Recall that Ramsey's theorem states that every $2$-edge-coloring of a
sufficiently large complete graph $K_n$ contains at least one
monochromatic copy of a given graph $H$. Let $c_{H,n}$ denote the
fraction of copies of $H$ in $K_n$ that must be monochromatic in any
$2$-edge-coloring of $G$. By an averaging argument, $c_{H,n}$ is a
bounded, monotone increasing function in $n$, and therefore has a
positive limit $c_H$ as $n \to \infty$. The constant $c_H$ is known as
the {\it Ramsey multiplicity constant} for the graph $H$. It is simple
to show that $c_H \leq 2^{1-m}$ for a graph $H$ with $m=e(H)$ edges,
where this bound comes from considering a random $2$-edge-coloring of
$K_n$ with each coloring equally likely.

Erd\H{o}s \cite{E62} and, in a more general form, Burr and Rosta \cite{BR80} suggested that the Ramsey multiplicity constant
is achieved by a random coloring. These conjectures are false as was demonstrated by Thomason \cite{T89}
even for $H$ being any complete graph $K_t$ with $t \geq 4$. Moreover, as shown in \cite{F08}, there are graphs $H$ with $m$ edges and $c_H \leq m^{-m/2+o(m)}$, which demonstrates that the random coloring is far from being optimal for some graphs.

For bipartite graphs the situation seems to be very different. The edge density of a graph is the
fraction of pairs of vertices that are edges. The conjectures of Erd\H{o}s-Simonovits \cite{Si84} and Sidorenko \cite{Si93}
suggest that for any bipartite $H$ the number of copies of $H$ in any graph $G$ on $n$ vertices with edge
density $p$ bounded away from $0$ is asymptotically at least the same as in the $n$-vertex random graph with edge density $p$. This conjecture implies that
$c_H=2^{1-m}$ if $H$ is bipartite with $m$ edges. The most general results on this problem were obtained in \cite{CFS10} and \cite{LS12}, where it is shown that every bipartite graph $H$ which has a vertex in one part complete to the other part satisfies the conjecture.

More generally, let $c_{H,\Gamma}$ denote the fraction of copies of $H$ in $\Gamma$ that must be monochromatic in any $2$-edge-coloring of $\Gamma$. For a graph $\Gamma$ with $n$ vertices, by averaging over all copies of $\Gamma$ in $K_n$, we have $c_{H,\Gamma} \leq c_{H,n} \leq c_H$. It is natural to try to find conditions on $\Gamma$ which imply that $c_{H,\Gamma}$ is close to $c_H$.  The next theorem shows that if $\Gamma$ is sufficiently jumbled, then $c_{H,\Gamma}$ is close to $c_H$. The proof follows by noting that the proportion of monochromatic copies of $H$ in the weighted reduced graph $R$ is at least $c_{H, |R|}$. This count then transfers back to $\Gamma$ using the one-sided counting lemma. We omit the details.

\begin{theorem}\label{thm:Ram-mult}
For each $\epsilon>0$ and graph $H$, there is $c>0$ such that if $\Gamma$ is a $(p,\beta)$-jumbled graph on $n$ vertices with $\beta \leq cp^{d_2(H)+3}n$ then
every $2$-edge-coloring of $\Gamma$ contains at least $(c_H-\epsilon)p^{e(H)}n^{v(H)}$ labeled monochromatic copies of $H$.
\end{theorem}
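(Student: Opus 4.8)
The plan is to run the standard regularity-method argument used for Theorems~\ref{TuranIntro} and \ref{RamseyIntro}, but feeding in the (essentially known) \emph{dense} Ramsey multiplicity bound applied to a \emph{weighted} reduced graph. Fix a $2$-edge-colouring of $\Gamma$ with colour classes $G_1$ (red) and $G_2$ (blue). First I would apply the multicolour sparse regularity lemma, Lemma~\ref{lem:colorreg} (and, if convenient, the cleaning Lemma~\ref{lem:colorcleaning}), with $\ell=2$, a small discrepancy parameter $\epsilon_0$, a small cleaning parameter $\alpha$, and $m_0$ large, all chosen at the end. This produces a common equitable partition $V_1,\dots,V_k$ with $m_0\le k\le M$ such that, for each colour $s$, all but $\le\epsilon_0k^2$ pairs $(V_i,V_j)_{G_s}$ satisfy $\DISC(q^{(s)}_{ij},p,\epsilon_0)$, where $q^{(s)}_{ij}$ is the colour-$s$ density between $V_i$ and $V_j$. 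Choosing $c$ small in terms of $M$ (exactly as in the proof of Theorem~\ref{RamseyIntro}), each induced pair $(V_i,V_j)_\Gamma$ is $(p,c_0p^{d_2(H)+3}\sqrt{|V_i||V_j|})$-jumbled, which is where the exponent $d_2(H)+3$ enters via Theorem~\ref{thm:onesidedintro}; also, by jumbledness, $\Gamma$ has density $p(1+o(1))$ between any two parts, so $q^{(1)}_{ij}+q^{(2)}_{ij}=p(1+o(1))$ for \emph{every} pair $(i,j)$.

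The key dense ingredient is a weighted Ramsey multiplicity bound: if nonnegative weights $\tilde r_{ij},\tilde b_{ij}$ with $\tilde r_{ij}+\tilde b_{ij}=1$ are assigned to the pairs of $[k]$, then
\[
\sum_{\phi}\prod_{ab\in E(H)}\tilde r_{\phi(a)\phi(b)}+\sum_{\phi}\prod_{ab\in E(H)}\tilde b_{\phi(a)\phi(b)}\ \ge\ c_{H,k}\,N_H(K_k),
\]
where $\phi$ ranges over injections $V(H)\to[k]$. This is immediate by random rounding: colour each pair $\{i,j\}$ red independently with probability $\tilde r_{ij}$ and blue otherwise; since distinct edges of $H$ map to distinct pairs under an injection, the left-hand side is exactly the expected number of labelled monochromatic copies of $H$, while every $2$-colouring of $K_k$ has at least $c_{H,k}N_H(K_k)$ such copies by definition of $c_{H,k}$. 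I would apply this with $\tilde r_{ij}=q^{(1)}_{ij}/(q^{(1)}_{ij}+q^{(2)}_{ij})$ and $\tilde b_{ij}=q^{(2)}_{ij}/(q^{(1)}_{ij}+q^{(2)}_{ij})$, so that (using $q^{(1)}_{ij}+q^{(2)}_{ij}=p(1+o(1))$ and that there are $e(H)$ factors)
\[
\sum_{s\in\{1,2\}}\sum_{\phi}\prod_{ab\in E(H)}q^{(s)}_{\phi(a)\phi(b)}\ \ge\ (1-o(1))\,p^{e(H)}\,c_{H,k}\,N_H(K_k),
\]
and recall that $c_{H,k}\to c_H$ and $N_H(K_k)=(1-o(1))k^{v(H)}$ as $k\to\infty$.

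To transfer this to $\Gamma$, call a pair $(\phi,s)$ \emph{usable} if $\phi$ is injective and every edge-pair $\{\phi(a),\phi(b)\}$ is $\DISC$-good in colour $s$ and has $q^{(s)}_{\phi(a)\phi(b)}\ge\alpha p$. For each usable $(\phi,s)$, applying the one-sided counting lemma, Theorem~\ref{thm:onesidedintro}, with parameters $\alpha,\theta$ and with $X_a=V_{\phi(a)}$ yields at least $(1-\theta)\bigl(\prod_{ab\in E(H)}q^{(s)}_{\phi(a)\phi(b)}\bigr)\prod_a|V_{\phi(a)}|$ labelled copies of $H$ in $G_s$ with $a\mapsto V_{\phi(a)}$, each a genuine monochromatic copy of $H$ in $\Gamma$; copies from distinct $(\phi,s)$ are distinct (distinct injections use disjoint parts, and a copy with $e(H)\ge1$ cannot be monochromatic in two colours). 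Summing over usable $(\phi,s)$, using $\prod_a|V_{\phi(a)}|\ge(1-o(1))(n/k)^{v(H)}$, and subtracting from the displayed bound the contribution of non-usable $\phi$ (those through one of the $\le 2\epsilon_0k^2$ non-regular pairs, $O(\epsilon_0k^{v(H)})$ of them each contributing at most $p^{e(H)}$, and those with an edge of colour-$s$ density $<\alpha p$, at most $k^{v(H)}$ per colour each contributing less than $\alpha p^{e(H)}$), the total number of monochromatic labelled copies of $H$ in $\Gamma$ is at least
\[
(1-\theta)(1-o(1))\,p^{e(H)}\Bigl(c_{H,k}N_H(K_k)-\bigl(2e(H)\epsilon_0+2\alpha\bigr)k^{v(H)}\Bigr)(n/k)^{v(H)}.
\]
Choosing $\theta,\epsilon_0,\alpha$ small and then $m_0$ (hence $k$) large so that $c_{H,k}\ge c_H-\tfrac{\e}{20}$ and $N_H(K_k)\ge(1-\tfrac{\e}{20})k^{v(H)}$, and taking $n$ large (the assertion is vacuous once $(c_H-\e)p^{e(H)}n^{v(H)}<1$), the right-hand side is at least $(c_H-\e)p^{e(H)}n^{v(H)}$, as required.

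The main obstacle is the second step: extracting the \emph{sharp} constant $c_H$. Rounding each reduced pair to its majority colour and invoking the ordinary unweighted multiplicity bound would lose a multiplicative factor of order $2^{e(H)}$; it is precisely the weighted (averaging/random-rounding) formulation above, together with the exact identity $q^{(1)}_{ij}+q^{(2)}_{ij}=(1+o(1))p$, that preserves the correct constant. Everything else — tracking the $(1-\theta)$ slack of the counting lemma, the $O(\epsilon_0k^2)$ non-regular pairs, the low-density edges, the $o(1)$ density fluctuations of $\Gamma$, and the jumbledness bookkeeping that produces the exponent $d_2(H)+3$ — is routine and parallels the proofs in Section~\ref{sec:applications}.
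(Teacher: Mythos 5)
Your proposal is correct and follows essentially the same route as the paper, which only sketches this proof in two sentences (weighted reduced graph, the bound $c_{H,|R|}$ for its monochromatic-copy proportion, and transfer via the one-sided counting lemma, Theorem~\ref{thm:onesidedintro}). Your random-rounding argument for the weighted multiplicity bound is exactly the content behind the paper's phrase about the weighted reduced graph, and the remaining bookkeeping matches the proofs in Section~\ref{sec:applications}.
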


Maybe the earliest result on Ramsey multiplicity is Goodman's theorem \cite{G59}, which determines $c_{K_3,n}$ and, in particular, implies $c_{K_3}=\frac{1}{4}$. The next theorem shows an extension of Goodman's theorem to pseudorandom graphs, giving an optimal jumbledness condition to imply $c_{H,\Gamma}=\frac{1}{4}-o(1)$.

\begin{theorem}
  \label{thm:Goodman}
  If $\Gamma$ is a $(p, \beta)$-jumbled graph on $n$ vertices with $\beta \leq \frac{1}{10}p^2n$, then every $2$-edge-coloring of $\Gamma$ contains at least $(p^3-10p\frac{\beta}{n})\frac{n^3}{24}$ monochromatic triangles.
\end{theorem}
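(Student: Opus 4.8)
The plan is to mimic Goodman's classical double-counting argument, but to control the error terms arising from the jumbledness of $\Gamma$ rather than working in $K_n$. Recall Goodman's identity: in any $2$-edge-coloring of a graph, a non-monochromatic triangle contains exactly two ``bichromatic cherries'' (paths of length two whose two edges receive different colors), while a monochromatic triangle contains none. Hence, if $T$ denotes the number of triangles of $\Gamma$ and $P$ the number of bichromatic cherries, the number of monochromatic triangles is exactly $T - P/2$. So the task reduces to two estimates: a lower bound $T \geq (p^3 - O(p\beta/n))\tfrac{n^3}{6}$ on the total triangle count of $\Gamma$, and an upper bound on $P$.

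For the triangle count, I would apply the jumbledness condition directly: $\Gamma$ being $(p,\beta)$-jumbled with $\beta \le \tfrac{1}{10}p^2 n$ forces every vertex $v$ to have degree $d(v) = pn + O(\beta)$ and every pair $u,v$ to have codegree $d(u,v) = p^2 n + O(\beta)$ (this is Lemma~\ref{lem:deg-est} applied with $v$ a single vertex or an edge, or simply two applications of the defining inequality \eqref{eq:jumbled}). Summing the codegree estimate over all edges of $\Gamma$ gives $3T = \sum_{uv \in E(\Gamma)} d(u,v) = \sum_{uv\in E(\Gamma)} (p^2 n + O(\beta))$, and since $e(\Gamma) = \tfrac12 p n^2 + O(\beta n)$ we get $T = \tfrac16 p^3 n^3 + O(p\beta n^2)$. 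The $\tfrac{1}{10}$ is chosen precisely so that these error terms can be collected into the clean form $(p^3 - 10 p\beta/n)\tfrac{n^3}{24}$ once one also bounds $P$.

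For the bound on bichromatic cherries, fix a vertex $v$ and let $r(v)$ and $b(v)$ be the numbers of red and blue edges at $v$, so $r(v)+b(v) = d(v)$. The number of bichromatic cherries centered at $v$ is $r(v)b(v) \le \tfrac14 d(v)^2$, and summing over $v$ gives $P \le \tfrac14 \sum_v d(v)^2$. Now $\sum_v d(v)^2 = \sum_v (pn + O(\beta))^2 = p^2 n^3 + O(p\beta n^2)$, again using the degree estimate from jumbledness. Therefore the number of monochromatic triangles is at least
\[
T - \frac{P}{2} \ \ge\ \frac{p^3 n^3}{6} - \frac{1}{8} p^2 n^3 + \text{(error terms of size }O(p\beta n^2)\text{)}.
\]
Wait — this is not yet the right shape: $\tfrac16 p^3 n^3 - \tfrac18 p^2 n^3$ is negative for small $p$, so the naive cherry bound is too lossy. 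The correct approach is Goodman's: one must not bound $r(v)b(v)$ by $\tfrac14 d(v)^2$ but rather keep the count of monochromatic cherries, $\binom{r(v)}{2}+\binom{b(v)}{2} \ge \tfrac14 d(v)^2 - \tfrac12 d(v)$, and observe that the number of monochromatic triangles equals $\tfrac13\left[\sum_v\left(\binom{r(v)}{2}+\binom{b(v)}{2}\right) - (\text{non-mono paths that fail to close})\right]$; more cleanly, monochromatic triangles $= T - \tfrac12\sum_v r(v)b(v)$ and one bounds $\sum_v r(v) b(v)$ from above. The genuine identity to use is: number of monochromatic triangles $\ge \tfrac13\sum_v \left[\binom{r(v)}{2} + \binom{b(v)}{2} - (n-1-d(v))\cdot\text{something}\right]$ — this bookkeeping is exactly Goodman's and I would reproduce it with the codegree-corrected closing probability. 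The main obstacle, and the step requiring genuine care, is precisely this: combining the cherry count with the triangle-closing information so that the $p^2$-order term is absorbed correctly and only the advertised $O(p\beta/n)$ loss remains; this is where the constant $\tfrac{1}{10}$ and the factor $24$ are pinned down, and where one must use the codegree estimate (not just the degree estimate) to know that almost every cherry closes up into a triangle with the expected frequency $p$.
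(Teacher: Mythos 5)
Your proposal correctly diagnoses the central difficulty --- that the naive Goodman bound, which controls the number of \emph{all} bichromatic cherries, produces a term of order $p^2n^3$ that swamps the triangle count of order $p^3n^3$ when $p$ is small --- but it stops exactly at that point and does not resolve it. The paragraph beginning ``The correct approach is Goodman's\dots'' is an acknowledgment that the key step is missing, not a proof of it, and the tool you name for the fix (the codegree estimate $d(u,v)\approx p^2n$) is not the one that works. Knowing the codegree of a fixed pair does not tell you what fraction of the monochromatic cherries at a vertex $a$ close into triangles, because those cherries live inside the specific sets $R_a$ and $B_a$ (the red and blue neighborhoods of $a$), and you need to control the number of $\Gamma$-edges \emph{inside those particular sets}.

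The argument the paper has in mind runs in the opposite direction: rather than subtracting an upper bound on bichromatic cherries from the triangle count, one counts from below the monochromatic cherries that close. Writing $A$ for the number of triangles of $\Gamma$, $M$ for the number of monochromatic ones, and $T$ for the number of triples $(a,\{b,c\})$ forming a triangle with $ab$ and $ac$ the same color, one has the exact identity $T=A+2M$ (a non-monochromatic triangle has exactly one apex with a same-colored cherry, a monochromatic one has three), so $M=\tfrac12(T-A)$. The upper bound $A\le \tfrac16 p^3n^3+O(p\beta n^2)$ is as in your triangle-count paragraph. The lower bound on $T$ is the step you are missing: for each vertex $a$, the closing monochromatic cherries at $a$ are counted by $e_\Gamma(R_a)+e_\Gamma(B_a)$, and applying the jumbledness inequality \eqref{eq:jumbled} with $X=Y=R_a$ (and likewise $B_a$) gives $e_\Gamma(R_a)+e_\Gamma(B_a)\ge \tfrac{p}{2}\bigl(|R_a|^2+|B_a|^2\bigr)-O(\beta\, d(a))\ge \tfrac{p}{4}d(a)^2-O(\beta\, d(a))$ by convexity. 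Summing over $a$ and using $d(a)\ge pn-\beta$ yields $T\ge \tfrac14 p^3n^3-O(p\beta n^2)$, and then $M\ge\tfrac12\bigl(\tfrac14-\tfrac16\bigr)p^3n^3-O(p\beta n^2)=\tfrac{1}{24}p^3n^3-O(p\beta n^2)$, which is where the $24$ comes from. Without the set-level application of jumbledness to $R_a$ and $B_a$ and the convexity step, the $p^2$-order obstruction you identified cannot be removed, so as written the proposal has a genuine gap.
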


The proof of this theorem follows by first noting that $T = A + 2M$, where $A$ denotes the number of triangles in $\Gamma$, $M$ the number of monochromatic triangles in $\Gamma$, and $T$ the number of ordered triples $(a,b,c)$ of vertices of $\Gamma$ which form a triangle such that $(a,b)$ and $(a,c)$ are the same color. We then give an upper bound for $A$ and a lower bound for $T$ using the jumbledness conditions and standard inequalities. We omit the precise details.

The previous theorem has the following immediate corollary, giving an optimal jumbledness condition to imply that a graph is $(K_3,2)$-Ramsey.

\begin{corollary}
  If $\Gamma$ is a $(p, \beta)$-jumbled graph on $n$ vertices with $\beta < \frac{p^2}{10}n$, then $\Gamma$ is $(K_3,2)$-Ramsey.
\end{corollary}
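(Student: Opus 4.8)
The plan is to carry out a pseudorandom version of Goodman's classical triangle-counting argument and then close the estimate using the jumbledness hypothesis together with the Cauchy--Schwarz inequality. Fix a $2$-colouring of the edges of $\Gamma$. For a vertex $v$ let $N_v$ be its neighbourhood in $\Gamma$ and let $R_v, B_v$ be its red and blue neighbourhoods, so that $N_v = R_v \cup B_v$ (a disjoint union) and $d(v) = |R_v| + |B_v|$; write $e(S)$ for the number of edges of $\Gamma$ inside a set $S$ and $e(S,T)$ for the number of edges between disjoint sets $S, T$. Let $A$ be the total number of triangles in $\Gamma$ and $M$ the number of monochromatic ones. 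The combinatorial heart of the argument is the identity
\[
M \;=\; \frac13\sum_{v\in V(\Gamma)} \bigl(e(R_v)+e(B_v)\bigr)\;-\;\frac16\sum_{v\in V(\Gamma)} e(R_v,B_v),
\]
which I would derive from the three elementary facts $3A=\sum_v e(N_v)$, $e(N_v)=e(R_v)+e(B_v)+e(R_v,B_v)$, and $\sum_v (e(R_v)+e(B_v)) = 3M + (A-M)$; the last holds because in any $2$-colouring a triangle has all three, respectively exactly one, of its vertices incident to two equally coloured triangle-edges according as it is, or is not, monochromatic.

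Next I would substitute the jumbledness estimates at the level of a single vertex. Applying the jumbledness condition to the pairs $(R_v,R_v)$ and $(B_v,B_v)$ and using the elementary bound $|R_v|^2+|B_v|^2 \ge \tfrac12 d(v)^2$ gives $e(R_v)+e(B_v) \ge \tfrac14\bigl(p\,d(v)^2 - 2\beta d(v)\bigr)$; applying it to $(R_v,B_v)$ together with $|R_v||B_v|\le \tfrac14 d(v)^2$ gives $e(R_v,B_v)\le \tfrac14 p\,d(v)^2 + \tfrac12\beta d(v)$. It matters here that this last bound uses jumbledness and not merely the trivial $e(R_v,B_v)\le |R_v||B_v|$; this is precisely what will make the final constant work for densities close to $1$. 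Substituting into the identity and summing over $v$, all the $d(v)^2$ and $d(v)$ terms collapse to
\[
M \;\ge\; \frac{p}{24}\sum_{v} d(v)^2 \;-\; \frac{\beta}{4}\sum_{v} d(v).
\]

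Finally I would convert the two degree sums into $p,\beta,n$. Jumbledness applied to $(V(\Gamma),V(\Gamma))$ gives $pn^2-\beta n \le \sum_v d(v) = 2e(\Gamma) \le pn^2+\beta n$, and Cauchy--Schwarz gives $\sum_v d(v)^2 \ge \bigl(\sum_v d(v)\bigr)^2/n \ge n(pn-\beta)^2$. Substituting a lower bound in the $\sum d(v)^2$ term and an upper bound in the $\sum d(v)$ term yields
\[
M \;\ge\; \frac{n}{24}\Bigl(p(pn-\beta)^2 - 6\beta(pn+\beta)\Bigr),
\]
and it remains only to check that under $\beta\le \tfrac1{10}p^2n$ this is at least $\bigl(p^3 - 10p\tfrac{\beta}{n}\bigr)\tfrac{n^3}{24}$. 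Expanding, this is equivalent to $2pn(2-p)\ge (6-p)\beta$, which holds with room to spare since $p\le 1$ gives $(6-p)\beta \le 6\beta \le \tfrac35 p^2n \le 2pn \le 2pn(2-p)$. The corollary is then immediate: if $\beta<\tfrac1{10}p^2n$ the displayed bound on $M$ is strictly positive, so $\Gamma$ contains a monochromatic triangle under every $2$-colouring, i.e.\ $\Gamma$ is $(K_3,2)$-Ramsey. There is no genuinely hard step in this argument; the only point requiring care is the bookkeeping of error terms so that the clean constant $10$ survives for all $p\in(0,1]$, which, as noted, is why one must invoke jumbledness rather than a trivial bound when estimating $e(R_v,B_v)$.
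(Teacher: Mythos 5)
Your proof is correct and follows essentially the same route as the paper: the corollary is deduced from the quantitative bound of Theorem 9.4 on the number of monochromatic triangles, and your identity $M = \tfrac13\sum_v\bigl(e(R_v)+e(B_v)\bigr) - \tfrac16\sum_v e(R_v,B_v)$ is just a rewriting of the paper's $T = A + 2M$ (with $T=\sum_v(e(R_v)+e(B_v))$ and $\sum_v e(R_v,B_v)=2(A-M)$), after which both arguments apply per-vertex jumbledness and Cauchy--Schwarz. Since the paper only sketches this computation, your write-up usefully supplies the omitted details, including the observation that $e(R_v,B_v)$ must be bounded via jumbledness rather than trivially in order for the constant $10$ to survive when $p$ is close to $1$.
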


Define the Tur\'an multiplicity $\rho_{H,d,n}$ to be the minimum, over all graphs $G$ on $n$ vertices with edge density at least $d$, of the fraction of copies of $H$ in $K_n$ which are also in $G$. Let $\rho_{H,d}$ be the limit of $\rho_{H,d,n}$ as $n \to \infty$. This limit exists by an averaging argument. The conjectures of Erd\H{o}s-Simonovits \cite{Si84} and Sidorenko \cite{Si93} mentioned earlier can be stated as $\rho_{H,d}=d^{e(H)}$ for bipartite $H$.
Recently, Reiher \cite{Re12}, extending work of Razborov \cite{R08} and Nikiforov \cite{N11} for $t = 3$ and $4$, determined $\rho_{K_t, d}$ for all $t \geq 3$.

We can similarly extend these results to the sparse setting. Let $\rho_{H,d,\Gamma}$ be the minimum, over all subgraphs $G$ of $\Gamma$ with at least $d e(\Gamma)$ edges, of the fraction of copies of $H$ in $\Gamma$ which are also in $G$. We have the following result, which gives jumbledness conditions on $\Gamma$ which imply that $\rho_{H,d,\Gamma}$ is close to $\rho_{H,d}$.

\begin{theorem}\label{thm:Tur-mult}
For each $\epsilon>0$ and graph $H$ there is $c>0$ such that if $\Gamma$ is a $(p,\beta)$-jumbled graph on $n$ vertices with $\beta \leq cp^{d_2(H)+3}n$ then
every subgraph of $\Gamma$ with at least $d e(\Gamma)$ edges contains at least $(\rho_{H,d}-\epsilon)p^{e(H)}n^{v(H)}$ labeled copies of $H$.
\end{theorem}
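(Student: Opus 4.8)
The plan is to run the by-now-familiar reduction used in the proofs of Theorems~\ref{TuranIntro}, \ref{RamseyIntro} and \ref{RemovalIntro}: apply the sparse regularity lemma, clean it up with Lemma~\ref{lem:cleaning}, pass to the reduced graph, apply the dense Tur\'an multiplicity bound there, and then transfer the resulting copies of $H$ back down to $G$ with the one-sided counting lemma, Theorem~\ref{thm:onesidedintro}. The one genuinely new point is that we need the \emph{sharp} constant $\rho_{H,d}$ in the conclusion rather than merely a positive one, and for this it is essential to work with the \emph{weighted} reduced graph, retaining the true pair densities $q_{ij}$ rather than only the bound $q_{ij}\ge\alpha p$; otherwise the transfer would cost a spurious factor of order $\alpha^{e(H)}$, exactly as in the proofs of Theorems~\ref{TuranIntro} and~\ref{RemovalIntro}.

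Concretely, given $\epsilon>0$, I would first choose small parameters $\alpha,\theta>0$ (pinned down at the end), feed $\alpha,\theta$ into Theorem~\ref{thm:onesidedintro} to get $c_0,\epsilon_0>0$, and then feed $\alpha$ and $\epsilon_0$ into Lemma~\ref{lem:cleaning} to get $c_1>0$ and $M$. For $\Gamma$ that is $(p,\beta)$-jumbled with $\beta\le cp^{d_2(H)+3}n\le c_1pn$ (so we will take $c\le c_1$) and $G\subseteq\Gamma$ with $e(G)\ge d\,e(\Gamma)$, Lemma~\ref{lem:cleaning} produces $G'\subseteq G$ with $e(G')\ge e(G)-4\alpha e(\Gamma)\ge(d-4\alpha)(1-o(1))p\binom n2$ and an equitable partition $V_1\cup\dots\cup V_k$, $m\le k\le M$, with no $G'$-edge inside a part and every non-empty pair $(V_i,V_j)_{G'}$ of density $q_{ij}:=d_{G'}(V_i,V_j)\ge\alpha p$ satisfying $\DISC(q_{ij},p,\epsilon_0)$. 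Let $W_R$ be the weighted reduced graph, i.e.\ the step-function graphon on $[0,1]^2$ with $k$ blocks of mass $|V_i|/n$ and value $w(ij)=q_{ij}/p\in[\alpha,1]$ on non-empty pairs (and $0$ elsewhere). Since $\sum_{i<j}q_{ij}|V_i||V_j|=e(G')$ and the parts are equitable, the density of $W_R$ is $d'=2e(G')/(pn^2)\ge(d-4\alpha)(1-o(1))\ge d-\delta$, where $\delta\to0$ as $\alpha\to0$ and $n\to\infty$.

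Now I would invoke the dense Tur\'an multiplicity result in its graphon form: $\rho_{H,d'}=\inf\{t_H(W):W\text{ a graphon of density}\ge d'\}$, and this function is monotone and continuous in $d'$ (by compactness of graphon space, \cite{LS06}), so $t_H(W_R)\ge\rho_{H,d'}\ge\rho_{H,d-\delta}\ge\rho_{H,d}-\epsilon/4$ once $\delta$ is small enough. Expanding $t_H(W_R)=\sum_{\psi\colon V(H)\to[k]}\bigl(\prod_{a}\tfrac{|V_{\psi(a)}|}{n}\bigr)\prod_{ab\in E(H)}w(\psi(a)\psi(b))$ and discarding the non-injective terms (each has a repeated coordinate, so together they contribute only $O(1/k)$), the sum over injective $\psi$ is at least $\rho_{H,d}-\epsilon/4-O(1/k)$. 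For each injective $\psi$ — equivalently, each labelled copy of $H$ in the non-empty-pair graph of the partition — the sets $X_a:=V_{\psi(a)}$ are disjoint of size $\ge n/(2M)$, the restriction of $\Gamma$ to $X_a\times X_b$ is $(p,c_0p^{d_2(H)+3}\sqrt{|X_a||X_b|})$-jumbled provided $c\le c_0/(2M)$, and $(X_a,X_b)_{G'}$ satisfies $\DISC(q_{\psi(a)\psi(b)},p,\epsilon_0)$; hence Theorem~\ref{thm:onesidedintro} yields at least $(1-\theta)\prod_{ab}q_{\psi(a)\psi(b)}\prod_a|X_a|=(1-\theta)p^{e(H)}n^{v(H)}\bigl(\prod_a\tfrac{|V_{\psi(a)}|}{n}\bigr)\prod_{ab}w(\psi(a)\psi(b))$ labelled copies of $H$ in $G'$ with that part-assignment. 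These contributions are disjoint for distinct $\psi$ (an embedding determines its part-assignment), so summing and then taking $\alpha$ (hence $\delta$) small, $\theta\le\epsilon/4$, $m$ large and $c=\min(c_0/(2M),c_1)$ gives $N_H(G)\ge N_H(G')\ge(\rho_{H,d}-\epsilon)p^{e(H)}n^{v(H)}$, as required.

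The main obstacle is structural rather than computational: one must carry the weighted reduced graph through to the dense Tur\'an multiplicity step so that the product $\prod_{ab}q_{\psi(a)\psi(b)}=p^{e(H)}\prod_{ab}w(\psi(a)\psi(b))$ regenerates the factor $p^{e(H)}$ exactly, and one must know that $d'\mapsto\rho_{H,d'}$ is continuous so that the cleaning loss $4\alpha$, the regularity error, the counting error $\theta$ and the $1/k$ term can all be absorbed into the $-\epsilon$ slack in the statement. Everything else — the jumbledness bookkeeping when restricting $\Gamma$ to parts, the estimate $e(\Gamma)\ge(1-o(1))p\binom n2$, and the size bounds $|V_i|\ge n/(2M)$ — is routine and identical to the corresponding steps in Section~\ref{sec:applications}.
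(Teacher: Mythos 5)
Your proposal is correct and follows exactly the route the paper intends: the paper states Theorem~\ref{thm:Tur-mult} without proof, indicating only (via the sketch for Theorem~\ref{thm:Ram-mult}) that one works in the weighted reduced graph and transfers back with the one-sided counting lemma, which is precisely your argument. Your added observations — that the true densities $q_{ij}$ must be retained so that $\prod_{ab}q_{\psi(a)\psi(b)}$ regenerates $p^{e(H)}$ exactly, that non-injective part-assignments cost only $O(1/k)$, and that left-continuity of $d'\mapsto\rho_{H,d'}$ absorbs the cleaning and regularity losses — are the correct details the paper omits.
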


\bibliographystyle{abbrv}
\bibliography{references}

\end{document}